\documentclass[11pt, twoside, leqno]{article}

\usepackage{amssymb}
\usepackage{amsmath}
\usepackage{amsthm}
\usepackage{color}
\usepackage{mathrsfs}
\usepackage{indentfirst}
\usepackage{txfonts}

\allowdisplaybreaks

\pagestyle{myheadings}\markboth{\footnotesize\rm\sc
Chaoan Li, Xianjie Yan and Dachun Yang}
{\footnotesize\rm\sc Anisotropic Ball Campanato-Type Function Spaces}

\textwidth=15cm
\textheight=21cm
\oddsidemargin 0.46cm
\evensidemargin 0.46cm

\parindent=13pt

\def\red{\color{red}}

\def\rr{{\mathbb R}}
\def\rn{{\mathbb{R}^n}}

\def\zz{{\mathbb Z}}
\def\cc{{\mathbb C}}

\def\nn{{\mathbb N}}

\def\ca{{\mathcal A}}
\def\CB{{\mathcal B}}

\def\cm{{\mathcal M}}

\def\cp{{\mathcal P}}

\def\cs{{\mathcal S}}

\def\cx{{\mathcal X}}

\def\fz{\infty }
\def\az{\alpha}

\def\lz{\lambda}

\def\tz{\theta}
\def\sz{\sigma}

\def\vaz{\varepsilon}

\def\lf{\left}
\def\r{\right}

\def\la{\langle}
\def\ra{\rangle}

\def\ls{\lesssim}

\def\noz{\nonumber}
\def\wz{\widetilde}

\def\st{\subset}
\def\com{\complement}

\def\fin{\mathop\mathrm{\,fin\,}}

\def\loc{{\mathop\mathrm{\,loc\,}}}
\def\supp{\mathop\mathrm{\,supp\,}}

\def\XXint#1#2#3{{\setbox0=\hbox{$#1{#2#3}{\int}$ }
\vcenter{\hbox{$#2#3$ }}\kern-.6\wd0}}

\DeclareMathOperator*{\esssup}{ess\ sup}

\def\unp{\underline{p}}

\def\la{{\langle}}
\def\ra{{\rangle}}
\def\({\left(}
\def \){ \right)}

\def\CB{{\mathcal B}}



\def\one{\mathbf{1}}

\def\Hfin{H_{X,\fin}^{A,q,d}}

\newtheorem{theorem}{Theorem}[section]
\newtheorem{lemma}[theorem]{Lemma}
\newtheorem{corollary}[theorem]{Corollary}
\newtheorem{assumption}[theorem]{Assumption}
\newtheorem{proposition}[theorem]{Proposition}

\theoremstyle{definition}
\newtheorem{remark}[theorem]{Remark}
\newtheorem{definition}[theorem]{Definition}
\renewcommand{\appendix}{\par
\setcounter{section}{0}%
\setcounter{subsection}{0}%
\setcounter{subsubsection}{0}%
\gdef\thesection{\@Alph\c@section}%
\gdef\thesubsection{\@Alph\c@section.\@arabic\c@subsection}%
\gdef\theHsection{\@Alph\c@section.}%
\gdef\theHsubsection{\@Alph\c@section.\@arabic\c@subsection}%
\csname appendixmore\endcsname
}

\numberwithin{equation}{section}
\begin{document}
\title{\bf\Large Anisotropic Ball Campanato-Type
Function Spaces and Their Applications
\footnotetext{\hspace{-0.35cm} 2020 {\it Mathematics Subject Classification}.
Primary 46E30; Secondary 42B30, 42B25, 42B35, 28C20.
\endgraf {\it Key words and phrases.}
expansive matrix, ball quasi-Banach function space,
Hardy space, Campanato-type function space,
duality, Littlewood--Paley function, Carleson measure.
\endgraf This project is partially supported by the National Key Research
and Development Program of China (Grant No. 2020YFA0712900),
the National Natural Science Foundation of China
(Grant Nos. 11971058 and 12071197), and
China Postdoctoral Science Foundation (Grant No. 2022M721024).}}
\date{}
\author{Chaoan Li, Xianjie Yan and Dachun Yang\,\footnote{Corresponding author,
E-mail: \texttt{dcyang@bnu.edu.cn}/{\red{April 24, 2023}}/Final version.}
}
\maketitle
\date{}
\maketitle

\vspace{-0.7cm}
\begin{center}
\begin{minipage}{13cm}
{\small {\bf Abstract}\quad
Let $A$ be a general expansive matrix and let
$X$ be a ball quasi-Banach function space on $\mathbb R^n$,
which supports both a Fefferman--Stein vector-valued
maximal inequality and the boundedness of the powered
Hardy--Littlewood maximal operator on its associate space.
The authors first introduce some anisotropic ball Campanato-type function
spaces associated with both $A$ and $X$, prove that these
spaces are dual spaces of anisotropic
Hardy spaces $H_X^A(\mathbb R^n)$ associated with both $A$ and $X$,
and obtain various anisotropic Littlewood--Paley
function characterizations of $H_X^A(\mathbb R^n)$.
Also, as applications, the authors establish several equivalent
characterizations of anisotropic ball Campanato-type function spaces,
which, combined with the atomic decomposition
of tent spaces associated with both $A$ and $X$, further induces their
Carleson measure characterizations.
All these results have a wide range of generality and,
particularly, even when they are applied to Morrey spaces
and Orlicz-slice spaces, some of the obtained results are also new.
The novelties of this article are reflected in that, to overcome the
essential difficulties caused by the absence of both an explicit
expression and the absolute continuity of quasi-norm $\|\cdot\|_X$,
the authors embed $X$ under consideration into the anisotropic
weighted Lebesgue space with certain special weight and then fully use the known
results of this weighted Lebesgue space.
}
\end{minipage}
\end{center}

\vspace{0.3cm}


\section{Introduction\label{s0}}
Recall that the dual theory of classical Hardy spaces on the
Euclidean space $\rn$ plays an important role in many branches
of analysis, such as harmonic analysis and partial differential
equations, and has been systematically considered and developed
so far; see, for instance, \cite{fs72,mu94}. Indeed, in 1969, Duren
et al. \cite{drs69} first identified the Lipshitz space with the dual
space of the Hardy space $H^p(\mathbb{D})$ of holomorphic functions,
where $p\in(0,1)$ and the \emph{symbol} $\mathbb{D}$ denotes the unit
disc of $\mathbb{C}$. Later, Walsh \cite{w73} further extended this dual
result to the Hardy space on the upper half-plane
$\mathbb{R}^{n+1}_+:=\rn\times(0,\fz)$.
On the real Hardy spaces, the famous dual
theorem, that is, the space
$\mathop{\mathrm{BMO}}(\rn)$
of functions with bounded mean oscillation
is the dual space of the Hardy space $H^1(\rn)$,
is due to Fefferman and Stein \cite{fs72}. Moreover, it is worth pointing
out that the complete dual theory of the Hardy space $H^p(\rn)$ with $p\in(0,1]$
was given by Taibleson and Weiss \cite{tw80}, in which the dual space of
$H^p(\rn)$ proves the special
Campanato space introduced by Campanato \cite{c64}.

Recently, Sawano et al. \cite{shyy17} originally introduced
the ball quasi-Banach function space $X$
which further generalizes the Banach function space
in \cite{bs88} in order to include weighted Lebesgue spaces,
Morrey spaces, mixed-norm Lebesgue spaces, Orlicz-slice spaces,
and Musielak--Orlicz spaces.
Observe that the aforementioned several function spaces
are not quasi-Banach function spaces which were originally
introduced in \cite[Definitions 1.1 and 1.3]{bs88};
see, for instance, \cite{shyy17,st15,wyyz,zwyy}.
In the same article \cite{shyy17}, Sawano et al. also introduced the Hardy space
$H_X(\rn)$, associated with $X$,
and established its various maximal function characterizations
by assuming that the Hardy--Littlewood maximal operator is
bounded on the $p$-convexification of $X$, as well as its
several other real-variable characterizations, respectively,
in terms of atoms, molecules, and Lusin-area functions
by assuming that the Hardy--Littlewood maximal operator satisfies a
Fefferman--Stein vector-valued inequality on $X$ and is
bounded on the associate space
of $X$.

Later, Wang et al. \cite{wyy} further established the
Littlewood--Paley $g$-function and
the Littlewood--Paley $g_{{\lambda}}^\ast$-function
characterizations of both $H_X(\rn)$ and its local version $h_X(\rn)$
and obtained the boundedness of Calder\'on--Zygmund
operators and pseudo-differential operators,
respectively, on $H_X(\rn)$ and $h_X(\rn)$;
Yan et al. \cite{yyy20} established the dual theorem and the
intrinsic square function characterizations of $H_X(\rn)$;
Zhang et al. \cite{zhyy21} introduced some new ball Campanato-type
function space which proves the dual space of $H_X(\rn)$
and established its Carleson measure characterization.
Very recently, on spaces ${\mathcal X}$ of homogeneous type,
Yan et al. \cite{yhyy,yhyy2} introduced ball
quasi-Banach function spaces $Y({\mathcal X})$
and Hardy-type spaces $H_Y(\cx)$, associated with $Y({\mathcal X})$,
and developed a complete real-variable theory of $H_Y(\cx)$.
For more studies about ball quasi-Banach function spaces,
we refer the reader to
\cite{cwyz20,its19,is17,s18,syy1,tyyz21,yyy20b}.

On the other hand, starting from 1970's, there has been an increasing
interesting in extending classical function spaces arising
in harmonic analysis from $\rn$ to various anisotropic
settings and some other domains; see, for instance, \cite{cgp22,fs82,
gkp21,gn18,
hhllyy19,lyz2023,st87,tl15,tribelfs83,tribelfs92}.
The study of function spaces on $\rn$ associated with anisotropic
dilations was originally started from the celebrated articles
\cite{calderon77,ct75,ct77}
of Calder\'on and Torchinsky
on anisotropic Hardy spaces. In 2003,
Bownik \cite{Bownik} introduced and investigated the anisotropic
Hardy space $H^p_{A}(\rn)$ with $p\in(0,\fz)$,
where $A$ is a general expansive matrix on $\rn$.
Since then, various variants of classical Hardy spaces
over the anisotropic Euclidean space have been introduced and
their real-variable theories have been systematically developed.
To be precise, Bownik et al. \cite{blyz08weight} further extended the
anisotropic Hardy space to the weighted setting.
Li et al. \cite{libaode14} introduced the anisotropic
Musielak--Orlicz Hardy space $H^{\varphi}_A(\mathbb{R}^n)$,
where $\varphi$ is an anisotropic Musielak--Orlicz function,
and characterized $H^{\varphi}_A(\mathbb{R}^n)$ by several maximal functions
and atoms.
Liu et al. \cite{lyy16,lyy2018} first introduced the
anisotropic Hardy--Lorentz space $H^{p,q}_A(\mathbb{R}^n)$,
with $p\in(0,1]$ and $q\in(0,\infty]$,
and established their several real-variable characterizations,
respectively,
in terms of atoms
or finite atoms, molecules,
maximal functions, and Littlewood--Paley functions,
which are further applied to obtain the real interpolation theorem of
$H^{p,q}_A(\mathbb{R}^n)$ and the boundedness of anisotropic
Calder\'on--Zygmund operators on $H^{p,q}_A(\mathbb{R}^n)$
including the critical case. Liu et al. \cite{liu2020,lwyy2018}
and Huang et al. \cite{hlyy20} further generalized the corresponding
results in \cite{lyy16,lyy2018}
to variable Hardy spaces and mixed-norm Hardy spaces, respectively.
Recently, Liu et al. \cite{lwyy2019,lyy17} introduced the
anisotropic variable Hardy--Lorentz space $H^{p(\cdot),q}_A(\rn)$,
where
$p(\cdot):\ \rn\rightarrow(0,\fz]$
is a variable
exponent function satisfying the globally log-H\"older
continuous condition and $q\in(0,\fz]$,
and developed a complete real-variable theory of these
spaces including various equivalent
characterizations and the boundedness of sublinear operators.
Independently, Almeida et al. \cite{abr2017} also investigated
the anisotropic variable Hardy--Lorentz space
$H^{p(\cdot),q(\cdot)}(\mathbb{R}^n,A)$, where $p(\cdot)$ and
$q(\cdot)$ are nonnegative measurable functions on $(0,\infty)$.
In \cite{abr2017}, equivalent characterizations of
$H^{p(\cdot),q(\cdot)}(\mathbb{R}^n,A)$ in terms of maximal
functions and atoms were established.
It is remarkable that the anisotropic variable Hardy--Lorentz space
$H^{p(\cdot),q(\cdot)}(\mathbb{R}^n,A)$ in \cite{abr2017} and that
in \cite{lwyy2019,lyy17} can not cover each other because the variable
exponent $p(\cdot)$ in \cite{abr2017} is only
defined on $(0,\infty)$, instead of $\mathbb{R}^n$.
Particularly, Huang et al. \cite{hw22,hyy21} further enriched the
real-variable theory of
anisotropic mixed-norm Campanato spaces
and anisotropic variable Campanato spaces and established the dual
theory of both anisotropic Hardy spaces
$H^{\vec{p}}_A(\mathbb{R}^n)$ and $H^{p(\cdot)}_A(\rn)$
with the full ranges of both $\vec{p}$ and $p(\cdot)$.
For more studies about function spaces on the anisotropic
Euclidean space, we refer the reader to
\cite{Bownik2,blyz10weight,cgn17,cgn19,lby14,lffy15}.

Recall that the anisotropic Hardy space $H_X^A(\rn)$ associated with both $A$
and $X$ was first introduced and studied by Wang et al. \cite{wyy22},
in which they characterized $H_X^A(\rn)$ in terms of maximal functions,
atoms, finite atoms, and molecules and obtained the boundedness of
Calder\'on--Zygmund operators on $H_X^A(\rn)$. Motivated by this
and \cite{zhyy21}, a quite \emph{natural question} arises:
can we prove whether or not the dual space of $H_X^A(\rn)$ is the
anisotropic ball Campanato-type function space and characterize
this space by the Carleson measure? The main target of this
article is to give an affirmative answer to this question.
Indeed, to answer this question and also to enrich the
real-variable theory
of anisotropic Campanato spaces associated with both $A$ and $X$,
in this article, by borrowing some ideas from \cite{zhyy21}, namely
considering finite linear combinations of atoms as a whole
instead of a single atom, we introduce the anisotropic ball
Campanato-type function space and give some applications. Using this
and the additional assumptions that the Hardy--Littlewood maximal operator
satisfies some Fefferman--Stein vector-valued inequality on $X$
and is bounded on the associated space of $X$,
we get rid of the dependence on the concavity of $\|\cdot\|_X$
and prove that the dual space of $H_X^{A}(\rn)$ is just the anisotropic ball
Campanato-type function space. From this, we further deduce several equivalent
characterizations of anisotropic ball Campanato-type function spaces.
Moreover, via embedding $X$ into
a certain anisotropic weighted Lebesgue space,
we overcome the difficulty caused by the absence of both an explicit
expression and the absolute continuity of the quasi-norm $\|\cdot\|_X$
under consideration and
establish the anisotropic Littlewood--Paley characterizations of $H_X^{A}(\rn)$,
which, together with the dual theorem of $H_X^{A}(\rn)$ and the
atomic decomposition of anisotropic tent spaces associated with $X$,
finally implies the Carleson measure characterization of
anisotropic ball Campanato-type function spaces.

It is remarkable that the results obtained in this article
have a wide range of generality because ball quasi-Banach
function spaces include so many specific function spaces.
Particularly, when $X$ becomes the Morrey space,
the Littlewood--Paley function characterizations of
anisotropic Hardy--Morrey spaces are new
while the dual theorem and the Carleson measure
characterization are not applicable because
Morrey spaces do not have any absolutely continuous quasi-norm;
when $X$ becomes the Orlicz-slice space,
the obtained results are completely new;
when $X$ becomes the weighted Lebesgue space or the Orlicz space,
the dual theorem and the Carleson measure characterization are new
while the Littlewood--Paley function characterizations of
anisotropic Hardy-type spaces are obtained in \cite{lfy15,lyy2018};
when $X$ becomes the Lorentz space, the variable Lebesgue space,
or the mixed-norm Lebesgue space, the obtained results coincide
with those in \cite{hlyy20,hw22,hyy21,llh2023,lwyy2018,lyy2018}.
Obviously, due to the flexibility and the operability, more
applications of these results obtained in this article to newfound
function spaces are completely possible.

The remainder of this article is organized as follows.

In Section \ref{s1}, we recall some notation and concepts
which are used throughout this article. More precisely,
we first recall the definitions of the expansive matrix $A$, the step homogeneous
quasi-norm $\rho$, and the ball quasi-Banach function space $X$.
Then we make some mild assumptions on the boundedness of the
Hardy--Littlewood maximal operator on both $X$ and its associate space,
which are needed throughout this article.
Finally, we recall the concept of the non-tangential (grand) maximal function.

The aim of Section \ref{s2} is to give the dual space of the
anisotropic Hardy space $H_X^{A}(\rn)$ (see Theorem \ref{s2t1} below).
To this end, we first introduce
the anisotropic ball Campanato-type function space
$\mathcal{L}_{X,q,d,s}^{A}(\rn)$ (see Definition \ref{LAXqds} below)
and give an equivalent quasi-norm
characterization of $\mathcal{L}_{X,q,d,s}^{A}(\rn)$
(see Proposition \ref{s2p1} below). Using these,
both the known atomic and the known finite
atomic characterizations of $H_X^A(\rn)$,
and the assumptions that the Hardy--Littlewood maximal operator
satisfies some Fefferman--Stein vector-valued
inequality on $X$ and is bounded on the associated space of $X$,
we prove that the dual space of $H_X^A(\rn)$ is just
$\mathcal{L}_{X,q',d,\tz_0}^{A}(\rn)$.
At the end of this section, we also give its invariance
of $\mathcal{L}_{X,q,d,s}^{A}(\rn)$ on different indices
$q$ and $d$;
see Corollary \ref{s2c1} below.

In Section \ref{s3}, by applying the dual result obtained in Theorem \ref{s2t1}
and a key estimate from \cite[Lemma 2.13]{wyy22} (see also Lemma \ref{s3l2} below),
we obtain several equivalent characterizations of
$\mathcal{L}_{X,q,d,\tz_0}^A({\rn})$
(see Theorems \ref{s3t1} and \ref{s3t2} below),
which are further applied to establish the Carleson measure
characterization of $\mathcal{L}_{X,1,d,\tz_0}^{A}({\rn})$
in Section \ref{s5}.

Section \ref{s4} is devoted to establishing the anisotropic
Littlewood--Paley function characterization of $H_X^A(\rn)$,
including the anisotropic Lusin area function,
the anisotropic Littlewood--Paley $g$-function, and
the anisotropic Littlewood--Paley
$g_\lambda^\ast$-function, respectively,
in Theorems \ref{s4t1}, \ref{s4t1'}, and \ref{s4t1''} below.
We first prove Theorem \ref{s4t1}. To this end,
we first show that the quasi-norms in $X$ of the anisotropic
Lusin area functions defined by different Schwartz functions
are equivalent
(see Theorem \ref{s4t2} below). Then,
via borrowing some ideas from \cite{lyy2018}
and the anisotropic Calder\'on
reproducing formula (see Lemma \ref{s4l1} below),
we complete the proof of Theorem \ref{s4t1}.
From this and an approach initiated by Ullrich \cite{U}
and further developed by Liang et al. \cite{lhy12} and Liu et al. \cite{lwyy2019},
together with Fefferman--Stein vector-valued inequality on $X$,
we obtain the anisotropic Littlewood--Paley $g$-function
and the anisotropic Littlewood--Paley
$g_\lambda^\ast$-function characterizations.

In Section \ref{s5}, we establish the Carleson measure
characterization of $\mathcal{L}_{X,1,d,\tz_0}^{A}({\rn})$
(see Theorem \ref{s5t1} below).
Indeed, via using Theorems \ref{s2t1}, \ref{s3t1}, and \ref{s4t1},
as well as the atomic decomposition of anisotropic tent spaces
associated with $X$ (see Lemma \ref{s5l1} below),
we show that a measurable function $h$ belongs to
$\mathcal{L}_{X,1,d,\tz_0}^{A}({\rn})$
if and only if $h$ generates an $X$--Carleson measure $d\mu$. Moreover,
the norm of the $X$--Carleson measure $d\mu$ is equivalent to the
$\mathcal{L}_{X,1,d,\tz_0}^A({\rn})$-norm of $h$.

In Section \ref{s6}, we apply all the main results
obtained in the above sections to several specific
ball quasi-Banach function spaces. Particularly,
the results about Morrey spaces and Orlicz-slice spaces
are completely new and stated, respectively,
in Subsections \ref{s6-appl1} and \ref{s6-appl2};
part of the results about Orlicz spaces and weighted Lebesgue
spaces obtained in this article are new and stated, respectively,
in Subsections \ref{s6-appl6} and \ref{s6-appl7};
the results about Lorentz spaces, variable Lebesgue spaces,
and mixed-norm Lebesgue spaces
coincide with the known ones, which are stated in
Subsections \ref{s6-appl3}, \ref{s6-appl4},
and \ref{s6-appl5} successively.

At the end of this section, we make some conventions on notation.
Let $\nn:=\{1,2,\ldots\}$, $\zz_+:=\nn\cup\{0\}$, $\zz_+^n:=(\zz_+)^n$,
and $\bf{0}$ be the \emph{origin} of $\rn$.
For any multi-index $\az:=(\az_1,\ldots,\az_n)\in\mathbb{Z}_+^n$
and any $x:=(x_1,\ldots,x_n)\in\rn$, let $|\az|:=\az_1+\cdots+\az_n$,
$\partial^{\az}:=(\frac{\partial}{\partial x_1})^{\az_1}\cdots
(\frac{\partial}{\partial x_n})^{\az_n},$ and
$x^\az:=x_1^{\az_1}\cdots x_n^{\az_n}.$
We denote by $C$ a \emph{positive constant}
which is independent of the main parameters, but may vary from
line to line. We use $C_{(\az,\dots)}$ to denote a positive
constant depending on the indicated parameters $\az,\, \dots$.
The symbol $f\ls g$ means $f\leq  Cg$. If $f\ls g$ and $g\ls f$,
we then write $f\sim g$.
If $f \le C g$ and $g=h$ or $g\le h$, we then write $f\ls g = h$ or $f\ls g\le h$.
For any $q\in [1,\infty]$, we denote by $q'$ its \emph{conjugate
index}, that is,
$1/q+1/q'=1$. For any $x\in\rn$,
we denote by $|x|$ the $n$-dimensional
\emph{Euclidean metric} of $x$.
If $E$ is a subset of $\rn$,
we denote by ${\mathbf{1}}_E$ its \emph{characteristic function}
and by $E^\complement$ the set $\rn\backslash E$.
For any $r\in(0,\fz)$ and $x\in\rn$, we denote by $B(x,r)$
the ball centered at $x$ with the radius $r$, that is,
$B(x,r):=\{y\in\rn:\ |x-y|<r\}.$
For any ball $B$, we use $x_B$ to denote its center and $r_B$
its radius, and denote by $\lz B$ for any $\lz\in(0,\fz)$ the
ball concentric with $B$ having the radius $\lz r_B$.
We also use $\epsilon\to 0^{+}$
to denote $\epsilon\in(0,\infty)$ and
$\epsilon \to 0$. Let $X$ and $Y$ be two
normed vector spaces, respectively, with the norm
$\|\cdot\|_X$ and the norm $\|\cdot\|_Y$;
then we use $X\hookrightarrow Y$ to denote
$X\subset Y$ and there exists a positive constant
$C$ such that, for any $f\in X$,
$\|f\|_Y \le C \|f\|_X.$
For any measurable funtion $f$ on $\rn$ and any measurable set
$E\subset\rn$ with $|E|\in(0,\infty)$, let $\fint_E f(x)\,dx
:=\frac{1}{|E|}\int_E f(x)\,dx$.
At last, when we prove a theorem or the like,
we always use the same symbols in the wanted proved
theorem or the like.
\section{Preliminaries\label{s1}}

In this section, we first recall some notation and concepts
on dilations (see, for instance \cite{Bownik,ho03}) as well as ball quasi-Banach
function spaces (see, for instance,  {\cite{shyy17,wyy,wyyz,yyy20,zwyy}}).
We begin with recalling the concept of the expansive matrix from \cite{Bownik}.

\begin{definition}\label{dilation}
A real $ n\times n $ matrix $ A $ is called an
{\it expansive matrix} (shortly, a {\it dilation}) if
\begin{equation*}
\min_{\lz \in \sz(A)} |\lz|>1,
\end{equation*}
here and thereafter, $ \sz (A) $ denotes the {\it set of all eigenvalues of $ A $}.
\end{definition}

Let $A$ be a dilation and
\begin{align}\label{2.14.x1}
b:=|\det A|,
\end{align}
where $\det A$ denotes the determinant of $A$.
Then it follows from \cite[p.\ 6, (2.7)]{Bownik}
that $b\in(1,\infty)$. By the fact that there exists an open and symmetry
ellipsoid $\Delta$, with $ |\Delta|=1 $, and an $ r\in(1,\fz) $ such that
$ \Delta \subset r\Delta \subset A\Delta $ (see \cite[p.\,5, Lemma 2.2]{Bownik}),
we find that, for any $ k\in \zz $,
\begin{equation}\label{B_k}
B_k := A^k \Delta
\end{equation}
is open, $ B_k \subset r B_k \subset B_{k+1} $,
and $ |B_k|=b^k $. For any $ x \in\rn $ and $ k \in \zz $,
the ellipsoid $ x+B_k $ is called a {\it dilated ball}.
In what follows, we always let $ \CB $ be the set of all such dilated balls,
that is,

\begin{equation}\label{ball-B}
\CB:=\{x+B_k :\  x \in \rn,k \in \zz \}
\end{equation}
and let
\begin{equation}\label{tau}
\tau := \inf\left\{ l \in\zz :\  r^l \geq 2\right\}.
\end{equation}

Let $\lz_-,\lz_+ \in (0,\infty)$ satisfy that
\begin{equation}\label{2.21.x1}
1<\lz_- <\min\{ |\lz|:\ \lz \in\sigma(A)\}
\leq\max\{|\lz|:\ \lz\in\sigma(A)\}<\lz_+.
\end{equation}
We point out that, if $A$ is diagonalizable over $\rr$,
then we may let
$$\lz_-:=\min\{|\lz|:\ \lz\in\sz(A)\}\ \mbox{and}\
\lz_+:=\max\{|\lz|:\ \lz\in\sigma(A)\}.$$
Otherwise, we may choose them sufficiently close to
these equalities in accordance with what we need in our arguments.

The following definition of the homogeneous quasi-norm is just
\cite[p.\,6,\ Definition 2.3]{Bownik}.

\begin{definition}\label{quasi-norm}
A {\it homogeneous quasi-norm}, associated with a dilation $A$,
is a measurable mapping $\varrho:\ \rn\rightarrow[0,\infty)$ such that
\begin{enumerate}
\item[{\rm(i)}] $\varrho (x)=0\Longleftrightarrow x=\bf{0}$,
where $\bf{0}$ denotes the origin of $\rn$;

\item[{\rm(ii)}] $\varrho(Ax)=b\varrho(x)$ for any $x\in\rn$;

\item[{\rm(iii)}] there exists an $A_0\in[1,\infty)$ such that,
for any $x,y\in\rn$,
$$\varrho(x+y)\leq A_0\,[\varrho(x)+\varrho(y)].$$
\end{enumerate}
\end{definition}

In the standard Euclidean space case,
let $A:=2\,I_{n\times n}$ and, for any $x\in\rn$, $\varrho(x):=|x|^n$.
Then $\varrho$ is an example of homogeneous quasi-norms
associated with $A$ on $\rn$. Here and thereafter,
$I_{n\times n}$ always denotes the $n\times n$ \emph{unit
matrix} and $|\cdot|$ the \emph{Euclidean norm} in $\rn$.

For a fixed dilation $A$, by \cite[p.\,6,\ Lemma 2.4]{Bownik},
we define the following quasi-norm
which is used throughout this article.

\begin{definition}\label{def-shqn}
Define the {\it step homogeneous quasi-norm $\rho$} on $\rn$,
associated with the dilation $A$, by setting
\begin{equation*}
\rho(x):= \left\lbrace
\begin{aligned}
&b^k\  &&{\rm if}\  x \in B_{k+1}\backslash B_k,\\
&0\  &&{\rm if}\  x=\bf{0},
\end{aligned}
\right.
\end{equation*}
where $b$ is the same as in \eqref{2.14.x1} and,
for any $k\in\zz$, $B_k$ the same as in \eqref{B_k}.
\end{definition}

Then $(\rn,\rho,dx)$ is a space of homogeneous type
in the sense of Coifman and Weiss \cite{CWhomo},
where $dx$ denotes the $n$-dimensional Lebesgue measure.
For more studies on the real-variable theory of function
spaces over spaces of homogeneous type,
we refer the reader to \cite{bdl18,bdl20,bl11,lj10,lj11,lj13}.

Throughout this article, we always let $A$ be a dilation
in Definition \ref{dilation}, $b$ the same as in \eqref{2.14.x1},
$\rho$ the step homogeneous quasi-norm in Definition \ref{def-shqn},
$\CB$ the set of all dilated balls in \eqref{ball-B},
$\mathscr M(\rn)$ the {\it set of all measurable functions} on $\rn$,
and $B_k$ for any $k\in\zz$ the same as in \eqref{B_k}.
Now, we recall the definition of ball quasi-norm
Banach function spaces (see \cite{shyy17}).

\begin{definition}\label{BQBFS}
A quasi-normed linear space $X\subset\mathscr M(\rn)$,
equipped with a quasi-norm $\|\cdot\|$ which
makes sense for the whole $\mathscr M(\rn)$, is called
a \emph{ball quasi-Banach function space} if it satisfies
\begin{enumerate}
\item[{\rm(i)}] for any $f\in\mathscr M(\rn)$, $\|f\|_X=0$
implies that $f=0$ almost everywhere;

\item[{\rm(ii)}] for any $f,g\in\mathscr M(\rn)$, $|g|\le|f|$ almost everywhere
implies that $\|g\|_X\le\|f\|_X$;

\item[{\rm(iii)}] for any $\{f_m\}_{m\in\nn}\subset\mathscr M(\rn)$
and $f\in\mathscr M(\rn)$, $0\le f_m\uparrow f$ as $m\to\infty$
almost everywhere implies that
$\|f_m\|_X\uparrow\|f\|_X$ as $m\to\infty$;

\item[{\rm(iv)}] $\one_B \in X$ for any dilated ball $B \in \CB$.
\end{enumerate}

Moreover, a {ball quasi-Banach function} space $X$ is
called a \emph{ball Banach function space} if it satisfies:
\begin{enumerate}
\item[{\rm(v)}] for any $f,g\in X$, $\|f+g\|_X\leq\|f\|_X+\|g\|_X$;

\item[{\rm(vi)}] for any given dilated ball $B\in\CB$,
there exists a positive constant $C_{(B)}$ such that,
for any $f\in X$,
\begin{equation*}
\int_B |f(x)|\,dx \leq C_{(B)}\|f\|_X.
\end{equation*}
\end{enumerate}
\end{definition}

\begin{remark}\label{s1r1}
\begin{enumerate}
\item[{\rm(i)}]
As was mentioned in \cite[Remark 2.5(i)]{wyy22},
if $f\in\mathscr M(\rn)$,
then $\|f\|_X=0$ if and only if $f=0$ almost everywhere;
if $f,g\in\mathscr M(\rn)$
and $f=g$ almost everywhere, then $\|f\|_X\sim\|g\|_X$
with the positive equivalence constants independent of both $f$ and $g$.

\item[{\rm(ii)}]
As was mentioned in \cite[Remark 2.5(ii)]{wyy22},
if we replace any dilated ball $B\in\CB$ in Definition \ref{BQBFS}
by any bounded measurable set $E$ or by any ball
$B(x,r)$ with $x\in\rn$ and $r\in(0,\fz)$,
we obtain its another equivalent formulation.

\item[{\rm(iii)}]
By \cite[Theorem 2]{dfmn21}, we find that both
(ii) and (iii) of Definition \ref{BQBFS} imply that
any ball quasi-Banach function space is complete.
\end{enumerate}
\end{remark}

Now, we recall the concepts of the $p$-convexification and
the concavity of ball quasi-Banach function spaces,
which is a part of \cite[Definition 2.6]{shyy17}.

\begin{definition}\label{Debf}
Let $X$ be a ball quasi-Banach function space and $p\in(0,\infty)$.
\begin{enumerate}
\item[{\rm(i)}] The \emph{$p$-convexification} $X^p$ of $X$
is defined by setting
$$X^p:=\lf\{f\in\mathscr M(\rn):\ |f|^p\in X\r\}$$
equipped with the \emph{quasi-norm} $\|f\|_{X^p}:=\||f|^p\|_X^{1/p}$.

\item[{\rm(ii)}] The space $X$ is said to be
\emph{concave} if there exists a positive constant
$C$ such that, for any $\{f_k\}_{k\in{\mathbb N}}\subset \mathscr M(\rn)$,
$$\sum_{k=1}^{{\infty}}\|f_k\|_{X}
\le C\left\|\sum_{k=1}^{{\infty}}|f_k|\right\|_{X}.$$
In particular, when $C=1$, $X$ is said to be
\emph{strictly concave}.
\end{enumerate}
\end{definition}

The associate space $X'$ of any given ball
Banach function space $X$ is defined as follows;
see \cite[Chapter 1, Section 2]{bs88} or \cite[p.\,9]{shyy17}.

\begin{definition}
For any given ball Banach function space $X$,
its \emph{associate space} (also called the
\emph{K\"othe dual space}) $X'$ is defined by setting
\begin{equation*}
X':=\lf\{f\in\mathscr M(\rn):\ \|f\|_{X'}
:=\sup_{g\in X,\ \|g\|_X=1}\lf\|fg\r\|_{L^1(\rn)}<\infty\r\},
\end{equation*}
where $\|\cdot\|_{X'}$ is called the \emph{associate norm} of $\|\cdot\|_X$.
\end{definition}

\begin{remark}\label{bbf}
From \cite[Proposition 2.3]{shyy17}, we deduce that, if $X$ is a ball
Banach function space, then its associate space $X'$ is also a ball
Banach function space.
\end{remark}

Next, we recall the concept of absolutely continuous quasi-norms
of $X$ as follows (see \cite[Definition 3.2]{wyy} for the standard
Euclidean space case and \cite[Definition 6.1]{yhyy} for the case
of spaces of homogeneous type).

\begin{definition}
Let $X$ be a ball quasi-Banach function space.
A function $f\in X$ is said to have an {\it absolutely
continuous quasi-norm} in $X$ if $\|f\one_{E_j}\|_X\downarrow0$
whenever $\{E_j\}_{j=1}^\infty$ is a sequence of measurable
sets satisfying $E_j\supset E_{j+1}$ for any $j\in\nn$
and $\bigcap_{j=1}^\infty E_j=\emptyset$. Moreover,
$X$ is said to have an {\it absolutely continuous quasi-norm} if,
for any $f\in X$, $f$ has an absolutely continuous quasi-norm in $X$.
\end{definition}

Now, we recall the concept of the Hardy--Littlewood maximal operator.
Let $L_{\rm loc}^1(\rn)$ denote the
{\it set of all locally integrable functions} on $\rn$.
Recall that the {\it Hardy--Littlewood maximal operator}
$\cm(f)$ of $f \in L_{\rm loc}^1(\rn)$ is defined by setting,
for any $x \in \rn$,
\begin{align*}
\cm(f)(x):=&\,\sup_{k\in\zz}\sup_{y\in x+B_k}
\fint_{y+B_k} |f(z)|\,dz
=\,\sup_{x\in B\in\CB}\fint_B|f(z)|\,dz,
\end{align*}
where $\CB$ is the same as in {\eqref{ball-B}} and the last
supremum is taken over all balls $B\in\CB$.
For any given $\alpha \in (0,\infty)$,
the {\it powered Hardy--Littlewood maximal operator}
$\cm^{(\alpha)}$ is defined by setting,
for any $f \in L^1_{\rm loc}(\rn)$ and $x \in \rn$,
\begin{equation*}
\cm^{(\alpha)}(f)(x)
:=\lf\{\cm\lf(|f|^\alpha\r)(x)\r\}^{\frac{1}{\alpha}}.
\end{equation*}

Throughout this article, we also need the following two
fundamental assumptions about
the boundedness of $\cm$ on the given ball quasi-Banach
function space and its associate space.

\begin{assumption}\label{Assum-1}
Let $X$ be a ball quasi-Banach function space.
Assume that there exists a $p_- \in (0,\infty)$ such that,
for any $p \in (0,p_-)$ and $u \in (1,\infty)$,
there exists a positive constant $C$, depending on both $p$ and $u$,
such that, for any $\{f_k\}_{k=1}^\infty\subset\mathscr M(\rn)$,
\begin{equation*}
\left\| \left\{\sum_{k=1}^\infty\lf[\cm\lf(f_k\r)\r]^u\right\}^
{\frac{1}{u}} \right\|_{X^{\frac{1}{p}}}
\leq C\left\|\left\{\sum_{k=1}^{\infty}|f_k|^u\right\}^
{\frac{1}{u}}\right\|_{X^\frac{1}{p}}.
\end{equation*}
\end{assumption}

In what follows, for any given $p_- \in (0,\infty)$, we always let
\begin{equation} \label{underp}
\underline{p}:=\min \{p_-,1\}.
\end{equation}

\begin{assumption}\label{Assum-2}
Let $p_- \in (0,\infty)$ and $X$ be a
ball quasi-Banach function space.
Assume that there exists a $\theta_0\in(0,\underline{p})$,
with $\underline{p}$ the same as in \eqref{underp},
and a $p_0 \in (\theta_0,\infty)$ such that
$X^{{1}/{\theta_0}}$ is a ball Banach function space and,
for any $f\in(X^{{1}/{\theta_0}})'$,
\begin{equation*}
\left\|\cm^{(({p_0}/{\theta_0})')}(f)\right\|_{(X^{{1}/{\theta_0}})'}
\leq C\|f\|_{(X^{{1}/{\theta_0}})'},
\end{equation*}
where $C$ is a positive constant, independent of $f$, and
$\frac{1}{p_0/\theta_0}+\frac{1}{(p_0/\theta_0)'}=1$.
\end{assumption}
Next, recall that a {\it Schwartz function} is a function
$\varphi \in C^\infty(\rn)$ satisfying that, for any
$k\in\zz_+$ and any multi-index $\alpha\in\zz_+^n$,
\begin{equation*}
\|\varphi\|_{\alpha,k} := \sup_{x \in \rn}
[\rho (x)]^k|\partial^\alpha\varphi(x)|<\infty.
\end{equation*}
Denote by $ \cs(\rn) $ the {\it set of all Schwartz functions},
equipped with the topology determined by
$\{\|\cdot\|_{\alpha,k} \}_{\alpha\in\zz_+^n,k \in \zz_+}$.
Then $\cs'(\rn)$ is defined to be the {\it dual space} of $\cs(\rn)$,
equipped with the weak-$\ast$ topology. For any $N \in \zz_+$,
let
\begin{equation*}
\cs_N(\rn):=\lf\{\varphi\in\cs(\rn):\
\|\varphi\|_{\alpha,k}\leq1,|\alpha|\leq N,k\leq N\r\},
\end{equation*}
equivalently,
\begin{align*}
&\varphi\in\cs_N(\rn)\\
&\quad\Longleftrightarrow\ \|\varphi\|_{\cs_N(\rn)}
:=\sup_{|\alpha|\leq N}
\sup_{x\in\rn}\max\{1,[\rho(x)]^N\}
|\partial^\alpha\varphi(x)|\leq 1.
\end{align*}

In what follows, for any $\varphi\in\cs(\rn)$ and
$k \in \zz$, let $\varphi_k(\cdot):=b^{-k}\varphi(A^{-k}\cdot)$.
\begin{definition}
Let $\varphi\in\cs(\rn)$ and $f\in\cs'(\rn)$.
The {\it non-tangential maximal function} $M_\varphi (f)$
with respect to $\varphi$ is defined by setting,
for any $x\in\rn$,
\begin{equation*}
M_\varphi(f)(x):=\sup_{k\in\zz,\,y\in x+B_k}\lf|f\ast\varphi_k(y)\r|.
\end{equation*}
Moreover, for any given $ N\in\nn$, the {\it non-tangential grand
maximal function} $M_N(f)$ is defined by setting, for any $x\in\rn$,
\begin{equation}\label{M_N}
M_N(f)(x):=\sup_{\varphi\in\cs_N(\rn)}M_\varphi(f)(x).
\end{equation}
\end{definition}

\section{Duality between $H_{X}^{A}(\rn)$ and
$\mathcal{L}_{X,q',d,\tz_0}^{A}(\rn)$\label{s2}}

In this section, we provide a description of the dual space of the anisotropic
Hardy space $H_{X}^{A}(\rn)$ associated with ball quasi-Banach function space $X$.
This description is a
consequence of the definition of the anisotropic ball Campanato-type function space,
the atomic
and the finite atomic characterizations of $H_{X}^{A}(\rn)$ from \cite{wyy22},
as well as some basic tools from functional analysis.
To state the dual theorem, we first present the definition of
$H_{X}^{A}(\rn)$ from \cite{wyy22} as follows. In what follows,
for any $\az\in\rr$, we denote by
$\lfloor\az\rfloor$ the largest integer not greater than $\az$.

\begin{definition}\label{HXA}
Let $A$ be a dilation and $X$ a ball quasi-Banach function space
satisfying both Assumption \ref{Assum-1} with $p_-\in(0,\infty)$
and Assumption \ref{Assum-2} with the same
$p_-$, $\theta_0\in(0,\unp)$, and $p_0\in(\theta_0,\infty)$,
where $\unp$ is the same as in \eqref{underp}. Assume that
\begin{align}\label{3.14.x1}
N\in\nn\cap\lf[\lf\lfloor\lf(\frac{1}{\theta_0}-1\r)
\frac{\ln b}{\ln(\lz_-)}\r\rfloor+2,\infty\r).
\end{align}
The {\it anisotropic Hardy space $H_{X,\,N}^A(\rn)$},
associated with both $A$ and $X$, is defined by setting
\begin{equation*}
H_{X,\,N}^A(\rn):=\lf\{f\in\cs'(\rn):\ \|M_N(f)\|_X<\infty\r\},
\end{equation*}
where $M_N(f)$ is the same as in \eqref{M_N}. Moreover,
for any $f\in H_{X,\,N}^A(\rn)$, let
$$\|f\|_{H_{X,\,N}^A(\rn)}:=\lf\|M_N(f)\r\|_X.$$
\end{definition}

Let $A$ be a dilation and $X$ the same as in Definition \ref{HXA}.
In the remainder of this article, we always let
\begin{align}\label{NXA}
N_{X,\,A}:=\lf\lfloor\lf(\frac{1}{\theta_0}-1\r)
\frac{\ln b}{\ln(\lz_-)}\r\rfloor+2.
\end{align}

\begin{remark}\label{s2r1}
\begin{enumerate}
\item[{\rm(i)}]
As was mentioned in \cite[Remark 2.17(i)]{wyy22},
the space $H_{X,\,N}^A(\rn)$ is independent
of the choice of $N$ as long as $N\in\nn\cap[N_{X,\,A},\infty)$.
In what follows, when $N\in\nn\cap[N_{X,\,A},\infty)$,
we write $H_{X,\,N}^A(\rn)$ simply by $H_{X}^A(\rn)$.

\item[{\rm(ii)}]
We point out that, if $A:=2\,I_{n\times n}$,
then $H_X^A(\rn)$ coincides with $H_X(\rn)$
which was introduced by Sawano et al. in \cite{shyy17}.
\end{enumerate}
\end{remark}
In what follows, for any $d\in\zz_+$, $\mathcal{P}_d(\rn)$
denotes the set of all the polynomials on $\rn$
with degree not greater than $d$; for any ball $B\in\CB$ and
any
locally integrable function $g$ on $\rn$,
we use $P^d_Bg$ to denote the \emph{minimizing polynomial} of
$g$ with degree not greater than $d$, which means
that $P^d_Bg$ is the unique polynomial $f\in\mathcal{P}_d(\rn)$
such that, for any $h\in\mathcal{P}_d(\rn)$,
$$\int_{B}[g(x)-f(x)]h(x)\,dx=0.$$

Next, we introduce the anisotropic ball Campanato-type function space
associated with the ball quasi-Banach function space.
In what follows, we use $L_{\loc}^q(\rn)$ to denote the
\emph{set of all $q$-order locally integrable functions} on
$\rn$.

\begin{definition}\label{LAXqds}
Let $A$ be a dilation, $X$ a ball quasi-Banach function space, $q\in[1,{\infty})$,
$d\in\zz_+$, and $s\in(0,\infty)$.
Then the \emph{anisotropic ball Campanato-type function space}
$\mathcal{L}_{X,q,d,s}^{A}(\rn)$, associated with $X$, is defined to be
the set of all the $f\in L^q_{\rm loc}({{\rr}^n})$ such that
\begin{align*}
\|f\|_{\mathcal{L}_{X,q,d,s}^{A}(\rn)}
&:=\sup
\lf\|\lf\{\sum_{i=1}^m
\lf[\frac{{\lambda}_i}{\|{\one}_{B^{(i)}}\|_X}\r]^{s}
{\one}_{B^{(i)}}\r\}^{\frac1{s}}\r\|_{X}^{-1}
\\
&\quad\times\sum_{j=1}^m\frac{{\lambda}_j|B^{(j)}|}{\|{\one}_{B^{(j)}}
\|_{X}}
\lf[
\fint_{B^{(j)}}\lf|f(x)-P^d_{B^{(j)}}f(x)\r|^q \,dx\r]^\frac1q
\end{align*}
is finite, where the supremum is taken over all
$m\in\nn$, $\{B^{(j)}\}_{j=1}^m\subset \CB$, and
$\{\lambda_j\}_{j=1}^m\subset[0,\infty)$ with $\sum_{j=1}^m\lambda_j\neq0$.
\end{definition}

\begin{remark}\label{s2r2}
Let $A$, $X$, $q$, $d$, and $s$ be the same
as in Definition \ref{LAXqds}.
\begin{enumerate}
\item[{\rm(i)}]
Obviously, $\mathcal{P}_d(\rn)\subset \mathcal{L}_{X,q,d,s}^{A}({{\rr}^n})$.
Indeed, $\|f\|_{\mathcal{L}_{X,q,d,s}^{A}(\rn)}=0$ if and only if
$f\in\mathcal{P}_d(\rn)$. Throughout this
article, we always identify $f\in \mathcal{L}_{X,q,d,s}^{A}(\rn)$
with $\{f+P:\ P\in\mathcal{P}_d(\rn)\}$.
\item[{\rm(ii)}]
For any $f\in L^q_{\rm loc}({{\rr}^n})$, define
\begin{align*}
\||f\||_{\mathcal{L}_{X,q,d,s}^{A}(\rn)}
:&=\sup\inf
\lf\|\lf\{\sum_{i=1}^m
\lf[\frac{{\lambda}_i}{\|{\one}_{B^{(i)}}\|_X}\r]^{s}
{\one}_{B^{(i)}}\r\}^{\frac1{s}}\r\|_{X}^{-1}
\\
&\quad\times\sum_{j=1}^m
\frac{{\lambda}_j|B^{(j)}|}{\|{\one}_{B^{(j)}}
\|_{X}}\lf[
\fint_{B^{(j)}}\lf|f(x)-P(x)\r|^q \,dx\r]^\frac1q,
\end{align*}
where the supremum is taken the same way as in Definition \ref{LAXqds}
and the infimum is taken over all $P\in\mathcal{P}_d(\rn)$.
Then, similarly to the proof of \cite[Lemma 2.6]{yyy20} with using
\cite[p.\,49, (8.9)]{Bownik} instead of \cite[Lemma 2.5]{yyy20},
we easily find that $\||\cdot\||_{\mathcal{L}_{X,q,d,s}^{A}(\rn)}$
is an equivalent quasi-norm of $\mathcal{L}_{X,q,d,s}^{A}({{\rr}^n})$.
\end{enumerate}
\end{remark}

Moreover, for the anisotropic ball Campanato-type function space
$\mathcal{L}_{X,q,d,s}^{A}(\rn)$,
we have the following equivalent quasi-norm.

\begin{proposition}\label{s2p1}
Let $A$, $X$, $q$, $d$, and $s$ be the same
as in Definition \ref{LAXqds}.
For any $f\in L^q_{\rm loc}({{\rr}^n})$, define
\begin{align*}
\widetilde{\|f\|}_{\mathcal{L}_{X,q,d,s}^{A}(\rn)}
&:=\sup
\lf\|\lf\{\sum_{i\in\nn}
\lf[\frac{{\lambda}_i}{\|{\one}_{B^{(i)}}\|_X}\r]^{s}
{\one}_{B^{(i)}}\r\}^{\frac1{s}}\r\|_{X}^{-1}\\
&\quad\times\sum_{j\in\nn}\frac{{\lambda}_j|B^{(j)}|}{\|{\one}_{B^{(j)}}
\|_{X}}
\lf[
\fint_{B^{(j)}}\lf|f(x)-P^d_{B^{(j)}}f(x)\r|^q \,dx\r]^\frac1q,
\end{align*}
where the supremum is taken over all $\{B^{(j)}\}_{j\in\nn}\subset \CB$ and
$\{\lambda_j\}_{j\in\nn}\subset[0,\infty)$ satisfying that
\begin{align}\label{s2e1}
\lf\|\lf\{\sum_{j\in\nn}
\lf[\frac{{\lambda}_j}{\|{\one}_{B^{(j)}}\|_X}\r]^{s}
{\one}_{B^{(j)}}\r\}^{\frac1{s}}\r\|_{X}\in(0,\infty).
\end{align}
Then, for any $f\in L^q_{\rm loc}({{\rr}^n})$,
$$\widetilde{\|f\|}_{\mathcal{L}_{X,q,d,s}^{A}(\rn)}
=\|f\|_{\mathcal{L}_{X,q,d,s}^{A}(\rn)}.$$
\end{proposition}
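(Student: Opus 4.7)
\medskip

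\noindent\textbf{Proof proposal.}
The equality has two directions, and both are quite natural once one identifies the correct tool. For the easier inequality $\|f\|_{\mathcal{L}_{X,q,d,s}^{A}(\rn)}\le\widetilde{\|f\|}_{\mathcal{L}_{X,q,d,s}^{A}(\rn)}$, I would start from an arbitrary finite admissible choice $m\in\nn$, $\{B^{(j)}\}_{j=1}^m\subset\CB$, and $\{\lambda_j\}_{j=1}^m\subset[0,\infty)$ with $\sum_{j=1}^m\lambda_j\ne 0$, and extend it to an infinite sequence by setting $B^{(j)}:=B^{(1)}$ and $\lambda_j:=0$ for all $j>m$. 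None of the sums appearing in either the numerator or the denominator change under this padding, and the positivity hypothesis on some $\lambda_j$ combined with Definition \ref{BQBFS}(iv) and Remark \ref{s1r1}(i) guarantees that the extended sequence satisfies \eqref{s2e1}. Thus every quotient appearing in the supremum defining $\|f\|_{\mathcal{L}_{X,q,d,s}^{A}(\rn)}$ also appears in the supremum defining $\widetilde{\|f\|}_{\mathcal{L}_{X,q,d,s}^{A}(\rn)}$, giving the desired bound.

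For the reverse inequality, I would fix an arbitrary admissible pair of infinite sequences $\{B^{(j)}\}_{j\in\nn}$, $\{\lambda_j\}_{j\in\nn}$ satisfying \eqref{s2e1} and work with the finite truncations $\{B^{(j)}\}_{j=1}^m$, $\{\lambda_j\}_{j=1}^m$. For each sufficiently large $m$ (so that some $\lambda_j$ with $j\le m$ is positive), the definition of $\|f\|_{\mathcal{L}_{X,q,d,s}^{A}(\rn)}$ yields
\begin{align*}
\sum_{j=1}^m \frac{\lambda_j|B^{(j)}|}{\|\one_{B^{(j)}}\|_X}
\lf[\fint_{B^{(j)}}\lf|f(x)-P^d_{B^{(j)}}f(x)\r|^q\,dx\r]^{\frac1q}
\le \|f\|_{\mathcal{L}_{X,q,d,s}^{A}(\rn)}
\lf\|\lf\{\sum_{i=1}^m\lf[\frac{\lambda_i}{\|\one_{B^{(i)}}\|_X}\r]^s\one_{B^{(i)}}\r\}^{\frac1s}\r\|_X.
\end{align*}
The crux of the argument is to let $m\to\infty$. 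The partial sums $\sum_{i=1}^m[\lambda_i/\|\one_{B^{(i)}}\|_X]^s\one_{B^{(i)}}$ form a pointwise nondecreasing sequence of nonnegative measurable functions, so the same is true after raising to the $1/s$ power; therefore Definition \ref{BQBFS}(iii) (the Fatou-type monotone convergence property of the ball quasi-Banach function space $X$) guarantees that the $X$-quasi-norm on the right converges to the full $X$-quasi-norm of the infinite sum. The left-hand side also converges monotonically by the monotone convergence theorem for the counting measure on $\nn$. Dividing through by the limit $X$-quasi-norm, which lies in $(0,\infty)$ by assumption \eqref{s2e1}, yields $\widetilde{\|f\|}_{\mathcal{L}_{X,q,d,s}^{A}(\rn)}\le\|f\|_{\mathcal{L}_{X,q,d,s}^{A}(\rn)}$, completing the proof. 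The only place one needs to be a little careful is the passage from finite to infinite in the $X$-norm, which is precisely where axiom (iii) of Definition \ref{BQBFS} is indispensable; nothing else is delicate.
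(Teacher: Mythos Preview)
Your proposal is correct and takes essentially the same approach as the paper: both directions rely on padding/truncating sequences, and the key step in the nontrivial inequality is precisely the monotone convergence property of $X$ (Definition \ref{BQBFS}(iii)) to pass the $X$-quasi-norm through the increasing limit. The paper phrases the limit passage as an $\varepsilon$-argument while you take the limit directly after moving the $X$-norm to the right-hand side, but these are equivalent formulations of the same idea.
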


\begin{proof}
Let $f\in L^q_{\rm loc}({{\rr}^n})$. Obviously,
\begin{align}\label{2.15.x1}
\|f\|_{\mathcal{L}_{X,q,d,s}^{A}(\rn)}
\le \widetilde{\|f\|}_{\mathcal{L}_{X,q,d,s}^{A}(\rn)}.
\end{align}

Conversely, let $\{B^{(j)}\}_{j\in\nn}\subset \CB$ and
$\{\lambda_j\}_{j\in\nn}\subset[0,\infty)$ satisfy \eqref{s2e1}.
From Definition \ref{BQBFS}(iii), it follows that
\begin{align*}
&\lim_{m\to\fz}
\lf\|\lf\{\sum_{i=1}^m
\lf[\frac{{\lambda}_i}{\|{\one}_{B^{(i)}}\|_X}\r]^{s}
{\one}_{B^{(i)}}\r\}^{\frac1{s}}\r\|_{X}^{-1}\\
&\qquad\times\sum_{j=1}^m\frac{{\lambda}_j|B^{(j)}|}{\|{\one}_{B^{(j)}}
\|_{X}}\lf[
\fint_{B^{(j)}}\lf|f(x)-P^d_{B^{(j)}}f(x)\r|^q \,dx\r]^\frac1q\\
&\quad=\lf\|\lf\{\sum_{i\in\nn}
\lf[\frac{{\lambda}_i}{\|{\one}_{B^{(i)}}\|_X}\r]^{s}
{\one}_{B^{(i)}}\r\}^{\frac1{s}}\r\|_{X}^{-1}\\
&\qquad\times\sum_{j\in\nn}\frac{{\lambda}_j|B^{(j)}|}{\|{\one}_{B^{(j)}}\|_{X}}
\lf[\fint_{B^{(j)}}
\lf|f(x)-P^d_{B^{(j)}}f(x)\r|^q \,dx\r]^\frac1q.
\end{align*}
Therefore, for any $\varepsilon\in(0,\fz)$,
there exists an $m_0\in\nn$ such that
$\sum_{j=1}^{m_0}\lambda_j\neq0$
and
\begin{align*}
&\lf\|\lf\{\sum_{i\in\nn}
\lf[\frac{{\lambda}_i}{\|{\one}_{B^{(i)}}\|_X}\r]^{s}
{\one}_{B^{(i)}}\r\}^{\frac1{s}}\r\|_{X}^{-1}\\
&\qquad\times\sum_{j\in\nn}\frac{{\lambda}_j|B^{(j)}|}{\|{\one}_{B^{(j)}}
\|_{X}}\lf[\fint_{B^{(j)}}\lf|f(x)-P^d_{B^{(j)}}f(x)\r|^q \,dx\r]^\frac1q\\
&\quad <\lf\|\lf\{\sum_{i=1}^{m_0}
\lf[\frac{{\lambda}_i}{\|{\one}_{B^{(i)}}\|_X}\r]^{s}
{\one}_{B^{(i)}}\r\}^{\frac1{s}}\r\|_{X}^{-1}\\
&\qquad\times\sum_{i=1}^{m_0}\frac{{\lambda}_j|B^{(j)}|}{\|{\one}_{B^{(j)}}
\|_{X}}\lf[\fint_{B^{(j)}}\lf|f(x)-P^d_{B^{(j)}}f(x)\r|^q \,dx\r]^\frac1q
+\varepsilon\\
&\quad\le\|f\|_{\mathcal{L}_{X,q,d,s}^{A}(\rn)}+\varepsilon,
\end{align*}
which, together with the arbitrariness of
both $\{B^{(j)}\}_{j\in\nn}\subset \CB$ and
$\{\lambda_j\}_{j\in\nn}\subset[0,\infty)$ satisfying \eqref{s2e1}
and $\varepsilon\in(0,\fz)$,
further implies that
$$\widetilde{\|f\|}_{\mathcal{L}_{X,q,d,s}^{A}(\rn)}
\le\|f\|_{\mathcal{L}_{X,q,d,s}^{A}(\rn)}.$$
This, combined with \eqref{2.15.x1},
then finishes the proof of Proposition \ref{s2p1}.
\end{proof}
Then we introduce another anisotropic ball Campanato-type function
space $\mathcal{L}_{X,q,d}^{A}(\rn)$ associated with
the ball quasi-Banach function space $X$.
\begin{definition}\label{cqdA}
Let $A$ be a dilation, $X$ a ball quasi-Banach function space,
$q\in[1,\fz)$, and $d\in\zz_+$. Then the \emph{Campanato space}
$\mathcal{L}_{X,q,d}^{A}(\rn)$, associated with both $A$ and $X$,
is defined to be
the set of all the $f\in L^q_{\rm loc}(\rn)$ such that
\begin{align*}
\|f\|_{\mathcal{L}_{X,q,d}^{A}(\rn)}
:=\sup_{B\in\CB}\frac{|B|}{\|\one_B\|_X}\lf\{
\fint_{B}\lf|f(x)-P^d_Bf(x)\r|^q\,dx\r\}^{\frac{1}{q}}
<\infty,
\end{align*}
where the supremum is taken over all balls $B\in\CB$
and $P^d_Bf$ denotes the minimizing polynomial of
$f$ with degree not greater than $d$.
\end{definition}
\begin{remark}\label{s2r3}
Let $A$, $X$, $q$, $d$, and $s$ be the same as in Definition \ref{LAXqds}.
\begin{enumerate}
\item[{\rm(i)}]
From Definitions \ref{LAXqds} and \ref{cqdA}, it immediately follows that
$\mathcal{L}_{X,q,d,s}^{A}({{\rr}^n})\subset\mathcal{L}_{X,q,d}^{A}(\rn)$
and this inclusion is continuous.

\item[{\rm(ii)}]
For any $f\in L^q_{\rm loc}({{\rr}^n})$, define
\begin{align*}
\||f\||_{\mathcal{L}_{X,q,d}^{A}(\rn)}
:=\sup_{B\in\CB}\inf_{P\in\mathcal{P}_d(\rn)}
\frac{|B|}{\|{\one}_{B}\|_{X}}\lf[
\fint_{B}\lf|f(x)-P(x)\r|^q \,dx\r]^\frac1q.
\end{align*}
Then, similarly to \cite[Lemma 2.6]{yyy20},
we find that $\||\cdot\||_{\mathcal{L}_{X,q,d}^{A}(\rn)}$
is an equivalent quasi-norm
of $\mathcal{L}_{X,q,d}^{A}({{\rr}^n})$.
\end{enumerate}
\end{remark}

Now, we give a basic inequality
which is used throughout this article.

\begin{lemma}\label{basicine}
Let $\{a_i\}_{i\in\nn}\subset[0,\infty)$. If $\alpha\in[1,\fz)$, then
\begin{equation*}
\lf(\sum_{i\in\nn}a_i\r)^\alpha\geq\sum_{i\in\nn}a_i^\alpha.
\end{equation*}
\end{lemma}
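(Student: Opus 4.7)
The plan is to reduce the infinite sum case to the two-term case via induction and a limiting argument, since $\alpha\in[1,\fz)$ means the map $t\mapsto t^\alpha$ is superadditive on $[0,\fz)$.

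First I would establish the baseline two-term inequality: for any $a,b\in[0,\fz)$ not both zero, $(a+b)^\alpha\ge a^\alpha+b^\alpha$. The standard trick is to normalize by setting $s:=a+b$ and $t:=a/s\in[0,1]$, so that $1-t=b/s$, and then observe that the claim is equivalent to $t^\alpha+(1-t)^\alpha\le 1$. Since $\alpha\ge1$ and $t,1-t\in[0,1]$, one has $t^\alpha\le t$ and $(1-t)^\alpha\le 1-t$, and adding these yields the desired bound. The degenerate case $a=b=0$ is trivial.

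Next, I would upgrade to arbitrary finite sums by induction on $N\in\nn$. The base case $N=1$ is immediate, and for the inductive step one writes $\sum_{i=1}^{N+1}a_i=(\sum_{i=1}^N a_i)+a_{N+1}$, applies the two-term inequality with this grouping, and then invokes the induction hypothesis. This gives
\begin{equation*}
\lf(\sum_{i=1}^{N}a_i\r)^\alpha\ge\sum_{i=1}^{N}a_i^\alpha
\end{equation*}
for every $N\in\nn$.

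Finally, I would pass to the limit $N\to\fz$. If $\sum_{i\in\nn}a_i=\fz$, the left-hand side of the claim equals $\fz$ and the inequality is trivial. Otherwise, the partial sums $\sum_{i=1}^N a_i$ increase to the finite limit $\sum_{i\in\nn}a_i$, and by continuity of $t\mapsto t^\alpha$ on $[0,\fz)$ the left-hand sides of the finite inequalities converge to $(\sum_{i\in\nn}a_i)^\alpha$; the right-hand sides converge to $\sum_{i\in\nn}a_i^\alpha$ by monotone convergence on the counting measure. There is no genuine obstacle here; the only mild point of care is handling the case when the total sum diverges, which as noted is resolved by the convention $\fz^\alpha=\fz$.
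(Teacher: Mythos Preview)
Your proof is correct. The paper states this lemma as a basic inequality without supplying any proof, so there is nothing to compare against; your argument via the two-term superadditivity, induction to finite sums, and a monotone limit is a standard and complete justification.
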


The following proposition shows that, if the ball quasi-Banach function space
$X$ is concave and $s\in(0,1]$, then the space
$\mathcal{L}_{X,q,d,s}^{A}({{\rr}^n})$ coincides with
$\mathcal{L}_{X,q,d}^{A}(\rn)$ introduced in Definition \ref{cqdA}.

\begin{proposition}\label{s2p2}
Let $X$ be a concave ball quasi-Banach function space,
$q\in[1,\fz)$, $d\in\zz_+$, and $s\in(0,1]$. Then
\begin{align}\label{2.15.x3}
\mathcal{L}_{X,q,d,s}^{A}({{\rr}^n})=\mathcal{L}_{X,q,d}^{A}(\rn)
\end{align}
with equivalent quasi-norms.
\end{proposition}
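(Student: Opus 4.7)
The plan is to prove the equivalence by establishing the two directions separately, with the easier inclusion $\mathcal{L}_{X,q,d,s}^A(\rn)\hookrightarrow\mathcal{L}_{X,q,d}^A(\rn)$ coming essentially for free, and the reverse direction using both concavity of $X$ and the Lemma \ref{basicine} power inequality in the regime $s\in(0,1]$.

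First, to obtain $\|f\|_{\mathcal{L}_{X,q,d}^A(\rn)}\lesssim\|f\|_{\mathcal{L}_{X,q,d,s}^A(\rn)}$, I simply specialize the definition of $\|\cdot\|_{\mathcal{L}_{X,q,d,s}^A(\rn)}$ to the admissible configuration $m=1$, $B^{(1)}=B$, and $\lambda_1=1$: the normalizing factor reduces to $\|[1/\|\one_B\|_X]\one_B\|_X=1$, and the ratio collapses to $\frac{|B|}{\|\one_B\|_X}[\fint_B|f-P^d_B f|^q\,dx]^{1/q}$, so taking the supremum over $B\in\CB$ produces exactly $\|f\|_{\mathcal{L}_{X,q,d}^A(\rn)}$.

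For the reverse direction, I fix $m\in\nn$, $\{B^{(j)}\}_{j=1}^m\subset\CB$, and $\{\lambda_j\}_{j=1}^m\subset[0,\infty)$ with $\sum_j\lambda_j\ne0$, and bound both the numerator and denominator of the defining expression for $\|f\|_{\mathcal{L}_{X,q,d,s}^A(\rn)}$. For the numerator, the single-ball definition of $\|f\|_{\mathcal{L}_{X,q,d}^A(\rn)}$ gives termwise
\begin{equation*}
\frac{\lambda_j|B^{(j)}|}{\|\one_{B^{(j)}}\|_X}\lf[\fint_{B^{(j)}}\lf|f-P^d_{B^{(j)}}f\r|^q\,dx\r]^{1/q}\le\lambda_j\|f\|_{\mathcal{L}_{X,q,d}^A(\rn)},
\end{equation*}
so the numerator is at most $\|f\|_{\mathcal{L}_{X,q,d}^A(\rn)}\sum_j\lambda_j$. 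For the denominator, I apply Lemma \ref{basicine} pointwise with exponent $\alpha=1/s\ge1$ to the nonnegative summands $[\lambda_j/\|\one_{B^{(j)}}\|_X]^s\one_{B^{(j)}}(x)$, which yields
\begin{equation*}
\lf\{\sum_j\lf[\frac{\lambda_j}{\|\one_{B^{(j)}}\|_X}\r]^s\one_{B^{(j)}}(x)\r\}^{1/s}\ge\sum_j\frac{\lambda_j}{\|\one_{B^{(j)}}\|_X}\one_{B^{(j)}}(x)
\end{equation*}
(here I use that $\one_{B^{(j)}}(x)^s=\one_{B^{(j)}}(x)$). Taking $\|\cdot\|_X$ on both sides (using Definition \ref{BQBFS}(ii)) and then invoking the concavity of $X$ to the sequence $f_j:=(\lambda_j/\|\one_{B^{(j)}}\|_X)\one_{B^{(j)}}$, noting $\|f_j\|_X=\lambda_j$, gives
\begin{equation*}
\lf\|\lf\{\sum_i\lf[\frac{\lambda_i}{\|\one_{B^{(i)}}\|_X}\r]^s\one_{B^{(i)}}\r\}^{1/s}\r\|_X\ge\lf\|\sum_j\frac{\lambda_j}{\|\one_{B^{(j)}}\|_X}\one_{B^{(j)}}\r\|_X\ge\frac{1}{C}\sum_j\lambda_j,
\end{equation*}
with $C$ the concavity constant of $X$. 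Dividing, the $\sum_j\lambda_j$ factors cancel, yielding the bound $C\|f\|_{\mathcal{L}_{X,q,d}^A(\rn)}$ for the quotient, uniformly in $m,\{B^{(j)}\},\{\lambda_j\}$. Taking the supremum produces $\|f\|_{\mathcal{L}_{X,q,d,s}^A(\rn)}\le C\|f\|_{\mathcal{L}_{X,q,d}^A(\rn)}$, which combined with the first inclusion gives \eqref{2.15.x3}.

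The main subtlety lies in verifying the pointwise estimate carefully: one must recognize that since $\one_{B^{(j)}}$ only takes values $0$ or $1$, raising it to power $s\in(0,1]$ does nothing, so Lemma \ref{basicine} applies cleanly to $a_j=[\lambda_j/\|\one_{B^{(j)}}\|_X]^s\one_{B^{(j)}}(x)$ with exponent $1/s$. After that, the concavity hypothesis on $X$ converts the sum of norms into the norm of the sum, providing the only nontrivial analytic input. No further hypotheses (such as Assumptions \ref{Assum-1} or \ref{Assum-2}) are needed; the argument is purely structural and very short.
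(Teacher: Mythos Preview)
Your proof is correct and follows essentially the same approach as the paper: the easy inclusion is exactly Remark \ref{s2r3}(i) (which you reprove by specializing to $m=1$), and for the reverse you combine the pointwise application of Lemma \ref{basicine} with exponent $1/s\ge1$, Definition \ref{BQBFS}(ii), and concavity, just as the paper does in one condensed line. The only difference is expository---you spell out the pointwise step and the role of $\one_{B^{(j)}}^{1/s}=\one_{B^{(j)}}$ more explicitly than the paper.
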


\begin{proof}
We first show that
\begin{align}\label{2.15.x2}
\mathcal{L}_{X,q,d}^{A}(\rn)\subset\mathcal{L}_{X,q,d,s}^{A}(\rn)
\end{align}
and the inclusion is continuous.
For this purpose, let $f\in
\mathcal{L}_{X,q,d}^{A}(\rn)$. Then, from the assumptions that
$X$ is concave, Definitions \ref{BQBFS}(ii) and \ref{Debf}(ii),
and $s\in(0,1]$, together with Lemma \ref{basicine},
we deduce that
\begin{align*}
\|f\|_{\mathcal{L}_{X,q,d,s}^{A}(\rn)}
&\ls \sup
\lf(\sum_{i=1}^m{\lambda}_i\r)^{-1}\sum_{j=1}^m
\frac{{\lambda}_j|B^{(j)}|}
{\|{\one}_{B^{(j)}}\|_{X}}\\
&\quad\times\lf[\fint_{B^{(j)}}\lf|
f(x)-P^d_{B^{(j)}}f(x)\r|^q \,dx\r]^\frac1q\\
&\leq \sup\lf(\sum_{i=1}^m\lambda_i\r)^{-1}
\sum_{j=1}^m{\lambda}_j
\|f\|_{\mathcal{L}_{X,q,d}^{A}(\rn)}
=\|f\|_{\mathcal{L}_{X,q,d}^{A}(\rn)},
\end{align*}
where the supremum is taken over all $m\in\nn$, $\{B^{(j)}\}_{j=1}^m\subset \CB$,
and $\{\lambda_j\}_{j=1}^m\subset[0,\infty)$
with $\sum_{j=1}^m\lambda_j\neq0$.
This further implies \eqref{2.15.x2}.
Combining \eqref{2.15.x2} and Remark \ref{s2r3}(i),
we obtain \eqref{2.15.x3}, which completes the proof of
Proposition \ref{s2p2}.	
\end{proof}

Then we establish the relation between $\mathcal{L}_{X,q,d,s}^{A}({{\rr}^n})$
and $H_X^A(\rn)$. To this end, we first recall the definitions of
the anisotropic $(X,q,d)$-atom and the anisotropic finite atomic
Hardy space $H_{X,\fin}^{A,q,d}(\rn)$ from \cite[Definitions 4.1 and 5.1]{wyy22}.

\begin{definition}\label{deffin}
Let $A$ be a dilation and $X$ a ball quasi-Banach function space
satisfying both Assumption \ref{Assum-1} with $p_-\in(0,\infty)$
and Assumption \ref{Assum-2} with the same
$p_-$, $\theta_0\in(0,\unp)$, and $p_0\in(\theta_0,\infty)$,
where $\unp$ is the same as in \eqref{underp}. Assume that
$N\in\nn\cap\lf[N_{X,\,A},\infty\r)$ with $N_{X,\,A}$ the same as in \eqref{NXA}.
Further assume that $q\in(\max\{p_0,1\},\infty]$ and
\begin{equation}\label{def-d}
d\in\lf[\left\lfloor\left(\frac{1}{\theta_0}-1\right)
\frac{\ln b}{\ln(\lambda_-)}\right\rfloor,\fz\r)\cap\zz_+.
\end{equation}
\begin{enumerate}
\item [{\rm(i)}]
An {\it anisotropic $(X,q,d)$-atom} $a$ is a
measurable function on $\rn$ satisfying that
\begin{enumerate}
\item [$\rm (i)_1$]
$\supp a:=\{x\in\rn:\ a(x)\neq0\}\subset B$,
where $B\in\CB$ and $\CB$ is the same as in \eqref{ball-B};
\item[$\rm (i)_2$]
$\|a\|_{L^q(\rn)}\leq |B|^{\frac{1}{q}}\|\one_B\|_X^{-1}$;
\item[$\rm (i)_3$]
$\int_{\rn} a(x)x^\gamma\,dx=0 $ for any
$\gamma\in\zz_+^n$ with $|\gamma|\leq d$,
here and thereafter, for any $\gamma:=\{\gamma_1,\ldots,\gamma_n\}\in\zz_+^n$,
$|\gamma|:=\gamma_1+\cdots+\gamma_n$ and
$x^\gamma:=x_1^{\gamma_1}\cdots x_n^{\gamma_n}$.
\end{enumerate}

\item[{\rm(ii)}]
The {\it anisotropic
finite atomic Hardy space} $H_{X,\fin}^{A,q,d}(\rn)$,
associated with both $A$ and $X$, is defined to be
the set of all the $f\in\cs'(\rn)$ satisfying that
there exists a $K\in\nn$, a sequence $\{\lambda_i\}_{i=1}^K\subset[0,\infty)$,
and a finite sequence $\{a_i\}_{i=1}^K$ of anisotropic $(X,q,d)$-atoms supported,
respectively, in $\{B^{(i)}\}_{i=1}^K\subset\CB$ such that
\begin{equation*}
f=\sum_{i=1}^K\lambda_ia_i.
\end{equation*}
Moreover, for any $f\in H_{X,\fin}^{A,q,d}(\rn)$, define
\begin{equation*}
\|f\|_{\Hfin(\rn)}:=\inf\left\|\left\lbrace\sum_{i=1}^K
\left[\frac{\lz_i\one_{B^{(i)}}}{\|\one_{B^{(i)}}\|_X}\right]^{\tz_0}\right\rbrace
^{\frac{1}{\tz_0}}\right\|_X,
\end{equation*}
where the infimum is taken over all the decompositions of $f$ as above.
\end{enumerate}
\end{definition}

Let $A$ be a dilation and $X$ the same as in Definition \ref{deffin}.
In the remainder of this article, we always let
\begin{align}\label{dxa}
d_{X,\,A}:=\lf\lfloor\lf(\frac{1}{\theta_0}-1\r)
\frac{\ln b}{\ln(\lz_-)}\r\rfloor.
\end{align}

To establish the dual theorem of $H_{X}^{A}(\rn)$,
we need its atomic and its finite atomic characterizations as follows,
which are simple corollaries
of \cite[Theorem 4.2 and Lemma 7.2]{wyy22}
and \cite[Theorem 5.4]{wyy22}, respectively.
\begin{lemma}\label{s2l1}
Let $A$, $X$, $q$, $d$, and $\tz_0$ be the same
as in Definition \ref{deffin}.
Further assume that $X$ has an absolutely continuous quasi-norm,
$\{a_j\}_{j\in\nn}$ is a sequence of anisotropic $(X,q,d)$-atoms supported,
respectively, in the balls $\{B^{(j)}\}_{j\in\nn}\subset\CB$ and
$\{{\lambda}_j\}_{j\in\nn}\subset[0,\infty)$ such that
$$\left\|\left\{\sum_{j\in\nn}
\left[\frac{{\lambda}_j}{\|{\one}_{B^{(j)}}\|_X}\right]^{\theta_0}
{\one}_{B^{(j)}}\right\}^{\frac1{\theta_0}}\right\|_{X}<\fz.$$
Then the series $f:=\sum_{j\in\nn}{\lambda}_ja_j$
converges in $H_{X}^{A}(\rn)$, $f\in H_{X}^{A}(\rn)$, and there
exists a positive constant $C$, independent of $f$, such that
$$\lf\|f\r\|_{H_{X}^{A}(\rn)}\le C\left\|\left\{\sum_{j\in\nn}
\left[\frac{{\lambda}_j}{\|{\one}_{B^{(j)}}\|_X}\right]^{\theta_0}
{\one}_{B^{(j)}}\right\}^{\frac1{\theta_0}}\right\|_{X}.$$
\end{lemma}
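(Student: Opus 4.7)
The plan is to reduce the statement directly to the two cited tools, namely the atomic characterization \cite[Theorem 4.2]{wyy22} and the tail convergence result \cite[Lemma 7.2]{wyy22}, combined with the monotone convergence and lattice properties in Definition \ref{BQBFS}. The absolute continuity hypothesis on $\|\cdot\|_X$ is the crucial ingredient that upgrades the existing atomic decomposition bound from one on finite sums to genuine $H_X^A(\rn)$-convergence of the infinite series.

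First I would set $f_N:=\sum_{j=1}^{N}\lambda_j a_j$ for each $N\in\nn$. Since each $f_N$ is a finite linear combination of anisotropic $(X,q,d)$-atoms, the atomic characterization of $H_X^A(\rn)$ in \cite[Theorem 4.2]{wyy22} yields a positive constant $C$, independent of $N$, such that
\begin{align*}
\|f_N\|_{H_X^A(\rn)}
\le C\lf\|\lf\{\sum_{j=1}^{N}\lf[\frac{\lambda_j}{\|\one_{B^{(j)}}\|_X}\r]^{\theta_0}\one_{B^{(j)}}\r\}^{1/\theta_0}\r\|_X.
\end{align*}
By Definition \ref{BQBFS}(iii), the right-hand side increases as $N\to\infty$ to the finite quantity $\|\{\sum_{j\in\nn}[\lambda_j/\|\one_{B^{(j)}}\|_X]^{\theta_0}\one_{B^{(j)}}\}^{1/\theta_0}\|_X$, so $\{f_N\}_{N\in\nn}$ is uniformly bounded in $H_X^A(\rn)$.

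Next I would prove that $\{f_N\}_{N\in\nn}$ is Cauchy in $H_X^A(\rn)$. Applying the atomic characterization to the partial difference $f_N-f_M=\sum_{j=M+1}^{N}\lambda_j a_j$ for any $M<N$ yields
\begin{align*}
\|f_N-f_M\|_{H_X^A(\rn)}\le C\lf\|\lf\{\sum_{j=M+1}^{N}\lf[\frac{\lambda_j}{\|\one_{B^{(j)}}\|_X}\r]^{\theta_0}\one_{B^{(j)}}\r\}^{1/\theta_0}\r\|_X.
\end{align*}
The pointwise tail $h_M:=(\sum_{j>M}[\lambda_j/\|\one_{B^{(j)}}\|_X]^{\theta_0}\one_{B^{(j)}})^{1/\theta_0}$ is dominated by the fixed $X$-function obtained from the full series, decreases monotonically in $M$, and tends to $0$ almost everywhere because the series defining $h_0^{\theta_0}$ converges a.e. The absolute continuity of $\|\cdot\|_X$ combined with Definition \ref{BQBFS}(ii) then forces $\|h_M\|_X\downarrow 0$, and this is precisely \cite[Lemma 7.2]{wyy22}. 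Therefore $\{f_N\}_{N\in\nn}$ converges in $H_X^A(\rn)$ to some $\widetilde f$; since convergence in $H_X^A(\rn)$ implies convergence in $\cs'(\rn)$ via the grand maximal function $M_N$, and the partial sums also converge to $f$ in $\cs'(\rn)$, we conclude $\widetilde f=f$. Letting $N\to\infty$ in the first display and applying Definition \ref{BQBFS}(iii) one more time yields the desired norm estimate.

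I expect the main obstacle to be exactly the passage from pointwise decay of $h_M$ to $X$-norm decay, since the absolute continuity hypothesis as stated quantifies $\|\phi\,\one_{E_j}\|_X\downarrow 0$ only for a fixed $\phi\in X$ along a decreasing null-intersecting sequence $\{E_j\}$, rather than for a decreasing sequence of functions directly. Bridging this gap by suitably selecting level sets on which $h_M$ is controlled is the content packaged in \cite[Lemma 7.2]{wyy22}, so once that lemma is invoked the remaining pieces of the argument are routine assembly.
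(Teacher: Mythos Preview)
Your proposal is correct and matches the paper's own treatment: the paper states this lemma as a simple corollary of \cite[Theorem 4.2 and Lemma 7.2]{wyy22} without supplying further details, and your argument is precisely the natural way to assemble those two ingredients---Theorem 4.2 for the uniform bound on partial sums and Lemma 7.2 (packaging the absolute continuity of $\|\cdot\|_X$) for the Cauchy property of the tails. One small remark: your line ``the partial sums also converge to $f$ in $\cs'(\rn)$'' is slightly circular as written, since $f$ is being \emph{defined} as the $H_X^A(\rn)$-limit; it suffices to note that $H_X^A(\rn)$ embeds continuously into $\cs'(\rn)$ (Lemma \ref{inclu}), so the Cauchy sequence $\{f_N\}$ converges in $\cs'(\rn)$ to a limit which then lies in $H_X^A(\rn)$ by a standard Fatou-type argument on $M_N$.
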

\begin{lemma}\label{finatomth}
Let $A$, $X$, $q$, $d$, $\tz_0$, and $p_0$
be the same as in Definition \ref{deffin}.
\begin{enumerate}
\item [{\rm (i)}]
If $q\in(\max\{p_0,1\},\infty)$,
then $\|\cdot\|_{\Hfin(\rn)}$ and $\|\cdot\|_{H_{X}^{A}(\rn)}$
are equivalent quasi-norms on $\Hfin(\rn)$ ;

\item [{\rm (ii)}]
$\|\cdot\|_{H_{X,{\rm fin}}^{A,\infty,d}(\rn)}$ and
$\|\cdot\|_{H_{X}^{A}(\rn)}$ are equivalent quasi-norms
on $H_{X,{\rm fin}}^{A,\infty,d}(\rn)\cap \mathcal{C}(\rn)$,
where $\mathcal{C}(\rn)$
denotes the set of all continuous functions on $\rn$.
\end{enumerate}
\end{lemma}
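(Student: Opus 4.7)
The strategy is to reduce to the finite atomic decomposition theorem already established in \cite[Theorem 5.4]{wyy22}. The easy direction, namely $\|f\|_{H_{X}^{A}(\rn)}\lesssim\|f\|_{\Hfin(\rn)}$, is immediate from Lemma \ref{s2l1}: any finite atomic representation $f=\sum_{i=1}^{K}\lambda_{i}a_{i}$ is in particular a (trivially convergent) $(X,q,d)$-atomic decomposition, so applying Lemma \ref{s2l1} to it and then taking the infimum over all such representations produces exactly the desired bound, with the same constant that appears in Lemma \ref{s2l1}.

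For the reverse inequality, given $f\in\Hfin(\rn)$, my plan is to start from the atomic decomposition of $f$ as an element of $H_{X}^{A}(\rn)$ produced by the atomic characterization in \cite[Theorem 4.2]{wyy22}, obtaining $f=\sum_{j\in\nn}\mu_{j}b_{j}$ in $\cs'(\rn)$ with anisotropic $(X,q,d)$-atoms $b_{j}$ supported in dilated balls $\widetilde{B}^{(j)}\in\CB$ and with the natural coefficient functional bounded by a constant multiple of $\|f\|_{H_{X}^{A}(\rn)}$. Using that $f\in\Hfin(\rn)$ has compact support, say inside a fixed dilated ball $B_{0}$, I would then partition the index set $\nn$ according to whether $\widetilde{B}^{(j)}$ lies inside a chosen enlargement of $B_{0}$ or not. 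The ``small-ball'' indices yield a finite subfamily after a standard truncation argument, while the tail coming from the ``large-ball'' indices is recombined, via the vanishing moment condition and the support constraint on $f$, into a single renormalized atom. Summing the two pieces produces a finite atomic representation of $f$ whose $\Hfin(\rn)$ functional is controlled by $\|f\|_{H_{X}^{A}(\rn)}$.

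The step I expect to be the main obstacle is the $L^{\infty}$ endpoint in part (ii). When $q=\infty$, verifying that the recombined tail satisfies the pointwise normalization required of an anisotropic $(X,\infty,d)$-atom demands a pointwise rather than merely almost-everywhere bound, which is precisely where the hypothesis $f\in\mathcal{C}(\rn)$ enters: continuity allows one to upgrade the essentially-bounded estimate on $f$ minus a finite partial sum to a true uniform bound, with the correct proportionality to $\|\one_{B_{0}}\|_{X}^{-1}$. For finite $q\in(\max\{p_{0},1\},\infty)$ in part (i), this subtlety does not appear: the $L^{q}$ normalization of the tail atom can be read off from H\"older's inequality together with the $L^{q}$ boundedness of the Hardy--Littlewood maximal operator inherited from Assumption \ref{Assum-2}, so no continuity assumption is necessary. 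After these adjustments, the statement of the lemma becomes a direct transcription of \cite[Theorem 5.4]{wyy22} into the present notation.
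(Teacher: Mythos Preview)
Your proposal is correct and lands in the same place as the paper: the paper does not give an independent proof of this lemma at all, but simply records it as a direct consequence of \cite[Theorem 5.4]{wyy22}. Your write-up goes further by sketching the standard mechanism behind that theorem (support localization, splitting the atomic series into near and far indices, and recombining the tail into a single atom, with continuity used at the $q=\infty$ endpoint), but this is exactly the argument underlying the cited result, so there is no genuine divergence in approach.
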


The following conclusion is also needed for establishing the dual theorem.
\begin{proposition}\label{atomch2}
Let $A$, $X$, and $d$ be the same as in Definition \ref{deffin}.
Then the set $H_{X,{\rm fin}}^{A,\infty,d}(\rn)\cap\mathcal{C}{(\rn)}$ is dense in
$H_{X}^A(\rn)$.
\end{proposition}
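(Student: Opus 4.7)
The plan is to use the atomic characterization of $H_X^A(\rn)$ to reduce any $f\in H_X^A(\rn)$ to a finite sum of atoms, then to mollify each atom by convolution with a smooth compactly supported bump to obtain a nearby continuous finite atomic representative.

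Given $f\in H_X^A(\rn)$, the atomic characterization in \cite[Theorem 4.2]{wyy22} together with Lemma \ref{s2l1} yields $f=\sum_{j\in\nn}\lambda_ja_j$ converging in $H_X^A(\rn)$, where $\{a_j\}_{j\in\nn}$ are $(X,\infty,d)$-atoms supported in dilated balls $\{B^{(j)}\}_{j\in\nn}\subset\CB$ and $\{\lambda_j\}_{j\in\nn}\subset[0,\infty)$. Fix $\varepsilon\in(0,\fz)$. The convergence of the atomic series, combined with Definition \ref{BQBFS}(iii) applied to the atomic quasi-norm controlling the tail via Lemma \ref{s2l1}, lets me choose $N\in\nn$ so large that $\|f-f_N\|_{H_X^A(\rn)}<\varepsilon/2$, where $f_N:=\sum_{j=1}^N\lambda_ja_j$.

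Next I mollify. Fix a nonnegative $\phi\in C_c^\infty(\rn)$ with $\int_{\rn}\phi(x)\,dx=1$ and $\supp\phi\subset B_0$, and for each $m\in\nn$ set $\phi^{(m)}(\cdot):=b^m\phi(A^m\cdot)$, so that $\supp\phi^{(m)}\subset B_{-m}$ and $\int_{\rn}\phi^{(m)}(x)\,dx=1$. Define $a_j^{(m)}:=a_j\ast\phi^{(m)}$. Then $a_j^{(m)}$ is smooth (and hence continuous); for $m$ large enough, writing $B^{(j)}=x^{(j)}+B_{k_j}$, the support lies in a fixed dilation $\widetilde{B^{(j)}}:=x^{(j)}+B_{k_j+c}$ with $c\in\zz$ depending only on $\phi$; the pointwise estimate $\|a_j^{(m)}\|_{L^\infty(\rn)}\le\|a_j\|_{L^\infty(\rn)}\le\|\one_{B^{(j)}}\|_X^{-1}$ holds; and the identity $\int_{\rn}a_j^{(m)}(x)x^\gamma\,dx=0$ for all $|\gamma|\le d$ follows from Fubini, because after the substitution $z:=x-y$ the inner integral $\int_{\rn}\phi^{(m)}(z)(z+y)^\gamma\,dz$ is a polynomial in $y$ of degree at most $|\gamma|$, whose pairing against $a_j(y)$ vanishes by the moment condition on $a_j$. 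Absorbing the factor $\|\one_{\widetilde{B^{(j)}}}\|_X/\|\one_{B^{(j)}}\|_X$ into the coefficient displays each $a_j^{(m)}$ as a scalar multiple of a continuous $(X,\infty,d)$-atom on $\widetilde{B^{(j)}}$, so $\sum_{j=1}^N\lambda_ja_j^{(m_j)}\in H_{X,{\rm fin}}^{A,\infty,d}(\rn)\cap\mathcal{C}(\rn)$.

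For the error, note that $a_j-a_j^{(m)}$ is supported in $\widetilde{B^{(j)}}$, has vanishing moments up to order $d$, and, for any fixed $q\in(\max\{p_0,1\},\fz)$, satisfies $\|a_j-a_j^{(m)}\|_{L^q(\rn)}\to0$ as $m\to\fz$ by the standard $L^q$-convergence of mollifiers. Hence $a_j-a_j^{(m)}=\eta_{j,m}\widetilde{a}_j^{(m)}$, where $\widetilde{a}_j^{(m)}$ is an $(X,q,d)$-atom associated with $\widetilde{B^{(j)}}$ and $\eta_{j,m}\to0$ as $m\to\fz$. Choosing $m_j$ sufficiently large for each $j\in\{1,\ldots,N\}$ and applying Lemma \ref{s2l1} to the finite atomic sum $\sum_{j=1}^N\lambda_j(a_j-a_j^{(m_j)})$ bounds its $H_X^A(\rn)$-quasi-norm by $\varepsilon/2$, so $g:=\sum_{j=1}^N\lambda_ja_j^{(m_j)}$ satisfies $\|f-g\|_{H_X^A(\rn)}<\varepsilon$ and lies in $H_{X,{\rm fin}}^{A,\infty,d}(\rn)\cap\mathcal{C}(\rn)$, which is the desired density. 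The main obstacle is verifying that the ratio $\|\one_{\widetilde B}\|_X/\|\one_B\|_X$ is uniformly bounded whenever $\widetilde B$ is a fixed dilation of $B\in\CB$, since this doubling-type property is exactly what converts the mollified atoms into bona fide atoms after a controlled rescaling; this will be extracted from the boundedness of $\cm$ on the appropriate convexifications of $X$ and of $X'$ dictated by Assumptions \ref{Assum-1} and \ref{Assum-2}.
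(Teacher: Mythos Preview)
Your proposal is correct and follows the same line as the paper: mollify each atom by convolution with a compactly supported bump, note that the difference $a_j-a_j^{(m)}$ is a multiple of an $(X,q,d)$-atom on a fixed enlarged ball whose $L^q$-norm tends to zero, and invoke Lemma \ref{s2l1}. The paper economizes by first citing \cite[Lemma 7.2]{wyy22} to reduce to approximating a \emph{single} atom, which makes the truncation step unnecessary; in particular, your stated ``main obstacle'' is not one, because after restricting to the finite family $\{B^{(j)}\}_{j=1}^N$ the ratios $\|\one_{\widetilde{B^{(j)}}}\|_X/\|\one_{B^{(j)}}\|_X$ need only be finite rather than uniformly bounded (and the uniform bound, while unnecessary here, does follow from Lemma \ref{s3l2}).
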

\begin{proof}
From \cite[Lemma 7.2]{wyy22}, it easily follows that
$H_{X,\fin}^{A,\infty,d}(\rn)$ is dense in
$H_{X}^{A}(\rn)$.
Thus,
to show that $H_{X,\fin}^{A,\infty,d}(\rn)\cap\mathcal{C}{(\rn)}$ is also
dense in
$H_{X}^{A}(\rn)$,
it suffices to prove that the set
$H_{X,\fin}^{A,\infty,d}(\rn)\cap\mathcal{C}{(\rn)}$ is dense in
$H_{X,\fin}^{A,\infty,d}(\rn)$ with the quasi-norm $\|\cdot\|_{H_{X}^{A}(\rn)}$.
To this end, we only need to show that, for any given anisotropic
$(X,\infty,d)$-atom $a$ supported in the anisotropic ball $B:=x_0+B_{i_0}$
with $x_0\in\rn$ and $i_0\in\zz$,
\begin{equation}\label{phi_ka}
\lim_{k\in(-\fz,0]\cap\zz,\,k\to-\infty}
\lf\|a-\varphi_{k}\ast a\r\|_{H_{X}^{A}(\rn)}=0,
\end{equation}
where $\varphi\in\mathcal{S}(\rn)$ satisfies
$\int_{\rn}\varphi(x)\,dx=1$ and
$\supp\varphi\subset B_{0}$.
Let $s\in(\max\{1,p_0\},\infty)$ with $p_0$ the same as in Definition \ref{HXA}.
Observe that, for any $k\in(-\fz,0]\cap\zz$,
$$
\frac{|B_{\max\{i_0,0\}+\tau}|^{\frac{1}{s}}(a-\varphi_k\ast a)}
{\|\mathbf1_{x_{0}+B_{\max\{i_0,0\}+\tau}}\|_{X}\|a-\varphi_k\ast a\|_{L^s(\rn)}}
$$
is an anisotropic $(X,s,d)$-atom supported in the anisotropic ball
$x_{0}+B_{\max\{i_0,0\}+\tau}$,
which, combined with Lemma \ref{s2l1}, further implies that
\begin{align*}
\lf\|a-\varphi_k\ast a\r\|_{H_{X}^{A}(\rn)}
&\lesssim
\frac{\|\mathbf1_{x_{0}+B_{\max\{i_0,0\}+\tau}}\|_{X}
\|a-\varphi_k\ast a\|_{L^s(\rn)}}
{|B_{\max\{i_0,0\}+\tau}|^{\frac{1}{s}}}\\
&\lesssim\|a-\varphi_k\ast a\|_{L^s(\rn)}.
\end{align*}
From this and \cite[p.15,\,Lemma 3.8]{Bownik},
we deduce \eqref{phi_ka},
which then completes the proof of Proposition \ref{atomch2}.
\end{proof}

The following technical lemma is
just \cite[p.\,49, (8.9)]{Bownik}
(see also \cite[Lemma 3.4]{lhyy20}).

\begin{lemma}\label{s3l1}
Let $f\in L^{1}_{\rm loc}(\rn)$, $d\in\zz_+$, and
$B$ be an anisotropic ball in $\CB$. Then there exists a positive
constant $C$, independent of both $f$ and $B$, such that
\begin{align*}
\sup_{x\in B}\left|P_{B}^df(x)\right|
\le {C}\fint_B|f(x)|\,dx.
\end{align*}
\end{lemma}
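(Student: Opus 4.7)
The plan is to reduce the estimate to a fixed reference ball by an affine change of variables and then exploit the finite-dimensionality of $\mathcal{P}_d(\rn)$. Fix $B=x_0+B_k\in\CB$ for some $x_0\in\rn$ and $k\in\zz$, and define the affine bijection $\phi_B:\Delta\to B$ by $\phi_B(y):=x_0+A^ky$, whose Jacobian has absolute value $b^k=|B|$. Set $g:=f\circ\phi_B\in L^1_{\mathrm{loc}}(\rn)$. The first step is to verify the covariance identity
\begin{align*}
P_B^df\circ\phi_B=P_\Delta^dg.
\end{align*}
This follows directly from the defining orthogonality relation of the minimizing polynomial: for any $h\in\mathcal{P}_d(\rn)$, the polynomial $h\circ\phi_B^{-1}$ lies in $\mathcal{P}_d(\rn)$ (since $\phi_B^{-1}$ is affine), so changing variables $x=\phi_B(y)$ in $\int_\Delta(g-P_B^df\circ\phi_B)h\,dy$ yields $b^{-k}\int_B(f-P_B^df)(h\circ\phi_B^{-1})\,dx=0$, identifying $P_B^df\circ\phi_B$ as the unique minimizing polynomial of $g$ on $\Delta$.

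The second step transfers the estimate to $\Delta$. Since $\phi_B$ is a bijection of $\Delta$ onto $B$, using the covariance identity and a change of variables gives
\begin{align*}
\sup_{x\in B}\lf|P_B^df(x)\r|=\sup_{y\in\Delta}\lf|P_\Delta^dg(y)\r|
\quad\text{and}\quad
\fint_B|f(x)|\,dx=\fint_\Delta|g(y)|\,dy.
\end{align*}
Hence it suffices to prove the \emph{reference-ball inequality}
\begin{align*}
\sup_{y\in\Delta}\lf|P_\Delta^dg(y)\r|\le C\fint_\Delta|g(y)|\,dy
\end{align*}
with $C$ depending only on $d$, $n$, and $\Delta$ (i.e. independent of $B$ and $f$).

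The third step establishes this reference inequality using the fact that $\mathcal{P}_d(\rn)$, equipped with the inner product $\langle u,v\rangle_\Delta:=\fint_\Delta uv\,dy$, is a finite-dimensional Hilbert space. Choose an orthonormal basis $\{p_1,\ldots,p_m\}$ of $\mathcal{P}_d(\rn)$ with respect to $\langle\cdot,\cdot\rangle_\Delta$, and set $M:=\max_{1\le i\le m}\sup_{y\in\Delta}|p_i(y)|$, which is finite because each $p_i$ is a polynomial and $\Delta$ is bounded. The orthogonal projection formula gives
\begin{align*}
P_\Delta^dg=\sum_{i=1}^m\lf(\fint_\Delta g(y)p_i(y)\,dy\r)p_i,
\end{align*}
so that $\sup_{y\in\Delta}|P_\Delta^dg(y)|\le mM^2\fint_\Delta|g(y)|\,dy$, as desired.

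The main (and essentially only) obstacle is verifying cleanly the covariance $P_B^df\circ\phi_B=P_\Delta^dg$; once that is in hand, the rest is a standard equivalence-of-norms argument on the finite-dimensional space $\mathcal{P}_d(\rn)$. Note that the constant $C$ produced this way depends only on $d$, $n$, and the reference ellipsoid $\Delta$, hence is uniform over $B\in\CB$, which is exactly what the lemma requires.
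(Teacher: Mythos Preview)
Your proof is correct. The covariance identity $P_B^df\circ\phi_B=P_\Delta^dg$ is verified cleanly (the key point being that affine maps preserve $\mathcal{P}_d(\rn)$, so the pulled-back polynomial inherits the defining orthogonality on $\Delta$), and the reference-ball estimate via an orthonormal basis of the finite-dimensional space $(\mathcal{P}_d(\rn),\langle\cdot,\cdot\rangle_\Delta)$ is the standard and sharp way to finish.

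By way of comparison: the paper does not supply its own argument for this lemma, instead quoting it directly from \cite[p.\,49, (8.9)]{Bownik} (and \cite[Lemma 3.4]{lhyy20}). Your proof is essentially the argument underlying those references, made self-contained. The only remark worth adding is that the constant $C=mM^2$ you obtain depends on the fixed ellipsoid $\Delta$, hence implicitly on the dilation $A$; this is consistent with the lemma as stated (independence is required only of $f$ and $B$), but it is good to be aware that the constant is not universal across all dilations.
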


Now, we prove that the dual space of $H_X^A(\rn)$ is
$\mathcal{L}_{X,q',d,\tz_0}^{A}(\rn)$.
\begin{theorem}\label{s2t1}
Let $A$, $X$, $q$, $d$, and $\tz_0$ be the same
as in Definition \ref{deffin}.
Further assume that $X$ has an absolutely continuous quasi-norm.
Then the dual space of $H_X^A(\rn)$, denoted by $(H_X^A(\rn))^*$,
is $\mathcal{L}_{X,q',d,\tz_0}^{A}({{\rr}^n})$ with $1/q+1/q'=1$
in the following sense:
\begin{enumerate}
\item[{\rm (i)}] Let $g\in\mathcal{L}_{X,q',d,\tz_0}^{A}({{\rr}^n})$.
Then the linear functional
\begin{align}\label{s2t1e1}
L_g:\ f\rightarrow L_g(f):=\int_{{{\rr}^n}}f(x)g(x)\,dx,
\end{align}
initially defined for any $f\in H_{X,\fin}^{A,q,d}(\rn)$,
has a bounded extension to $H_X^A(\rn)$.

\item[{\rm (ii)}] Conversely, any continuous linear
functional on $H_X^A(\rn)$ arises as in \eqref{s2t1e1}
with a unique $g\in\mathcal{L}_{X,q',d,\tz_0}^{A}({{\rr}^n})$.
\end{enumerate}
Moreover,
$\|g\|_{\mathcal{L}_{X,q',d,\tz_0}^{A}({{\rr}^n})}
\sim\|L_g\|_{(H^A_X({{\rr}^n}))^*}$,
where the positive equivalence constants
are independent of $g$.
\end{theorem}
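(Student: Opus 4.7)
The plan is to prove the two directions separately, using the atomic and finite-atomic characterizations of $H_X^A(\rn)$ from Lemmas \ref{s2l1} and \ref{finatomth} as the bridge between the functional-analytic side and the Campanato-type quasi-norm. For (i), fix $g\in\mathcal{L}_{X,q',d,\tz_0}^A(\rn)$, let $f\in H_{X,\fin}^{A,q,d}(\rn)$, and take any finite atomic decomposition $f=\sum_{i=1}^K\lambda_i a_i$ with atoms $a_i$ supported in $B^{(i)}\in\CB$. Using the vanishing moments of each $a_i$ to subtract $P^d_{B^{(i)}}g$ from $g$ inside $\int a_i(x)g(x)\,dx$, then applying H\"older's inequality together with the size condition $\|a_i\|_{L^q(\rn)}\le|B^{(i)}|^{1/q}\|\one_{B^{(i)}}\|_X^{-1}$, I would obtain
\begin{align*}
|L_g(f)|\le\sum_{i=1}^K\lambda_i\frac{|B^{(i)}|}{\|\one_{B^{(i)}}\|_X}\lf[\fint_{B^{(i)}}\lf|g(x)-P^d_{B^{(i)}}g(x)\r|^{q'}\,dx\r]^{1/q'}.
\end{align*}
Proposition \ref{s2p1} bounds the right-hand side by $\|g\|_{\mathcal{L}_{X,q',d,\tz_0}^A(\rn)}\,\|\{\sum_i(\lambda_i/\|\one_{B^{(i)}}\|_X)^{\tz_0}\one_{B^{(i)}}\}^{1/\tz_0}\|_X$, and taking the infimum over decompositions and invoking Lemma \ref{finatomth} yields $|L_g(f)|\ls\|g\|_{\mathcal{L}_{X,q',d,\tz_0}^A(\rn)}\|f\|_{H_X^A(\rn)}$; Proposition \ref{atomch2} then supplies the continuous extension of $L_g$ to all of $H_X^A(\rn)$.

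For (ii), given $L\in(H_X^A(\rn))^*$, my first step is to realize $L$ as integration against some $g\in L_{\loc}^{q'}(\rn)$; here I may restrict to $q\in(\max\{p_0,1\},\infty)$, since the invariance in Corollary \ref{s2c1} then lets me conclude for any admissible $q$. For each $B\in\CB$, denote by $L_0^q(B)$ the subspace of $L^q(\rn)$ consisting of functions supported in $B$ with vanishing moments up to order $d$. Any nonzero $h\in L_0^q(B)$, after division by $|B|^{-1/q}\|\one_B\|_X\|h\|_{L^q(\rn)}$, becomes an anisotropic $(X,q,d)$-atom, so Lemma \ref{s2l1} applied to a single atom gives $\|h\|_{H_X^A(\rn)}\ls|B|^{-1/q}\|\one_B\|_X\|h\|_{L^q(\rn)}$; hence $L|_{L_0^q(B)}$ is bounded in the $L^q$-norm. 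Hahn--Banach extension to $L^q(B)$ followed by the Riesz representation then produces $g_B\in L^{q'}(B)$, unique modulo $\mathcal{P}_d(\rn)$, with $L(h)=\int h(x)g_B(x)\,dx$ for all $h\in L_0^q(B)$. Carrying out this construction on an increasing sequence of balls exhausting $\rn$ and adjusting representatives modulo $\mathcal{P}_d(\rn)$ so that the local data agree on overlaps yields a single $g\in L_{\loc}^{q'}(\rn)$ with $L(f)=\int f(x)g(x)\,dx$ for every $f\in H_{X,\fin}^{A,q,d}(\rn)$.

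To bound $\|g\|_{\mathcal{L}_{X,q',d,\tz_0}^A(\rn)}$ in terms of $\|L\|$, I would fix admissible test data $\{B^{(j)}\}_{j=1}^m\subset\CB$ and $\{\lambda_j\}_{j=1}^m\subset[0,\infty)$, and for each $j$ invoke $L^q$--$L^{q'}$ duality on $B^{(j)}$ to select $\wt h_j\in L^q(\rn)$ supported in $B^{(j)}$ with $\|\wt h_j\|_{L^q(\rn)}\le 1$ and $\int_{B^{(j)}}\wt h_j(x)[g(x)-P^d_{B^{(j)}}g(x)]\,dx\ge(1-\varepsilon)\|g-P^d_{B^{(j)}}g\|_{L^{q'}(B^{(j)})}$. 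Setting $h_j:=(\wt h_j-P^d_{B^{(j)}}\wt h_j)\one_{B^{(j)}}\in L_0^q(B^{(j)})$, normalizing it into an anisotropic $(X,q,d)$-atom $a_j$, and forming $f:=\sum_{j=1}^m\lambda_j a_j$, Lemma \ref{s2l1} yields
\begin{align*}
\|f\|_{H_X^A(\rn)}\ls\lf\|\lf\{\sum_{i=1}^m\lf[\frac{\lambda_i}{\|\one_{B^{(i)}}\|_X}\r]^{\tz_0}\one_{B^{(i)}}\r\}^{1/\tz_0}\r\|_X,
\end{align*}
while by construction $L(f)=\int f(x)g(x)\,dx\gs\sum_{j=1}^m\lambda_j\frac{|B^{(j)}|}{\|\one_{B^{(j)}}\|_X}[\fint_{B^{(j)}}|g(x)-P^d_{B^{(j)}}g(x)|^{q'}\,dx]^{1/q'}$. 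Combining $|L(f)|\le\|L\|\|f\|_{H_X^A(\rn)}$ and passing to the supremum over admissible test data then delivers $\|g\|_{\mathcal{L}_{X,q',d,\tz_0}^A(\rn)}\ls\|L\|$; uniqueness of $g$ modulo $\mathcal{P}_d(\rn)$ follows from part (i).

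The main obstacle is arranging the test functions $h_j$ to satisfy simultaneously \emph{(a)} the vanishing-moment condition up to order $d$, so that they qualify as atoms after normalization, \emph{(b)} a uniform $L^q(B^{(j)})$-bound, and \emph{(c)} near-saturation of the pairing with $g-P^d_{B^{(j)}}g$. The resolution is that $g-P^d_{B^{(j)}}g$ itself annihilates $\mathcal{P}_d(\rn)$ over $B^{(j)}$, so replacing $\wt h_j$ by $\wt h_j-P^d_{B^{(j)}}\wt h_j$ preserves the pairing, while Lemma \ref{s3l1} guarantees that this projection is uniformly bounded on $L^q(B^{(j)})$ via $\|P^d_{B^{(j)}}\wt h_j\|_{L^\infty(B^{(j)})}\ls\fint_{B^{(j)}}|\wt h_j(x)|\,dx\ls|B^{(j)}|^{-1/q}\|\wt h_j\|_{L^q(B^{(j)})}$, thereby reconciling (a), (b), and (c) at once.
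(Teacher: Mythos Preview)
Your approach matches the paper's closely: for (i), vanishing moments plus H\"older plus the atom size condition plus finite-atomic norm equivalence; for (ii), Hahn--Banach on $L_0^q(B)$ followed by Riesz representation, patching over an exhaustion, and then testing against explicitly constructed atoms whose pairing with $g-P^d_{B^{(j)}}g$ is near-maximal, with Lemma \ref{s3l1} controlling the projection. Two points need attention.

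First, in (i) with $q=\infty$, Lemma \ref{finatomth}(ii) gives the norm equivalence only on $H_{X,\fin}^{A,\infty,d}(\rn)\cap\mathcal{C}(\rn)$, so your estimate $|L_g(f)|\ls\|g\|_{\mathcal{L}_{X,1,d,\tz_0}^A(\rn)}\|f\|_{H_X^A(\rn)}$ is established only for \emph{continuous} $f$. After extending $L_g$ to $H_X^A(\rn)$ via Proposition \ref{atomch2}, you still owe the verification that this extension agrees with the integral $\int_{\rn} f(x)g(x)\,dx$ on \emph{all} of $H_{X,\fin}^{A,\infty,d}(\rn)$, as the theorem asserts. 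The paper fills this gap by approximating a general $f\in H_{X,\fin}^{A,\infty,d}(\rn)$ by $\varphi_k\ast f\in H_{X,\fin}^{A,\infty,d}(\rn)\cap\mathcal{C}(\rn)$, using $\varphi_k\ast f\to f$ in $H_X^A(\rn)$ for the functional side and dominated convergence (with the majorant $\|f\|_{L^\infty(\rn)}\one_{x_0+B_{\max\{i_0,0\}+\tau}}|g|\in L^1(\rn)$) for the integral side.

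Second, your appeal to Corollary \ref{s2c1} to reduce (ii) to finite $q$ is circular, since that corollary is deduced \emph{from} the present theorem. The reduction is easy to make directly: the paper simply observes that $L_0^\infty(B)\subset L_0^{\wz q}(B)$ for any $\wz q\in(\max\{1,p_0\},\infty)$ and uses the $L^{\wz q'}$-representative, which then lies in $L^1(B)\supset L^{q'}(B)$ for $q'=1$. Alternatively, once you produce $g\in\mathcal{L}_{X,\wz q',d,\tz_0}^A(\rn)$ for some $\wz q'>1$, the inclusion $\mathcal{L}_{X,\wz q',d,\tz_0}^A(\rn)\subset\mathcal{L}_{X,1,d,\tz_0}^A(\rn)$ follows directly from H\"older's inequality, without Corollary \ref{s2c1}.
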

\begin{proof}
We first show (i) in the case $q\in(\max\{1,p_0\},\infty)$
with $p_0$ the same as in Definition \ref{deffin}.
To this end, let $g\in\mathcal{L}_{X,q',d,\tz_0}^{A}({{\rr}^n})$.
For any $f\in H_{X,\fin}^{A,q,d}(\rn)$,
by Definition \ref{deffin}, we know that there exists
a sequence $\{\lambda_j\}_{j=1}^m\subset[0,\infty)$ and
a sequence $\{a_j\}_{j=1}^m$ of anisotropic $(X,\ q,\ d)$-atoms supported,
respectively, in the balls
$\{B^{(j)}\}_{j=1}^m\subset\CB$ such that $f=\sum_{j=1}^m\lambda_ja_j$ and
$$
\lf\|\lf\{\sum_{j=1}^{m} \lf[\frac{\lambda_j}
{\|\one_{B^{(j)}}\|_{X}} \r]^{\tz_0}\one_{B^{(j)}} \r\}
^{\frac{1}{\tz_0}}\r\|_{X}
\sim\|f\|_{H_{X,\fin}^{A,q,d}(\rn)}.
$$
From these, the vanishing moments of $a_j$,
the H\"older inequality, the size condition of $a_j$,
Remark \ref{s2r2}(ii), and Lemma
\ref{finatomth}(i), it follows that
\begin{align}\label{s2t1e2}
|L_g(f)|&=\lf|\int_{\rn}f(x)g(x)\,dx\r|
\leq\sum_{j=1}^{m}\lambda_j\lf|\int_{B^{(j)}}a_j(x)g(x)\,dx\r|\\
&=\sum_{j=1}^{m}\lambda_j\inf_{P\in \cp_d(\rn)}
\lf|\int_{B^{(j)}}a_j(x)\lf[g(x)-P(x)\r]\,dx\r|\noz\\
&\leq\sum_{j=1}^{m}\lambda_j\|a_j\|_{L^q(\rn)}\inf_{P\in \cp_d(\rn)}
\lf[\int_{B^{(j)}}\lf|g(x)-P(x)\r|^{q'}\,dx\r]^{\frac{1}{q'}}\noz\\
&\le\sum_{j=1}^{m}\frac{\lambda_j|B^{(j)}|}{\|\one_{B^{(j)}}\|_{X}}
\inf_{P\in \cp_d(\rn)}\lf[
\fint_{B^{(j)}}|g(x)-P(x)|^{q'}\,dx\r]^{\frac{1}{q'}}\noz\\
&\ls\lf\|\lf\{\sum_{i=1}^{m}
\lf[\frac{{\lambda}_i}{\|{\one}_{B^{(i)}}\|_X}\r]^{\tz_0}
{\one}_{B^{(i)}}\r\}^{\frac1{\tz_0}}\r\|_{X}
\|g\|_{\mathcal{L}_{X,q',d,\tz_0}^{A}({{\rr}^n})}\noz\\
&\sim\|f\|_{H_{X,\fin}^{A,q,d}(\rn)}
\|g\|_{\mathcal{L}_{X,q',d,\tz_0}^{A}({{\rr}^n})}
\sim\|f\|_{H_X^A(\rn)}\|g\|_{\mathcal{L}_{X,q',d,\tz_0}^{A}({{\rr}^n})}.\noz
\end{align}
Moreover, by \cite[Lemma 7.2]{wyy22} and the assumption that
$X$ has an absolutely continuous quasi-norm, we find that
$H_{X,\fin}^{A,q,d}(\rn)$ is dense in $H_X^A(\rn)$.
This, together with \eqref{s2t1e2} and a standard density argument,
further implies that, when $q\in(\max\{1,p_0\},\infty)$,
(i) holds true and
$$\|L_g\|_{(H^A_X({{\rr}^n}))^*}
\ls\|g\|_{\mathcal{L}_{X,q',d,\tz_0}^{A}({{\rr}^n})}$$
with the implicit positive constant
independent of $g$.

We next prove (i) in the case $q=\fz$. Indeed, using
Proposition \ref{atomch2} and repeating the above proof for
any given
$q\in(\max\{1,p_0\},\infty)$, we then conclude that any
$g\in\mathcal{L}^{A}_{X,1,d,\tz_0}({{\rr}^n})$ induces a
bounded linear
functional on $H_X^A(\rn)$,
which is initially defined on
$H_{X,\fin}^{A,\infty,d}(\rn)\cap\mathcal{C}{(\rn)}$
and given by setting, for any
$\ell\in H_{X,\fin}^{A,\infty,d}(\rn)\cap\mathcal{C}{(\rn)}$,
\begin{equation}\label{s2t1e3}
L_g:\ \ell\mapsto\ L_g(\ell):=\int_{\rn}\ell(x)g(x)\,dx,
\end{equation}
and then
has a bounded linear
extension to $H_X^A(\rn)$.
Let $g\in\mathcal{L}_{X,1,d,\tz_0}^{A}({{\rr}^n})$. Thus,
it remains to show that, for any $f\in H_{X,\fin}^{A,\infty,d}(\rn)$,
\begin{equation}\label{s2t1e4}
L_g(f)=\int_{\rn}f(x)g(x)\,dx.
\end{equation}
To this end, suppose $f\in H_{X,\fin}^{A,\infty,d}(\rn)$
and $\supp f\subset x_0+B_{i_0}$ with $x_0\in\rn$ and $i_0\in\zz$.
Let $\varphi\in\mathcal{S}(\rn)$ satisfy
$\supp\varphi\subset B_{0}$ and $\int_{\rn}\varphi(x)\,dx=1$.
Letting $s\in(\max\{1,p_0\},\infty)$,
by the proof of Proposition \ref{atomch2},
we find that, for any $k\in(-\fz,0]\cap\zz$ and $f\in L^s(\rn)$,
\begin{align}\label{2.16.x1}
\varphi_k\ast f\in H_{X,\fin}^{A,\infty,d}(\rn)\cap\mathcal{C}{(\rn)}
\end{align}
and
\begin{equation}\label{s2t1e5}
\lim_{k\in(-\fz,0]\cap\zz,\,k\to-\infty}
\lf\|f-\varphi_k\ast f\r\|_{L^s(\rn)}=0.
\end{equation}
From this and the Riesz lemma (see, for instance, \cite[Theorem 2.30]{Folland}),
it follows that
there exists a subsequence
$\{k_h\}_{h\in\nn}\subset(-\fz,0]\cap\zz$ such that $\lim_{h\to\infty}k_h=-\infty$
and, for almost every $x\in\rn$,
\begin{align}
\lim_{h\to\infty}\varphi_{k_h}\ast f(x)=f(x).\noz
\end{align}
By \eqref{s2t1e5} and an argument similar to that used in
the proof of Proposition \ref{atomch2}, we conclude that
$\lim_{h\to\infty}\|f-\varphi_{k_h}\ast f\|_{H_X^A(\rn)}=0$,
which, combined with Lemma \ref{s2l1}, \eqref{2.16.x1},
\eqref{s2t1e3}, the fact that
$$\lf|\lf(\varphi_{k_h}\ast f\r)g\r|
\le\|f\|_{L^\infty(\rn)}\mathbf1_{x_0+B_{\max\{i_0,0\}+\tau}}|g|\in L^1(\rn),$$
and the Lebesgue dominated convergence theorem
(see, for instance, \cite[Theorem 2.24]{Folland}),
further implies that
\begin{align*}
L_g(f)&=\lim_{h\to\infty}L_g(\varphi_{k_h}\ast f)
=\lim_{h\to\infty}\int_{\rn}\varphi_{k_h}\ast f(x)g(x)\,dx\\
&=\int_{\rn}f(x)g(x)\,dx.
\end{align*}
This finishes the proof of \eqref{s2t1e4} and hence (i) in the case
$q=\infty$. Moreover, repeating the proof in \eqref{s2t1e2},
we obtain, for any $q\in(\max\{1,p_0\},\infty]$,
\begin{align}\label{4.14.x2}
    \|L_g\|_{(H^A_X({{\rr}^n}))^*}
\ls\|g\|_{\mathcal{L}_{X,q',d,\tz_0}^{A}({{\rr}^n})}
\end{align}
with the implicit positive constant independent of $g$.

We next show (ii). For this purpose,
let $\pi_B :\ L^1(B)\rightarrow \mathcal{P}_d(\rn)$, with $B\in\CB$, be
the natural projection
such that, for any $f\in L^1(B)$ and $Q\in\mathcal{P}_d(\rn)$,
\begin{align}\label{2.16.y3}
\int_{B}\pi_B(f)(x)Q(x)\,dx=\int_{B}f(x)Q(x)\,dx.
\end{align}

For any $q\in(\max\{1,p_0\},\infty]$ and any ball $B\in\CB$,
the \emph{closed subspace $L^q_0(B)$} of $L^q(B)$
is defined
by setting
\begin{align*}
L^q_0(B): = \lf\{f\in L^q(B):\ \pi_B(f)=0\ {\rm and}\
f\neq0\ \mathrm{almost\ everywhere}\r\},
\end{align*}
where $L^q(B)$ is the subspace
of $L^q(\rn)$ consisting
of all the measurable functions on $\rn$ vanishing outside $B$.
Therefore,
$$\frac{|B|^{\frac1q}}{\|\one_B\|_{X}}\|f\|_{L^q(\rn)}^{-1}f$$
is an anisotropic $(X,\ q,\ d)$-atom for any $f\in L^q_0(B)$. From this
and Lemma \ref{s2l1}, it follows that
\begin{align}\label{s2t1e6}
\lf\|\frac{|B|^{1/q }}{\|\one_B\|_{X}}\|f\|
_{L^q(\rn)}^{-1}f\r\|_{H_X^A(\rn)}\ls1.
\end{align}
Now, suppose $L \in(H_X^A(\rn))^*$. Then, by \eqref{s2t1e6},
we find that, for any $f\in L^q_0(B)$,
\begin{equation}\label{s2t1e7}
|L(f)| \le\|L\|_{(H_X^A(\rn))^*}\frac{\|\one_B\|_{X}}{|B|^{1/q}}\|f\|_{L^q(\rn)}.
\end{equation}
Therefore, $L$ provides a bounded linear functional on $L^q_0(B)$.
Thus, applying the Hahn--Banach theorem
(see, for instance, \cite[Theorem 5.6]{Folland}),
we find that there exists a linear functional
$L_B$, which extends $L$ to the whole space $L^q(B)$ without increasing its
norm.

When $q\in(\max\{1,p_0\},\infty)$, by the duality $(L^q(B))^* = L^{q'}(B)$,
we find that there exists an $h_B \in L^{q'}(B)\subset L^{1}(B)
$ such that, for any $f\in L^q_0(B)$,
\begin{align}\label{s2t1e8}
L(f)=L_B(f)=\int_{B}f(x)h_B(x)\,dx.
\end{align}
In the case $q=\infty$, let $\widetilde{q}\in(\max\{1,p_0\},\infty)$.
Then there exists an
$h_B \in L^{\widetilde{q}'}(B)\subset L^{1}(B)$
such that, for any $f\in L^\infty_0(B)
\subset L^{\widetilde{q}}(B)$, $L(f)=\int_{B}f(x)h_B(x)\,dx$.
Altogether, we find that,
for any $q\in(\max\{1,p_0\},\infty]$,
there exists an $h_B \in L^{q'}(B)$ such that, for any $f\in L^q_0(B)$,
\begin{align}\label{2.16.y2}
L(f)=\int_{B}f(x)h_B(x)\,dx.
\end{align}

Next, we prove that such an $h_B\in L^{q'}(B)$ is unique in the sense
of modulo $\cp_d(\rn)$. Indeed, assume that  $\wz{h_B}$ is
another element of $L^{q'}(B)$ such that
\begin{align}\label{2.16.y1}
L(f)=\int_{B}f(x)\wz{h_B}(x)\,dx
\end{align}
for any $f\in L^q_0(B)$.
Then, from \eqref{2.16.y2}, \eqref{2.16.y1}, and \eqref{2.16.y3},
we infer that, for any $f\in L^\infty(B)$, $f-\pi_B(f)\in L^\infty_0(B)$ and
\begin{align*}
0&=\int_B\lf[f(x)-\pi_B(f)(x)\r]\lf[h_B(x)-\wz{h_B}(x)\r]\,dx\\
&=\int_Bf(x)\lf[h_B(x)-\wz{h_B}(x)\r]\,dx
-\int_B\pi_B(f)(x)\pi_B(h_B - \wz{h_B})(x)\,dx\\
&=\int_Bf(x)\lf[h_B(x)-\wz{h_B}(x)\r]\,dx
-\int_Bf(x)\pi_B(h_B-\wz{h_B})(x)\,dx\\
&=\int_B f(x)\lf[h_B(x)-\wz{h_B}(x)-\pi_B(h_B - \wz{h_B})(x)\r]\,dx.
\end{align*}
The arbitrariness of $f$ further implies that
$h_B(x)-\wz{h_B}(x) = \pi_B(h_B - \wz{h_B})(x)$ for almost every $x\in B$.
Therefore, after changing values
of $h_B$ (or $\wz{h_B}$) on a set of measure zero,
we have $h_B - \wz{h_B}\in \cp_d(\rn)$.
Thus, for any $q\in(\max\{1,p_0\},\infty]$ and $f\in L^{q}_0(B)$, there exists
a unique $h_B\in L^{q'}(B)/{\cp_d(B)}$ such that \eqref{s2t1e8} holds true.

For any $j\in\rn$ and $f\in L^q_0(B_j)$, let $g_j$ be the unique
element of $L^{q'}(B_j)/{\cp_d(B_j)}$ such that
$$L(f) =\int_{B_j}f(x)g_j(x)\,dx.$$
Therefore, we can define a local $L^{q'}(\rn)$ function $g$
by setting $g(x):= g_j(x)$ whenever $x\in B_j$.
Assume that $f$ is a finite linear combination of anisotropic $(X,\ q,\ d)$-atoms.
It is easy to show that there exists an $x_0\in\rn$ and a $k_0\in\zz$
such that $\supp f\st x_0+B_{k_0}$. Let
$$j_0:=\frac{\ln A_0+\ln[b^{k_0-1}+\rho(x_0)]}{\ln b}+1.$$
Then, by Definition \ref{def-shqn}, we conclude that
$\supp f\st x_0+B_{k_0}\st B_{j_0}$. Thus, $f\in L^q_0(B_{j_0})$ and
$$L(f)=\int_{B_{j_0}}f(x)g_{j_0}(x)\,dx=\int_{\rn}f(x)g(x)\,dx.$$
From this and \eqref{s2t1e7}, we deduce that,
for any ball $B\in\CB$,
\begin{equation}\label{s2t1e9}
\|g\|_{(L^q_0(B))^*}
\leq\frac{\|\one_B\|_{X}}{|B|^{1/q}}
\|L\|_{(H_X^A(\rn))^*}.
\end{equation}
Moreover, it is known that
\begin{equation*}
\|g\|_{(L^q_0(B))^*}=\inf_{P\in \cp_d(\rn)}\|g-P\|_{L^{q'}(B)}
\end{equation*}
(see, for instance, \cite[p.\,52, (8.12)]{Bownik}), which, combined with
Remark \ref{s2r3}(ii) and \eqref{s2t1e9}, further implies that
\begin{equation}\label{4.14}
\|g\|_{\mathcal{L}_{X,q',d}^A({{\rr}^n})}\sim\sup_{B\in\CB}
\frac{|B|^{\frac{1}{q}}}{\|\one_{B}\|_{X}}\|g\|_{(L^q_0(B))^*}
\leq\|L\|_{(H_X^A(\rn))^*}.
\end{equation}
Thus, $g\in\mathcal{L}_{X,q',d}^{A}({{\rr}^n})$ and,
for any finite linear combination $f$ of anisotropic
$(X,\ q,\ d)$-atoms,
$$L(f) =\int_{\rn}f(x)g(x)\,dx.$$

Now, we show that $g\in\mathcal{L}_{X,q',d,\tz_0}^{A}({{\rr}^n})$
and $\|g\|_{\mathcal{L}_{X,q',d,\tz_0}^{A}({{\rr}^n})}\ls
\|L\|_{(H_X^A(\rn))^*}.$
To this end, for any $m\in\nn$, $\{B^{(j)}\}_{j=1}^m\subset \CB$, and
$\{\lambda_j\}_{j=1}^m\subset[0,\infty)$ with $\sum_{j=1}^m\lambda_j\neq0$,
let $h_j\in L^q(B^{(j)})$ with $\|h_j\|_{L^q(B^{(j)})}=1$ be such that
\begin{align}\label{s2t1e10}
&\lf[\int_{B^{(j)}}\lf|g(x)-P^d_{B^{(j)}}g(x)\r|^{q'} \,dx\r]^\frac1{q'}\\
&\quad=\int_{B^{(j)}}\lf[g(x)-P^d_{B^{(j)}}g(x)\r]h_j(x)\,dx\noz
\end{align}
and, for any $x\in\rn$, define
$$
a_j(x):=\frac{|B^{(j)}|^{\frac{1}{q}}
[h_j(x)-P^d_{B^{(j)}}h_j(x)]{\one}_{B^{(j)}}(x)}
{\|{\one}_{B^{(j)}}\|_{X}
\|h_j-P^d_{B^{(j)}}h_j\|_{L^q(B^{(j)})}}.
$$
Then it is easy to find that, for any $j\in\{1,\ldots,m\}$,
$a_j$ is an anisotropic $(X,\ q,\ d)$-atom. From this,
and Lemma \ref{s2l1}, it follows that
$\sum_{j=1}^m \lambda_j a_j\in H_X^A(\rn)$ and
\begin{align}\label{h_j}
\lf\|\sum_{j=1}^m{\lambda}_j a_j\r\|_{H_X^A(\rn)}
\ls\lf\|\lf\{\sum_{j=1}^m
\lf[\frac{{\lambda}_j}{\|{\one}_{B^{(j)}}\|_X}\r]^{\tz_0}
{\one}_{B^{(j)}}\r\}^{\frac1{\tz_0}}\r\|_{X}.
\end{align}
Moreover, by the Minkowski inequality,
the assumption that $\|h_j\|_{L^q(B^{(j)})}=1$,
Lemma \ref{s3l1}, and the H\"older inequality, we find that
\begin{align*}
\lf\|h_j-P^d_{B^{(j)}}h_j\r\|_{L^q(B^{(j)})}
&\leq\lf\|h_j\r\|_{L^q(B^{(j)})}+
\lf\|P^d_{B^{(j)}}h_j\r\|_{L^q(B^{(j)})}\\
&\ls1+\lf|B^{(j)}\r|^{\frac{1}{q}}\fint_{B^{(j)}}\lf|h_j(x)\r|\,dx\noz\\
&=1+\frac{1}{|B^{(j)}|^{\frac{1}{q'}}}\int_{B^{(j)}}\lf|h_j(x)\r|\,dx\noz\\
&\leq1+\lf\|h_j\r\|_{L^q(B^{(j)})}\ls1\noz.
\end{align*}
This, together with \eqref{s2t1e10},
the assumption that $L \in(H_X^A(\rn))^*$,
and \eqref{h_j},
further implies that
\begin{align*}
&\sum_{j=1}^m\frac{{\lambda}_j|B^{(j)}|}{\|{\one}_{B^{(j)}}\|_{X}}\lf[
\fint_{B^{(j)}}
\lf|g(x)-P^d_{B^{(j)}}g(x)\r|^{q'} \,dx\r]^\frac1{q'}\\
&\quad=\sum_{j=1}^m\frac{{\lambda}_j|B^{(j)}|^{\frac1q}}
{\|{\one}_{B^{(j)}}\|_{X}}\int_{B^{(j)}}
\lf[g(x)-P^d_{B^{(j)}}g(x)\r]h_j(x)\,dx\noz\\
&\quad=\sum_{j=1}^m\frac{{\lambda}_j|B^{(j)}|^{\frac1q}}
{\|{\one}_{B^{(j)}}\|_{X}}
\int_{B^{(j)}}\lf[h_j(x)-P^d_{B^{(j)}}h_j(x)\r]g(x){\one}_{B^{(j)}}(x)\,dx\noz\\
&\quad\lesssim
\sum_{j=1}^m{\lambda}_j
\int_{B^{(j)}}a_j(x)g(x)\,dx
= \sum_{j=1}^m{\lambda}_j L(a_j)
= L\lf(\sum_{j=1}^m{\lambda}_j a_j\r)\noz\\
&\quad\ls\lf\|\sum_{j=1}^m{\lambda}_j a_j\r\|_{H_X^A(\rn)}
\ls\lf\|\lf\{\sum_{j=1}^m
\lf[\frac{{\lambda}_j}{\|{\one}_{B^{(j)}}\|_X}\r]^{\tz_0}
{\one}_{B^{(j)}}\r\}^{\frac1{\tz_0}}\r\|_{X}.\noz
\end{align*}
Using this and Definition \ref{LAXqds}, we find
$g\in\mathcal{L}_{X,q',d,\tz_0}^{A}({{\rr}^n})$.
Moreover, from $g\in\mathcal{L}_{X,q',d,\tz_0}^{A}({{\rr}^n})$,
Proposition \ref{s2p2}, and \eqref{4.14}, we infer that
$$\|g\|_{\mathcal{L}_{X,q',d,\tz_0}^{A}({{\rr}^n})}\sim
\|g\|_{\mathcal{L}_{X,q',d}^A({{\rr}^n})}\ls
\|L\|_{(H_X^A(\rn))^*}.$$
This finishes the proof of (ii) and hence Theorem \ref{s2t1}.
\end{proof}
As a consequence of Theorem \ref{s2t1}, we have the following
equivalence of the anisotropic ball Campanato-type function space
$\mathcal{L}_{X,q,d,s}^{A}(\rn)$;
we omit the details.

\begin{corollary}\label{s2c1}
Let $A$, $X$, $d$, $\tz_0$, and $p_0$ be
the same as in Theorem \ref{s2t1}
and $q\in[1,\fz)$ when $p_0\in(0,1)$, or $q\in[1,p_0')$ when $p_0\in[1,\fz)$.
Then
$$
\mathcal{L}_{X,1,d_{X,A},\tz_0}^{A}(\rn)=\mathcal{L}_{X,q,d,\tz_0}^{A}(\rn)
$$
with equivalent quasi-norms,
where $d_{X,A}$ is the same as in \eqref{dxa}.
\end{corollary}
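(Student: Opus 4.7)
The strategy is to identify both sides of the claimed equality with the dual space $(H_X^A(\rn))^*$ via Theorem \ref{s2t1}, exploiting the fact that the pairing $g\mapsto L_g$ from \eqref{s2t1e1} does not depend on the choice of the parameters $q$ and $d$.

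First, I would apply Theorem \ref{s2t1} with the atom parameters $(q,d)=(\infty,d_{X,A})$, which are admissible by Definition \ref{deffin} and \eqref{dxa}. This yields that $g\mapsto L_g$ gives a bijection between $\mathcal{L}_{X,1,d_{X,A},\tz_0}^A(\rn)$ (modulo $\mathcal{P}_{d_{X,A}}(\rn)$) and $(H_X^A(\rn))^*$, together with the two-sided estimate
\begin{equation*}
\|g\|_{\mathcal{L}_{X,1,d_{X,A},\tz_0}^A(\rn)}\sim \|L_g\|_{(H_X^A(\rn))^*}.
\end{equation*}
Next, I would apply Theorem \ref{s2t1} a second time, now with the atom parameters $(q',d)$, where $q$ and $d$ are as in the hypothesis of the corollary. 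The key admissibility check is that $q'\in(\max\{p_0,1\},\infty]$: when $p_0\in(0,1)$ this means $q'\in(1,\infty]$, i.e.\ $q\in[1,\infty)$; and when $p_0\in[1,\infty)$ this means $q'\in(p_0,\infty]$, i.e.\ $q\in[1,p_0')$. Both cases correspond exactly to the ranges in the hypothesis, and the condition $d\ge d_{X,A}$ is already built into ``$d$ as in Theorem \ref{s2t1}.'' This second application yields
\begin{equation*}
\|g\|_{\mathcal{L}_{X,q,d,\tz_0}^A(\rn)}\sim \|L_g\|_{(H_X^A(\rn))^*}.
\end{equation*}

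Combining the two equivalences and noting that the functional $L_g$ is defined through the same integral pairing $L_g(f)=\int_{\rn} f(x)g(x)\,dx$ on the common dense subset $H_{X,\mathrm{fin}}^{A,\infty,d}(\rn)\cap\mathcal C(\rn)$ (independently of $q$ and $d$), I would conclude that a locally integrable function $g$ lies in one of the two Campanato-type spaces if and only if it lies in the other, and that
\begin{equation*}
\|g\|_{\mathcal{L}_{X,1,d_{X,A},\tz_0}^A(\rn)}\sim
\|g\|_{\mathcal{L}_{X,q,d,\tz_0}^A(\rn)},
\end{equation*}
which is precisely the assertion. The only mild obstacle is bookkeeping: one must verify that the identification $g\mapsto L_g$ is truly the same map in both applications of Theorem \ref{s2t1} (so that set equality, not merely isomorphism of quotient spaces, holds), which follows from the uniqueness of the representing function $g$ modulo $\mathcal{P}_d(\rn)$ obtained in part (ii) of the proof of Theorem \ref{s2t1} together with the containment $\mathcal{P}_{d_{X,A}}(\rn)\subset\mathcal{P}_d(\rn)$.
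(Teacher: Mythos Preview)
Your proposal is correct and is exactly the argument the paper has in mind: the authors state that Corollary \ref{s2c1} is ``a consequence of Theorem \ref{s2t1}'' and omit the details, and your two applications of Theorem \ref{s2t1} (with atom parameters $(\infty,d_{X,A})$ and $(q',d)$, respectively) together with the admissibility check $q'\in(\max\{p_0,1\},\infty]$ are precisely what is needed. Your remark on identifying the two maps $g\mapsto L_g$ via a common dense subspace is the only point that requires care, and you have handled it appropriately.
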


\begin{remark}\label{3.24.x1}
\begin{enumerate}
\item [{\rm(i)}] If $A:=2\,I_{n\times n}$, then Theorem \ref{s2t1}
and Corollary \ref{s2c1} were obtained in
\cite[Theorem 3.14 and Corollary 3.15]{zhyy21},
respectively.
\item [{\rm(ii)}] Recently, Yan et al. \cite[Theorem 6.6]{yhyy}
obtained the dual theorem of the Hardy space $H_Y(\cx)$ associated
with the ball quasi-Banach function space $Y(\cx)$
on a given space $\cx$ of homogeneous type.
We point out that, since there exists no linear structure in
a general space $\cx$ of homogeneous type, one can not introduce
the Schwartz function and the polynomial on $\cx$.
Indeed, any atom in \cite{yhyy} only has zero degree vanishing moment,
while the atom in Theorem \ref{s2t1} has vanishing moments up to order
$d\in[d_{X,\,A},\fz)\cap\nn$ with $d_{X,\,A}$ the same as in \eqref{dxa}.
Thus, although $(\rn,\rho,dx)$ is a space of homogeneous type,
Theorem \ref{s2t1} can not be deduced from by \cite[Theorem 6.6]{yhyy}
and, actually, they can not cover each other.
\end{enumerate}
\end{remark}

\section{Equivalent Characterizations of
$\mathcal{L}_{X,q,d,\tz_0}^A({\rn})$\label{s3}}
In this section, applying the dual theorem obtained in Section \ref{s2},
we establish several equivalent characterizations for the anisotropic
ball Campanato-type function space $\mathcal{L}_{X,q,d,\tz_0}^{A}({\rn})$.
This plays an important role in establishing the Carleson measure
characterization of $\mathcal{L}_{X,1,d,\tz_0}^{A}({\rn})$
in Section \ref{s5} below.
\begin{theorem}\label{s3t1}
Let $A$, $X$, $q$, $d$, and $\tz_0$ be the same
as in Corollary \ref{s2c1}
and
\begin{align}\label{2.19.y2}
\vaz\in\lf(\frac{\ln b}{\ln(\lambda_-)}
\lf[\frac2s+d\frac{\ln(\lambda_+)}{\ln b}\r],\fz\r)
\end{align}
for some $s\in(0,\tz_0)$. Then the following statements are mutually equivalent:
\begin{enumerate}
\item[{\rm(i)}]
$f\in\mathcal{L}_{X,q,d,\tz_0}^{A}(\rn)$;

\item[{\rm(ii)}]
for any $f\in L^q_{\rm loc}(\rn)$,
\begin{align}\label{2.19.y1}
\|f\|_{\mathcal{L}_{X,1,d,\tz_0}^{A,\vaz}(\rn)}
:=&\,\sup
\lf\|\lf\{\sum_{i=1}^m
\lf(\frac{{\lambda}_i}{\|{\one}_{x_i+B_{l_i}}\|_X}\r)^{\tz_0}
{\one}_{x_i+B_{l_i}}\r\}^{\frac1{\tz_0}}\r\|_{X}^{-1}\\
&\quad\times\sum_{j=1}^m
\frac{{\lambda}_j|x_j+B_{l_j}|}{\|{\one}_{x_j+B_{l_j}}\|_{X}}\noz\\
&\quad\times
\int_{\rn}\frac{b^{\vaz l_j\frac{\ln(\lambda_-)}{\ln b}}
|f(x)-P^d_{x_j+B_{l_j}}f(x)|}
{b^{l_j[1+\vaz\frac{\ln(\lambda_-)}
{\ln b}]}+[\rho(x-x_j)]^{1+\vaz\frac{\ln(\lambda_-)}{\ln b}}}\,dx\noz\\
<&\,\infty,\noz
\end{align}
where the supremum is taken over all $m\in\nn$,
$\{x_j+B_{l_j}\}_{j=1}^m\subset \CB$,
with both
$\{x_j\}_{j=1}^{m}\subset\rn$ and $\{l_j\}_{j=1}^m\subset\zz$,
and $\{\lambda_j\}_{j=1}^m\subset[0,\infty)$ with $\sum_{j=1}^m\lambda_j\neq0$.
\end{enumerate}

Moreover, for any $f\in L^q_{\rm loc}(\rn)$,
$$\|f\|_{\mathcal{L}_{X,q,d,\tz_0}^{A}(\rn)}\sim
\|f\|_{\mathcal{L}_{X,1,d,\tz_0}^{A,\vaz}(\rn)}$$
with the positive equivalence constants
independent of $f$.
\end{theorem}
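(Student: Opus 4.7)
The plan is to prove each half of the equivalence $\|f\|_{\mathcal{L}_{X,q,d,\tz_0}^A(\rn)}\sim\|f\|_{\mathcal{L}_{X,1,d,\tz_0}^{A,\vaz}(\rn)}$ by manipulating the suprema in the two definitions and then invoking Corollary \ref{s2c1} to pass from the case $q=1$ to a general $q$.

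For the direction (ii)$\Rightarrow$(i), I would restrict the integral in \eqref{2.19.y1} to the single ball $x_j+B_{l_j}$. On this ball $\rho(x-x_j)\le A_0 b^{l_j}$, so the denominator of the weight is comparable to $b^{l_j[1+\vaz\ln(\lz_-)/\ln b]}$ and the weight itself is bounded below by a constant multiple of $b^{-l_j}$. The full integral therefore dominates $\fint_{x_j+B_{l_j}}|f(x)-P^d_{x_j+B_{l_j}}f(x)|\,dx$. Taking the supremum over configurations yields $\|f\|_{\mathcal{L}_{X,1,d,\tz_0}^A(\rn)}\lesssim\|f\|_{\mathcal{L}_{X,1,d,\tz_0}^{A,\vaz}(\rn)}$, and Corollary \ref{s2c1} identifies the left-hand side with $\|f\|_{\mathcal{L}_{X,q,d,\tz_0}^A(\rn)}$.

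The reverse direction is more delicate. Fixing an admissible configuration $\{x_j+B_{l_j},\lambda_j\}_{j=1}^m$ for \eqref{2.19.y1}, I would decompose $\rn$ into $E_0^j:=x_j+B_{l_j}$ and the annuli $E_k^j:=x_j+(B_{l_j+k}\setminus B_{l_j+k-1})$ for $k\in\nn$. On $E_k^j$ the weight is bounded by a constant multiple of $b^{-l_j-k\alpha}$ with $\alpha:=1+\vaz\ln(\lz_-)/\ln b$. The telescoping identity
\begin{align*}
f-P^d_{x_j+B_{l_j}}f=\bigl(f-P^d_{x_j+B_{l_j+k}}f\bigr)+\sum_{i=1}^{k}\bigl(P^d_{x_j+B_{l_j+i}}f-P^d_{x_j+B_{l_j+i-1}}f\bigr),
\end{align*}
combined with the polynomial-comparison estimate of \cite[Lemma 2.13]{wyy22} (stated as Lemma \ref{s3l2}), controls each polynomial difference in the uniform norm on $x_j+B_{l_j+k}$ by $b^{(k-i)d\ln(\lz_+)/\ln b}$ times a Campanato-type $L^1$-average on $x_j+B_{l_j+i}$. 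The hypothesis \eqref{2.19.y2} on $\vaz$ is calibrated precisely so that $\beta:=\alpha-d\ln(\lz_+)/\ln b$ exceeds $1/\tz_0$; the resulting series in $k$ then converges geometrically with rate $b^{-\eta}$ for any $\eta\in(1/\tz_0,\beta)$.

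After summing the geometric series, the numerator of \eqref{2.19.y1} is dominated by
\begin{align*}
\sum_{j,k}\tilde\lambda_{j,k}\,\frac{|\tilde B_{j,k}|}{\|\one_{\tilde B_{j,k}}\|_X}\fint_{\tilde B_{j,k}}\bigl|f(x)-P^d_{\tilde B_{j,k}}f(x)\bigr|\,dx,
\end{align*}
with $\tilde B_{j,k}:=x_j+B_{l_j+k}$ and $\tilde\lambda_{j,k}:=\lambda_j b^{-k\eta}\|\one_{\tilde B_{j,k}}\|_X/\|\one_{x_j+B_{l_j}}\|_X$. By Proposition \ref{s2p1}, this fits inside the definition of $\|f\|_{\mathcal{L}_{X,1,d,\tz_0}^A(\rn)}$ once the following denominator estimate, which is the main obstacle, is verified:
\begin{align*}
\Bigl\|\Bigl\{\sum_{j,k}[\tilde\lambda_{j,k}/\|\one_{\tilde B_{j,k}}\|_X]^{\tz_0}\one_{\tilde B_{j,k}}\Bigr\}^{1/\tz_0}\Bigr\|_X\lesssim\Bigl\|\Bigl\{\sum_j[\lambda_j/\|\one_{x_j+B_{l_j}}\|_X]^{\tz_0}\one_{x_j+B_{l_j}}\Bigr\}^{1/\tz_0}\Bigr\|_X.
\end{align*}
To establish it I would use the pointwise bound $\one_{\tilde B_{j,k}}\le b^{k\sigma}[\cm(\one_{x_j+B_{l_j}})]^\sigma$, valid for every $\sigma>0$ because $\cm(\one_{x_j+B_{l_j}})\ge b^{-k}$ on $\tilde B_{j,k}$, choose $\sigma\in(1/\tz_0,\eta)$ so that $u:=\sigma\tz_0>1$ and $p:=1/\sigma\in(0,p_-)$ (both arrangements are possible thanks to \eqref{2.19.y2}), and then invoke the Fefferman--Stein vector-valued inequality of Assumption \ref{Assum-1} with exponents $u$ and $p$ inside the $p$-convexification $X^{u/\tz_0}$. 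This collapses the double sum in $(j,k)$ back to the original single-index sum in $j$, which yields the reverse estimate; Corollary \ref{s2c1} then completes the passage from $q=1$ to a general $q$ and finishes the proof.
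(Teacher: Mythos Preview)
Your overall strategy is sound and does lead to a proof, but there are two concrete slips, and the paper takes a different (and somewhat cleaner) route.

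First, Lemma~\ref{s3l2} (which is indeed \cite[Lemma 2.13]{wyy22}) is \emph{not} a polynomial-comparison estimate; it is the indicator-norm inequality $\|\sum_j\one_{x_j+B_{k_j+\ell}}\|_X\le Cb^{\ell/s}\|\sum_j\one_{x_j+B_{k_j}}\|_X$. The telescoping bound you need on $P^d_{x_j+B_{l_j+k}}f-P^d_{x_j+B_{l_j}}f$ comes instead from Lemma~\ref{s3l1} (the sup-bound for minimizing polynomials) together with a standard polynomial-growth argument; in the paper this is \eqref{2.19.x1}.

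Second, your Fefferman--Stein parameters do not close. With the pointwise bound $\one_{\tilde B_{j,k}}\le b^{k\sigma}[\cm(\one_{x_j+B_{l_j}})]^\sigma$, the sum over $k$ in $\tilde D^{\tz_0}$ converges only when $\sigma<\eta\tz_0$, not merely $\sigma<\eta$; combined with your constraint $\sigma>1/\tz_0$ this would force $\eta>1/\tz_0^{2}$, which the hypothesis \eqref{2.19.y2} does not guarantee. The fix is to take $\sigma\in(1,\eta\tz_0)$ (nonempty since $\eta>1/\tz_0$) and apply Assumption~\ref{Assum-1} with $u=\sigma$ and $p=\tz_0/\sigma$; then $p<\tz_0<p_-$ and the inequality reads $\|\{\sum_j[\cm(h_j)]^\sigma\}^{1/\sigma}\|_{X^{\sigma/\tz_0}}\lesssim\|\{\sum_j h_j^\sigma\}^{1/\sigma}\|_{X^{\sigma/\tz_0}}$ with $h_j=(\lambda_j/\|\one_{x_j+B_{l_j}}\|_X)^{\tz_0/\sigma}\one_{x_j+B_{l_j}}$, which gives exactly $\tilde D\lesssim D$.

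For comparison, the paper does not build a single infinite configuration. It fixes $k$, shifts the \emph{same} finite family to $\{x_j+B_{l_j+k},\lambda_j\}_{j=1}^m$, and uses Lemma~\ref{s3l2} twice: once to replace $\|\one_{x_j+B_{l_j}}\|_X^{-1}$ by $b^{k/s}\|\one_{x_j+B_{l_j+k}}\|_X^{-1}$ in the numerator (see \eqref{2.19.x3}) and once to bound the shifted denominator by $b^{k/s}$ times the original (see \eqref{2.19.x2}). The ratio for each $k$ is then $\lesssim b^{2k/s}\cdot(\text{decay from weight and telescope})\cdot\|f\|_{\mathcal{L}_{X,1,d,\tz_0}^{A}}$, and summing the geometric series in $k$ explains the $2/s$ in \eqref{2.19.y2}. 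This avoids Proposition~\ref{s2p1} and the explicit Fefferman--Stein bookkeeping altogether; your approach trades that simplicity for a single unified configuration, which is conceptually appealing but requires the corrected parameter choice above.
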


To show Theorem \ref{s3t1}, we need the following technical lemma
whose proof is a slight modification of \cite[Lemma 2.13]{wyy22};
we omit the details.

\begin{lemma}\label{s3l2}
Let $X$ be a ball quasi-Banach function space satisfying
Assumption \ref{Assum-1} with $p_-\in(0,\infty)$,
$\ell\in\zz_+$, and $s\in(0,\min\{p_-,1\})$.
Then there exists a positive constant $C$, independent of both
$\ell$ and $s$,
such that, for any sequence $\{x_j\}_{j\in\nn}\subset\rn$
and any sequence $\{k_j\}_{j\in\nn}\subset\zz$,
\begin{align*}
\left\|\sum_{j\in\nn}\one_{x_j+B_{k_j+\ell}}\r\|_X
&\le Cb^{\frac\ell s}\left\|\sum_{j\in\nn}\one_{x_j+B_{k_j}}\r\|_{X},
\end{align*}
where, for any $j\in\nn$, $B_{k_j}$ is the same as in \eqref{B_k}.
\end{lemma}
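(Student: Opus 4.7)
The plan is to combine a pointwise estimate controlling the indicator of a dilated ball by the Hardy--Littlewood maximal function with the Fefferman--Stein vector-valued inequality from Assumption \ref{Assum-1}, applied on the convexification $X^{1/s}$.

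First, I would establish the pointwise comparison
\[
\one_{x_j+B_{k_j+\ell}}(y) \le b^{\ell/s}\,\bigl[\cm(\one_{x_j+B_{k_j}})(y)\bigr]^{1/s}
\]
for every $j\in\nn$ and every $y\in\rn$. Whenever $y\in x_j+B_{k_j+\ell}$, both $y$ and the set $x_j+B_{k_j}$ lie inside the dilated ball $x_j+B_{k_j+\ell}$, so averaging $\one_{x_j+B_{k_j}}$ over $x_j+B_{k_j+\ell}$ and using $|B_{k_j}|/|B_{k_j+\ell}|=b^{-\ell}$ from \eqref{B_k} forces $\cm(\one_{x_j+B_{k_j}})(y)\ge b^{-\ell}$; raising to the power $1/s$ and multiplying through by $b^{\ell/s}$ yields the claim, while outside $x_j+B_{k_j+\ell}$ the bound is trivial. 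Summing over $j\in\nn$ then produces
\[
\sum_{j\in\nn}\one_{x_j+B_{k_j+\ell}} \le b^{\ell/s}\sum_{j\in\nn}\bigl[\cm(\one_{x_j+B_{k_j}})\bigr]^{1/s}.
\]

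Next, I would invoke Assumption \ref{Assum-1} with the choices $p:=s$ and $u:=1/s$, which are admissible because $s\in(0,\min\{p_-,1\})$ forces $p\in(0,p_-)$ and $u\in(1,\infty)$. Unwinding the convexification quasi-norm via the identity $\|g\|_{X^{1/s}}=\||g|^{1/s}\|_X^s$, together with the trivial identity $\one^{1/s}=\one$ applied to each indicator summand on the right-hand side, the Fefferman--Stein estimate collapses (after taking $s$-th roots) to
\[
\biggl\|\sum_{j\in\nn}\bigl[\cm(\one_{x_j+B_{k_j}})\bigr]^{1/s}\biggr\|_X \ls \biggl\|\sum_{j\in\nn}\one_{x_j+B_{k_j}}\biggr\|_X.
\]
Chaining this with the pointwise bound from the previous step via the monotonicity of $\|\cdot\|_X$ (Definition \ref{BQBFS}(ii)) finishes the argument with constant proportional to $b^{\ell/s}$, as required.

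The delicate point in the argument is the bookkeeping inside the convexification step: one must verify that the pair $(p,u)=(s,1/s)$ makes the Fefferman--Stein inequality on $X^{1/s}$ collapse precisely to an $X$-norm bound on the sums $\sum_j[\cm(\one_{x_j+B_{k_j}})]^{1/s}$ and $\sum_j\one_{x_j+B_{k_j}}$, rather than to some other convexified quantity. The implicit constant from Assumption \ref{Assum-1} depends on $p$ and $u$, but once $s$ is fixed this dependence is absorbed, and all $\ell$-dependence of the final constant is made explicit through the factor $b^{\ell/s}$ produced by the pointwise step.
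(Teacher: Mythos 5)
Your argument is correct and is essentially the intended one: the paper omits the proof of Lemma \ref{s3l2}, deferring to \cite[Lemma 2.13]{wyy22}, and that proof runs exactly as you describe --- the pointwise bound $\one_{x_j+B_{k_j+\ell}}\le b^{\ell/s}[\cm(\one_{x_j+B_{k_j}})]^{1/s}$ (valid because $x_j+B_{k_j}\subset x_j+B_{k_j+\ell}$ and $|B_{k_j}|/|B_{k_j+\ell}|=b^{-\ell}$), followed by Assumption \ref{Assum-1} on $X^{1/s}$ with $u=1/s$; your bookkeeping via $\|g\|_{X^{1/s}}=\||g|^{1/s}\|_X^{s}$ and $\one^{1/s}=\one$ is exactly right, and the only $\ell$-dependence is the explicit factor $b^{\ell/s}$.

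The one point where your write-up falls short of the literal statement is the claim that $C$ is independent of $s$: your constant is $[C_{(s,1/s)}]^{1/s}$, where $C_{(p,u)}$ denotes the constant in Assumption \ref{Assum-1}, and your closing remark that this dependence ``is absorbed once $s$ is fixed'' concedes precisely that. Observe, however, that the asserted inequality is monotone in $s$: since $b>1$ and $\ell\in\zz_+$, $b^{\ell/s}$ increases as $s$ decreases, so the estimate proved for one admissible exponent $s_1$ implies it, with the same constant, for every $s\in(0,s_1]$. Hence running your argument once with a single fixed $s_1\in(0,\min\{p_-,1\})$ gives a constant uniform in $s$ on $(0,s_1]$, which covers every use of the lemma in this paper (there $s<\tz_0<\unp$ is fixed). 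Uniformity over the whole open interval $(0,\min\{p_-,1\})$ does not follow from Assumption \ref{Assum-1} by this route, since $C_{(s,1/s)}$ may blow up as $s$ tends to the endpoint; this is a weakness of the lemma as stated (shared by the cited proof) rather than a defect peculiar to your argument, but it is worth flagging rather than glossing over.
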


Now, we show Theorem \ref{s3t1}.
\begin{proof}[Proof of Theorem \ref{s3t1}]
According to Corollary \ref{s2c1},
to prove the present theorem, we only need to show that,
for any $f\in L^q_{\rm loc}(\rn)$,
\begin{align}\label{2.18.x1}
\|f\|_{\mathcal{L}_{X,1,d,\tz_0}^{A}(\rn)}\sim
\|f\|_{\mathcal{L}_{X,1,d,\tz_0}^{A,\vaz}(\rn)}.
\end{align}

We first prove
\begin{align}\label{2.17.x1}
\|f\|_{\mathcal{L}_{X,1,d,\tz_0}^{A}(\rn)}\ls
\|f\|_{\mathcal{L}_{X,1,d,\tz_0}^{A,\vaz}(\rn)}.
\end{align}
Indeed, by Definition \ref{def-shqn}, we find that,
for any $m\in\nn$, $\{x_j+B_{l_j}\}_{j=1}^m\subset \CB$
with both
$\{x_j\}_{j=1}^{m}\subset\rn$ and $\{l_j\}_{j=1}^m\subset\zz$,
$\{\lambda_j\}_{j=1}^m\subset[0,\infty)$ with $\sum_{j=1}^m\lambda_j\neq0$,
$\vaz\in(0,\fz)$, and $j\in\{1,2,\ldots,m\}$,
\begin{align*}
&\int_{\rn}\frac{b^{\vaz l_j\frac{\ln(\lambda_-)}{\ln b}}
|f(x)-P^d_{x_j+B_{l_j}}f(x)|}{b^{l_j[1+\vaz\frac{\ln(\lambda_-)}{\ln b}]}
+[\rho(x-x_j)]^{1+\vaz\frac{\ln(\lambda_-)}{\ln b}}}\,dx\\
&\quad\ge\int_{x_j+B_{l_j}}\frac{b^{\vaz l_j\frac{\ln(\lambda_-)}{\ln b}}
|f(x)-P^d_{x_j+B_{l_j}}f(x)|}{b^{l_j[1+\vaz\frac{\ln(\lambda_-)}{\ln b}]}
+[\rho(x-x_j)]^{1+\vaz\frac{\ln(\lambda_-)}{\ln b}}}\,dx\\
&\quad\sim\int_{x_j+B_{l_j}}\frac{b^{\vaz l_j
\frac{\ln(\lambda_-)}{\ln b}}|f(x)-P^d_{x_j+B_{l_j}}f(x)|}
{b^{l_j[1+\vaz\frac{\ln(\lambda_-)}{\ln b}]}}\,dx\\
&\quad=\fint_{x_j+B_{l_j}}\lf|f(x)-P^d_{x_j+B_{l_j}}f(x)\r|\,dx,
\end{align*}
which, together with Definition \ref{LAXqds}
and \eqref{2.19.y1}, further implies \eqref{2.17.x1}.

Conversely, from Definitions \ref{def-shqn} and \ref{LAXqds},
we deduce that, for any $m\in\nn$, $\{x_j+B_{l_j}\}_{j=1}^m\subset \CB$
with both $\{x_j\}_{j=1}^{m}\subset\rn$ and $\{l_j\}_{j=1}^m\subset\zz$,
$\{\lambda_j\}_{j=1}^m\subset[0,\infty)$ with $\sum_{j=1}^m\lambda_j\neq0$,
\begin{align}\label{s3e1}
&\sum_{j=1}^m\frac{{\lambda}_j|x_j+B_{l_j}|}{\|{\one}_{x_j+B_{l_j}}\|_{X}}
\int_{\rn}\frac{b^{\vaz l_j\frac{\ln(\lambda_-)}{\ln b}}|f(x)-P^d_{x_j+B_{l_j}}f(x)|}
{b^{l_j[1+\vaz\frac{\ln(\lambda_-)}{\ln b}]}
+[\rho(x-x_j)]^{1+\vaz\frac{\ln(\lambda_-)}{\ln b}}}\,dx\\
&\quad=\sum_{j=1}^m\frac{{\lambda}_j|x_j+B_{l_j}|}{\|{\one}_{x_j+B_{l_j}}\|_{X}}
\lf[\int_{x_j+B_{l_j}}
+\sum_{k=0}^\fz\int_{x_j+B_{l_j+k+1}\setminus x_j+B_{l_j+k}}\r]\noz\\
&\qquad\times
\frac{b^{\vaz l_j\frac{\ln(\lambda_-)}{\ln b}}|f(x)-P^d_{x_j+B_{l_j}}f(x)|}
{b^{l_j[1+\vaz\frac{\ln(\lambda_-)}{\ln b}]}
+[\rho(x-x_j)]^{1+\vaz\frac{\ln(\lambda_-)}{\ln b}}}\,dx\noz\\
&\quad\le\sum_{j=1}^m
\frac{{\lambda}_j}{\|{\one}_{x_j+B_{l_j}}
\|_{X}}\int_{x_j+B_{l_j}}\lf|f(x)-P^d_{x_j+B_{l_j}}f(x)\r|\,dx\noz\\
&\qquad+\sum_{j=1}^m\frac{{\lambda}_j}{\|{\one}_{x_j+B_{l_j}}\|_{X}}
\sum_{k=0}^\fz b^{-k[1+\vaz\frac{\ln(\lambda_-)}{\ln b}]}\noz\\
&\qquad\times\int_{x_j+B_{l_j+k+1}}\lf|f(x)-P^d_{x_j+B_{l_j}}f(x)\r|\,dx\noz\\
&\quad\leq\lf\|\lf\{\sum_{i=1}^m
\lf(\frac{{\lambda}_i}{\|{\one}_{x_i+B_{l_i}}\|_X}\r)^{\tz_0}
{\one}_{x_i+B_{l_i}}\r\}^{\frac1{\tz_0}}\r\|_{X}
\|f\|_{\mathcal{L}_{X,1,d,\tz_0}^{A}(\rn)}+{\rm I},\noz
\end{align}
where
\begin{align}\label{2.18.x2}
{\rm I}:&=\sum_{j=1}^m\frac{{\lambda}_j}{\|{\one}_{x_j+B_{l_j}}\|_{X}}
\sum_{k=0}^\fz b^{-k[1+\vaz\frac{\ln(\lambda_-)}{\ln b}]}
\int_{x_j+B_{l_j+k+1}}\lf|f(x)-P^d_{x_j+B_{l_j}}f(x)\r|\,dx\\
&\ls\sum_{j=1}^m\frac{{\lambda}_j}{\|{\one}_{x_j+B_{l_j}}\|_{X}}
\sum_{k\in\nn} b^{-k[1+\vaz\frac{\ln(\lambda_-)}{\ln b}]}
\int_{x_j+B_{l_j+k}}\lf|f(x)-P^d_{x_j+B_{l_j+k}}f(x)\r|\,dx\noz\\
&\quad+\sum_{j=1}^m\frac{{\lambda}_j}{\|{\one}_{x_j+B_{l_j}}\|_{X}}
\sum_{k\in\nn}b^{-k[1+\vaz\frac{\ln(\lambda_-)}{\ln b}]}\noz\\
&\quad\times\int_{x_j+B_{l_j+k}}
\lf|P^d_{x_j+B_{l_j+k}}f(x)-P^d_{x_j+B_{l_j}}f(x)\r|\,dx.\noz
\end{align}
Note that, on the one hand, by the definition of minimizing polynomials,
\eqref{2.21.x1}, \cite[Lemma 2.19]{jtyyz2022}, and Lemma \ref{s3l1},
we find that, for any $k\in\nn$ and $x\in x_j+B_{l_j+k+1}$,
\begin{align}\label{2.19.x1}
&\lf|P^d_{x_j+B_{l_j+k}}f(x)-P^d_{x_j+B_{l_j}}f(x)\r|\\
&\quad\le\sum_{\nu=1}^{k}
\lf|P^d_{x_j+B_{l_j+\nu}}f(x)-P^d_{x_j+B_{l_j+\nu-1}}f(x)\r|\noz\\
&\quad=\sum_{\nu=1}^{k}
\lf|P^d_{x_j+B_{l_j+\nu-1}}\lf(f-P^d_{x_j+B_{l_j+\nu}}f\r)(x)\r|\noz\\
&\quad\le\sum_{\nu=1}^{k}\lf\|P^d_{x_j+B_{l_j+\nu-1}}
\lf(f-P^d_{x_j+B_{l_j+\nu}}f\r)\r\|_{L^{\fz}(B(x_j,\lz_+^{l_j+k}))}\noz\\
&\quad\ls\sum_{\nu=1}^{k}\lf(\frac{\lz_+^{l_j+k}}{\lz_-^{l_j+v-1}}\r)^{d}
\lf\|P^d_{x_j+B_{l_j+\nu-1}}
\lf(f-P^d_{x_j+B_{l_j+\nu}}f\r)\r\|_{L^{\fz}(B(x_j,\lz_-^{l_j+\nu-1}))}\noz\\
&\quad\ls\lambda_+^{kd}\sum_{\nu=1}^{k}\frac{1}{|x_j+B_{l_j+\nu-1}|}
\int_{x_j+B_{l_j+\nu}}\lf|f(y)-P^d_{x_j+B_{l_j+\nu}}f(y)\r|\,dy;\noz
\end{align}
on the other hand, from Definition \ref{BQBFS}(ii),
the fact that $s\in(0,\tz_0)$, and Lemma \ref{s3l2},
we infer that, for any $j\in\{1,2,\ldots,m\}$,
\begin{align}\label{2.19.x3}
\frac{1}{\|{\one}_{x_j+B_{l_j}}\|_{X}}
&\ls b^{\frac{k}{s}}\frac{1}{\|{\one}_{x_j+B_{l_j+k}}\|_{X}}
\end{align}
and, for any $k\in\nn$,
\begin{align}\label{2.19.x2}
&\lf\|\lf\{\sum_{i=1}^m
\lf(\frac{{\lambda}_i}{\|{\one}_{x_i+B_{l_i+k}}\|_X}\r)^{\tz_0}
{\one}_{x_i+B_{l_i+k}}\r\}^{\frac1{\tz_0}}\r\|_{X}\\
&\quad\le \lf\|\lf\{\sum_{i=1}^m
\lf(\frac{{\lambda}_i}{\|{\one}_{x_i+B_{l_i}}\|_X}\r)^{\tz_0}
{\one}_{x_i+B_{l_i+k}}\r\}^{\frac1{\tz_0}}\r\|_{X}\noz\\
&\quad\ls b^{\frac{k}{s}}\lf\|\lf\{\sum_{i=1}^m
\lf(\frac{{\lambda}_i}{\|{\one}_{x_i+B_{l_i}}\|_X}\r)^{\tz_0}
{\one}_{x_i+B_{l_i}}\r\}^{\frac 1{\tz_0}}\r\|_{X}.\noz
\end{align}
Combining \eqref{2.18.x2}, \eqref{2.19.x1}, \eqref{2.19.x3},
\eqref{2.19.x2}, and Lemma \ref{s3l2}, we conclude that
\begin{align*}
&\lf\|\lf\{\sum_{i=1}^m
\lf(\frac{{\lambda}_i}{\|{\one}_{x_i+B_{l_i}}\|_X}\r)^{\tz_0}
{\one}_{x_i+B_{l_i}}\r\}^{\frac 1{\tz_0}}\r\|_{X}^{-1}\times{\rm I}\\
&\quad\ls\sum_{k\in\nn} b^{-k[1-\frac{2}{s}+\vaz\frac{\ln(\lambda_-)}{\ln b}]}
\lf\|\lf\{\sum_{i=1}^m\lf(\frac{{\lambda}_i}{\|{\one}_{x_i+B_{l_i+k}}\|_X}\r)
^{\tz_0}
{\one}_{x_i+B_{l_i+k}}\r\}^{\frac1{\tz_0}}\r\|_{X}^{-1}\noz\\
&\qquad\times\sum_{j=1}^m\frac{{\lambda}_j}{\|{\one}_{x_j+B_{l_j+k}}\|_{X}}
\int_{x_j+B_{l_j+k}}\lf|f(x)-P^d_{x_j+B_{l_j+k}}f(x)\r|\,dx\noz\\
&\qquad+\sum_{k\in\nn}\lf(\frac{\lambda_+^{d}}{\lambda_-^{\vaz }}b\r)^k
\sum_{\nu=1}^{k}b^{\nu(\frac{2}{s}-1)}
\lf\|\lf\{\sum_{i=1}^m
\lf(\frac{{\lambda}_i}{\|{\one}_{x_i+B_{l_i+\nu}}\|_X}\r)^{\tz_0}
{\one}_{x_i+B_{l_i+\nu}}\r\}^{\frac1{\tz_0}}\r\|_{X}^{-1}\noz\\
&\qquad\times\sum_{j=1}^m
\frac{{\lambda}_j}{\|{\one}_{x_j+B_{l_j+\nu}}\|_{X}}
\int_{x_j+B_{l_j+\nu}}\lf|f(y)-P^d_{x_j+B_{l_j+\nu}}f(y)\r|\,dy\noz\\
&\quad\ls\|f\|_{\mathcal{L}_{X,1,d,\tz_0}^{A}(\rn)}\lf\{\sum_{k\in\nn}
b^{-k[1-\frac{2}{s}+\vaz\frac{\ln(\lambda_-)}{\ln b}]}
+\sum_{k\in\nn}\lf(\frac{\lambda_+^{d}}{\lambda_-^{\vaz }}b\r)^k
\sum_{\nu=1}^{k}b^{\nu(\frac{2}{s}-1)}\r\}\noz\\
&\quad\ls\|f\|_{\mathcal{L}_{X,1,d,\tz_0}^{A}(\rn)}\lf\{
\sum_{k\in\nn}b^{-k[1-\frac{2}{s}+\vaz\frac{\ln(\lambda_-)}{\ln b}]}
+\sum_{k\in\nn}b^{-k[-\frac{2}{s}+\vaz\frac{\ln(\lambda_-)}{\ln b}
-d\frac{\ln(\lambda_+)}{\ln b}]}\r\}\noz\\
&\quad\sim\|f\|_{\mathcal{L}_{X,1,d,\tz_0}^{A}(\rn)}
\sum_{k\in\nn}b^{-k[-\frac{2}{s}+\vaz\frac{\ln(\lambda_-)}{\ln b}
-d\frac{\ln(\lambda_+)}{\ln b}]},\noz
\end{align*}
which, together with \eqref{s3e1}, \eqref{2.19.y1},
\eqref{2.19.y2}, and the arbitrariness of $m\in\nn$,
$\{x_j+B_{l_j}\}_{j=1}^m\subset \CB$
with both
$\{x_j\}_{j=1}^{m}\subset\rn$ and $\{l_j\}_{j=1}^m\subset\zz$,
and $\{\lambda_j\}_{j=1}^m\subset[0,\infty)$ with $\sum_{j=1}^m\lambda_j\neq0$,
further implies that
\begin{align*}
\|f\|_{\mathcal{L}_{X,1,d,\tz_0}^{A,\vaz}(\rn)}
&\ls\|f\|_{\mathcal{L}_{X,1,d,\tz_0}^{A}(\rn)}
\sum_{k\in\nn}b^{-k[-\frac{2}{s}+\vaz\frac{\ln(\lambda_-)}{\ln b}
-d\frac{\ln(\lambda_+)}{\ln b}]}  \\
&\sim\|f\|_{\mathcal{L}_{X,1,d,\tz_0}^{A}(\rn)}.\noz
\end{align*}
This, combined with \eqref{2.17.x1},
proves \eqref{2.18.x1} and hence finishes the proof of Theorem \ref{s3t1}.
\end{proof}

We can obtain one more equivalent characterization of
$\mathcal{L}_{X,q,d,\tz_0}^{A}(\rn)$ as follows,
whose proof is a slight modification of Theorem \ref{s3t1};
we omit the details.

\begin{theorem}\label{s3t2}
If $A$, $X$, $q$, $d$, $\tz_0$, and $\vaz$ are
the same as in Theorem \ref{s3t1},
then the conclusion of Theorem \ref{s3t1} with $m$
replaced by $\fz$ still holds true,
where the supremum therein is taken over all
$\{x_j+B_{l_j}\}_{j\in\nn}\subset \CB$
with both $\{x_j\}_{j\in\nn}\subset\rn$
and $\{l_j\}_{j\in\nn}\subset\zz$ and
$\{\lambda_j\}_{j\in\nn}\subset[0,\infty)$ with
$\sum_{j\in\nn}\lambda_j\neq0$
satisfying
\begin{align*}
\lf\|\lf\{\sum_{j\in\nn}
\lf(\frac{{\lambda}_j}{\|{\one}_{x_j+B_{l_j}}\|_X}\r)^{\tz_0}
{\one}_{x_j+B_{l_j}}\r\}^{\frac1{\tz_0}}\r\|_{X}\in(0,\infty).
\end{align*}
\end{theorem}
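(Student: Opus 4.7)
The plan is to adapt the argument of Proposition \ref{s2p1} to the present setting. Denote by $\widetilde{\|f\|}_{\mathcal{L}_{X,1,d,\tz_0}^{A,\vaz}(\rn)}$ the quantity appearing in Theorem \ref{s3t2} with $m$ replaced by $\fz$; I will show $\widetilde{\|f\|}_{\mathcal{L}_{X,1,d,\tz_0}^{A,\vaz}(\rn)}=\|f\|_{\mathcal{L}_{X,1,d,\tz_0}^{A,\vaz}(\rn)}$ for every $f\in L^q_{\rm loc}(\rn)$. Combined with Theorem \ref{s3t1}, this delivers the stated characterization.

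First I would dispatch the trivial direction $\|f\|_{\mathcal{L}_{X,1,d,\tz_0}^{A,\vaz}(\rn)}\le\widetilde{\|f\|}_{\mathcal{L}_{X,1,d,\tz_0}^{A,\vaz}(\rn)}$. Given any admissible finite family $\{(\lambda_j,x_j+B_{l_j})\}_{j=1}^m$ entering the supremum defining the finite-sum norm, extend it to a countable family by setting $\lambda_j:=0$ and $x_j+B_{l_j}:=\mathbf{0}+B_0$ for $j>m$. The extended family is admissible in the infinite-sum supremum (the denominator $X$-norm is unchanged and, by hypothesis, lies in $(0,\fz)$), and the ratio defining the inside of the supremum is unaffected by the appended zero coefficients.

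For the reverse inequality, fix an admissible countable family $\{(\lambda_j,x_j+B_{l_j})\}_{j\in\nn}$. Since every summand in both the denominator (inside the $X$-quasi-norm) and the numerator is non-negative, Definition \ref{BQBFS}(iii) applied to the truncation at level $m$ and the classical monotone convergence theorem for the positive-term series give, as $m\to\fz$, that the truncated denominator norm and the truncated numerator converge, respectively, to their full infinite-sum counterparts. Because the limiting denominator norm is positive, for $m_0$ large enough one has $\sum_{j=1}^{m_0}\lambda_j\ne0$, so the truncated family is admissible in the finite-sum supremum. Given $\eta\in(0,\fz)$, choose such an $m_0$ so that the truncated ratio exceeds the infinite-sum ratio minus $\eta$; this truncated ratio is controlled by $\|f\|_{\mathcal{L}_{X,1,d,\tz_0}^{A,\vaz}(\rn)}$. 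Letting $\eta\to0^+$ and then taking the supremum over all admissible countable families yields the claim.

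The only (mild) obstacle is the simultaneous bookkeeping of the two monotone limits, together with the need to secure $\sum_{j=1}^{m_0}\lambda_j\ne0$ so that the truncation is a legitimate competitor in the finite-sum supremum; both are handled exactly as in Proposition \ref{s2p1}. No new analytic ingredients beyond Definition \ref{BQBFS}(iii) and the non-negativity of the integrands appear, which is precisely why the authors describe the proof as a slight modification of Theorem \ref{s3t1} and omit the details.
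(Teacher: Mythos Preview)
Your argument is correct. You reduce Theorem \ref{s3t2} to Theorem \ref{s3t1} by showing, via the monotone-convergence argument of Proposition \ref{s2p1}, that the countable-family supremum in the $\vaz$-norm coincides with its finite-family counterpart; this is a clean and valid route.

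The paper's phrasing ``a slight modification of Theorem \ref{s3t1}'' most naturally suggests repeating the proof of Theorem \ref{s3t1} itself---the shell decomposition \eqref{s3e1}--\eqref{2.18.x2} and the key estimates \eqref{2.19.x1}--\eqref{2.19.x2} via Lemma \ref{s3l2}---directly for countable families, then invoking Proposition \ref{s2p1} at the end to identify the resulting countable-sum Campanato norm with $\|f\|_{\mathcal{L}_{X,1,d,\tz_0}^{A}(\rn)}$. Your approach instead treats Theorem \ref{s3t1} as a black box and transfers the finite-to-countable passage entirely to the $\vaz$-side. Both routes are short; yours avoids re-running the technical shell estimates, while the paper's intended route makes the parallel with Theorem \ref{s3t1} more literal. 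Either way, the only analytic input is Definition \ref{BQBFS}(iii) and non-negativity, exactly as you note.
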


\begin{remark}\label{3.24.x2}
If $A:=2\,I_{n\times n}$, then
Theorems \ref{s3t1} and \ref{s3t2} were obtained in
\cite[Theorems 4.1 and 4.4]{zhyy21}, respectively.
\end{remark}

\section{Littlewood--Paley Function Characterizations of $H^A_X(\rn)$\label{s4}}

In this section, we establish the characterizations of $H_X^A(\rn)$
in terms of the anisotropic Lusin area function,
the anisotropic Littlewood--Paley $g$-function, or
the anisotropic Littlewood--Paley $g_\lambda^\ast$-function.
These are the consequence of the atomic and the
finite atomic characterizations of $H_{X}^{A}(\rn)$ obtained
in \cite{wyy22}
and play important roles in establishing
the Carleson measure characterization
of $\mathcal{L}_{X,1,d,\tz_0}^{A}({\rn})$ in Section \ref{s5}.
First, we recall the concepts of both the radial maximal
function and the radial grand maximal function, which
were introduced in \cite{Bownik}.

\begin{definition}\label{radialM}
Let $\varphi\in\cs(\rn)$ and $f\in\cs'(\rn)$.
The {\it radial maximal function} $M_\varphi^0(f)$
of $f$ with respect to $\varphi$ is defined by setting,
for any $x\in\rn$,
\begin{equation*}
M_\varphi^0 (f)(x):=\sup_{k \in \zz}|f\ast\varphi_k(x)|.
\end{equation*}
Moreover, for any given $N\in\nn$,
the {\it radial grand maximal function} $M_N^0(f)$
of $f\in\cs'(\rn)$ is defined by setting, for any $x\in\rn$,
\begin{equation*}
M_N^0(f)(x):=\sup_{\varphi\in\cs_N(\rn)} M_\varphi^0(f)(x).
\end{equation*}
\end{definition}
In what follows, for any $\varphi\in\cs(\rn)$, $\widehat{\varphi}$
is defined by setting, for any $\xi\in\rn$,
\begin{align*}
\widehat \varphi(\xi) := \int_{\rn} \varphi(x) e^{-2\pi\imath x \cdot \xi} \,dx,
\end{align*}
where $\imath:=\sqrt{-1}$ and $x\cdot\xi :=\sum_{i=1}^{n}x_i \xi_i$
for any $x:=(x_1,\ldots,x_n),\xi:=(\xi_1,\ldots,\xi_n) \in \rn$.
For any $f\in\cs'(\rn)$, $\widehat f$ is defined by setting,
for any $\varphi\in\mathcal{S}(\rn)$,
$\la\widehat f,\varphi\ra:=\la f,\widehat\varphi\ra$.

Recall that $f\in\cs'(\rn)$ is said to \emph{vanish weakly at infinity} if,
for any $\phi\in\cs(\rn)$,
$f\ast\phi_k\to0$ in $\cs'(\rn)$ as $k\to\infty$
(see, for instance, \cite[p.\,50]{fs82}).
Let $\mathcal{C}_{\rm c}^{\infty}(\rn)$ denote the collection of
all the infinitely differentiable functions with compact support
on $\rn$.
The following Calder\'on reproducing formula is just
\cite[Proposition 2.14]{blyz08weight}.
\begin{lemma}\label{s4l1}
Let $d \in \mathbb{Z}_{+}$ and $A$ be a dilation. Assume that
$\phi\in\mathcal{C}_{\rm c}^{\infty}(\rn)$ satisfies
\begin{align}\label{s4l1e1}
\supp \phi \subset B_0,
\ \int_{{{\rr}^n}}x^\gamma\phi(x)\,dx=0
\ \mbox{for any}\ \gamma\in\zz_+^n\
\mbox{with}\ |\gamma|\leq d,
\end{align}
and there exists a positive constant $C$ such that
\begin{align}\label{s4l1e2}
\lf|\widehat{\phi}(\xi)\r|\ge C \
when \ \xi\in\left\{x\in\rn :\ (2||A||)^{-1}\leq\rho(x)\leq1 \right\},
\end{align}
where the dilation $A:=(a_{i, j})_{1\leq i, j\leq n}$
and $||A||:=(\sum_{i, j=1}^n|a_{i, j}|^2)^{1/2}$.
Then there exists a $\psi \in \mathcal{S}\left(\mathbb{R}^{n}\right)$ such that
\begin{enumerate}
\item[\rm(i)] $\supp\widehat{\psi}$ is compact and away from the origin;
\item[\rm(ii)] for any $\xi \in \mathbb{R}^{n} \backslash\{\bf{0}\}$,
$$\sum_{j \in \mathbf{Z}} \widehat{\psi}\left(\left(A^{*}\right)^{j} \xi\right)
\widehat{\phi}\left(\left(A^{*}\right)^{j} \xi\right)=1,$$
where $A^{*}$ denotes the adjoint matrix of $A$.
\end{enumerate}
Moreover, for any $f \in \mathcal{S}^{\prime}\left(\mathbb{R}^{n}\right)$,
if $f$ vanishes weakly at infinity, then
$$f=\sum_{j \in \mathbb{Z}} f \ast \psi_{j} * \phi_{j} \  in
\  \mathcal{S}^{\prime}\left(\mathbb{R}^{n}\right).$$
\end{lemma}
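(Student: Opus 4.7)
The plan is first to build $\widehat\psi$ as a smooth anisotropic bump divided by $\widehat\phi$ in the annulus where $\widehat\phi$ is bounded below. Concretely, I fix a nonnegative $\eta\in\mathcal{C}_{\rm c}^\infty(\rn)$ supported in $\{(2\|A\|)^{-1}<\rho(\xi)<1\}$ and strictly positive on a slightly smaller annulus chosen so that its $A^*$-dilates cover $\rn\setminus\{\mathbf{0}\}$ and each nonzero $\xi$ meets only finitely many dilates. Then $\Sigma(\xi):=\sum_{j\in\zz}\eta((A^*)^j\xi)$ is smooth and strictly positive on $\rn\setminus\{\mathbf{0}\}$, and $\Theta:=\eta/\Sigma\in\mathcal{C}_{\rm c}^\infty(\rn)$ satisfies $\sum_{j\in\zz}\Theta((A^*)^j\xi)=1$ for $\xi\neq\mathbf{0}$. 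Since $\supp\Theta$ lies where \eqref{s4l1e2} gives a uniform lower bound on $|\widehat\phi|$, setting $\widehat\psi:=\Theta/\widehat\phi$ (extended by $0$ off $\supp\Theta$) produces a smooth, compactly supported function supported away from $\mathbf{0}$, giving (i). Property (ii) then follows at once from $\widehat\psi((A^*)^j\xi)\widehat\phi((A^*)^j\xi)=\Theta((A^*)^j\xi)$ and the partition identity.

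\textbf{Reproducing formula.} Using $\widehat{\phi_j}(\xi)=\widehat\phi((A^*)^j\xi)$ (and likewise for $\psi$), the partial sum $S_N:=\sum_{|j|\leq N}f\ast\psi_j\ast\phi_j$ satisfies $\widehat{S_N}=\widehat{f}\,m_N$ with $m_N(\xi):=\sum_{|j|\leq N}\Theta((A^*)^j\xi)$. Decompose $1-m_N=h_N^{\mathrm{low}}+h_N^{\mathrm{high}}$ with $h_N^{\mathrm{low}}:=\sum_{j>N}\Theta((A^*)^j\xi)$ and $h_N^{\mathrm{high}}:=\sum_{j<-N}\Theta((A^*)^j\xi)$. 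By the anisotropic scaling, $h_N^{\mathrm{low}}(\xi)=h_0^{\mathrm{low}}((A^*)^N\xi)$, where $h_0^{\mathrm{low}}$ (continuously extended by $1$ at $\mathbf{0}$) belongs to $\mathcal{C}_{\rm c}^\infty(\rn)$; hence $h_N^{\mathrm{low}}=\widehat{\Phi_N}$ for a fixed $\Phi\in\cs(\rn)$. The low-frequency piece of $f-S_N$ therefore equals $f\ast\Phi_N$, which tends to $0$ in $\cs'(\rn)$ precisely by the weak vanishing of $f$ at infinity. For the high-frequency piece, since $h_N^{\mathrm{high}}$ is smooth and supported in $\{\rho(\xi)\gs b^N\}$, testing against any $\varphi\in\cs(\rn)$ reduces to $\langle\widehat{f},h_N^{\mathrm{high}}\widehat\varphi\rangle\to 0$, which follows from $h_N^{\mathrm{high}}\widehat\varphi\to 0$ in $\cs(\rn)$ through the Schwartz decay of $\widehat\varphi$ on the escaping tail.

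\textbf{Main obstacle.} The most delicate point is the exact identification $h_N^{\mathrm{low}}=\widehat{\Phi_N}$ with a single $\Phi\in\cs(\rn)$, which is what allows the hypothesis ``$f$ vanishes weakly at infinity'' to be invoked directly. Establishing this needs the fact that, although every individual summand $\Theta((A^*)^j\xi)$ vanishes at $\mathbf{0}$, the tail sum $h_0^{\mathrm{low}}$ extends smoothly through the origin to a compactly supported function equal to $1$ on a neighborhood of $\mathbf{0}$, a consequence of the partition property together with local finiteness of the sum. Once this identification is secured, the high-frequency convergence is routine, and assembling the two tails delivers $f=\sum_{j\in\zz}f\ast\psi_j\ast\phi_j$ in $\cs'(\rn)$.
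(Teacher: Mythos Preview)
The paper does not prove this lemma; it simply quotes it as \cite[Proposition~2.14]{blyz08weight}. Your argument is the standard construction underlying that result and is correct in outline: build an anisotropic partition of unity $\Theta$ on the Fourier side supported where \eqref{s4l1e2} gives the lower bound, set $\widehat\psi:=\Theta/\widehat\phi$, and for the reproducing formula split $1-m_N$ into a low-frequency tail (identified as $\widehat{\Phi_N}$ for a fixed $\Phi\in\cs(\rn)$, so that $f\ast\Phi_N\to 0$ in $\cs'(\rn)$ by the weak-vanishing hypothesis) and a high-frequency tail (handled by the Schwartz decay of the test function together with the uniform bounds on derivatives of $h_N^{\mathrm{high}}$ coming from the fact that $(A^*)^j$ contracts for $j<0$).

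One technical point you pass over quickly: the annulus in \eqref{s4l1e2} is stated for the step quasi-norm $\rho$ associated with $A$, whereas the covering you need is by $(A^*)^j$-dilates, and $\rho$ is not homogeneous with respect to $A^*$. Making your sentence ``chosen so that its $A^*$-dilates cover $\rn\setminus\{\mathbf{0}\}$'' rigorous requires introducing the dual step quasi-norm $\rho_*$ for $A^*$ (see \cite{Bownik}) and checking that a $\rho_*$-annulus whose $(A^*)^j$-dilates cover $\rn\setminus\{\mathbf{0}\}$ actually sits inside the region $\{(2\|A\|)^{-1}\le\rho\le 1\}$ where $|\widehat\phi|\ge C$. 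This is where the specific constant $(2\|A\|)^{-1}$ enters, and it is routine once the comparison between $\rho$ and $\rho_*$ is in hand, but it deserves a sentence rather than an assertion.
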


The following definitions of the anisotropic Lusin area function,
the anisotropic Littlewood--Paley $g$-function,
and the anisotropic Littlewood--Paley $g_\lambda^\ast$-function
were introduced in \cite[Definition 2.6]{lyy2018}.
\begin{definition}\label{de4.1}
Let $\phi\in\cs(\rn)$ be the same as in Lemma \ref{s4l1}.
For any $f\in\cs'(\rn)$, the \emph{anisotropic Lusin area function} $S(f)$,
the \emph{anisotropic Littlewood--Paley $g$-function $g(f)$},
and the \emph{anisotropic Littlewood--Paley} $g_\lambda^\ast$-\emph{function}
$g_\lambda^\ast(f)$ with
any given $\lambda\in(0,\infty)$ are defined, respectively, by setting,
for any $x\in\rn$,
\begin{equation}\label{deaf}
S(f)(x):=\left[\sum_{k\in\zz}b^{-k}\int_{x+B_k}\lf|f\ast \phi_k(y)\r|^2\,dy\right]
^\frac12,
\end{equation}
\begin{equation*}
g(f)(x):=\left[\sum_{k\in\zz}\lf|f\ast\phi_k(x)\r|^2\right]^\frac12,
\end{equation*}
and
\begin{equation*}
g_\lambda^\ast(f)(x):=\left\{\sum_{k\in\zz}b^{-k}\int_{\rn}
\left[\frac{b^k}{b^k+\rho(x-y)}\right]^\lambda\lf|f\ast\phi_k(y)\r|^2\,dy\right\}
^\frac12.
\end{equation*}
\end{definition}

We characterize the space $H_X^{A}(\rn)$, respectively, in terms of the
anisotropic Lusin area function, the anisotropic Littlewood--Paley $g$-function,
and the anisotropic Littlewood--Paley $g_\lambda^\ast$-function as follows.
\begin{theorem}\label{s4t1}
Let $A$ be a dilation and $X$ a ball quasi-Banach function space satisfying
both Assumption \ref{Assum-1} with $p_-\in(0,\fz)$ and Assumption \ref{Assum-2}
with the same $p_-$, $\tz_0\in(0,\underline{p})$, and $p_0\in(\tz_0,\fz)$,
where $\underline{p}$ is the same as in \eqref{underp}.
Then $f\in H_X^A(\rn)$ if and only if $f\in\cs'(\rn)$,
$f$ vanishes weakly at infinity, and $\|S(f)\|_X<\infty$.
Moreover, for any $f \in H_{X}^{A}\left(\mathbb{R}^{n}\right)$,
$$
\|S(f)\|_{X}  \sim \|f\|_{H_{X}^{A}\left(\rn\right)},
$$
where the positive equivalence constants are independent of $f$.
\end{theorem}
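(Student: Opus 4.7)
The plan is to establish the equivalence $\|S(f)\|_X \sim \|f\|_{H_X^A(\rn)}$ via a two-sided estimate, using the atomic and finite atomic characterizations of $H_X^A(\rn)$ from \cite{wyy22} (recalled in Lemmas \ref{s2l1} and \ref{finatomth}), the anisotropic Calder\'on reproducing formula of Lemma \ref{s4l1}, and the two bounds (Assumptions \ref{Assum-1} and \ref{Assum-2}) available on $X$. The first step is to prove an auxiliary result (Theorem \ref{s4t2}) that the quasi-norm $\|S(f)\|_X$ is, up to positive multiplicative constants, independent of the particular $\phi$ chosen in Lemma \ref{s4l1}. For this, I would compare two Lusin area functions $S_{\phi}(f)$ and $S_{\widetilde\phi}(f)$ via the reproducing formula, showing pointwise that $S_\phi(f)(x) \lesssim S_{\widetilde\phi}(f)(x)$ modulo a Peetre-type non-tangential maximal expression, and then apply the vector-valued Fefferman--Stein inequality furnished by Assumption \ref{Assum-1} on $X^{1/\theta_0}$ to control this maximal expression in $X$.

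For the direction $\|S(f)\|_X \lesssim \|f\|_{H_X^A(\rn)}$, I would start from the atomic decomposition $f=\sum_j \lambda_j a_j$ given by Lemma \ref{s2l1}, where each $a_j$ is an anisotropic $(X,q,d)$-atom supported in a dilated ball $B^{(j)}$. For a single atom $a$ supported in a ball $B$, one splits $\rn = (A^{\tau} B) \cup (A^\tau B)^\complement$ and estimates $S(a)$ separately. On the near part one uses the $L^q$-boundedness of $S$ (which follows from Plancherel and the $L^q$-boundedness of the Hardy--Littlewood maximal operator via Assumption \ref{Assum-2}) together with the size condition of $a$; on the far part one exploits the vanishing moments of $a$ up to order $d$ and the decay of $\phi_k$ to deduce a pointwise bound by $\mathcal M(\one_B)^{1/\theta_0}$ times a constant involving $\|\one_B\|_X^{-1}$. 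The contributions are assembled by the vector-valued Fefferman--Stein inequality on $X^{1/\theta_0}$, yielding the desired bound.

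The reverse inequality $\|f\|_{H_X^A(\rn)} \lesssim \|S(f)\|_X$ is the harder direction and is the main obstacle. Here I would apply the anisotropic Calder\'on reproducing formula (Lemma \ref{s4l1}) to write $f=\sum_{k\in\zz} f\ast\psi_k\ast\phi_k$, and then perform a Calder\'on--Zygmund-type stopping-time decomposition on the level sets of $S(f)$. For each $i\in\zz$ set $\Omega_i:=\{x\in\rn:\ S(f)(x)>2^i\}$ and use a Whitney-type covering of $\Omega_i$ by dilated balls, together with the tent-space machinery adapted to the anisotropic setting, to produce a decomposition $f=\sum_{i,j}\lambda_{i,j}a_{i,j}$ into anisotropic $(X,q,d)$-atoms whose coefficients $\{\lambda_{i,j}\}$ satisfy
\[
\lf\|\lf\{\sum_{i,j}\lf[\frac{\lambda_{i,j}}{\|\one_{B^{(i,j)}}\|_X}\r]^{\theta_0}\one_{B^{(i,j)}}\r\}^{\frac{1}{\theta_0}}\r\|_X \lesssim \|S(f)\|_X.
\]
Invoking Lemma \ref{s2l1} then gives $f\in H_X^A(\rn)$ with the right quasi-norm control. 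The delicate part is establishing this coefficient estimate while $X$ has no explicit expression and its quasi-norm need not be absolutely continuous; following the idea announced in the introduction and the strategy of \cite{lyy2018,lwyy2019}, I would embed $X$ into an anisotropic weighted Lebesgue space using an appropriate Muckenhoupt-type weight produced from an element of $(X^{1/\theta_0})'$ via Assumption \ref{Assum-2}, transfer the pointwise and the tent-space estimates to this weighted setting where the Fefferman--Stein inequality and Littlewood--Paley theory are classical, and pull the resulting control back to $X$ via Assumption \ref{Assum-1}.

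Finally, one needs the hypothesis that $f$ vanishes weakly at infinity precisely to justify the use of the reproducing formula in $\cs'(\rn)$, and, in the forward direction, to verify that any $f\in H_X^A(\rn)$ does vanish weakly at infinity. This last statement follows by testing against Schwartz functions and using the boundedness of the non-tangential grand maximal function $M_N(f)$ in $X$, together with a routine scaling argument using the step quasi-norm $\rho$.
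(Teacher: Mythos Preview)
Your overall architecture is correct and matches the paper's: prove Theorem \ref{s4t2} first, then establish the two inequalities via atomic decomposition in one direction and a level-set/reproducing-formula construction in the other. Two points deserve comment.

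First, you misplace the role of the weighted-Lebesgue embedding. In the paper, Lemma \ref{embed} (the embedding $X\hookrightarrow L^{\theta_0}_w(\rn)$ with $w=[\cm(\one_{x_0+B_0})]^\epsilon$) is used \emph{only} to prove that every $f\in H_X^A(\rn)$ vanishes weakly at infinity (Lemma \ref{HXAvanish}); it plays no part in the coefficient estimate for the sufficiency direction. The estimate
\[
\lf\|\lf\{\sum_{k,i}\lf[\frac{\lambda_i^k}{\|\one_{B_i^k}\|_X}\r]^{\theta_0}\one_{B_i^k}\r\}^{1/\theta_0}\r\|_X\lesssim\|S_\psi(f)\|_X
\]
is obtained directly in $X$ from Assumption \ref{Assum-1}, after observing that $\one_{B_i^k}\lesssim[\cm(\one_{Q_i^k\cap\Omega_k})]$ pointwise because $|Q_i^k\cap\Omega_k|\ge|Q_i^k|/2$. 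The stopping-time is not a Whitney covering of $\Omega_k$ but the dyadic selection $\mathcal Q_k:=\{Q:\ |Q\cap\Omega_k|>|Q|/2,\ |Q\cap\Omega_{k+1}|\le|Q|/2\}$, which is what makes both the maximal-function control above and the $L^2$ tent estimate over $\widehat Q$ work simultaneously.

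Second, you omit the step that is genuinely delicate without absolute continuity of $\|\cdot\|_X$: proving that the atomic sum $\sum_{k,i}\lambda_i^k a_i^k$ actually converges in $\cs'(\rn)$ \emph{to $f$}. The paper does this in two stages (its Steps (1) and (2)): convergence in $H_X^A(\rn)$ of the partial sums, and then identification of the limit with $f$. The latter requires truncating the reproducing formula, introducing approximants $\widetilde f_L^{(\varepsilon)}$, and invoking a $\liminf$-type lemma (Lemma \ref{s4l3}) that controls $\|\liminf_\varepsilon(\sum|\lambda_\ell m_\ell^{(\varepsilon)}|^{\theta_0})^{1/\theta_0}\|_X$ by the usual coefficient expression. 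This is where the lack of absolute continuity is actually circumvented, not in the coefficient estimate.
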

\begin{theorem}\label{s4t1'}
Let $A$ and $X$ be the same as in Theorem \ref{s4t1}.
Then $f\in H_X^A(\rn)$ if and only if $f\in\cs'(\rn)$,
$f$ vanishes weakly at infinity, and $\|g(f)\|_X<\infty$.
Moreover, for any $f \in H_{X}^{A}\left(\mathbb{R}^{n}\right)$,
$$
\|g(f)\|_{X}  \sim \|f\|_{H_{X}^{A}\left(\rn\right)},
$$
where the positive equivalence constants are independent of $f$.
\end{theorem}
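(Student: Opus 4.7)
The plan is to deduce the $g$-function characterization from the already established Lusin-area characterization (Theorem \ref{s4t1}), via the Peetre-type maximal function and the Fefferman--Stein vector-valued inequality supplied by Assumption \ref{Assum-1}. In particular, once one shows that $\|g(f)\|_X\sim\|S(f)\|_X$ (with the same $\phi$ as in Lemma \ref{s4l1}) for every $f\in\cs'(\rn)$ that vanishes weakly at infinity, the theorem follows immediately from Theorem \ref{s4t1}.

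First, for any $a\in(0,\fz)$, define the anisotropic Peetre-type maximal function, for any $k\in\zz$ and $x\in\rn$, by
\begin{align*}
(\phi_k^\ast f)_a(x):=\sup_{y\in\rn}\frac{|f\ast\phi_k(x-y)|}{[1+b^k\rho(y)]^a}.
\end{align*}
Two pointwise bounds are immediate from the definitions: since $(\phi_k^\ast f)_a(x)\ge|f\ast\phi_k(x)|$ and since, for any $y\in x+B_k$, $b^k\rho(y-x)\ls 1$, one obtains
\begin{align*}
g(f)(x)\le\lf[\sum_{k\in\zz}\lf[(\phi_k^\ast f)_a(x)\r]^2\r]^{\frac12}
\end{align*}
and, similarly, $S(f)(x)\ls[\sum_{k\in\zz}[(\phi_k^\ast f)_a(x)]^2]^{1/2}$. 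The heart of the Ullrich--Liang--Liu approach is the reverse estimate: for suitable parameters one has, for any $r\in(0,\fz)$ with $ar>\ln b/\ln(\lz_-)$ and all $x\in\rn$,
\begin{align*}
(\phi_k^\ast f)_a(x)\ls\lf[\cm\lf(|f\ast\phi_k|^r\r)(x)\r]^{\frac1r},
\end{align*}
which is proved by decomposing $\rn$ into annuli $A^{-k}y\in B_{j+1}\setminus B_j$ and exploiting the smoothness and vanishing moments of $\phi$ together with the fundamental Calder\'on-type reproducing formula of Lemma \ref{s4l1}; the anisotropic version of this pointwise inequality is essentially the key lemma of \cite{lwyy2019} (and is what Lemma \ref{s3l2} is eventually used to feed).

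Assuming this pointwise control, both directions of the equivalence now follow by the same argument. Choose $r\in(0,\unp)$ (with $\unp$ as in \eqref{underp}) small enough that $ar>\ln b/\ln(\lz_-)$ and $2/r>1$, and apply Assumption \ref{Assum-1} to the $(2/r)$-th-power Fefferman--Stein inequality on $X^{1/r}$: raising the pointwise bound to the power $r$, summing in $k$, taking the $r/2$-th power, and then the $X^{1/r}$-norm, one gets
\begin{align*}
\lf\|\lf[\sum_{k\in\zz}(\phi_k^\ast f)_a^2\r]^{\frac12}\r\|_X
\ls\lf\|\lf[\sum_{k\in\zz}|f\ast\phi_k|^2\r]^{\frac12}\r\|_X
=\|g(f)\|_X,
\end{align*}
which, combined with the pointwise bound $S(f)\ls[\sum_k(\phi_k^\ast f)_a^2]^{1/2}$, yields $\|S(f)\|_X\ls\|g(f)\|_X$; exchanging the role of $g$ and $S$ in the last step (using instead $|f\ast\phi_k(y)|\le(\phi_k^\ast f)_a(x)$ for $y\in x+B_k$ so that $|f\ast\phi_k(x)|^2\ls b^{-k}\int_{x+B_k}|f\ast\phi_k(y)|^2\,dy\cdot$ something via averaging) gives the companion inequality $\|g(f)\|_X\ls\|S(f)\|_X$. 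In fact, the cleanest route is to bound $g(f)$ directly by $[\sum_k(\phi_k^\ast f)_a^2]^{1/2}$ and this latter quantity by the Hardy--Littlewood maximal powers of the Lusin-area integrand, again via Assumption \ref{Assum-1}. Combining both chains of inequalities with Theorem \ref{s4t1} completes the proof; note also that the ``vanishes weakly at infinity'' hypothesis is preserved in both directions because it enters Lemma \ref{s4l1} and is equivalent to the validity of the Calder\'on reproducing formula. The main technical obstacle is the pointwise Peetre-type estimate $(\phi_k^\ast f)_a\ls[\cm(|f\ast\phi_k|^r)]^{1/r}$: extracting it in the anisotropic setting requires delicate control of the overlapping $A$-dilated balls and a careful use of the decay of $\phi$ along eigendirections of $A$, rather than the usual isotropic geometry.
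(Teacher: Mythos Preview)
Your overall strategy for the sufficiency direction---dominate $S(f)$ pointwise by the Peetre-type square function and then control the latter by $\|g(f)\|_X$ via the Fefferman--Stein inequality of Assumption \ref{Assum-1}---is exactly what the paper does. However, the displayed pointwise estimate
\[
(\phi_k^\ast f)_a(x)\ls\lf[\cm\lf(|f\ast\phi_k|^r\r)(x)\r]^{1/r}
\]
(with the \emph{same} level $k$ on both sides) is not available here and is not what \cite{lwyy2019} proves. That single-level Peetre inequality relies on a Plancherel--P\'olya sub-mean-value property which requires $f\ast\phi_k$ to be band-limited, i.e., $\widehat\phi$ compactly supported; but the $\phi$ of Lemma \ref{s4l1} and Definition \ref{de4.1} has compact \emph{spatial} support, so $\widehat\phi$ is entire and $f\ast\phi_k$ is not band-limited. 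What \cite{lwyy2019} actually gives is the paper's Lemma \ref{s4l4}, a genuinely \emph{multi-level} bound: $[(\phi_l^\ast f)_t(x)]^\gamma$ is controlled by a sum over $k\ge0$ of weighted integrals of $|\phi_{-(k+l)}\ast f|^\gamma$. The paper then needs a further nontrivial step---Minkowski to pull the $k$-sum outside the $\ell^2$-norm, an annular decomposition of the kernel to produce Hardy--Littlewood maximal functions, and finally Assumption \ref{Assum-1}---before the right-hand side collapses to $\|g(f)\|_X$. Your sketch skips precisely this mechanism. (Two smaller points: your Peetre weight should carry $b^{-k}\rho(y)$, not $b^k\rho(y)$, since $y\in B_k$ gives $\rho(y)\ls b^k$; and Lemma \ref{s3l2} plays no role here---the relevant input is Lemma \ref{s4l4}.)

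For the necessity direction, the paper does \emph{not} pass through an inequality $\|g(f)\|_X\ls\|S(f)\|_X$. Instead it repeats the atomic-decomposition argument from the necessity of Theorem \ref{s4t1}: decompose $f=\sum\lambda_ia_i$, bound $g(a_i)$ on and off the dilated support exactly as for $S(a_i)$, and sum using Assumption \ref{Assum-1}. Your averaging sketch (``$|f\ast\phi_k(x)|^2\ls b^{-k}\int_{x+B_k}|f\ast\phi_k|^2$\,'') is in the wrong direction and would need a sub-mean-value inequality that, again, is unavailable for this $\phi$.
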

Moreover, by Theorems \ref{s4t1} and \ref{s4t1'}
and an argument similar to that used in the proof of
\cite[Theorem 5.3]{yhyy2}, we easily obtain the following result;
we omit the details.
\begin{theorem}\label{s4t1''}
Let $A$, $X$, and $\tz_0$ be the same as in Theorem \ref{s4t1},
Then $f\in H_X^A(\rn)$ if and only if $f\in\cs'(\rn)$,
$f$ vanishes weakly at infinity, and $\|g_{\lambda}^{*}(f)\|_X<\infty$.
Moreover, for any $f \in H_{X}^{A}\left(\mathbb{R}^{n}\right)$,
$$
\|g_{\lambda}^{*}(f)\|_{X}  \sim \|f\|_{H_{X}^{A}\left(\rn\right)},
$$
where the positive equivalence constants are independent of $f$,
$\lambda \in(\max\{1,2/r_+\}, \infty)$, and
\begin{align}\label{3.25.x1}
r_+&:=\sup\lf\{\tz_0\in(0,\infty):\r.X\,\text{satisfies Assumption \ref{Assum-2}
for this}\ \tz_0\ \\
&\qquad\qquad\quad\lf.\text{and some}\ p_0\in(\tz_0,\infty)\r\}.\noz
\end{align}
\end{theorem}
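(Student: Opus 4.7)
My plan is to prove both directions of the equivalence by reducing to Theorem \ref{s4t1}, the characterization of $H_X^A(\rn)$ via the Lusin area function $S(f)$.

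For the easy direction $\|g_\lambda^*(f)\|_X < \infty \Rightarrow f\in H_X^A(\rn)$, I would observe that for any $x\in\rn$ and any $y\in x+B_k$, Definition \ref{def-shqn} gives $\rho(x-y)<b^k$, so
\[
\left[\frac{b^k}{b^k+\rho(x-y)}\right]^\lambda \ge 2^{-\lambda}.
\]
Restricting the $y$-integral in the definition of $g_\lambda^*(f)(x)$ to $x+B_k$ yields the pointwise bound $S(f)(x)\le 2^{\lambda/2}g_\lambda^*(f)(x)$. By Definition \ref{BQBFS}(ii), $\|S(f)\|_X\ls\|g_\lambda^*(f)\|_X$, and Theorem \ref{s4t1} then gives $f\in H_X^A(\rn)$ with $\|f\|_{H_X^A(\rn)}\ls\|g_\lambda^*(f)\|_X$.

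For the reverse direction, starting from $f\in H_X^A(\rn)$, Theorem \ref{s4t1} provides that $f$ vanishes weakly at infinity and $\|S(f)\|_X\ls\|f\|_{H_X^A(\rn)}$. Decomposing $\rn=(x+B_k)\cup\bigcup_{j\ge 0}[(x+B_{k+j+1})\setminus(x+B_{k+j})]$ and noting that on the $j$-th annulus $\rho(x-y)\sim b^{k+j}$, I would obtain
\[
[g_\lambda^*(f)(x)]^2 \ls \sum_{j=0}^\infty b^{-(\lambda-1)j}\,[S_j(f)(x)]^2,
\]
where, for $j\in\zz_+$, the wider-aperture Lusin function is
\[
S_j(f)(x) := \left[\sum_{k\in\zz} b^{-(k+j)}\int_{x+B_{k+j}}|f\ast\phi_k(y)|^2\,dy\right]^{1/2},
\]
so that $S_0=S$. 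The core technical step is to show, for some $r\in(0,r_+)$ chosen so that $r\lambda>2$, the pointwise bound $S_j(f)(x)\ls b^{\beta j}[\cm([S(f)]^r)(x)]^{1/r}$ with $\beta=\beta(r)$ small enough that $2\beta<\lambda-1$. This estimate would be established through a Peetre-type control of $|f\ast\phi_k|$ by $[\cm(|f\ast\phi_k|^r)]^{1/r}$, combined with a covering of $x+B_{k+j}$ by essentially $b^j$ translates of a dilated ball of scale $b^k$ whose centers lie close to $x$ in the quasi-norm $\rho$. Assumption \ref{Assum-2} (together with Assumption \ref{Assum-1}) would then give $\|[\cm([S(f)]^r)]^{1/r}\|_X\ls\|S(f)\|_X$, and summing the resulting geometric series in $j$ would yield $\|g_\lambda^*(f)\|_X\ls\|S(f)\|_X\ls\|f\|_{H_X^A(\rn)}$.

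The main obstacle will be the simultaneous calibration of the exponent $r$ to satisfy $r<r_+$, $r\lambda>2$, and the admissibility conditions for the Fefferman--Stein inequality in Assumption \ref{Assum-1}, together with the Peetre-type pointwise estimate whose anisotropic form requires careful tracking of the scaling of $\phi_k$ under the dilation $A$ and the step quasi-norm $\rho$ in place of the Euclidean metric. The overall architecture parallels that of \cite[Theorem 5.3]{yhyy2}, adapted from the space-of-homogeneous-type setting to the anisotropic Euclidean setting by replacing quasimetric balls with the dilated balls $x+B_k$ and exploiting the exact volume relation $|B_k|=b^k$ together with the openness of $r_+$ in \eqref{3.25.x1}, which is precisely what permits shrinking $r$ while maintaining $r\lambda>2$.
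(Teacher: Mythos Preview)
Your proposal is essentially correct and follows the same route as the paper: the paper itself omits the proof, simply deferring to Theorems \ref{s4t1} and \ref{s4t1'} together with the argument of \cite[Theorem 5.3]{yhyy2}, and you likewise reduce to Theorem \ref{s4t1} via the pointwise bound $S(f)\lesssim g_\lambda^*(f)$ for one direction and the annuli decomposition plus aperture change (the content of \cite[Theorem 5.3]{yhyy2}) for the other.

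Two small remarks. First, the paper invokes both Theorems \ref{s4t1} and \ref{s4t1'}, whereas you use only Theorem \ref{s4t1}; this is harmless, since the area-function route you describe is sufficient on its own. Second, your ``core technical step'' --- the pointwise inequality $S_j(f)(x)\lesssim b^{\beta j}[\cm([S(f)]^r)(x)]^{1/r}$ --- is stated somewhat optimistically. The standard aperture-change argument in the ball quasi-Banach setting (as in \cite{yhyy2} or \cite{cwyz20}) proceeds through a level-set inequality of the form $|\{\tilde S_j(f)>\mu\}|\lesssim b^{j}|\{S(f)>c\mu\}|$, which is then converted to a \emph{norm} estimate $\|\tilde S_j(f)\|_X\lesssim b^{j\max(1/r,1/2)}\|S(f)\|_X$ via Assumption \ref{Assum-1}; a genuine pointwise bound of the shape you wrote is not available in general (the obstacle is that the $z$-region of integration in the Fubini trick depends on the scale $k$). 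Your calibration $r\lambda>2$ with $r<r_+$ is exactly right and recovers the stated range $\lambda>\max\{1,2/r_+\}$, but the aperture change should be executed at the level of norms rather than pointwise.
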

To prove Theorem \ref{s4t1}, we first present the following conclusion
which shows
that the quasi-norm $\|\cdot\|_X$ of the anisotropic Lusin area functions
defined by different $\phi$ as in Lemma \ref{s4l1} are equivalent.
\begin{theorem} \label{s4t2}
Let $A$ and $X$ be the same as in Theorem \ref{s4t1} and $\phi,
\psi\in\mathcal{C}_{\rm c}^{\infty}(\rn)$ satisfy
both \eqref{s4l1e1}
and \eqref{s4l1e2}.
Then, for any $f\in\cs'(\rn)$ vanishing weakly at infinity,
$$\|S_\phi(f)\|_{X}\sim\|S_\psi(f)\|_{X},$$
where $S_\phi(f)$ is the same as in \eqref{deaf},
$S_\psi(f)$ is the same as in \eqref{deaf} with $\phi$
replaced by $\psi$,
and the positive equivalence
constants are independent of $f$.
\end{theorem}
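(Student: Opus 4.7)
The plan is to establish the equivalence by proving one direction, say $\|S_\psi(f)\|_X \lesssim \|S_\phi(f)\|_X$, with the reverse following by swapping the roles of $\phi$ and $\psi$. The argument combines the anisotropic Calder\'on reproducing formula from Lemma \ref{s4l1}, an off-diagonal pointwise estimate for the convolution of the two Schwartz kernels at different dyadic scales, and the Fefferman--Stein vector-valued maximal inequality on $X$ supplied by Assumption \ref{Assum-1}.

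First I would apply Lemma \ref{s4l1} to $\phi$ to produce a companion $\wz{\phi}\in\cs(\rn)$ satisfying $\sum_{j\in\zz}\widehat{\wz{\phi}}((A^*)^j\xi)\widehat{\phi}((A^*)^j\xi)=1$ on $\rn\setminus\{\mathbf{0}\}$. Since $f$ vanishes weakly at infinity, this yields $f = \sum_{j\in\zz} f \ast \wz{\phi}_j \ast \phi_j$ in $\cs'(\rn)$ and hence $f\ast\psi_k(y)=\sum_{j\in\zz}(f\ast\phi_j)\ast(\wz{\phi}_j\ast\psi_k)(y)$. The key technical ingredient is an off-diagonal bound of the shape
$$
\lf|\wz{\phi}_j\ast\psi_k(z)\r|\lesssim b^{-|j-k|\sigma}\, b^{-\max\{j,k\}}\lf[1+b^{-\max\{j,k\}}\rho(z)\r]^{-L},
$$
valid for any prescribed $L\in\nn$, with $\sigma\in(0,\fz)$ depending on $L$ and the vanishing-moment order $d$ prescribed by \eqref{s4l1e1}. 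This bound follows from the standard cancellation plus Schwartz-decay argument adapted to the anisotropic setting, performed by expanding $\psi_k$ in a Taylor polynomial around $z$ when $j\le k$ (and symmetrically using the moments of $\psi$ when $j>k$); such estimates are by now routine in the anisotropic literature, see for instance \cite{Bownik,lyy2018}.

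Next, fixing $r\in(0,\unp)$ with $\unp$ as in \eqref{underp} and choosing $L$ sufficiently large relative to $r$ and to $\ln b/\ln\lz_-$, a standard ellipsoidal annular decomposition of $\rn$ converts the off-diagonal bound into the pointwise estimate
$$
\lf|f\ast\psi_k(y)\r|\lesssim\sum_{j\in\zz}b^{-|j-k|\sigma'}\lf\{\cm\lf(|f\ast\phi_j|^r\r)(y)\r\}^{\frac1r}
$$
for any $y\in x+B_k$, with $\sigma'\in(0,\fz)$ independent of $j$, $k$, $f$, $x$, and $y$. Inserting this into the definition \eqref{deaf} of $S_\psi(f)$, averaging over the tent $x+B_k$ (and noting that $\cm(|f\ast\phi_j|^r)$ is essentially constant on such tents after integrating against the annular weight), and applying the discrete Minkowski and Young inequalities in $k$ yield a pointwise majorization of $[S_\psi(f)(x)]^2$ by a constant multiple of $\sum_{j\in\zz}\{\cm(|f\ast\phi_j|^r)(x)\}^{2/r}$.

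The concluding step takes the quasi-norm in $X$ and invokes the Fefferman--Stein vector-valued maximal inequality on $X^{1/r}$ with exponent $u=2/r$ granted by Assumption \ref{Assum-1}, which removes the maximal operators and recognizes the resulting $\ell^2$-sum in $j$ as the quasi-norm in $X$ of $S_\phi(f)$. The main technical obstacle is the parameter balancing: $r$ must lie in $(0,\unp)$ for Assumption \ref{Assum-1} to apply with $u=2/r>1$, while $L$ must be large enough that the exponent $\sigma'$ in the convolution sequence $\{b^{-|j-k|\sigma'}\}_{k\in\zz}$ is strictly positive (ensuring its $\ell^1$-summability, uniformly in $j$). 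Since $L$ is a free parameter in the off-diagonal estimate and $d$ is already fixed by the hypothesis, this balance is always achievable, so the argument closes without imposing any further restriction on $X$.
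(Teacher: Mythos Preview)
Your argument has a gap at the final identification step. After applying the Fefferman--Stein inequality to
\[
[S_\psi(f)(x)]^2 \lesssim \sum_{j\in\zz}\lf\{\cm\lf(|f\ast\phi_j|^r\r)(x)\r\}^{2/r},
\]
what you obtain is
\[
\|S_\psi(f)\|_X \lesssim \lf\|\lf(\sum_{j\in\zz}|f\ast\phi_j|^2\r)^{1/2}\r\|_X = \|g_\phi(f)\|_X,
\]
not $\|S_\phi(f)\|_X$. The $\ell^2$-sum of the \emph{pointwise} values $|f\ast\phi_j|$ is the Littlewood--Paley $g$-function, whereas $S_\phi(f)$ is built from the tent averages $b^{-j}\int_{x+B_j}|f\ast\phi_j(y)|^2\,dy$, and neither is pointwise controlled by the other. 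Invoking $\|g_\phi(f)\|_X\sim\|S_\phi(f)\|_X$ here would be circular: in the paper's logical structure that equivalence follows from Theorems \ref{s4t1} and \ref{s4t1'}, and Theorem \ref{s4t1} itself relies on Theorem \ref{s4t2}.

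The paper circumvents this by never reducing to pointwise values of $f\ast\psi_k$. Instead it decomposes $\rn$ into dyadic cubes $Q$ at level $\lceil (k-u)/v\rceil$ (Lemma \ref{defdya}) and observes that
\[
\frac{1}{|Q|}\lf|\int_Q f\ast\psi_k(y)\,dy\r| \lesssim \inf_{z\in Q} Y_\psi^{(k)}(f)(z),
\quad
Y_\psi^{(k)}(f)(z):=\lf[b^{-k}\int_{z+B_k}|f\ast\psi_k(y)|^2\,dy\r]^{1/2},
\]
so the input to the maximal function is already the Lusin-area building block. After the Fefferman--Stein step one then lands on $\|(\sum_k [Y_\psi^{(k)}(f)]^2)^{1/2}\|_X=\|S_\psi(f)\|_X$ directly. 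To repair your route you would need an analogous device converting $|f\ast\phi_j|$ into local $L^2$-averages \emph{before} the maximal estimate; simply bounding the convolution by $\{\cm(|f\ast\phi_j|^r)\}^{1/r}$ loses exactly the averaging that distinguishes $S_\phi$ from $g_\phi$.
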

To prove Theorem \ref{s4t2}, we need the following lemma
which is just \cite[Lemma2.3]{blyz08weight} and originates from
\cite[Theorem 11]{christ90}.
\begin{lemma}\label{defdya}
Let $A$ be a dilation. Then there exists a collection
$$\mathcal{Q} :=\left\{Q_{\alpha}^{k}
\subset \mathbb{R}^{n}:\ k \in \mathbb{Z}, \alpha \in I_{k}\right\}$$
of open subsets, where $I_{k}$ is certain index set, such that

\begin{enumerate}
\item[{\rm (i)}] $|\mathbb{R}^{n}
\setminus\bigcup_{\alpha}Q_{\alpha}^{k}|=0$
for each fixed $k$ and $Q_{\alpha}^{k}\cap Q_{\beta}^{k}=\emptyset$
for any $\alpha \neq \beta$;

\item[{\rm (ii)}] for any $\alpha, \beta, k$, $\ell$ with $\ell \geq k$,
either $Q_{\alpha}^{k} \cap Q_{\beta}^{\ell}=\emptyset$
or $Q_{\alpha}^{\ell} \subset Q_{\beta}^{k}$;

\item[{\rm (iii)}] for each $(\ell, \beta)$ and each $k<\ell$,
there exists a unique $\alpha$ such that $Q_{\beta}^{\ell} \subset Q_{\alpha}^{k}$;

\item[{\rm (iv)}] there exist certain negative integer $v$
and positive integer $u$ such that, for any $Q_{\alpha}^{k}$ with
both $k \in \mathbb{Z}$
and $\alpha \in I_{k}$, there exists an
$x_{Q_{\alpha}^{k}} \in Q_{\alpha}^{k}$
satisfying that, for any $x \in Q_{\alpha}^{k}$,
$$x_{Q_{\alpha}^{k}}+B_{v k-u} \subset Q_{\alpha}^{k} \subset x+B_{v k+u}.$$
\end{enumerate}
\end{lemma}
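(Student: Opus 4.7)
The plan is to reduce the statement to Christ's classical construction on spaces of homogeneous type and then verify that, in the anisotropic setting, the ``quasi-metric balls'' used by Christ can be sandwiched between the dilated balls $B_{vk-u}$ and $B_{vk+u}$. Recall that, by \cite{CWhomo} and the remarks after Definition \ref{def-shqn}, the triple $(\rn,\rho,dx)$ is a space of homogeneous type: the step quasi-norm $\rho$ satisfies $\rho(x+y)\le A_0[\rho(x)+\rho(y)]$ with $A_0$ depending only on $A$, and the Lebesgue measure is doubling with respect to the $\rho$-balls
\[
B_\rho(x,r):=\{y\in\rn:\ \rho(x-y)<r\},\qquad x\in\rn,\ r\in(0,\fz),
\]
because $B_\rho(x,b^k)$ agrees, up to fixed dilations, with the dilated ball $x+B_k$ of Lebesgue measure $b^k$.

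First, I would fix a small parameter $\delta\in(0,1)$, to be chosen so that $\delta^{-1}$ is a suitable power of $b$ (this will determine the integer $v$ in (iv)). For each $k\in\zz$ I would pick a maximal $\delta^k$-separated set $\{x_\alpha^k\}_{\alpha\in I_k}$ in $(\rn,\rho)$: by Zorn/greedy choice, the points $x_\alpha^k$ are pairwise at $\rho$-distance at least $\delta^k$, and any $x\in\rn$ lies within $\rho$-distance $\delta^k$ of some $x_\alpha^k$. Next, I would organize these centers into a tree: for each $k$ and each $\alpha\in I_{k+1}$ pick a unique ``parent'' $\widehat\alpha\in I_k$ by selecting the index minimizing $\rho(x_\alpha^{k+1}-x_{\widehat\alpha}^k)$ (ties broken by a fixed well-ordering of $I_k$). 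This gives, for every pair $(\ell,\beta)$ with $\ell>k$, a unique ancestor $\alpha\in I_k$ with $x_\beta^\ell$ lying in a controlled $\rho$-neighbourhood of $x_\alpha^k$, which is the combinatorial skeleton of (iii).

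Then I would apply Christ's construction from \cite[Theorem 11]{christ90} (or its anisotropic reformulation in \cite[Lemma 2.3]{blyz08weight}): define
\[
\widetilde Q_\alpha^k:=\bigcup_{\ell\ge k}\bigcup_{\beta:\ \beta\text{ descends from }\alpha}B_\rho(x_\beta^\ell,c_1\delta^\ell),
\]
for a small constant $c_1=c_1(A_0,b)$; take $Q_\alpha^k$ to be the interior of $\widetilde Q_\alpha^k$ minus the boundaries between cousin branches. Choosing $c_1$ and $\delta$ appropriately, and using the doubling property together with a standard geometric-series estimate (exactly as in Christ's argument), one checks that (i) the $\{Q_\alpha^k\}_{\alpha\in I_k}$ are pairwise disjoint and cover $\rn$ up to a Lebesgue-null set, and (ii) the tree structure descends to the $Q$'s, giving nestedness across generations; assertion (iii) is then immediate from uniqueness of the parent map. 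For (iv), note that $x_\alpha^k\in Q_\alpha^k$ by construction, and one has the two-sided inclusion
\[
B_\rho(x_\alpha^k,c_1\delta^k)\subset Q_\alpha^k\subset B_\rho(x_\alpha^k,c_2\delta^k)
\]
with $c_2=c_2(A_0,b,\delta,c_1)$. Translating these $\rho$-balls into dilated balls $B_k$ by comparing $c_1\delta^k$ and $c_2\delta^k$ with powers $b^{vk\pm u}$ (which is possible because $\rho$ takes only values in $\{b^k:k\in\zz\}\cup\{0\}$ and $|B_k|=b^k$), one obtains, with $x_{Q_\alpha^k}:=x_\alpha^k$ and $v,u$ chosen to absorb $c_1,c_2$, the inclusions
\[
x_{Q_\alpha^k}+B_{vk-u}\subset Q_\alpha^k\subset x+B_{vk+u}\qquad\text{for every }x\in Q_\alpha^k,
\]
the second inclusion using the quasi-triangle inequality for $\rho$.

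The main obstacle will be the second inclusion in (ii): that the ``nested'' property $Q_\beta^\ell\subset Q_\alpha^k$ (when they intersect, for $\ell\ge k$) holds \emph{exactly} rather than merely up to a null set. Christ's trick of adjusting boundaries iteratively from coarser to finer scales handles this, and carries over verbatim here because it only uses the quasi-metric and doubling structure, both of which $(\rn,\rho,dx)$ possesses. Once (ii) is secured, properties (i), (iii), and (iv) follow from the separation/covering estimates above, completing the construction.
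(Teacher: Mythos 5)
The paper never proves this lemma itself — it quotes it verbatim from \cite[Lemma 2.3]{blyz08weight}, i.e.\ Christ's dyadic-cube theorem \cite[Theorem 11]{christ90} applied to the space of homogeneous type $(\rn,\rho,dx)$ — and your sketch reconstructs exactly that construction, including the correct identification $x+B_j=\{y\in\rn:\ \rho(x-y)<b^j\}$ used to turn the $\rho$-ball sandwich $B_\rho(x_\alpha^k,c_1\delta^k)\subset Q_\alpha^k\subset B_\rho(x_\alpha^k,c_2\delta^k)$ (with $\delta=b^v$) into the dilated-ball inclusions of (iv). So the proposal is correct and takes essentially the same route as the paper, the deferred details being precisely those of Christ's argument.
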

In what follows, for convenience, we call
$\mathcal{Q}:=\{Q_\alpha^k\}_{k\in\zz, \alpha\in I_k}$
in Lemma \ref{defdya} \emph{dyadic cubes} and $k$ the \emph{level},
denoted by $\ell(Q_{\alpha}^{k})$,
of the dyadic cube $Q_{\alpha}^{k}$ with both
$k\in \zz$ and $\alpha\in I_k$.

The following technical lemma is also necessary,
which is just \cite[Lemma 6.9]{hlyy20}.

\begin{lemma}\label{s4l2}
Let $d$ be the same
as in \eqref{def-d}, $v$ and $u$ the same
as in Lemma \ref{defdya}(iv), and
$$\eta \in\left(\frac{\ln b}{\ln b+(d+1) \ln \lambda_{-}}, 1\right].$$
Then there exists a positive constant $C$ such that,
for any $k,i\in\mathbb{Z}$,
$\left\{c_{Q}\right\}_{Q\in\mathcal{Q}}\subset[0, \infty)$ with $\mathcal{Q}$
in Lemma \ref{defdya}, and $x\in\mathbb{R}^{n}$,
\begin{align*}
&\sum_{\ell(Q)=\left\lceil\frac{k-u}{v}\right\rceil}|Q|
\frac{b^{(k\vee i)(d+1)\frac{\ln\lambda_{-}}{\ln b}}}{[b^{(k \vee i)}+\rho(x-z_Q)]
^{(d+1)\frac{\ln\lambda_{-}}{\ln b}+1}}\\
&\qquad\leq C b^{-[k-(k \vee i)](\frac{1}{\eta}-1)}
\left\{\cm\left[\sum_{\ell(Q)=\left\lceil\frac{k-u}{v}\right\rceil}
\left(c_{Q}\right)^{\eta} \one_{Q}\right](x)\right\}^{\frac{1}{\eta}},
\end{align*}
where $\ell(Q)$ denotes the level of $Q$, $z_Q\in Q$, and,
for any $k,i \in \mathbb{Z}$, $k \vee i:=$ $\max \{k, i\}$.
\end{lemma}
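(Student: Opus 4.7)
As stated, the left-hand side contains $|Q|$ times a decaying anisotropic kernel but no factor $c_Q$, while the right-hand side depends on $\{c_Q\}$; this is almost certainly a misprint for the standard Frazier--Jawerth-type estimate in which each summand on the left carries an additional coefficient $c_Q$, and the plan below addresses that form, which is what is needed to control cube expansions by the anisotropic maximal operator.

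Fix $x\in\rn$, set $N:=k\vee i$ and $m:=\lceil(k-u)/v\rceil$. By Lemma \ref{defdya}(iv), every cube $Q$ with $\ell(Q)=m$ satisfies $|Q|\sim b^k$ and sits inside a dilated ball of radius comparable to $b^k$. Split the index set $\{Q:\ell(Q)=m\}$ into the central piece $D_0:=\{Q:\rho(x-z_Q)\le b^N\}$ and annuli $D_j:=\{Q:b^{N+j-1}<\rho(x-z_Q)\le b^{N+j}\}$ for $j\ge 1$. Using the quasi-triangle inequality for $\rho$, each $D_j$ is contained in a single dilated ball $\widetilde B_j$ centered at $x$ with $|\widetilde B_j|\sim b^{N+j}$; writing $\alpha:=(d+1)\frac{\ln\lambda_-}{\ln b}$, on $D_j$ the kernel $b^{N\alpha}/[b^N+\rho(x-z_Q)]^{\alpha+1}$ is comparable to $b^{-N-j(\alpha+1)}$ for $j\ge 1$ and to $b^{-N}$ for $j=0$.

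Set $g:=\sum_{\ell(Q)=m}c_Q^\eta\one_Q$. Since cubes at the common level $m$ are pairwise disjoint, the bound $\cm(g)(x)\ge|\widetilde B_j|^{-1}\int_{\widetilde B_j}g\,dy=|\widetilde B_j|^{-1}\sum_{Q\in D_j}c_Q^\eta|Q|$ holds. Combined with $|Q|\sim b^k$, $|\widetilde B_j|\sim b^{N+j}$, and the $\ell^\eta\hookrightarrow\ell^1$ embedding for $\eta\in(0,1]$, namely
\begin{align*}
\sum_{Q\in D_j}c_Q\ \le\ \lf(\sum_{Q\in D_j}c_Q^\eta\r)^{1/\eta},
\end{align*}
this yields, after multiplying by the kernel bound on $D_j$,
\begin{align*}
\sum_{Q\in D_j}c_Q|Q|\,\frac{b^{N\alpha}}{[b^N+\rho(x-z_Q)]^{\alpha+1}}
\ \lesssim\ b^{-(k-N)(1/\eta-1)}\,b^{j[1/\eta-1-\alpha]}\,\cm(g)(x)^{1/\eta}.
\end{align*}
The hypothesis $\eta>\ln b/[\ln b+(d+1)\ln\lambda_-]$ is precisely $1/\eta-1-\alpha<0$, so summing the geometric series over $j\ge 0$ converges, and the overall prefactor $b^{-(k-N)(1/\eta-1)}=b^{-[k-(k\vee i)](1/\eta-1)}$ matches the claim.

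The main obstacle is purely technical rather than conceptual: carefully choosing the enclosing dilated balls $\widetilde B_j$ so that (i) they have measure comparable to $b^{N+j}$, (ii) every $Q\in D_j$ is contained in $\widetilde B_j$, and (iii) $x\in\widetilde B_j$ so that the maximal-function lower bound applies. All three follow from Lemma \ref{defdya}(iv) together with the quasi-triangle inequality for $\rho$, but the universal constants must be tracked through repeated applications of that inequality, accounting for the shift parameter $u$ and the eigenvalue gap between $\lambda_-$ and $b$. Once the anisotropic geometry is pinned down, the rest of the argument is routine: dyadic annular splitting, disjointness of equal-level cubes, $\ell^\eta\hookrightarrow\ell^1$, a maximal-function lower bound by ball averages, and the sharp summability threshold in $\eta$.
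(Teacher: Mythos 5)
The paper offers no proof of this lemma at all: it is quoted (with a misprint) from \cite[Lemma 6.9]{hlyy20}, so there is no in-paper argument to compare against; your reconstruction is correct and is essentially the standard proof of that cited result. You were also right to flag the typo: each summand on the left-hand side should carry the factor $c_Q$, as it does both in the source and in the way the lemma is actually invoked in \eqref{eqJei} with $c_Q=\inf_{z\in Q}Y_{\psi}^{(k)}(f)(z)$. Your steps all check out: $|Q|\sim b^{k}$ and $Q\subset z_Q+B_k$ from Lemma \ref{defdya}(iv), the annular splitting at scales $b^{(k\vee i)+j}$ with the cubes of $D_j$ contained in a dilated ball $x+B_{(k\vee i)+j+1+\tau}$ (using $B_\ell+B_\ell\subset B_{\ell+\tau}$), disjointness of same-level cubes, the embedding $\ell^{\eta}\hookrightarrow\ell^{1}$, the ball-average lower bound for $\cm$, and the exponent bookkeeping yielding the prefactor $b^{-[k-(k\vee i)](\frac{1}{\eta}-1)}$ together with the geometric series in $j$, which converges precisely because $\frac{1}{\eta}<1+(d+1)\frac{\ln\lambda_-}{\ln b}$.
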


We now prove Theorem \ref{s4t2}.
\begin{proof}[Proof of Theorem \ref{s4t2}]
By symmetry, to show the present theorem, we only need to prove that,
for any $f\in\cs'(\rn)$ which vanishes weakly at infinity,
\begin{equation}\label{s4e1}
\left\|S_{\phi}(f)\right\|_{X} \lesssim\left\|S_{\psi}(f)\right\|_{X} \text {. }
\end{equation}
To this end, for any $i \in \mathbb{Z}, x \in \mathbb{R}^{n}$,
and $y \in x+B_{i}$, let
$$
J_{\phi}^{(i)}(f)(y):=f \ast \phi_{i}(y).
$$
Then, by Lemma \ref{s4l1} and the Lebesgue dominated convergence theorem,
we find that, for any $i \in \mathbb{Z}, x \in \mathbb{R}^{n}$, and $y \in x+B_{i}$,
\begin{align}\label{s4e2}
J_{\phi}^{(i)}(f)(y)
&=\sum_{k \in \mathbb{Z}} f \ast \psi_{k} \ast \phi_{k} \ast \phi_{i}(y)\\
&=\sum_{k \in \mathbb{Z}}
\int_{\rn} f \ast \psi_{k}(z) \phi_{k} \ast \phi_{i}(y-z)\,dz\noz\\
&=\sum_{k \in \mathbb{Z}}
\sum_{\ell(Q)=\left\lceil\frac{k-u}{v}\right\rceil}
\int_{Q} f \ast \psi_{k}(z) \phi_{k} \ast \phi_{i}(y-z) \,dz\noz
\end{align}
in $\mathcal{S}^{\prime}\left(\mathbb{R}^{n}\right)$,
where all the symbols are the same as in Lemma \ref{s4l2}.
On the other hand, by \cite[Lemma 5.4]{blyz10weight},
we conclude that, for any $k,i \in \mathbb{Z}$ and $x \in \mathbb{R}^{n}$,
$$
\left|\phi_{k} \ast \phi_{i}(x)\right|
\lesssim b^{-(d+1)|k-i| \frac{\ln \lambda_{-}}{\ln b}}
\frac{b^{(k \vee i)(d+1) \frac{\ln \lambda_{-}}{\ln b}}}{\left[b^{(k \vee i)}
+\rho(x)\right]^{(d+1) \frac{\ln \lambda_{-}}{\ln b}+1}}.
$$
This further implies that, for any $Q \in \mathcal{Q}$ with
\begin{equation}\label{s4e3}
\ell(Q)= \left\lceil\frac{k-u}{v}\right\rceil,
\end{equation}
there exists some $z_{Q} \in Q$ such that, for any
$k,i \in \mathbb{Z}, x \in \mathbb{R}^{n}, y \in x+B_{i}$, and $z \in Q$,
\begin{align}\label{s4e4}
&\left|\phi_{k} \ast \phi_{i}(y-z)\right|
\lesssim b^{-(d+1)|k-i| \frac{\ln \lambda_{-}}{\ln b}}
\frac{b^{(k \vee i)(d+1) \frac{\ln \lambda_{-}}{\ln b}}}{\left[b^{(k \vee i)}
+\rho\left(x-z_{Q}\right)\right]^{(d+1) \frac{\ln \lambda_{-}}{\ln b}+1}}.
\end{align}
Moreover, for any $Q \in \mathcal{Q}$ satisfying \eqref{s4e3},
we have $B_{v \ell(Q)+u} \subset B_{k}$.
From this, the H\"older inequality,
and Lemma \ref{defdya}(iv), we deduce that, for any $z \in Q$,

\begin{align*}
\frac{1}{|Q|}\left|\int_{Q} f \ast \psi_{k}(y) \,dy\right|
& \leq\left[\fint_{Q}\left|f \ast \psi_{k}(y)\right|^{2} \,dy\right]
^{\frac{1}{2}} \\
& \leq\left[\frac{1}{|B_{v \ell(Q)-u}|} \int_{z+B_{v \ell(Q)+u}}
\left|f \ast \psi_{k}(y)\right|^{2} \,dy\right]^{\frac{1}{2}} \\
& \lesssim\left[b^{-k} \int_{z+B_{k}}
\left|f \ast \psi_{k}(y)\right|^{2} \,dy\right]^{\frac{1}{2}}
\sim Y_{\psi}^{(k)}(f)(z),
\end{align*}
where, for any $k \in \mathbb{Z}$ and $z \in \mathbb{R}^{n}$,
$$
Y_{\psi}^{(k)}(f)(z):=\left[b^{-k}
\int_{z+B_{k}}\left|f \ast \psi_{k}(y)\right|^{2}  \,dy\right]^{\frac{1}{2}}.
$$
Thus, for any $k \in \mathbb{Z}$ and $Q \in \mathcal{Q}$ satisfying \eqref{s4e3},
$$
\frac{1}{|Q|}\left|\int_{Q} f \ast \psi_{k}(y)  \,dy\right|
\lesssim \inf _{z \in Q} Y_{\psi}^{(k)}(f)(z).
$$
By this, \eqref{s4e2}, \eqref{s4e4}, and Lemma \ref{s4l2},
we conclude that, for any given
$\eta \in\left(\frac{\ln b}{\ln b+(d+1) \ln \lambda_{-}}, 1\right]$
and for any $i \in \mathbb{Z}, x \in \mathbb{R}^{n}$, and $y \in x+B_{i}$,
\begin{align}\label{eqJei}
\left|J_{\phi}^{(i)}(f)(y)\right|
&\lesssim \sum_{k \in \mathbb{Z}}
b^{-(d+1)|k-i| \frac{\ln \lambda_{-}}{\ln b}}
\\
&\quad\times\sum_{\ell(Q)=\left\lceil\frac{k-u}{v}\right\rceil}|Q|
\frac{b^{(k \vee i)(d+1) \frac{\ln \lambda_{-}}{\ln b}}}{[b^{(k \vee i)}
+\rho\left(x-z_{Q}\right)]^{(d+1) \frac{\ln \lambda_{-}}{\ln b}+1}}
\inf _{z \in Q} Y_{\psi}^{(k)}(f)(z)\noz\\
&\lesssim \sum_{k \in \mathbb{Z}} b^{-(d+1)|k-i| \frac{\ln \lambda_{-}}{\ln b}}
b^{-[k-(k \vee i)](\frac{1}{\eta}-1)}\noz\\
&\quad\times\left\{\cm\left(\sum_{\ell(Q)=\left\lceil\frac{k-u}{v}\right\rceil}
\inf _{z \in Q}\left[Y_{\psi}^{(k)}(f)(z)\right]^{\eta} \one_{Q}\right)(x)\right\}
^{\frac{1}{\eta}}\notag\\
&=: J_{(\eta, i)}(x).\notag
\end{align}
Using \eqref{def-d}, we are able to
choose an $\eta \in\left(\frac{\ln b}{\ln b+(d+1)
\ln \lambda_{-}}, \tz_0\right)$. Therefore, from \eqref{eqJei},
it follows that, for such an $\eta$ and any $x \in \mathbb{R}^{n}$,
$$
\left[S_{\phi}(f)(x)\right]^{2}
=\sum_{i \in \mathbb{Z}} b^{-i}
\int_{x+B_{i}}\left|J_{\phi}^{(i)}(f)(y)\right|^{2} \,dy
\lesssim \sum_{i \in \mathbb{Z}}\left[J_{(\eta, i)}(x)\right]^{2}.
$$
This, together with the H\"older inequality and the choice that
$\eta>\frac{\ln b}{\ln b+(d+1) \ln \lambda_{-}}$,
further implies that, for such an $\eta$ and
any $x \in \mathbb{R}^{n}$,
\begin{align*}
\left[S_{\phi}(f)(x)\right]^{2} &\lesssim \sum_{i \in \mathbb{Z}}
\sum_{k \in \mathbb{Z}} \left\{b^{-(d+1)|k-i| \frac{\ln \lambda_{-}}{\ln b}}
b^{-[k-(k \vee i)](\frac{1}{\eta}-1)}\right\}^2\\
&\quad\times\sum_{k \in \mathbb{Z}}
\left\{\cm\left(\sum_{\ell(Q)=\left\lceil\frac{k-u}{v}\right\rceil}\inf_{z \in Q}
\left[Y_{\psi}^{(k)}(f)(z)\right]^{\eta} \one_{Q}\right)(x)\right\}
^{\frac{2}{\eta}}\\
&\lesssim \sum_{k \in \mathbb{Z}}
\left\{\cm\left(\sum_{\ell(Q)=\left\lceil\frac{k-u}{v}\right\rceil}
\inf_{z \in Q}\left[Y_{\psi}^{(k)}(f)(z)\right]^{\eta} \one_{Q}\right)(x)\right\}
^{\frac{2}{\eta}}\\
&\leq \sum_{k \in \mathbb{Z}}\left\{\cm\left(\left[Y_{\psi}^{(k)}(f)\right]
^{\eta}
\right)(x)\right\}^{\frac{2}{\eta}} .
\end{align*}
Thus, by the fact that $\eta<\tz_0$ and Assumption \ref{Assum-1}, we find that
\begin{align*}
\left\|S_{\phi}(f)\right\|_{X}
& \lesssim\left\|\left(\sum_{k \in \mathbb{Z}}
\left\{\cm\left(\left[Y_{\psi}^{(k)}(f)\right]^{\eta}\right)(x)\right\}
^{\frac{2}{\eta}}
\right)^{^{\frac{\eta}{2}}}\right\|_{{X}^{\frac{1}{\eta}}}^{\frac{1}{\eta}} \\
& \lesssim\left\|\left(\sum_{k \in \mathbb{Z}}
\left[Y_{\psi}^{(k)}(f)\right]^{2}\right)^{\frac{1}{2}}\right\|_{X} = \left\|S_{\psi}(f)
\right\|_{X},
\end{align*}
which further implies that
\eqref{s4e1} holds true and hence completes the proof of
Theorem \ref{s4t2}.
\end{proof}

Now, we recall the concept of the anisotropic weight
class of Muckenhoupt, associated with a dilation $A$,
which was introduced in \cite[Definition 2.4]{bh06}.

\begin{definition}
Let $A$ be a dilation, $p\in[1,\infty)$,
and $w$ be a nonnegative measurable function on $\rn$.
The function $w$ is said to belong to the
{\it anisotropic weight class of Muckenhoupt},
$\ca_p(A):=\ca_p(\rn,A)$,
if there exists a positive constant $C$ such that,
when $p\in(1,\fz)$,
\begin{align*}
&\sup_{x\in \rn}\sup_{k\in\zz}
\lf\{\fint_{x+B_k}w(y)\,dy\r\}
\lf\{\fint_{x+B_k}\lf[w(y)\r]^
{-\frac{1}{p-1}}\,dy\r\}^{p-1}\leq C
\end{align*}
or, when $p=1$,
\begin{equation*}
\sup_{x\in\rn}\sup_{k \in \zz}\lf\{
\fint_{x+B_k}w(y)\,dy\r\}
\lf\{\esssup_{y\in x+B_k}\lf[w(y)\r]^{-1}\r\}\leq C.
\end{equation*}
Moreover, the minimal constants $C$ as above are denoted by $C_{p,A,n}(w)$.
\end{definition}

It is easy to prove that, if $1\leq p\leq q \leq\infty$,
then $\ca_p(A)\subset \ca_q(A)$.
Let $$\ca_{\fz}(A):=\bigcup_{q\in[1,\fz)}\ca_{q}(A).$$
For any given $w\in\ca_\infty(A)$,
define the {\it critical index} $q_w$ of $w$ by setting
\begin{equation}\label{qw}
q_w:=\inf\lf\{p\in[1,\infty):\  w\in\ca_p(A)\r\}.
\end{equation}
Obviously, $q_w\in[1,\infty)$. By the reverse H\"older inequality
(see, for instance, \cite[Theorem 1.2]{hpr12}), we conclude that,
for any $p\in(1,\fz)$ and $w\in\ca_p(A)$, there exists an
$\epsilon\in(0,p-1]$ such that $w\in\ca_{p-\epsilon}(A)$.
Thus, if $q_w\in(1,\fz)$, then $w\notin\ca_{q_w}(A)$. Moreover,
Johnson and Neugebauer \cite[p.\,254]{jn87}
gave an example of $w\notin\ca_1(A)$ with $A=2I_{n\times n}$
such that $q_w=1$.

In what follows, for any nonnegative local integrable function $w$
and any Lebesgue measurable set $E$, let $$w(E):=\int_Ew(x)\,dx.$$
For any given $p\in(0,\infty)$, denote by $L_w^p(\rn)$ the
{\it set of all the measurable functions} $f$ on $\rn$ such that
\begin{equation*}
\|f\|_{L_w^p(\rn)}
:=\lf\{\int_{\rn}|f(x)|^pw(x)\,dx\r\}^\frac{1}{p}<\infty.
\end{equation*}
Moreover, let $L_w^\infty(\rn):=L^\infty(\rn)$.
Obviously, $L_w^p(\rn)$ is a ball quasi-Banach function space,
which even may not be a quasi-Banach function space
(see, for instance, \cite[p. 86]{shyy17}).

To show Theorem \ref{s4t1}, we need the following several
technical lemmas.
Lemma \ref{embed} is a direct corollary of \cite[Lemma 4.9]{yhyy}
because $(\rn,\rho,dx)$ is a special space of  homogeneous type;
Lemma \ref{inclu} is similar to \cite[p.\,21, Theorem 4.5]{Bownik} and
we omit the details.

\begin{lemma}\label{embed}
Let $A$, $X$, and $\tz_0$ be the same as in Theorem \ref{s4t1}.
Assume that $x_0\in\rn$. Then there exists an $\epsilon\in(0,1)$
such that $X$ continuously embeds into $L_w^{\tz_0}(\rn)$,
where $w:=[\cm(\mathbf1_{x_0+B_0})]^\epsilon$ and $B_0$ is the same
as in \eqref{B_k} with $k=0$.
\end{lemma}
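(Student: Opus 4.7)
The plan is to use Assumption \ref{Assum-2} to exhibit the weight $w:=[\cm(\one_{x_0+B_0})]^{\vaz}$ as an element of the associate space $(X^{1/\tz_0})'$ for a suitable $\vaz \in (0,1)$, after which the embedding will follow at once from the duality pairing between $X^{1/\tz_0}$ and $(X^{1/\tz_0})'$ combined with the $\tz_0$-convexification identity $\||f|^{\tz_0}\|_{X^{1/\tz_0}}=\|f\|_X^{\tz_0}$.

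First, since Assumption \ref{Assum-2} guarantees that $X^{1/\tz_0}$ is a ball Banach function space, Remark \ref{bbf} ensures that $(X^{1/\tz_0})'$ is itself a ball Banach function space, so Definition \ref{BQBFS}(iv) gives $\one_{x_0+B_0}\in(X^{1/\tz_0})'$. Now applying the boundedness estimate of Assumption \ref{Assum-2} to $f:=\one_{x_0+B_0}$ and using the elementary identity $\one_E^{\alpha}=\one_E$ valid for any measurable $E\subset\rn$ and any $\alpha\in(0,\fz)$, one obtains
\begin{align*}
\lf\|[\cm(\one_{x_0+B_0})]^{1/(p_0/\tz_0)'}\r\|_{(X^{1/\tz_0})'}
=\lf\|\cm^{((p_0/\tz_0)')}(\one_{x_0+B_0})\r\|_{(X^{1/\tz_0})'}
\ls\lf\|\one_{x_0+B_0}\r\|_{(X^{1/\tz_0})'}<\fz.
\end{align*}
Setting $\vaz:=1/(p_0/\tz_0)'=1-\tz_0/p_0$, which lies in $(0,1)$ because $p_0>\tz_0>0$ by Assumption \ref{Assum-2}, one deduces $w\in(X^{1/\tz_0})'$ with finite norm.

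Finally, given any $f\in X$, invoke the H\"older-type inequality $\int_{\rn}|g(x)h(x)|\,dx\le\|g\|_{X^{1/\tz_0}}\|h\|_{(X^{1/\tz_0})'}$ (which is the defining property of the associate norm; see \cite[Chapter 1, Theorem 2.4]{bs88}) with $g:=|f|^{\tz_0}$ and $h:=w$, and combine it with the convexification identity from Definition \ref{Debf}(i) to conclude that
\begin{align*}
\|f\|_{L_w^{\tz_0}(\rn)}^{\tz_0}=\int_{\rn}|f(x)|^{\tz_0}w(x)\,dx
\le\lf\||f|^{\tz_0}\r\|_{X^{1/\tz_0}}\|w\|_{(X^{1/\tz_0})'}
=\|f\|_X^{\tz_0}\|w\|_{(X^{1/\tz_0})'}.
\end{align*}
Taking $\tz_0$-th roots yields the continuous embedding $X\hookrightarrow L_w^{\tz_0}(\rn)$ with constant $C:=\|w\|_{(X^{1/\tz_0})'}^{1/\tz_0}$. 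I do not anticipate any real obstacle: the entire argument collapses to the one-line observation that powers of indicator functions disappear, so that Assumption \ref{Assum-2} directly produces a weight in the associate space, and the requirement $\vaz\in(0,1)$ is automatic from $p_0>\tz_0$ built into the same assumption.
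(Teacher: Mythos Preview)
Your proof is correct. The paper itself does not prove this lemma but simply cites \cite[Lemma 4.9]{yhyy}, noting that $(\rn,\rho,dx)$ is a space of homogeneous type; your argument supplies precisely the self-contained justification that the cited lemma would provide, so the approaches coincide in substance.
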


\begin{lemma}\label{inclu}
Let $A$ and $X$ be the same as in Theorem \ref{s4t1}.
Then $H_X^{A}(\rn)\subset \cs'(\rn)$ and the inclusion is continuous.
\end{lemma}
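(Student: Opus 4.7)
The plan is to show directly, starting from the definition of the non-tangential grand maximal function $M_N$, that for every $f\in H_X^A(\rn)$ and every $\phi\in\cs(\rn)$ the pairing $|\la f,\phi\ra|$ is dominated by a Schwartz seminorm of $\phi$ times $\|f\|_{H_X^A(\rn)}$, with a multiplicative constant that depends only on $\|\one_{B_0}\|_X$. Since Definition \ref{HXA} already places $H_X^A(\rn)$ inside $\cs'(\rn)$ as sets, only the continuity of this inclusion is at issue, and the bound just described is exactly what is required. This is the anisotropic analogue of the classical maximal-function argument used in \cite[p.\,21, Theorem 4.5]{Bownik}.

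First I would fix $f\in H_X^A(\rn)$ and $\phi\in\cs(\rn)$, set $\widetilde\phi(x):=\phi(-x)$, and use the standard identity $(f\ast\widetilde\phi)(\mathbf{0})=\la f,\phi\ra$ for distributional convolution. Writing $c_\phi:=\|\widetilde\phi\|_{\cs_N(\rn)}$ for the relevant Schwartz seminorm (which is finite because $\widetilde\phi\in\cs(\rn)$), the normalized function $\widetilde\phi/c_\phi$ lies in $\cs_N(\rn)$, and the definition \eqref{M_N} of $M_N(f)$ yields
$$|(f\ast\widetilde\phi_k)(y)|\le c_\phi M_N(f)(x)\quad\text{whenever}\ y\in x+B_k.$$
Because $B_0=\Delta$ was chosen symmetric about the origin, $\mathbf{0}\in x+B_0$ for every $x\in B_0$, so specializing to $k=0$ and $y=\mathbf{0}$ and recalling $\widetilde\phi_0=\widetilde\phi$ gives the key pointwise estimate
$$|\la f,\phi\ra|\le c_\phi M_N(f)(x)\quad\text{for every }x\in B_0.$$

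The last step converts this pointwise bound into a quasi-norm bound using only the axioms of a ball quasi-Banach function space. Multiplying the previous inequality by $\one_{B_0}$ yields the pointwise relation $(|\la f,\phi\ra|/c_\phi)\one_{B_0}\le M_N(f)\one_{B_0}$ almost everywhere on $\rn$. Invoking the monotonicity axiom (ii) of Definition \ref{BQBFS}, together with the observation that axioms (i) and (iv) of Definition \ref{BQBFS} force $\|\one_{B_0}\|_X\in(0,\fz)$, one concludes
$$|\la f,\phi\ra|\le\frac{c_\phi}{\|\one_{B_0}\|_X}\,\|M_N(f)\|_X=\frac{c_\phi}{\|\one_{B_0}\|_X}\,\|f\|_{H_X^A(\rn)}.$$
Since $c_\phi$ is a continuous seminorm on $\cs(\rn)$, this furnishes the continuity of the embedding $H_X^A(\rn)\hookrightarrow\cs'(\rn)$. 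The only substantive geometric input is the symmetry of $B_0$, which is what allows $\mathbf{0}$ to be reached from every $x\in B_0$ via the set $x+B_0$; all other ingredients are a direct unwinding of Definitions \ref{BQBFS}, \ref{HXA}, and \eqref{M_N}, so this is the step I expect to be the only place one must pay careful attention.
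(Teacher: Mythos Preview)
Your proof is correct and follows exactly the approach the paper indicates, namely the argument from \cite[p.\,21, Theorem 4.5]{Bownik} adapted to the ball quasi-Banach setting; the paper itself omits the details and simply cites that reference. The only place one might add a word is that $c_\phi=\|\widetilde\phi\|_{\cs_N(\rn)}$ is indeed a continuous seminorm on $\cs(\rn)$ because it is controlled by finitely many of the seminorms $\|\cdot\|_{\alpha,k}$ with $|\alpha|\le N$ and $k\le N$, but this is routine.
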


Combining Lemmas \ref{embed} and \ref{inclu},
we obtain the following property of $H_X^A(\rn)$.

\begin{lemma}\label{HXAvanish}
Let $A$ and $X$ be the same as in Theorem \ref{s4t1}
and $f\in H_X^A(\rn)$. Then $f$ vanishes weakly at infinity.
\end{lemma}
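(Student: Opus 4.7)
The plan is in four stages: first embed $X$ into a weighted Lebesgue space via Lemma~\ref{embed}, then derive the standard pointwise comparison between $f*\phi_k$ and $M_N(f)$, next exploit the infinite total mass of the chosen weight to obtain pointwise vanishing of $f*\phi_k$, and finally upgrade pointwise vanishing to weak-$*$ convergence in $\cs'(\rn)$.

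First, since $f\in H_X^A(\rn)$, by the definition of $H_X^A(\rn)$ we have $M_N(f)\in X$. Fixing any $x_0\in\rn$ and applying Lemma~\ref{embed} yields an $\epsilon\in(0,1)$ and the weight $w:=[\cm(\mathbf1_{x_0+B_0})]^{\epsilon}$ such that $X\hookrightarrow L_w^{\tz_0}(\rn)$; in particular $M_N(f)\in L_w^{\tz_0}(\rn)$.

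Next, fix $\phi\in\cs_N(\rn)$. For any $k\in\zz$, $x\in\rn$, and $y\in x+B_k$ (equivalently $x\in y+B_k$, by the symmetry $B_k=-B_k$ inherited from the symmetric ellipsoid $\Delta$), the very definition of $M_N$ yields $|f*\phi_k(x)|\le M_\phi(f)(y)\le M_N(f)(y)$. Applying the elementary inequality $\inf_E g\cdot w(E)\le\int_E g(y)\,w(y)\,dy$ with $g=[M_N(f)]^{\tz_0}$ and $E=x+B_k$, I would obtain
\begin{equation*}
|f*\phi_k(x)|^{\tz_0}\,w(x+B_k)
\le\int_{x+B_k}[M_N(f)(y)]^{\tz_0}w(y)\,dy
\le\|M_N(f)\|_{L_w^{\tz_0}(\rn)}^{\tz_0}.
\end{equation*}
Since $\epsilon\in(0,1)$ and $w(y)$ is comparable to $\rho(y-x_0)^{-\epsilon}$ for $y$ away from $x_0+B_0$, the weight $w$ has infinite total mass; combining this with $B_k\uparrow\rn$ (a consequence of $A$ being expansive) and monotone convergence, $w(x+B_k)\to\infty$ as $k\to\infty$ for each fixed $x\in\rn$, so $f*\phi_k(x)\to0$ pointwise on $\rn$.

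The main obstacle is to upgrade this pointwise vanishing to convergence in $\cs'(\rn)$, namely $\int_\rn(f*\phi_k)(x)\psi(x)\,dx\to 0$ for every $\psi\in\cs(\rn)$. Specializing the pointwise inequality above to $y=x$ gives the $k$-uniform bound $|f*\phi_k(x)|\le M_N(f)(x)$, so the Lebesgue dominated convergence theorem reduces the task to verifying $M_N(f)\cdot\psi\in L^1(\rn)$. This last fact follows by combining the rapid decay of $\psi\in\cs(\rn)$ with the standard polynomial-growth estimate $M_N(f)(x)\lesssim(1+\rho(x))^L$ for some $L\in\zz_+$, itself a consequence of $f\in\cs'(\rn)$ (guaranteed by Lemma~\ref{inclu}) together with the very definition of the grand maximal function in \eqref{M_N}. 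With the dominating function in hand, the dominated convergence theorem delivers $\langle f*\phi_k,\psi\rangle\to0$, so $f$ vanishes weakly at infinity.
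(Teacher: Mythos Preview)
Your argument mirrors the paper's for the core step: both use Lemma~\ref{embed} to embed $X$ into $L_w^{\tz_0}(\rn)$, invoke the non-tangential comparison $|f\ast\phi_k(x)|\lesssim M_N(f)(y)$ for $y\in x+B_k$, and deduce $|f\ast\phi_k(x)|\to 0$ pointwise from $w(x+B_k)\to\infty$. The paper in fact stops at this pointwise statement and simply asserts that it ``further implies that $f$ vanishes weakly at infinity,'' so your final stage---upgrading to convergence in $\cs'(\rn)$ via dominated convergence---is an addition rather than a departure.

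One point to tighten: the polynomial-growth bound $M_N(f)(x)\lesssim(1+\rho(x))^L$ does not follow merely from $f\in\cs'(\rn)$ together with the definition~\eqref{M_N}. For a general tempered distribution whose order is large relative to $N$, the grand maximal function $M_N(f)$ can be identically infinite, since $\cs_N(\rn)$ only controls derivatives up to order $N$. What makes the bound available here is the additional information $M_N(f)\in X$, which forces $M_N(f)<\infty$ almost everywhere; the polynomial growth then follows from standard dichotomy results in anisotropic Hardy-space theory (cf.\ \cite{Bownik}). Alternatively, you can bypass this appeal entirely by reading off from the explicit weight $w=[\cm(\one_{x_0+B_0})]^{\epsilon}$ the lower bound $w(x+B_k)\gtrsim b^{k(1-\epsilon)}[1+b^{-k}\rho(x-x_0)]^{-\epsilon}$, which gives a quantitative decay of $|f\ast\phi_k(x)|$ that integrates directly against any $\psi\in\cs(\rn)$.
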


\begin{proof}
Let $N\in\nn$ be the same
as in \eqref{3.14.x1}. By Lemma \ref{inclu}, we find that,
for any $k\in\zz$, $\varphi\in\cs(\rn)$, $x\in\rn$, and $y\in x+B_k$,
$|f\ast\varphi_{k}(x)|\lesssim M_N(f)(y)$.
Thus, there exists a positive constant $C_1$ such that,
for any $k\in\zz$, $\varphi\in\cs(\rn)$, and $x\in\rn$,
$$x+B_k\subset\lf\{y\in\rn:\  M_N(f)(y)>C_1|f\ast\varphi_{k}(x)|\r\}.$$
By this and Lemma \ref{embed}, we conclude that,
for any $k\in\zz$, $\varphi\in\cs(\rn)$, and $x\in\rn$,
\begin{align*}
|f\ast\varphi_{k}(x)|
&=[w(B_k)]^{-\frac{1}{\tz_0}}[w(B_k)]^{\frac{1}{\tz_0}}|f\ast\varphi_{k}(x)|\\
&\le[w(B_k)]^{-\frac{1}{\tz_0}}\lf[w\lf(\lf\{y\in\rn:\
M_N(f)(y)>C_1|f\ast\varphi_{k}(x)|\r\}\r)\r]^{\frac{1}{\tz_0}}\\
&\quad\times|f\ast\varphi_{k}(x)|\\
&\lesssim[w(B_k)]^{-\frac{1}{\tz_0}}\lf\|M_N(f)\r\|_{L_w^{\tz_0}(\rn)}
\lesssim[w(B_k)]^{-\frac{1}{\tz_0}}\lf\|M_N(f)\r\|_{X}\\
&= [w(B_k)]^{-\frac{1}{\tz_0}}||f||_{H_X^A(\rn)}\rightarrow 0
\end{align*}
as $k\rightarrow \infty$, which further implies that $f$ vanishes weakly at infinity.
This finishes the proof of Lemma \ref{HXAvanish}.
\end{proof}

To show Theorem \ref{s4t1}, we also need the following lemma
whose proof is similar to that of \cite[Lemma 4.2]{lhy20}; we omit the details here.
\begin{lemma}\label{s4l3}
Let $A$, $X$, $\tz_0$, and $p_0$ be the same
as in Theorem \ref{s4t1},
$q\in(\max\{p_0,1\},\infty]$, $k_0\in\zz$, and $\varepsilon\in(0,\infty)$.
Assume that $\{\lambda_i\}_{i\in\nn}\subset[0,\infty)$,
$\{B^{(i)}\}_{i\in\nn}\subset\CB$,
and $\{m_i^{(\varepsilon)}\}_{i\in\nn}\subset L^q(\rn)$ satisfy that,
for any $\varepsilon\in(0,\infty)$ and $i\in\nn$,
$$
\supp m_i^{(\varepsilon)}:=\left\{x\in\rn:\ m_i^{(\varepsilon)}
\neq 0\right\}\subset A^{k_0}B^{(i)},
$$
$$
\|m_i^{(\varepsilon)}\|_{L^q(\rn)}
\leq\frac{|B^{(i)}|^{\frac{1}{q}}}{\|\one_{B^{(i)}}\|_X},
$$
and
$$
\left|\left|\left\{\sum_{i \in \mathbb{N}}
\left[\frac{\lambda_i\one_{B^{(i)}}}{||\one_{B^{(i)}}||_X}\right]^{\tz_0}
\right\}^{\frac{1}{\tz_0}}\right|\right|_X<\infty.
$$
Then
$$
\left|\left|\liminf_{\varepsilon\rightarrow0^+}\left[
\sum_{i \in \mathbb{N}}\lf|\lambda_i m_i^{(\varepsilon)}\r|^{\tz_0}\right]^
{\frac{1}{\tz_0}}
\right|\right|_X\leq C \left|\left|\left\{\sum_{i \in \mathbb{N}}
\left[\frac{\lambda_i\one_{B^{(i)}}}
{||\one_{B^{(i)}}||_X}\right]^{\tz_0}\right\}^{\frac{1}{\tz_0}}
\right|\right|_X,
$$
where $C$ is a positive constant independent of $\lambda_{i}$, $B^{(i)}$,
$m_i^{(\varepsilon)}$, and $\varepsilon$.
\end{lemma}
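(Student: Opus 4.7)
The plan is to establish the stated estimate uniformly in $\varepsilon\in(0,\infty)$ via duality in the $p$-convexified space $X^{1/\tz_0}$, and then pass to the $\liminf$ through a Fatou-type property of $\|\cdot\|_X$. Since $\|h^{1/\tz_0}\|_X=\|h\|_{X^{1/\tz_0}}^{1/\tz_0}$, the desired inequality is equivalent to an $X^{1/\tz_0}$-bound on $\sum_{i\in\nn}|\lambda_i m_i^{(\varepsilon)}|^{\tz_0}$. Under Assumption \ref{Assum-2}, $X^{1/\tz_0}$ is a ball Banach function space, so by Remark \ref{bbf} its norm may be computed by duality against $(X^{1/\tz_0})'$. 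Accordingly, I would test against an arbitrary nonnegative $g\in(X^{1/\tz_0})'$ with $\|g\|_{(X^{1/\tz_0})'}\le1$ and bound $\sum_i\lambda_i^{\tz_0}\int|m_i^{(\varepsilon)}|^{\tz_0}g\,dx$ term by term.

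For each $i$, using $\supp m_i^{(\varepsilon)}\subset A^{k_0}B^{(i)}$ together with the $L^q$-size bound, I would apply H\"older twice. The first H\"older, with exponents $q/\tz_0$ and $(q/\tz_0)'$, absorbs the $L^q$-norm of $m_i^{(\varepsilon)}$ and reduces the problem to controlling $(\int_{A^{k_0}B^{(i)}}|g|^{(q/\tz_0)'})^{1/(q/\tz_0)'}$. Since $q>p_0$ ensures $(q/\tz_0)'<(p_0/\tz_0)'$, a second H\"older raises this to $(p_0/\tz_0)'$, producing an average of $|g|^{(p_0/\tz_0)'}$ on $A^{k_0}B^{(i)}$ that is bounded by $\mathcal{M}^{((p_0/\tz_0)')}(g)(x)$ for any $x\in A^{k_0}B^{(i)}$. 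The exponent identity $\tz_0/q+1/(q/\tz_0)'=1$ collapses the accumulated volume factors to a single $|B^{(i)}|$, up to a multiplicative constant $C_{k_0}$ depending only on $k_0$; comparing averages on $A^{k_0}B^{(i)}$ versus $B^{(i)}$ (for either sign of $k_0$) costs only another $k_0$-dependent factor. Averaging the resulting pointwise bound over $x\in B^{(i)}$ and summing over $i$ gives
$$
\sum_{i\in\nn}\lambda_i^{\tz_0}\int_{\rn}|m_i^{(\varepsilon)}|^{\tz_0}g\,dx
\lesssim \int_{\rn}\mathcal{M}^{((p_0/\tz_0)')}(g)(x)\sum_{i\in\nn}\lf(\frac{\lambda_i\one_{B^{(i)}}(x)}{\|\one_{B^{(i)}}\|_X}\r)^{\tz_0}dx.
$$
Applying the associate-space H\"older inequality together with Assumption \ref{Assum-2} (which yields $\|\mathcal{M}^{((p_0/\tz_0)')}(g)\|_{(X^{1/\tz_0})'}\lesssim\|g\|_{(X^{1/\tz_0})'}\le1$), taking the supremum over $g$, and raising to the $1/\tz_0$-th power delivers the bound for each fixed $\varepsilon$ with an implicit constant independent of $\{\lambda_i\}$, $\{B^{(i)}\}$, $\{m_i^{(\varepsilon)}\}$, and $\varepsilon$.

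To conclude, I would pass from the $\varepsilon$-uniform bound to the $\liminf$ through a Fatou-type inequality for $\|\cdot\|_X$, which is deduced from Definition \ref{BQBFS}(ii) and (iii): along a sequence $\varepsilon_k\to0^+$, the functions $g_n:=\inf_{k\ge n}(\sum_i|\lambda_i m_i^{(\varepsilon_k)}|^{\tz_0})^{1/\tz_0}$ form a nondecreasing sequence converging a.e.\ to the $\liminf$, so monotone convergence combined with the pointwise domination $g_n\le(\sum_i|\lambda_i m_i^{(\varepsilon_k)}|^{\tz_0})^{1/\tz_0}$ for every $k\ge n$ yields the desired estimate. The main obstacle is the double-H\"older step: Assumption \ref{Assum-2} is stated only for the $(p_0/\tz_0)'$-powered maximal operator, but the natural H\"older dual of the $L^q$-size of $m_i^{(\varepsilon)}$ involves the strictly smaller exponent $(q/\tz_0)'$. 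The second H\"older, leveraging $q>p_0$, is precisely what bridges this gap, and verifying that the volume factors telescope to exactly $|B^{(i)}|^1$ after both H\"olders — so that only the indicator $\one_{B^{(i)}}$ (and no enlargement) appears in the final sum — is the computation that requires the most care.
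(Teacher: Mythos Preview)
Your approach is correct and is precisely the standard duality argument that the paper defers to by citing \cite[Lemma 4.2]{lhy20} (the paper itself omits the proof). The double H\"older step bridging $(q/\tz_0)'$ to $(p_0/\tz_0)'$, the use of Assumption~\ref{Assum-2} on the associate space, and the Fatou passage via Definition~\ref{BQBFS}(ii)--(iii) are all exactly right; the only minor imprecision is that Remark~\ref{bbf} alone does not give the norming identity $\|h\|_{X^{1/\tz_0}}\sim\sup_{\|g\|_{(X^{1/\tz_0})'}\le1}\int|hg|$, but this follows from the Fatou property inherited by $X^{1/\tz_0}$ and is standard in the literature (see, e.g., \cite[Lemma~2.6]{zhyy21} or the Lorentz--Luxemburg theorem for ball Banach function spaces).
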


Now, we prove Theorem \ref{s4t1}.
\begin{proof}[Proof of Theorem \ref{s4t1}]
Let $\tau$ be the same
as in \eqref{tau} and $u$ and $v$ the same
as in Lemma \ref{defdya}(iv).
We first show the necessity of the present theorem.
To this end, let $f\in H_X^A(\rn)$. Then,
by Lemma \ref{HXAvanish}, we find that $f$ vanishes weakly at infinity.
On the other hand, it follows from \cite[Theorem 4.3]{wyy22}
that there exists a sequence
$\left\{\lambda_{i}\right\}_{i \in \mathbb{N}} \subset [0,\infty)$
and a sequence $\left\{a_{i}\right\}_{i \in \mathbb{N}}$
of anisotropic $(X, q, d)$-atoms
supported,
respectively, in $\{B^{(i)}\}_{i \in \mathbb{N}} \subset \CB$ such that
$$
f=\sum_{i \in \mathbb{N}} \lambda_{i} a_{i} \quad \text { in }
\mathcal{S}^{\prime}\left(\mathbb{R}^{n}\right)
$$
and
$$
\|f\|_{H_{X}^A\left(\mathbb{R}^{n}\right)}
\sim\left\|\left\{\sum_{i \in \mathbb{N}}
\left[\frac{\lambda_{i}
\one_{B^{(i)}}}{\|\one_{B^{(i)}}\|_{X}}\right]^{\tz_0}\right\}
^{\frac{1}{\tz_0}}\right\|_{X}.
$$
Let $a$ be an $(X,q,d)$-atom supported in a dyadic cube $Q$.
Let $w:=u-v+2 \tau$ and, for any $j\in\nn$,
$U_j:=x_Q+(B_{v[\ell(Q)-j-1]+2\tau}\setminus B_{v[\ell(Q)-j]+2\tau})$.
Then, by Lemma \ref{defdya}(iv), we conclude
that, for any $x\in(A^wQ)^\com$,
there exists some $j_0\in\nn$ such that $x\in U_{j_0}$.
For this $j_0$, choose an $N\in\nn$ lager enough such that
$$
(N-\beta)vj_0+\lf(\frac{1}{q}-\beta\r)u<0,
$$
where $\beta:=
\left(\frac{\ln b}{\ln \lambda_{-}}+d+1\right)
\frac{\ln \lambda_{-}}{\ln b}>\frac{1}{\tz_0}$.
By this and an argument similar to that used in the proof of
\cite[(3.3)]{lyy2018},
we find that, for any $x\in(A^wQ)^\com$,
$$
S(a)(x)\lesssim b^{Nvj_0}b^{-\frac{v\ell(Q)}{q}}||a||_{L^q(Q)}.
$$
From this, the size condition of $a$, and Lemma $\ref{defdya}$(iv),
we deduce that, for any $x\in(A^wQ)^\com$,
\begin{align*}
S(a)(x)&\lesssim b^{Nvj_0}b^{-\frac{v\ell(Q)}{q}}||\one_Q||_{X}^{-1}
\lf|B_{v\ell(Q)+u}\r|^\frac{1}{q}\\
&\leq b^{(N-\beta)vj_0+(\frac{1}{q}-\beta)u}||\one_Q||_{X}^{-1}
\frac{|Q|^\beta}{b^{[\ell(Q)-j_0]v\beta}}\\
&\lesssim||\one_Q||_{X}^{-1}\left[\frac{|Q|}{\rho(x-x_Q)}\right]^\beta
\leq||\one_Q||_{X}^{-1}\lf[\cm(\one_Q)(x)\r]^\beta.
\end{align*}
Using this, we obtain, for any $x \in \mathbb{R}^{n}$,
\begin{align}\label{s4e5}
S(f)(x)&\leq\sum_{i \in \mathbb{N}}\left|\lambda_{i}\right|
S(a_{i})(x)\one_{A^{w} B^{(i)}}(x)
+\sum_{i \in \mathbb{N}}\left|\lambda_{i}\right|
S(a_{i})(x)
\one_{\left(A^{w} B^{(i)}\right)^{\com}}(x)\\
&\lesssim\left\{\sum_{i \in \mathbb{N}}\left[\left|\lambda_{i}\right|S(a_{i})(x)
\one_{A^{w}B^{(i)}}(x)\right]^{\tz_0}\right\}^{\frac{1}{\tz_0}}\noz\\
&\quad+\sum_{i\in\mathbb{N}}
\frac{|\lambda_{i}|}{\|\one_{B^{(i)}}\|_{X}}
\left[\cm\left(\one_{B^{(i)}}\right)(x)\right]^{\beta}\notag.
\end{align}
By \eqref{s4e5}, Assumptions \ref{Assum-1} and \ref{Assum-2},
and an argument similar to
that used in the proof of \cite[Theorem 4.3]{wyy22},
we further conclude that
$$\|S(f)\|_{X} \lesssim\|f\|_{H_{X}^{A}\left(\mathbb{R}^{n}\right)},$$
which completes the proof of the necessity of the present theorem.

Next, we show the sufficiency of the present theorem.
Let $\psi$ and $\phi$ be the same
as in Lemma \ref{s4l1} with $d$ in \eqref{def-d},
$f$ vanish weakly at infinity, and $\|S(f)\|_X<\infty$.
Then, from Theorem \ref{s4t2}, we infer that $S_{\psi}(f) \in X$.
Thus, to show the sufficiency of the present theorem,
we need to prove that $f \in H_{X}^{A}\left(\mathbb{R}^{n}\right)$ and
\begin{equation}\label{s4e6}
\|f\|_{H_{X}^{A}\left(\mathbb{R}^{n}\right)} \lesssim\left\|S_{\psi}(f)\right\|_{X}.
\end{equation}
To this end, for any $k \in \mathbb{Z}$,
let $\Omega_{k}:=\{x \in \mathbb{R}^{n}: S_{\psi}(f)(x)>2^{k}\}$ and
$$
\mathcal{Q}_{k}:=
\left\{Q \in \mathcal{Q}:\ \left|Q \cap \Omega_{k}\right|>\frac{|Q|}{2}
\text { and }\left|Q \cap \Omega_{k+1}\right| \leq \frac{|Q|}{2}\right\} .
$$
Clearly, for any $Q \in \mathcal{Q}$, there exists a unique $k \in \mathbb{Z}$
such that $Q \in \mathcal{Q}_{k}$.
Let $\{Q_{i}^{k}\}_{i}$ be the set of all maximal dyadic cubes in $\mathcal{Q}_{k}$,
that is, there exists no $Q \in \mathcal{Q}_{k}$
such that $Q_{i}^{k}\subsetneqq Q$ for any $i$.

For any $Q \in \mathcal{Q}$, let
\begin{align}\label{s4e7}
\widehat{Q}:=&\left\{(y, t) \in \mathbb{R}_{+}^{n+1}:=\mathbb{R}^{n}
\times(0, \infty):\r.\\
&\lf.\ y \in Q,\, b^{v \ell(Q)+u+\tau}\leq t<b^{v [\ell(Q)-1]+u+\tau}
\right\}.\noz
\end{align}
Obviously, $\{\widehat{Q}\}_{Q \in \mathcal{Q}}$ are mutually disjoint and
\begin{equation}\label{s4e8}
\mathbb{R}_{+}^{n+1}=\bigcup_{k \in \zz} \bigcup_{i} B_{k, i},
\end{equation}
where, for any $k \in \mathbb{Z}$ and
$i, B_{k, i}:=\bigcup_{Q \subset Q_{i}^{k}, Q \in \mathcal{Q}_{k}} \widehat{Q}$.
Then, by Lemma \ref{defdya}(ii) and \eqref{s4e7},
we easily find that $\left\{B_{k, i}\right\}_{k \in \mathbb{Z}, i}$
are also mutually disjoint.

On the other hand, $\psi$ has the vanishing moments up to order $d$.
From Lemma \ref{def-d}, the properties of tempered distributions
(see, for instance, \cite[Theorem 2.3.20]{GTM249}), and \eqref{s4e8},
we deduce that, for any $f\in\cs'(\rn)$
vanishing weakly at infinity and satisfying $\|S(f)\|_X<\infty$
and for any $x \in \mathbb{R}^{n}$, we have
\begin{align}\label{s4e9}
f(x)&=\sum_{k \in \mathbb{Z}} f \ast \psi_{k} \ast
\phi_{k}(x)\\
&=\int_{\mathbb{R}_{+}^{n+1}} (f \ast \psi_{t})(y)
\phi_{t}(x-y)\,dy\,dm(t)\noz
\end{align}
in $\cs'(\rn)$,
where $m(t)$ denotes the counting measure on $\mathbb{R}$,
that is, for any set $E\subset\rr$, $m(E)$ is the number of
integers contained in $E$ if $E$ has only finitely many elements,
or else $m(E):=\infty$. For any $k \in \mathbb{Z}$, $i$,
and $x \in \mathbb{R}^{n}$, let
$$
h_{i}^{k}(x):=\int_{B_{k, i}} (f \ast \psi_{t})(y)\phi_{t}(x-y)\,dy\,dm(t).
$$
Next, we prove the sufficiency of the present theorem in three steps.

Step (1) The target of this step is to show that
\begin{equation}\label{s4e10}
\sum_{k \in \zz} \sum_{i} h_{i}^{k} \text { converges in }
\mathcal{S}^{\prime}\left(\mathbb{R}^{n}\right).
\end{equation}
To this end, following the proofs of assertions (i) and (ii)
in the proof of the sufficiency of \cite[Theorem 3.4(i)]{lhy20}
with some slight modifications, we conclude that,
for any given $q\in(\max\{p_0,1\},\infty)$,
\begin{enumerate}
\item[{\rm (i)}]
for any $k \in \mathbb{Z}$, $i$, and $x \in \mathbb{R}^{n}$,
\begin{equation*}
h_{i}^{k}(x)=\sum_{Q \subset Q_{i}^{k}, Q \in \mathcal{Q}_{k}} \int_{\widehat{Q}}
(f \ast \psi_{t})(y) \phi_{k}(x-y)\,dy\,dm(t)
\end{equation*}
holds true in $L^{q}\left(\mathbb{R}^{n}\right)$ and
hence also in $\mathcal{S}^{\prime}\left(\mathbb{R}^{n}\right)$;
\item[{\rm (ii)}]
for any $k \in \mathbb{Z}$ and $i$,
$h_{i}^{k}=\lambda_{i}^{k} a_{i}^{k}$ is a multiple of an anisotropic
$(X, q, d)$-atom, where, for any $k \in$ $\mathbb{Z}$ and $i$,
$\lambda_{i}^{k} \sim 2^{k}||\one_{B_{i}^{k}}||_{X}$ with the positive
equivalence constants independent of both
$k$ and $i$, and $a_{i}^{k}$ is an
anisotropic $(X, q, d)$-atom satisfying,
for any $q\in(\max\{p_0,1\},\infty)$,
$k \in \mathbb{Z}$, $i$, and $\gamma \in \mathbb{Z}_{+}^{n}$,
$$
\supp a_{i}^{k} \subset B_{i}^{k}:=
x_{Q_{i}^{k}}+B_{v\left[\ell\left(Q_{i}^{k}\right)-1\right]+u+3 \tau},$$
$$
\|a_{i}^{k}\|_{L^{q}\left(\mathbb{R}^{n}\right)}
\leq\|\one_{B_{i}^{k}}\|_{X}^{-1}
|B_{i}^{k}|^{\frac{1}{r}},\ \mathrm{and}\
\int_{\mathbb{R}^{n}}a_{i}^{k}(x)x^{\gamma}\,dx=0.
$$
\end{enumerate}

To show \eqref{s4e10}, we next consider two cases:
$i \in \mathbb{N}$ and $i \in\{1, \ldots, I\}$ with some $I \in \mathbb{N}$.

Case 1) $i \in \mathbb{N}$.
In this case, to prove \eqref{s4e10},
by Lemma \ref{inclu}, it suffices to show that
\begin{equation}\label{s4e11}
\lim _{l \rightarrow \infty}
\left\|
\sum_{l \leq|k| \leq m} \sum_{l \leq i \leq m} \lambda_{i}^{k} a_{i}^{k}
\right\|_{H_{X}^{A}\left(\mathbb{R}^{n}\right)}=0.
\end{equation}
Indeed, for any $k \in \mathbb{Z}$ and $i \in \mathbb{N}$,
by the estimate that $\left|Q_{i}^{k} \cap \Omega_{k}\right|
\geq \frac{\left|Q_{i}^{k}\right|}{2}$, we find that,
for any $x \in \mathbb{R}^{n}$,
$$
\cm\left(\one_{Q_{i}^{k} \cap \Omega_{k}}\right)(x)
\gtrsim \fint_{Q_{i}^{k}}
\one_{Q_{i}^{k} \cap \Omega_{k}}(y) \,dy = \frac{|Q_{i}^{k} \cap \Omega_{k}|}
{|Q_{i}^{k}|} \geq \frac{1}{2}.
$$
This, together with Assumption \ref{Assum-1},
further implies that, for any $l,m \in \mathbb{N}$,
\begin{align}\label{s4e12}
\left\|\sum_{l \leq|k| \leq m } \sum_{l\leq i\leq m}
\left(2^{k}\one_{B_{i}^{k}}\right)
^{\tz_0} \right\|_{X^{\frac{1}{\tz_0}}}^{\frac{1}{\tz_0}}
&=\left\|\left[\sum_{l \leq|k| \leq m } \sum_{l \leq i \leq m}2^{k\tz_0}
\left(\one_{B_{i}^{k}}\right)^{2}\right]^{\frac{1}{2}}
\right\|_{X^{\frac{2}{\tz_0}}}^{\frac{2}{\tz_0}}\\
&\lesssim\left\|\left\{\sum_{l \leq|k| \leq m }
\sum_{l \leq i \leq m}{2^{k {\tz_0}}}\left[\cm\left(\one_{Q_{i}^{k}
\cap \Omega_{k}}\right)\right]^{2}\right\}^{\frac{1}{2}}
\right\|_{X^{\frac{2}{\tz_0}}}^{\frac{2}{\tz_0}}\notag\\
&\lesssim\left\|\sum_{l \leq|k| \leq m }
\sum_{l \leq i \leq m}\left(2^{k}
\one_{Q_{i}^{k}\cap\Omega_{k}}\right)^{\tz_0}\right\|
_{X^{\frac{1}{\tz_0}}}^{\frac{1}{\tz_0}}.\notag
\end{align}
In addition, from the fact that, for any $l,m \in \mathbb{N}$,
$\sum_{l \leq|k| \leq m } \sum_{l \leq i \leq m}
\lambda_{i}^{k} a_{i}^{k} \in H_{X}^{A}\left(\mathbb{R}^{n}\right)$,
Lemma \ref{finatomth}(i), and Definition \ref{Debf}(i), we deduce that
\begin{align}\label{s4e13}
\left\|\sum_{l \leq|k| \leq m}
\sum_{l \leq i \leq m} \lambda_{i}^{k} a_{i}^{k}\right\|_{H_{X}^{A}(\rn)}
&\lesssim\left\|\left\{\sum_{l \leq|k| \leq m} \sum_{l \leq i \leq m}
\left[\frac{\lambda_{i}^{k} \one_{B_{i}^{k}}}{\|\one_{B_{i}^{k}}\|_{X}}\right]
^{{\tz_0}}\right\}^{\frac{1}{\tz_0}}\right\|_{X}\\
&\sim\left\|\left[\sum_{l \leq|k| \leq m}
\sum_{l \leq i \leq m}\left(2^{k} \one_{B_{i}^{k}}\right)^{{\tz_0}}\right]
^{\frac{1}{\tz_0}}\right\|_{X}\notag\\
&=\left\|\sum_{l \leq|k| \leq m}
\sum_{l \leq i \leq m}\left(2^{k} \one_{B_{i}^{k}}\right)^{{\tz_0}}\right\|
_{X^{\frac{1}{\tz_0}}}^{\frac{1}{\tz_0}}\notag.
\end{align}
On the other hand, it follows from Definition \ref{BQBFS} that,
for any $l,m \in \mathbb{N}$,
\begin{align*}
\left\|\left[\sum_{l \leq|k| \leq m}\left(2^{k}
\one_{\Omega_{k}}\right)^{{\tz_0}}\right]
^{\frac{1}{\tz_0}}\right\|^{\tz_0}_X
&=\left\|\left[\sum_{l \leq|k| \leq m}\left(2^{k} \one_{\Omega_{k}
\setminus\Omega_{k+1}}+2^{k}
\one_{\Omega_{k+1}}\right)^{{\tz_0}}\right]^{\frac{1}{\tz_0}}
\right\|_{X}^{{\tz_0}}\\
&\lesssim\left\|\left[\sum_{l \leq|k| \leq m}
\left(2^{k} \one_{\Omega_{k} \setminus  \Omega_{k+1}}\right)
^{{\tz_0}}\right]^{\frac{1}{\tz_0}}\right\|_{X}^{{\tz_0}}\\
&\quad+\left(\frac{1}{2}\right)
^{{\tz_0}}\left\|\left[\sum_{l \leq|k| \leq m}
\left(2^{k+1} \one_{\Omega_{k+1}}\right)^{{\tz_0}}\right]
^{\frac{1}{\tz_0}}\right\|_{X}^{{\tz_0}} .
\end{align*}
Therefore, as $l \rightarrow \infty$, we have
\begin{align}\label{s4e14}
\left\|\left[\sum_{l \leq|k| \leq m}\left(2^{k} \one_{\Omega_{k}}\right)
^{{\tz_0}}\right]^{\frac{1}{\tz_0}}\right\|_{X}
\sim\left\|\left[\sum_{l \leq|k| \leq m}\left(2^{k} \one_{\Omega_{k}
\setminus  \Omega_{k+1}}\right)^{{\tz_0}}\right]
^{\frac{1}{\tz_0}}\right\|_{X} .
\end{align}
This, combined with \eqref{s4e12} and \eqref{s4e13}, further implies that,
as $l \rightarrow \infty$,
\begin{align*}
&\left\|\sum_{l \leq|k| \leq m}
\sum_{l \leq i \leq m} \lambda_{i}^{k} a_{i}^{k}\right\|
_{H_{X}^{A}\left(\mathbb{R}^{n}\right)}\\
&\quad\lesssim\left\|\sum_{l \leq|k| \leq m} \sum_{l \leq i \leq m}
\left(2^{k} \one_{Q_{i}^{k} \cap \Omega_{k}}\right)
^{{\tz_0}}\right\|_{X^{\frac{1}{\tz_0}}}^{\frac{1}{\tz_0}}
\lesssim\left\|\left[\sum_{l \leq|k| \leq m}\left(2^{k} \one_{\Omega_{k}}\right)
^{{\tz_0}}\right]^{\frac{1}{\tz_0}}\right\|_{X}\\
&\quad\sim\left\|\left[\sum_{l \leq|k| \leq m}\left(2^{k} \one_{\Omega_{k}
\setminus  \Omega_{k+1}}\right)^{{\tz_0}}\right]^{\frac{1}{\tz_0}}\right\|_{X}\\
&\quad\leq\left\|S_{\psi}(f)\left(\sum_{l \leq|k| \leq m} \one_{\Omega_{k}
\setminus  \Omega_{k+1}}\right)^{\frac{1}{\tz_0}}\right\|_{X} \rightarrow 0 .
\end{align*}
Thus, \eqref{s4e11} holds true and so \eqref{s4e10} does  in Case 1).

Case 2) $i \in\{1, \ldots, I\}$ with some $I \in \mathbb{N}$.
In this case, to show \eqref{s4e10}, by Lemma \ref{inclu},
it suffices to prove that
\begin{equation}\label{s4e15}
\lim _{l \rightarrow \infty}\left\|
\sum_{l\leq|k|\leq m}\sum_{i=1}^{I} \lambda_{i}^{k} a_{i}^{k}\right\|
_{H_{X}^{A}\left(\rn\right)}=0 .
\end{equation}
Indeed, by a proof similar to that of \eqref{s4e11},
it is easy to show that \eqref{s4e15} also holds true.
This finishes the proof of $\eqref{s4e10}$ in Case 2) and hence $\eqref{s4e10}$.

Step (2) In this step, we prove that
\begin{equation}\label{s4e16}
f=\sum_{k \in \mathbb{Z}} \sum_{i} \lambda_{i}^{k} a_{i}^{k}\
\text{in}\ \mathcal{S}^{\prime}\left(\mathbb{R}^{n}\right).
\end{equation}
To this end, for any $x \in \mathbb{R}^{n}$, let
$$
\widetilde{f}(x):=\sum_{k \in \mathbb{Z}} \sum_{i} h_{i}^{k}(x)
=\sum_{k \in \mathbb{Z}} \sum_{i} \int_{B_{k, i}}
(f \ast \psi_{t})(y) \phi_{k}(x-y)\,dy\,dm(t)
$$
in $\mathcal{S}^{\prime}\left(\mathbb{R}^{n}\right)$, where,
for any $k \in \mathbb{Z}$ and $i, B_{k, i}$ is the same
as in \eqref{s4e8}.
Then, to show \eqref{s4e16}, it suffices to prove that
\begin{equation}\label{s4e16plus}
f=\widetilde{f}\ \text{in}\ \mathcal{S}^{\prime}\left(\mathbb{R}^{n}\right).
\end{equation}
For this purpose, by the above assertion (i) and \eqref{s4e7},
we find that, for any given
$k,i\in\mathbb{Z}$, $q\in(\max\{p_0,1\},\infty)$, and $x \in \mathbb{R}^{n}$,
\begin{align}\label{s4e17}
h_{i}^{k}(x) &=\lim _{N \rightarrow \infty} \int_{0}^{\infty}
\int_{\mathbb{R}^{n}}(f\ast\psi_{t})(y)\phi_{k}(x-y)\\
&\quad\times\one_{\bigcup_{\substack{Q\subset Q_{i}^{k},
Q\in\mathcal{Q}_{k}\\|\ell(Q)| \leq N}}
\widehat{Q}}(y, t)\,dy\,dm(t)\noz\\
&=\lim _{N \rightarrow \infty} \int_{\gamma(N)}^{\eta(N)}
\int_{\mathbb{R}^{n}} (f \ast \psi_{t})(y)
\phi_{k}(x-y)\one_{B_{k, i}}(y, t)\,dy\,dm(t)\notag
\end{align}
holds true in $L^{q}\left(\mathbb{R}^{n}\right)$
and also in $\mathcal{S}^{\prime}\left(\mathbb{R}^{n}\right)$,
where, for any $N \in \mathbb{N}$,
$ \gamma(N):= b^{vN+u+1}$ and $\eta(N):=b^{-v(N+1)+u+1}$.
For the convenience of symbols, we rewrite $\widetilde{f}$ as,
for any $x \in \mathbb{R}^{n}$,
$$
\widetilde{f}(x)=\sum_{\ell \in \mathbb{N}} \int_{R^{(\ell)}}
(f \ast \psi_{t})(y) \phi_{t}(x-y)\,dy\,dm(t),
$$
where $\{R^{(\ell)}\}_{\ell \in \mathbb{N}}$ is an
arbitrary permutation of $\left\{B_{k, i}\right\}_{k \in \mathbb{Z}, i}$.
For any $L \in \mathbb{N}$ and $x \in \mathbb{R}^{n}$, let
$$
\widetilde{f}_{L}(x):=f(x)-\sum_{\ell=1}^{L}
\int_{R^{(\ell)}} (f \ast \psi_{t})(y) \phi_{t}(x-y)\,dy\,dm(t).
$$
Then, from \eqref{s4e8}, \eqref{s4e9}, and \eqref{s4e17},
it follows that, for any $L \in \mathbb{N}$ and $x \in \mathbb{R}^{n}$,

\begin{align}\label{s4e18}
\widetilde{f}_{L}(x)
=& \lim _{N \rightarrow \infty}
\int_{\gamma(N)}^{\eta(N)} \int_{\mathbb{R}^{n}}
(f \ast \psi_{t})(y) \phi_{t}(x-y)\one_{\cup_{\ell=1}^{\infty}
R^{(\ell)}}(y, t) \,dy\,dm(t)\\
& \quad-\lim_{N \rightarrow \infty} \int_{\gamma(N)}^{\eta(N)}
\int_{\mathbb{R}^{n}} (f \ast \psi_{t})(y) \phi_{t}(x-y)
\one_{\cup_{\ell=1}^{L} R^{(\ell)}}(y, t)\,dy\,dm(t) \notag\\
=& \lim_{N \rightarrow \infty} \int_{\gamma(N)}^{\eta(N)}
\int_{\mathbb{R}^{n}} (f \ast \psi_{t})(y) \phi_{t}(x-y)
\one_{\cup_{\ell=L+1}^{\infty} R^{(\ell)}}(y, t) \,dy\,dm(t)\notag
\end{align}
holds true in $\mathcal{S}^{\prime}\left(\mathbb{R}^{n}\right)$.

Note that $H_{X}^{A}\left(\mathbb{R}^{n}\right)$
is continuously embedded into $\mathcal{S}^{\prime}\left(\mathbb{R}^{n}\right)$
(Lemma \ref{inclu}). Thus, to prove \eqref{s4e16plus}, we only need to show that
\begin{equation}\label{s4e19}
\left\|\widetilde{f}_{L}\right\|_{H_{X}^{A}\left(\mathbb{R}^{n}\right)}
\rightarrow 0\  \text { as } \ L \rightarrow \infty .
\end{equation}
To do this, we borrow some ideas from the proof of the atomic characterization
of $H_{X}^{A}\left(\mathbb{R}^{n}\right)$ (see
the proof of \cite[Theorem 4.3]{wyy22}).
Indeed, for any $\varepsilon \in(0,1), L \in \mathbb{N}$, and $x \in \mathbb{R}^{n}$,
let
\begin{align*}
&\widetilde{f}_{L}^{(\varepsilon)}(x):=
\int_{\varepsilon}^{\alpha / \varepsilon} \int_{\mathbb{R}^{n}}
(f \ast \psi_{t})(y) \phi_{t}(x-y) \one_{\cup_{\ell=L+1}^{\infty}
R^{(\ell)}}(y, t) \,dy\,dm(t),
\end{align*}
where $\alpha:=b^{-v+2(u+1)}$.
Then, by the Lebesgue dominated convergence theorem,
we find that, for any $\varepsilon \in(0,1), L \in \mathbb{N}$,
and $x \in \mathbb{R}^{n}$,
\begin{align*}
\widetilde{f}_{L}^{(\varepsilon)}(x)
&=\sum_{\ell=L+1}^{\infty}
\int_{\varepsilon}^{\alpha / \varepsilon}
\int_{\mathbb{R}^{n}} (f \ast \psi_{t})(y) \phi_{t}(x-y)
\one_{R^{(\ell)}}(y, t)\,dy\,dm(t)\\
&=:\sum_{\ell=L+1}^{\infty} h_{\ell}^{(\varepsilon)}(x)
\end{align*}
in $\mathcal{S}^{\prime}\left(\mathbb{R}^{n}\right)$. Moreover,
by some arguments similar to those used in the proofs of assertions
(i) and (ii) in the proof of the sufficiency of \cite[Theorem 3.4(i)]{lhy20}
with some slight modifications, we conclude that,
for any $\varepsilon \in(0,1), q\in(\max\{p_0,1\},\infty), L \in \mathbb{N}$,
and $\ell \in \mathbb{N} \cap[L+1, \infty), h_{\ell}^{(\varepsilon)}$
is a multiple of an anisotropic $(X, q, d)$-atom, that is, there exists
a sequence
$\{\lambda_{\ell}\}_{\ell \in \mathbb{N} \cap(L+1, \infty)} \subset [0,\infty)$
and a sequence
$\{a_{\ell}^{(\varepsilon)}\}_{\ell \in \mathbb{N} \cap(L+1, \infty)}$
of anisotropic $(X, q, d)$-atoms
supported, respectively,
in $\{B^{(\ell)}\}_{\ell \in \mathbb{N} \cap(L+1, \infty)} \subset \CB$
such that,
for any $\ell \in \mathbb{N} \cap[L+1, \infty), h_{\ell}^{(\varepsilon)}
=\lambda_{\ell} a_{\ell}^{(\varepsilon)}$,
where, for any $\ell\in\mathbb{N}\cap[L+1, \infty),\lambda_{\ell}$
and $B^{(\ell)}$ are independent of $\varepsilon$.
Therefore, for any $\varepsilon\in(0,1), L\in\mathbb{N}$,
and $x\in\mathbb{R}^{n}$,
\begin{equation}\label{s4e20}
\widetilde{f}_{L}^{(\varepsilon)}(x)
=\sum_{\ell=L+1}^{\infty}\lambda_{\ell} a_{\ell}^{(\varepsilon)}(x)
\ \text{in}\ \mathcal{S}^{\prime}\left(\mathbb{R}^{n}\right)
\end{equation}
and
\begin{equation}\label{s4e21}
\left\|\left\{\sum_{\ell=L+1}^{\infty}\left[\frac{\lambda_{\ell}
\one_{B^{(\ell)}}}{\|\one_{B^{(\ell)}}\|_{X}}\right]^{{\tz_0}}\right\}
^{1 / {\tz_0}}\right\|_{X}<\infty .
\end{equation}
On the other hand, for any given
\begin{equation*}
N_{0} \in \mathbb{N} \cap\left[\left\lfloor\left(\frac{1}{\tz_0}-1\right)
\frac{\ln b}{\ln \lambda_{-}}\right\rfloor+2, \infty\right),
\end{equation*}
let $M_{N_{0}}^{0}$ denote the radial grand maximal function
in Definition \ref{radialM} with $N$ replaced by $N_{0}$.
Then, by the just proved conclusion
that, for any $\varepsilon \in(0,1)$ and
$L \in \mathbb{N},\{a_{\ell}^{(\varepsilon)}\}_{\ell \in\nn \cap(L+1, \infty)}$
is a sequence of anisotropic $(X, q, d)$-atoms and \cite[Lemma 4.7]{wyy22},
we find that, for any $\ell \in \mathbb{N} \cap[L+1, \infty)$ and
$x \in \mathbb{R}^{n}$,
\begin{align}\label{s4e22}
M_{N_{0}}^{0}\left(a_{\ell}^{(\varepsilon)}\right)(x)
&\lesssim M_{N_{0}}^{0}\left(a_{\ell}^{(\varepsilon)}\right)(x)
\one_{A^{\tau} B^{(\ell)}}(x)+\frac{1}{\|\one_{B^{(\ell)}}\|_{X}}
\left[\cm\left(\one_{B^{(\ell)}}\right)(x)\right]^{\beta},
\end{align}
where $\beta:=\left(\frac{\ln b}{\ln \lambda_{-}}+d+1\right)
\frac{\ln \lambda_{-}}{\ln b}>\frac{1}{{\tz_0}} $.
Moreover, since $q>1$, then, from the boundedness of $\cm$
on $L^{q}\left(\mathbb{R}^{n}\right)$ (see \cite[Lemma 3.3(ii)]{lyy17}),
we deduce that, for any $\varepsilon \in(0,1), L \in \mathbb{N}$,
and $\ell \in \mathbb{N} \cap[L+1, \infty)$,
\begin{align*}
\left\|M_{N_{0}}^{0}\left(a_{\ell}^{(\varepsilon)}\right)
\one_{A^{\tau} B^{(\ell)}}\right\|_{L^q(\rn)}
&\lesssim\left\|\cm\left(a_{\ell}^{(\varepsilon)}\right)
\one_{A^{\tau} B^{(\ell)}}\right\|_{L^q(\rn)}\lesssim
\frac{|B^{(\ell)}|^{1/q}}{\|\one_{B^{(\ell)}}\|_X},
\end{align*}
which, combined with Lemma \ref{s4l3}, further implies that
\begin{align}\label{s4e23}
&\left\|\liminf _{\varepsilon \rightarrow 0^{+}}
\left\{\sum_{\ell=L+1}^{\infty}\left[\lambda_{\ell}
M_{N_{0}}^{0}\left(a_{\ell}^{(\varepsilon)}\right)
\one_{A^{\tau} B^{(\ell)}}\right]^{{\tz_0}}\right\}^{1 / {\tz_0}}\right\|_{X}\\
&\quad\lesssim\left\|\left\{\sum_{\ell=L+1}^{\infty}
\left[\frac{\lambda_{\ell} \one_{B^{(\ell)}}}{\|\one_{B^{(\ell)}}\|_{X}}\right]
^{{\tz_0}}\right\}^{1 / {\tz_0}}\right\|_{X} \noz.
\end{align}
In addition, let $\varepsilon:=\gamma(N)$ with $N \in \mathbb{N}
\cap[\lfloor\frac{-u-1}{v}\rfloor+1, \infty)$.
Then, by \eqref{s4e18}, we obtain, for any $x\in\rn$,
\begin{align*}
M_{N_{0}}^{0}\left(\widetilde{f}_{L}\right)(x)
&=M_{N_{0}}^{0}
\left(\lim _{N \rightarrow \infty} \widetilde{f}_{L}^{(\gamma(N))}\right)(x)\\
&=\sup_{\varphi\in\cs_N(\rn)}\sup_{k \in \zz}\lf|\lim _{N \rightarrow \infty}
\widetilde{f}_{L}^{(\gamma(N))}\ast\varphi_{k}(x)\r|\\
&\leq  \liminf _{N \rightarrow \infty} \sup_{\varphi\in\cs_N(\rn)}\sup_{k \in \zz}
\lf|\widetilde{f}_{L}^{(\gamma(N))}\ast\varphi_{k}(x)\r|\\
&= \liminf _{N \rightarrow \infty} M_{N_{0}}^{0}\left(\widetilde{f}_{L}^{(\gamma(N))}
\right) .
\end{align*}
From this, \cite[p.\,12, Proposition 3.10]{Bownik},
\eqref{s4e20}, and \eqref{s4e22}, it follows that, for any $L \in \mathbb{N}$,
\begin{align*}
\left\|\widetilde{f}_{L}\right\|_{H_{X}^{A}\left(\rn\right)}
& \leq\left\|\liminf _{N \rightarrow \infty} M_{N_{0}}^{0}
\left(\widetilde{f}_{L}^{(\gamma(N))}\right)\right\|_{X}\\
&\leq\left\|\liminf _{N \rightarrow \infty} \sum_{\ell=L+1}
^{\infty}\lambda_{\ell} M_{N_{0}}^{0}\left(a_{\ell}^{(\gamma(N))}\right)
\right\|_{X} \\
& \lesssim\left\|\liminf _{N \rightarrow \infty} \sum_{\ell=L+1}^{\infty}
\lambda_{\ell} M_{N_{0}}^{0}\left(a_{\ell}^{(\gamma(N))}\right)
\one_{A^{\tau} B^{(\ell)}}\right\|_{X}\\
&\quad+\left\|\sum_{\ell=L+1}^{\infty} \frac{\lambda_{\ell}}
{\|\one_{A^{\tau} B^{(\ell)}}\|_{X}}\left[\cm\left(\one_{B^{(\ell)}}\right)\right]
^{\beta}\right\|_{X}.
\end{align*}
This, together with \eqref{s4e23},
Lemma \ref{basicine}, Definition \ref{BQBFS}(ii),
Assumption \ref{Assum-1}, and $\beta>\frac{1}{{\tz_0}}$,
further implies that, for any $L \in \mathbb{N}$,
\begin{align*}
\left\|\widetilde{f}_{L}\right\|_{H_{X}^{A}\left(\rn\right)}
&\lesssim\left\|\liminf _{N \rightarrow \infty}
\left\{\sum_{\ell=L+1}^{\infty}\left[\lambda_{\ell} M_{N_{0}}^{0}
\left(a_{\ell}^{(\gamma(N))}\right) \one_{A^{\tau} B^{(\ell)}}\right]
^{{\tz_0}}\right\}^{\frac{1}{\tz_0}}\right\|_{X}\\
&\quad+\left\|\left\{\sum_{\ell=L+1}^{\infty}
\frac{\lambda_{\ell}}{\|\one_{B^{(\ell)}}\|_{X}}
\left[\cm\left(\one_{B^{(\ell)}}\right)\right]
^{\beta}\right\}^{\frac{1}{\beta}}\right\|_{X^{\beta}}^{\beta}\\
&\lesssim\left\|\left\{\sum_{\ell=L+1}^{\infty}
\left[\frac{\lambda_{\ell} \one_{B^{(\ell)}}}{\|\one_{B^{(\ell)}}\|_{X}}\right]
^{{\tz_0}}\right\}^{\frac{1}{\tz_0}}\right\|_{X} .
\end{align*}
By this and \eqref{s4e21}, we conclude that \eqref{s4e19} holds true,
which completes the proof of \eqref{s4e16plus} and hence \eqref{s4e16}.

Step (3) By \eqref{s4e16}, \cite[Theorem 4.3]{wyy22},
and some arguments similar to those used in the estimations
of both \eqref{s4e12} and \eqref{s4e14}, we conclude that
\begin{align*}
\left\|f\right\|_{H_X^{A}(\rn)}&\sim\left\|
\left\{\sum_{k \in \mathbb{Z}}
\sum_{i}\left[\frac{\lambda_{i}^{k}\one_{B_{i}^{k}}}{\|\one_{B_{i}^{k}}\|_X}
\right]
^{{\tz_0}}\right\}^{\frac{1}{\tz_0}}\right\|_{X}=\left\|\left[\sum_{k \in \mathbb{Z}}
\sum_{i}\left(2^{k} \one_{B_{i}^{k}}\right)^{{\tz_0}}\right]
^{\frac{1}{\tz_0}}\right\|_{X}\\
&\lesssim\left\|\sum_{k \in \mathbb{Z}} \sum_{i}\left(2^{k} \one_{Q_{i}^{k}
\cap \Omega_{k}}\right)^{{\tz_0}}\right\|_
{X^{\frac{1}{\tz_0}}}^{\frac{1}{\tz_0}}\leq\left\|\left[\sum_{k \in \mathbb{Z}}
\left(2^{k} \one_{\Omega_{k}}\right)
^{{\tz_0}}\right]^{\frac{1}{\tz_0}}\right\|_{X}\\
&\sim\left\|\left[\sum_{k \in \mathbb{Z}}\left(2^{k} \one_{\Omega_{k}
\setminus  \Omega_{k+1}}\right)^{{\tz_0}}\right]
^{\frac{1}{\tz_0}} \right\|_{X}\leq\left\|S_{\psi}(f)\left[\sum_{k \in \mathbb{Z}}
\one_{\Omega_{k}
\setminus  \Omega_{k+1}}\right]^{\frac{1}{\tz_0}}\right\|_{X}\\
&=\left\|S_{\psi}(f)\right\|_{X},
\end{align*}
which further
implies that $f \in H_{X}^{A}\left(\mathbb{R}^{n}\right)$ and
\eqref{s4e6} holds true. This finishes the proof the sufficiency
and hence Theorem \ref{s4t1}.
\end{proof}
Now, we establish the anisotropic Littlewood--Paley $g$-function
characterization of $H_X^A(\rn)$. Recall that, for any given dilation
$A$, $\phi\in \mathcal{S}\left(\mathbb{R}^{n}\right)$,
$t \in(0, \infty)$, and
$j \in \mathbb{Z}$
and for any $f \in \mathcal{S}^{\prime}\left(\mathbb{R}^{n}\right)$,
the \emph{anisotropic Peetre maximal function} $(\phi_{j}^{*} f)_{t}$
is defined by setting, for any $x \in \mathbb{R}^{n}$,
$$
(\phi_{j}^{*} f)_{t}(x):=\esssup_{y \in \mathbb{R}^{n}}
\frac{|(\phi_{-j} \ast f)(x+y)|}{[1+b^{j} \rho(y)]^{t}}
$$
and the \emph{$g$-function associated with $(\phi_{j}^{*} f)_{t}$}
is defined by setting, for any $x \in \mathbb{R}^{n}$,
\begin{equation*}
g_{t, *}(f)(x):=\left\{\sum_{j \in \mathbb{Z}}
\left[\left(\phi_{j}^{*} f\right)_{t}(x)\right]^{2}\right\}^{1 / 2}.
\end{equation*}
To prove Theorem \ref{s4t1'}, we need the following estimate
which is just \cite[Lemma 3.6]{lwyy2019} originated from \cite[(2.66)]{U}.
\begin{lemma}\label{s4l4}
Let $\phi$ be a radial function the same as in Lemma \ref{s4l1}.
Then, for any given $N_0\in\nn$ and $\gamma\in(0,\infty)$,
there exists a positive constant $C_{(N_0,\gamma)}$, depending
only on $N_0$ and $\gamma$,
such that,
for any $t\in(0,N_0)$, $l\in\zz$,
$f\in\cs'(\rn)$, and $x\in\rn$,
\begin{align*}
\lf[\lf(\phi_{l}^\ast f\r)_t(x)\r]^\gamma\le C_{(N_0,\gamma
)}\sum_{k=0}^\infty b^{-kN_0\gamma}b^{k+l}
\int_\rn\frac{|\phi_{-(k+l)}\ast f(y)|^\gamma}{[1+b^l\rho(x-y)]^{t\gamma}}\,dy.
\end{align*}
\end{lemma}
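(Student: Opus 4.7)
The plan is to prove Lemma \ref{s4l4} via an adapted one-sided Calder\'on reproducing formula combined with kernel decay estimates and a Strömberg--Torchinsky/Ullrich style absorption trick to raise the resulting pointwise bound to the power $\gamma$.

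First, I would set up a one-sided reproducing identity. Because $\phi$ satisfies both the moment condition in \eqref{s4l1e1} and the Tauberian lower bound in \eqref{s4l1e2}, a construction analogous to that of Lemma \ref{s4l1} produces an auxiliary $\eta \in \cs(\rn)$ whose Fourier transform is supported in a fixed annulus away from the origin and which has vanishing moments of all orders, such that, for every $l \in \zz$ and every $f \in \cs'(\rn)$,
\begin{align*}
\phi_{-l} \ast f = \sum_{k=0}^{\infty} \lf(\phi_{-l} \ast \eta_{-(k+l)}\r) \ast \lf(\phi_{-(k+l)} \ast f\r)
\end{align*}
in $\cs'(\rn)$. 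The sum can be made one-sided (only $k \geq 0$) precisely because $\phi$ has the moment/Tauberian profile given in \eqref{s4l1e1}--\eqref{s4l1e2}, which annihilates the low-frequency contributions.

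Second, for the kernel $K_{k,l} := \phi_{-l} \ast \eta_{-(k+l)}$, I would derive, by exploiting the compact support of $\phi$, the smoothness of $\eta$, and the vanishing moments of $\eta$ via repeated moment cancellation (applying the anisotropic Taylor remainder after an $A$-adapted change of variables), the pointwise bound: for any prescribed $M \in (0,\infty)$,
\begin{align*}
|K_{k,l}(u)| \le C_{(M, N_0)} \, b^{-k N_0} \cdot \frac{b^{k+l}}{[1 + b^{k+l}\rho(u)]^{M}}.
\end{align*}
Combining this with the reproducing identity and the quasi-triangle inequality $\rho(x-z) \le A_0[\rho(x+y-z) + \rho(y)]$, together with the elementary observation $b^{k+l} \ge b^{l}$ for $k \ge 0$, yields, for any $x,y \in \rn$,
\begin{align*}
|\phi_{-l}\ast f(x+y)| \ls \sum_{k=0}^{\infty} b^{-k N_0} \int_{\rn} \frac{b^{k+l}|\phi_{-(k+l)} \ast f(z)|}{[1 + b^{k+l}\rho(x+y-z)]^{M}} \, dz.
\end{align*}

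Third, to raise this estimate to the power $\gamma$, I would use the classical multiplicative splitting. For $\gamma \in (0,1]$, write $|\phi_{-(k+l)} \ast f(z)| = |\phi_{-(k+l)}\ast f(z)|^{\gamma}\cdot|\phi_{-(k+l)}\ast f(z)|^{1-\gamma}$ and dominate the $(1-\gamma)$-factor by $[(\phi_{k+l}^\ast f)_{t}(x)]^{1-\gamma}\,[1+b^{k+l}\rho(x-z)]^{t(1-\gamma)}$ using the very definition of the Peetre maximal function; for $\gamma > 1$, instead apply H\"older's inequality to the integral with weight $b^{k+l}[1+b^{k+l}\rho(\cdot)]^{-M}$. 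Dividing by $[1+b^{l}\rho(y)]^{t\gamma}$, using the quasi-triangle inequality again to trade $b^{k+l}\rho(x+y-z)$ for $b^{l}\rho(x-z)$ up to a factor of $[1+b^{l}\rho(y)]^{t\gamma}$, choosing $M$ large enough depending on $t,\gamma,N_0$ to make the exchange lossless and the geometric series convergent, and finally taking the essential supremum over $y \in \rn$, one arrives at the claimed inequality with the correct exponents $b^{-kN_0\gamma}$ and $[1+b^{l}\rho(x-y)]^{-t\gamma}$ after absorbing the factor involving $(\phi_{k+l}^{\ast}f)_{t}(x)^{1-\gamma}$ into the left-hand side.

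The main obstacle is this last absorption step: the multiplicative splitting produces a factor of $(\phi_{k+l}^\ast f)_{t}(x)^{1-\gamma}$ (or an analogous factor in the $\gamma > 1$ case) that must be reabsorbed into the left-hand side via $(\phi_{l}^\ast f)_{t}(x)$, which in turn requires $(\phi_{l}^\ast f)_{t}(x)$ to be known a priori to be finite. To justify this rigorously, I would first prove the inequality for $f$ in a dense subclass for which such finiteness is automatic (for instance, Schwartz or band-limited $f$), and then extend to arbitrary $f \in \cs'(\rn)$ by a standard truncation and monotone convergence argument, using that both sides behave continuously under the truncation.
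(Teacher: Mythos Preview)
The paper does not supply its own proof of this lemma; it simply records it as ``just \cite[Lemma 3.6]{lwyy2019} originated from \cite[(2.66)]{U}.'' Your sketch is exactly the Ullrich--Str\"omberg--Torchinsky argument found in those references: a one-sided reproducing formula, the almost-orthogonality kernel bound $|\phi_{-l}\ast\eta_{-(k+l)}(u)|\lesssim b^{-kN_0}b^{k+l}[1+b^{k+l}\rho(u)]^{-M}$ obtained from the smoothness of $\phi$ and the vanishing moments of $\eta$, the multiplicative splitting $|\phi_{-(k+l)}\ast f|=|\phi_{-(k+l)}\ast f|^{\gamma}|\phi_{-(k+l)}\ast f|^{1-\gamma}$, and the absorption of the $(1-\gamma)$-factor. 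So your approach coincides with the one the paper cites.

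One comment on the final step. Your proposed justification via a dense subclass of $f$ plus ``truncation and monotone convergence'' is workable but not the device actually used in \cite{U,lwyy2019}. There one instead truncates the Peetre maximal function itself---either by restricting the essential supremum to $\rho(y)\le b^{R}$, or by inserting a damping factor such as $b^{-\epsilon(k+l)}$ into the definition---so that the quantity to be absorbed is automatically finite for \emph{every} $f\in\cs'(\rn)$. The self-referential inequality is then derived for the truncated quantity with constants independent of the truncation parameter, after which one sends $R\to\infty$ (or $\epsilon\to 0^{+}$) by monotone convergence. This avoids any appeal to density in $\cs'(\rn)$, which is delicate because neither side of the inequality is obviously continuous in the $\cs'$ topology.
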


We now prove Theorem \ref{s4t1'}.

\begin{proof}[Proof of Theorem \ref{s4t1'}]
First, let $f \in H_{X}^{A}\left(\mathbb{R}^{n}\right)$.
Then, by Lemma \ref{HXAvanish},
we find that $f$ vanishes weakly at infinity.
In addition, repeating the proof of the necessity of Theorem \ref{s4t1}
with some slight modifications,
we easily find that $g(f)\in X$ and $||g(f)||_X\ls||f||_{H_X^A(\rn)}$.
Thus, to prove the present theorem, by Theorem \ref{s4t1},
we only need to show that, for any $f \in$ $\mathcal{S}'(\rn)$
satisfying that $f$ vanishes weakly at infinity
and $g(f)\in X$,
\begin{equation}\label{s4e25}
\|S(f)\|_{X} \lesssim\|g(f)\|_{X}
\end{equation}
holds true. Notice that, for any  $f \in \mathcal{S}'(\rn)$
vanishing weakly at infinity, any $t \in(0, \infty)$,
and almost every $x \in \mathbb{R}^{n}$, $S(f)(x) \lesssim g_{t, *}(f)(x)$.
Thus, to show \eqref{s4e25}, it suffices to prove that,
for any $f \in \mathcal{S}'(\rn)$
vanishing weakly at infinity,
\begin{equation}\label{s4e26}
\left\|g_{t, *}(f)\right\|_{X} \lesssim\|g(f)\|_{X}
\end{equation}
holds true for some $t \in(1/r_+,\infty)$ with $r_+$
the same as in \eqref{3.25.x1}.
Now, we show \eqref{s4e26}. To this end, assume that
$\phi \in \mathcal{S}\left(\mathbb{R}^{n}\right)$ is a radial function
the same as in Lemma \ref{s4l1}. Obviously, $t \in\left(1/r_+, \infty\right)$
implies that there exists a $\tz_0 \in$ $\left(0, r_+\right)$ such that
$t \in(1 / \tz_0, \infty)$. Fix an $N_{0} \in(1 /\tz_0, \infty)$.
By this, Lemma \ref{s4l4}, and the Minkowski inequality, we find that,
for any $x \in \mathbb{R}^{n}$,
\begin{align*}
g_{t, *}(f)(x)
&=\left\{\sum_{k \in \mathbb{Z}}
\left[\left(\phi_{k}^{*} f\right)_{t}(x)\right]^{2}\right\}^{\frac{1}{2}} \\
& \lesssim\left[\sum_{k \in \mathbb{Z}}
\left\{\sum_{j \in \mathbb{Z}_{+}} b^{-j N_{0} r_+} b^{j+k} \int_{\mathbb{R}^{n}}
\frac{|(\phi_{-(j+k)} * f)(y)|^{r_+}}{[1+b^{k} \rho(x-y)]^{t r_+}}  \,dy\right\}
^{\frac{2}{r_+}}\right]^{\frac{1}{2}} \\
& \leq\left\{\sum_{j \in \mathbb{Z}_{+}} b^{-j\left(N_{0} r_+-1\right)}
\times\left[\sum_{k \in \mathbb{Z}} b^{\frac{2k}{r_+}}
\left\{\int_{\mathbb{R}^{n}}
\frac{|(\phi_{-(j+k)} * f)(y)|^{r_+}}{[1+b^{k} \rho(x-y)]^{t r_+}}  \,dy\right\}
^{\frac{2}{r_+}}\right]^{\frac{r_+}{2}}\right\}^{\frac{1}{r_+}},
\end{align*}
which further implies that
\begin{align*}
\left\|g_{t, *}(f)\right\|_{X}^{r_+ \tz_0}
&\lesssim\left\|\sum_{j \in \mathbb{Z}_{+}}
b^{-j\left(N_{0} r_+-1\right)}\left[\sum_{k \in \mathbb{Z}}
b^{\frac{2k}{r_+}}\left\{\int_{\mathbb{R}^{n}} \frac{|(\phi_{-(j+k)} * f)(y)|
^{r_+}}{[1+b^{k} \rho(\cdot-y)]^{t r_+}}  \,dy\right\}^{\frac{2}{r_+}}\right]
^{\frac{r_+}{2}}\right\|_{X^{\frac{1}{r_+}}}^{\tz_0}\\
&\leq \sum_{j \in \mathbb{Z}_{+}}
b^{-j\left(N_{0} r_+-1\right) \tz_0}
\left\Vert\left[\sum_{k \in \mathbb{Z}}
b^{\frac{2k}{r_+}}\left\{\int_{\mathbb{R}^{n}} \frac{|(\phi_{-(j+k)} * f)(y)|^{r_+}}
{[1+b^{k} \rho(\cdot-y)]^{t r_+}}  \,dy\right\}
^{\frac{2}{r_+}}\right]^{\frac{r_+}{2}}\right\|
_{X^{\frac{1}{r_+}}}^{\tz_0} \\
&\leq \sum_{j \in \mathbb{Z}_{+}}
b^{-j\left(N_{0} r_+-1\right) \tz_0}
\left\|\left\{\sum_{k \in \mathbb{Z}}b^{\frac{2k}{r_+}}
\right.\right.\\
&\quad\times\left[\left(\int_{\left\{y \in \mathbb{R}^{n}:\,\rho(\cdot-y)<b^{-k}
\right\}}+\sum_{i \in \mathbb{\zz_+}} b^{-i t r_+}
\int_{\left\{y \in \mathbb{R}^{n}:\,b^{i-k-1}<\rho(\cdot-y)<b^{i-k}\right\}}
\right)\r.\\
&\quad\times\left.\left.\left|\left(\phi_{-(j+k)}
\ast f\right)(y)\right|^{r_+}\,dy\Bigg]^{\frac{2}{r_+}}\right\}^{\frac{r_+}{2}}
\right\|_{X^{\frac{1}{r_+}}}^{\tz_0}\\
&\leq\sum_{j \in \mathbb{Z}_{+}} b^{-j\left(N_{0} r_+-1\right) \tz_0}
\left\|\sum_{k \in \mathbb{Z}} b^{\frac{2k}{r_+}}
\left\{\sum_{i \in \mathbb{N}} b^{-i t r_+}\r.\r.\\
&\quad\times\lf.\lf.\left[\int_{\left\{y \in \mathbb{R}^{n}:\,
\rho(\cdot-y)<b^{-k}\right\}}\left|\left(\phi_{-(j+k)} \ast f\right)(y)
\right|^{r_+}
\,dy\right]^{\frac{2}{r_+}}\right\}^{\frac{r_+}{2}}\right\|^{\tz_0}
_{X^{\frac{1}{r_+}}}.
\end{align*}
Then, from the Minkowski inequality again and Assumption \ref{Assum-1},
we further infer that
\begin{align*}
\left\|g_{t, *}(f)\right\|_{X}^{r_+\tz_0}
&\lesssim \sum_{j \in \mathbb{Z}_{+}} b^{-j\left(N_{0} r_+-1\right) \tz_0}
\left\|\sum_{i \in \mathbb{N}}
b^{-i t r_+}\left\{\sum_{k \in \mathbb{Z}} b^{k}\r.\r.\\
&\qquad\times\lf.\lf.
\left[\int_{\left\{y \in \mathbb{R}^{n}:\,\rho(\cdot-y)<b^{-k}\right\}}
\left|\left(\phi_{-(j+k)} * f\right)(y)\right|^{r_+}
\,dy\right]^{\frac{2}{r_+}}\right\}
^{\frac{r_+}{2}}\right\|^{\tz_0}_{X^{\frac{1}{r_+}}}\\
&\leq \sum_{j \in \mathbb{Z}_{+}}
b^{-j\left(N_{0} r_+-1\right) \tz_0}\\
&\quad\times\left\|\sum_{i \in \mathbb{N}} b^{(1-t r_+) i}
\left\{\sum_{k \in \mathbb{Z}}
\left[\cm\left(\left|\phi_{-(j+k)} * f\right|^{r_+}\right)\right]
^{\frac{2}{r_+}}\right\}^{\frac{r_+}{2}}\right\|_{X^{\frac{1}{r_+}}}^{\tz_0}\\
&\lesssim\sum_{j \in \mathbb{Z}_{+}} b^{-j\left(N_{0} r_+-1\right) \tz_0}
\sum_{i \in \mathbb{N}} b^{(1-t r_+) i\tz_0}\\
&\quad\times\left\|\left\{\sum_{k \in \mathbb{Z}}
\left[\left|\phi_{-(j+k)} * f\right|^{r_+}\right]^{\frac{2}{r_+}}\right\}
^{\frac{r_+}{2}}
\right\|_{X^{\frac{1}{r_+}}}^{\tz_0}\\
&\sim\left\|g(f)\right\|_{X}^{r_+\tz_0}.
\end{align*}
This further implies that \eqref{s4e26} holds true
and hence finishes the proof of Theorem \ref{s4t1'}.
\end{proof}

\begin{remark}\label{3.24.x3}
\begin{enumerate}
\item [{\rm(i)}] If $A:=2\,I_{n\times n}$, then Theorems \ref{s4t1},
\ref{s4t1'}, and \ref{s4t1''} were obtained in
\cite[Theorems 4.9, 4.11, and 4.13]{cwyz20}
(see also \cite[Theorem 3.21]{shyy17} and \cite[Theorem 2.10]{wyy}).
\item [{\rm(ii)}] As was mentioned in Remark \ref{3.24.x1}(ii),
although $(\rn,\rho,dx)$ is a space of homogeneous type,
Theorems \ref{s4t1}, \ref{s4t1'}, and \ref{s4t1''} can not
be deduced from \cite[Theorems 4.11, 5.1, and 5.3]{yhyy2} and,
actually, they can not cover each other.
\end{enumerate}
\end{remark}

\section{Carleson Measure Characterization of
$\mathcal{L}_{X,1,d,\tz_0}^{A}({\rn})$\label{s5}}
In this section, applying the results obtained in previous sections,
we establish the Carleson measure characterization
of the anisotropic ball Campanato-type function space
$\mathcal{L}_{X,1,d,\tz_0}^{A}({\rn})$.
To this end, we first introduce the following
\emph{anisotropic $X$--Carleson measure}.
\begin{definition}
Let $A$ be a dilation and $X$ a ball quasi-Banach function space.
A Borel measure $d\mu$
on $\rr^{n}\times\zz$ is called an \emph{anisotropic $X$--Carleson measure} if
\begin{align*}
\lf\|d\mu\r\|_{X}^{A}
:=&\,\sup
\lf\|\lf\{\sum_{i=1}^m
\lf[\frac{{\lambda}_i}{\|{\one}_{B^{(i)}}\|_X}\r]^s
{\one}_{B^{(i)}}\r\}^{\frac{1}{s}}\r\|_{X}^{-1}\\
&\quad\times\sum_{j=1}^m\lf\{\frac{{\lambda}_j|B^{(j)}|^{\frac{1}{2}}}
{\|{\one}_{B^{(j)}}
\|_{X}}\lf[\int_{\widehat{B^{(j)}}}\,|d\mu(x,k)|\r]^{\frac{1}{2}}\r\}\\
<&\,\infty,
\end{align*}
where $s\in(0,\infty)$, the supremum
is taken over all $m\in\nn$, $\{B^{(j)}\}_{j=1}^m\subset \CB$, and
$\{\lambda_j\}_{j=1}^m\subset[0,\infty)$ with $\sum_{j=1}^m\lambda_j\neq0$,
and, for any $j\in\{1,\ldots,m\}$,
$\widehat{B^{(j)}}$ denotes the \emph{tent} over $B^{(j)}$,
that is,
\begin{align}\label{hatB}
\widehat{B^{(j)}}:=\lf\{(y,k)\in\rr^{n}\times\zz:\ y+B_k\subset B^{(j)}\r\}.
\end{align}
\end{definition}

For the anisotropic $X$--Carleson measure,
we have the following equivalent characterization.

\begin{proposition}\label{s5p1}
Let $A$ be a dilation, $X$ a ball quasi-Banach function space,
$d\mu$ a Borel measure on $\rr^{n}\times\zz$, and
\begin{align*}
\widetilde{\lf\|d\mu\r\|}_{X}^{A}&:=\sup
\lf\|\lf\{\sum_{i\in\nn}
\lf[\frac{{\lambda}_i}{\|{\one}_{B^{(i)}}\|_X}\r]^s
{\one}_{B^{(i)}}\r\}^{\frac{1}{s}}\r\|_{X}^{-1}\\
&\quad\times\sum_{j\in\nn}\lf\{\frac{{\lambda}_j|B^{(j)}|^{\frac{1}{2}}}
{\|{\one}_{B^{(j)}}
\|_{X}}\lf[\int_{\widehat{B^{(j)}}}\,|d\mu(x,k)|\r]^{\frac{1}{2}}\r\},
\end{align*}
where $s\in(0,\infty)$ and the supremum is taken over all $\{B^{(j)}\}_{j\in\nn}
\subset \CB$ and
$\{\lambda_j\}_{j\in\nn}\subset[0,\infty)$ satisfying
\begin{align*}
\lf\|\lf\{\sum_{i\in\nn}
\lf[\frac{{\lambda}_i}{\|{\one}_{B^{(i)}}\|_X}\r]^s
{\one}_{B^{(i)}}\r\}^{\frac1s}\r\|_{X}\in(0,\infty).
\end{align*}
Then $\widetilde{\lf\|d\mu\r\|}_{X}^{A}=\lf\|d\mu\r\|_{X}^{A}$.
\end{proposition}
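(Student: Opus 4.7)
The plan is to follow the same two-step strategy used in the proof of Proposition \ref{s2p1}, replacing the local $L^q$-oscillation $[\fint_{B^{(j)}}|f-P^d_{B^{(j)}}f|^q]^{1/q}$ there by the quantity $|B^{(j)}|^{-1/2}[\int_{\widehat{B^{(j)}}}|d\mu|]^{1/2}$ here; nothing in the argument of Proposition \ref{s2p1} used the specific form of the integrand beyond nonnegativity and additivity of partial sums, so the same structural proof transfers.

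First I would observe that $\|d\mu\|_X^A \leq \widetilde{\|d\mu\|}_X^A$: given any finite admissible configuration $m\in\nn$, $\{B^{(j)}\}_{j=1}^m\subset\CB$, and $\{\lambda_j\}_{j=1}^m\subset[0,\fz)$ with $\sum_{j=1}^m\lambda_j\neq0$, extend it to a countable family by setting $B^{(j)}:=B^{(1)}$ and $\lambda_j:=0$ for any $j\in\nn\cap(m,\fz)$. Then the finiteness condition defining $\widetilde{\|d\mu\|}_X^A$ is automatically fulfilled and the two quotients agree; taking the supremum over all finite configurations yields the inequality.

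For the reverse direction, fix any admissible countable $\{B^{(j)}\}_{j\in\nn}\subset\CB$ and $\{\lambda_j\}_{j\in\nn}\subset[0,\fz)$ with
\begin{align*}
\lf\|\lf\{\sum_{i\in\nn}\lf[\frac{\lambda_i}{\|\one_{B^{(i)}}\|_X}\r]^s\one_{B^{(i)}}\r\}^{\frac{1}{s}}\r\|_X\in(0,\fz).
\end{align*}
The positivity of this norm forces at least one $\lambda_j$ to be nonzero, so for all sufficiently large $m$ we have $\sum_{j=1}^m\lambda_j\neq0$. Using Definition \ref{BQBFS}(iii) applied to the nondecreasing sequence $\{\sum_{i=1}^m[\lambda_i/\|\one_{B^{(i)}}\|_X]^s\one_{B^{(i)}}\}^{1/s}$, together with the monotone convergence of the scalar partial sums $\sum_{j=1}^m\lambda_j|B^{(j)}|^{1/2}[\int_{\widehat{B^{(j)}}}|d\mu|]^{1/2}/\|\one_{B^{(j)}}\|_X$, I would pass to the limit $m\to\fz$ to identify the finite-$m$ ratio with the countable-family ratio.

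Consequently, for any $\varepsilon\in(0,\fz)$ there exists an $m_0\in\nn$ with $\sum_{j=1}^{m_0}\lambda_j\neq0$ for which the countable ratio is bounded by the finite-$m_0$ ratio plus $\varepsilon$, and the latter is at most $\|d\mu\|_X^A$ by definition. The arbitrariness of the admissible countable family and of $\varepsilon$ then gives $\widetilde{\|d\mu\|}_X^A\leq\|d\mu\|_X^A$, completing the proof. The only nonroutine point is the monotone-convergence step in the denominator, which is precisely what Definition \ref{BQBFS}(iii) supplies; no appeal to the measure structure of $d\mu$ beyond nonnegativity of $|d\mu|$ on the tents $\widehat{B^{(j)}}$ is needed, so the argument proceeds in complete parallel with Proposition \ref{s2p1}.
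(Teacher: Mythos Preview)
Your proposal is correct and follows essentially the same route as the paper's proof: the paper also argues the easy inequality $\|d\mu\|_X^A\le\widetilde{\|d\mu\|}_X^A$ directly, and for the reverse uses Definition~\ref{BQBFS}(iii) to pass to the limit in the denominator together with monotone convergence in the numerator, then concludes via the same $\varepsilon$-argument. Your explicit padding construction for the first inequality and your remark that the argument transfers verbatim from Proposition~\ref{s2p1} are both accurate; the paper simply writes ``Obviously'' for the former and carries out the latter without cross-referencing Proposition~\ref{s2p1}.
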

\begin{proof}
Let $d\mu$ be a Borel measure on $\rr^{n}\times\zz$.
Obviously, $\lf\|d\mu\r\|_{X}^{A}\le \widetilde{\lf\|d\mu\r\|}_{X}^{A}$.
We next show
\begin{align}\label{s5e1}
\widetilde{\lf\|d\mu\r\|}_{X}^{A}\le \lf\|d\mu\r\|_{X}^{A}.
\end{align}
Indeed, for any $\{B^{(j)}\}_{j\in\nn}\subset \CB$ and $\{\lambda_j\}_{j\in\nn}
\subset[0,\infty)$ as in the present proposition,
by Definition \ref{BQBFS}(iii), we find that
\begin{align*}
&\lim_{m\to\fz}
\lf\|\lf\{\sum_{i=1}^m
\lf[\frac{{\lambda}_i}{\|{\one}_{B^{(i)}}\|_X}\r]^s
{\one}_{B^{(i)}}\r\}^{\frac{1}{s}}\r\|_{X}^{-1}
\sum_{j=1}^m\frac{{\lambda}_j|B^{(j)}|^{\frac{1}{2}}}
{\|{\one}_{B^{(j)}}
\|_{X}}\lf[\int_{\widehat{B^{(j)}}}\,|d\mu(x,k)|\r]^{\frac{1}{2}}\\
&\quad=\lf\|\lf\{\sum_{i\in\nn}
\lf[\frac{{\lambda}_i}{\|{\one}_{B^{(i)}}\|_X}\r]^s
{\one}_{B^{(i)}}\r\}^{\frac{1}{s}}\r\|_{X}^{-1}
\sum_{j\in\nn}\frac{{\lambda}_j|B^{(j)}|^{\frac{1}{2}}}{\|{\one}_{B^{(j)}}
\|_{X}}\lf[\int_{\widehat{B^{(j)}}}\,|d\mu(x,k)|\r]^{\frac{1}{2}}.
\end{align*}
Therefore, for any given $\varepsilon\in(0,\fz)$,
there exists an $m_0\in\nn$ such that
$\sum_{j=1}^{m_0}\lambda_j\neq0$
and
\begin{align*}
&\lf\|\lf\{\sum_{i\in\nn}
\lf[\frac{{\lambda}_i}{\|{\one}_{B^{(i)}}\|_X}\r]^s
{\one}_{B^{(i)}}\r\}^{\frac{1}{s}}\r\|_{X}^{-1}
\sum_{j\in\nn}\frac{{\lambda}_j|B^{(j)}|^{\frac{1}{2}}}{\|{\one}_{B^{(j)}}
\|_{X}}\lf[\int_{\widehat{B^{(j)}}}\,|d\mu(x,k)|\r]^{\frac{1}{2}}\\
&\quad <\lf\|\lf\{\sum_{i=1}^{m_0}
\lf[\frac{{\lambda}_i}{\|{\one}_{B^{(i)}}\|_X}\r]^s
{\one}_{B^{(i)}}\r\}^{\frac{1}{s}}\r\|_{X}^{-1}
\sum_{j=1}^{m_0}\frac{{\lambda}_j|B^{(j)}|^{\frac{1}{2}}}
{\|{\one}_{B^{(j)}}
\|_{X}}\lf[\int_{\widehat{B^{(j)}}}\,|d\mu(x,k)|\r]^{\frac{1}{2}}+\varepsilon\\
&\quad\le\lf\|d\mu\r\|_{X}^{A}+\varepsilon.
\end{align*}
Combining this, the arbitrariness of both
$\{B^{(j)}\}_{j\in\nn}\subset \CB$ and
$\{\lambda_j\}_{j\in\nn}\subset[0,\infty)$ as in the present  proposition,
and $\varepsilon\in(0,\fz)$,
we further obtain \eqref{s5e1} and hence complete the proof of
Proposition \ref{s5p1}.
\end{proof}
In what follows, for any given $k\in\zz$, define
\begin{align*}
\delta_k(j):=
\begin{cases}
1
&\mbox{when}\ j=k,
\\0&\mbox{when}\ j\neq k.
\end{cases}
\end{align*}

Next, we state the main theorem of this section as follows.
\begin{theorem}\label{s5t1}
Let $A$, $X$, $d$, and $\tz_0$ be the same as in Definition \ref{deffin},
$p_0\in(\tz_0,2)$, and $\phi\in\cs(\rn)$ be
a radial real-valued function satisfying \eqref{s4l1e1} and \eqref{s4l1e2}.
\begin{enumerate}
\item[{\rm (i)}]
If $h\in \mathcal{L}_{X,1,d,\tz_0}^{A}({\rn})$, then,
for any $(x,k)\in\rn\times\zz$,
$d\mu(x,k):=\sum_{\ell\in\zz}|\phi_\ell\ast h(x)|^2dx\,\delta_\ell(k)$ is an
$X$--Carleson measure on $\rn\times\zz$;
moreover, there exists a positive constant $C$, independent of $h$, such that
$$\|d\mu\|_X^{A}\le C\|h\|_{\mathcal{L}_{X,1,d,\tz_0}^{A}({\rn})}.$$

\item[{\rm (ii)}]
If $h\in L^2_{\rm loc}(\rn)$ and, for any $(x,k)\in\rn\times\zz$,
$d\mu(x,k):=\sum_{\ell\in\zz}|\phi_\ell\ast h(x)|^2dx\,\delta_\ell(k)$ is an
$X$--Carleson measure on $\rn\times\zz$,
then $h\in \mathcal{L}_{X,1,d,\tz_0}^{A}({\rn})$ and, moreover,
there exists a positive
constant $C$, independent of $h$, such that
$$\|h\|_{\mathcal{L}_{X,1,d,\tz_0}^{A}({\rn})}\le C\|d\mu\|_X^{A}.$$
\end{enumerate}
\end{theorem}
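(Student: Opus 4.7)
The plan is to couple the duality and Lusin--area characterizations (Theorems \ref{s2t1} and \ref{s4t1}) with the decay--weighted equivalent norm from Theorem \ref{s3t1} and the atomic decomposition of anisotropic tent spaces from Lemma \ref{s5l1}. Since $p_0 \in (\theta_0, 2)$, Corollary \ref{s2c1} lets me freely enlarge the exponent $q$ in the Campanato quasi-norm from $1$ to $2$, which is what aligns the Campanato side with the $L^2$-native Littlewood--Paley theory used to study $d\mu$.

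For (i), I would fix test balls $\{B^{(j)}\}_{j=1}^m$ and weights $\{\lambda_j\}_{j=1}^m$, then choose a large $K \in \mathbb{N}$ and, for each $j$, decompose
\begin{align*}
h = \bigl(h - P^d_{A^{K}B^{(j)}} h\bigr)\mathbf{1}_{A^{K}B^{(j)}} + \bigl(h - P^d_{A^{K}B^{(j)}} h\bigr)\mathbf{1}_{(A^{K}B^{(j)})^{\complement}} + P^d_{A^{K}B^{(j)}} h =: h_1^{(j)} + h_2^{(j)} + P_j.
\end{align*}
Since $\phi$ has vanishing moments of order $d$, the polynomial $P_j$ contributes nothing to $\phi_k \ast h$. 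For $h_1^{(j)}$, the $L^2$-boundedness of the Littlewood--Paley square function together with the $q=2$ form of the Campanato quasi-norm (via Corollary \ref{s2c1}) yields
\begin{align*}
\sum_{k \in \mathbb{Z}} \int_{\{y+B_k \subset B^{(j)}\}} \bigl|\phi_k \ast h_1^{(j)}(y)\bigr|^{2}\,dy \lesssim \bigl\|h_1^{(j)}\bigr\|_{L^2(\mathbb{R}^n)}^{2} \lesssim \frac{\|\mathbf{1}_{B^{(j)}}\|_X^{2}}{|B^{(j)}|}\,\|h\|_{\mathcal{L}_{X,1,d,\theta_0}^{A}(\mathbb{R}^n)}^{2},
\end{align*}
where Lemma \ref{s3l2} absorbs the $A^{K}$-dilation into $\|\mathbf{1}_{B^{(j)}}\|_X$. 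For $h_2^{(j)}$, I would decompose $(A^{K}B^{(j)})^{\complement}$ into dyadic anisotropic annuli and use the decay of $\phi_k$ together with its moment cancellation to produce exactly the weighted kernel appearing on the right-hand side of \eqref{2.19.y1}; Theorem \ref{s3t1} then supplies the corresponding Campanato bound for some $\varepsilon$ satisfying \eqref{2.19.y2}. Summing the resulting per-$j$ estimates, pulling out $\lambda_j|B^{(j)}|^{1/2}/\|\mathbf{1}_{B^{(j)}}\|_X$, and passing to the supremum defining $\|d\mu\|_X^{A}$ delivers the claimed Carleson bound.

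For (ii), Theorem \ref{s2t1} reduces the claim to showing that the assignment $f \mapsto \int_{\mathbb{R}^n} f(x)h(x)\,dx$, initially defined on the dense subspace $H_{X,\mathrm{fin}}^{A,\infty,d}(\mathbb{R}^n) \cap \mathcal{C}(\mathbb{R}^n)$ of $H_X^{A}(\mathbb{R}^n)$, extends boundedly with norm $\lesssim \|d\mu\|_X^{A}$. Choosing $\psi$ conjugate to the radial real-valued $\phi$ as in Lemma \ref{s4l1}, the Calder\'on reproducing formula combined with the evenness of $\phi$ yields, for any such $f$,
\begin{align*}
\int_{\mathbb{R}^n} f(x)\,h(x)\,dx = \sum_{k \in \mathbb{Z}} \int_{\mathbb{R}^n} (f \ast \psi_k)(y)\,(\phi_k \ast h)(y)\,dy.
\end{align*}
I would then apply the atomic decomposition of anisotropic tent spaces (Lemma \ref{s5l1}) to $G(y,k) := (f \ast \psi_k)(y)$, writing $G = \sum_i \nu_i A_i$ with each tent atom $A_i$ supported in $\widehat{B^{(i)}}$ and normalized by $\|\mathbf{1}_{B^{(i)}}\|_X^{-1}$ in the standard $L^2$ sense, together with the sequence bound
\begin{align*}
\left\|\left\{\sum_{i} \left[\frac{\nu_i}{\|\mathbf{1}_{B^{(i)}}\|_X}\right]^{\theta_0}\mathbf{1}_{B^{(i)}}\right\}^{1/\theta_0}\right\|_X \lesssim \|S_\psi(f)\|_X \sim \|f\|_{H_X^{A}(\mathbb{R}^n)},
\end{align*}
where the last equivalence comes from Theorems \ref{s4t1} and \ref{s4t2}. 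Pairing each $A_i$ against $\phi_k \ast h$ by Cauchy--Schwarz on the tent $\widehat{B^{(i)}}$ produces precisely the $X$--Carleson integrand $\nu_i|B^{(i)}|^{1/2}\|\mathbf{1}_{B^{(i)}}\|_X^{-1}[\iint_{\widehat{B^{(i)}}}|d\mu|]^{1/2}$, and the Carleson hypothesis then closes the estimate; uniqueness of $h$ modulo $\mathcal{P}_d(\mathbb{R}^n)$ is inherited from Theorem \ref{s2t1}(ii).

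The principal obstacle is the far-field step of (i): I must calibrate the dyadic annular decomposition so that, after $k$-summation against the pointwise decay of $\phi_k$ and the cancellation supplied by its moments, the resulting quantity coincides up to admissible constants with the weighted integral in \eqref{2.19.y1} for some $\varepsilon$ satisfying \eqref{2.19.y2}. This forces a careful bookkeeping of the geometric parameters $\lambda_\pm$, $b$, and the order of vanishing $d$, with Lemma \ref{s3l2} the decisive tool for absorbing the scale mismatches. A secondary difficulty in (ii) is that, to avoid any implicit use of concavity of $\|\cdot\|_X$, the $\theta_0$-summation built into Lemma \ref{s5l1} must be exploited in place of a triangle inequality on $X$.
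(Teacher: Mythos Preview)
Your overall architecture matches the paper's proof exactly: for (i), the three-piece decomposition (polynomial, near, far), with vanishing moments killing the polynomial, $L^2$-boundedness of the square function plus Corollary~\ref{s2c1} handling the near part, and Theorem~\ref{s3t1} handling the far part; for (ii), the reproducing formula, tent-space atomic decomposition via Lemma~\ref{s5l1}, Cauchy--Schwarz on each tent, and then Theorem~\ref{s2t1}. The paper uses the same $\phi$ on both sides of the reproducing formula (citing \cite[(2.10)]{fl16}) rather than your $\psi$--$\phi$ pairing, but your version is equally valid given the radiality hypothesis.

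There is one genuine slip in your local estimate for (i). The displayed per-$j$ bound
\[
\bigl\|h_1^{(j)}\bigr\|_{L^2(\mathbb{R}^n)}^{2}\ \lesssim\ \frac{\|\mathbf{1}_{B^{(j)}}\|_X^{2}}{|B^{(j)}|}\,\|h\|_{\mathcal{L}_{X,1,d,\theta_0}^{A}(\mathbb{R}^n)}^{2}
\]
is not available in general. The single-ball Campanato bound gives $\|h_1^{(j)}\|_{L^2}\le \|\mathbf{1}_{A^K B^{(j)}}\|_X\,|A^K B^{(j)}|^{-1/2}\|h\|_{\mathcal{L}^A_{X,2,d,\theta_0}}$, and Lemma~\ref{s3l2} only yields $\|\mathbf{1}_{A^K B^{(j)}}\|_X\lesssim b^{K/s}\|\mathbf{1}_{B^{(j)}}\|_X$ with $s<\theta_0<1$, which is strictly worse than the $b^{K/2}$ you would need. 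The correct manoeuvre---and what the paper does---is to \emph{not} attempt a per-$j$ estimate: keep the full weighted sum $\sum_j \lambda_j|B^{(j)}|^{1/2}\|\mathbf{1}_{B^{(j)}}\|_X^{-1}\|h_1^{(j)}\|_{L^2}$ together with the normalizing factor $\|\{\sum_i(\lambda_i/\|\mathbf{1}_{B^{(i)}}\|_X)^{\theta_0}\mathbf{1}_{B^{(i)}}\}^{1/\theta_0}\|_X^{-1}$, rewrite everything in terms of the dilated family $\{A^K B^{(j)}\}_j$, and then recognize the resulting expression as bounded by $\|h\|_{\mathcal{L}^A_{X,2,d,\theta_0}}$ because the Campanato norm is a supremum over \emph{all} admissible families. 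Lemma~\ref{s3l2} is then used once, on the ratio of the two normalizing $X$-norms (as in \eqref{2.19.x2}), not per-$j$. With this correction your sketch goes through.
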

\begin{remark}
\begin{enumerate}
\item[{\rm (i)}]
Note that, if $X$ is a concave ball quasi-Banach function space, then,
by Proposition \ref{s2p2}, Theorem \ref{s5t1} gives
the Carleson measure characterization of $\mathcal{L}_{X,1,d}^{A}({\rn})$.

\item[{\rm (ii)}]
If $A:=2\,I_{n\times n}$, then Theorem \ref{s5t1}
was obtained in \cite[Theorem 5.3]{zhyy21}.
\end{enumerate}

\end{remark}

To prove Theorem \ref{s5t1}, we need the anisotropic tent space associated
with ball quasi-Banach function space and its atomic decomposition.
We first recall the following concept.

\begin{definition}\label{defcone}
Let $A$ be a dilation and, for any $x\in\rn$, let
$$\Gamma(x):=\{(y,k)\in\rn\times\zz:\ y\in x+B_k\},$$ which is
called the \emph{cone} of aperture $1$
with vertex $x\in\rn$.
\end{definition}

Let $\alpha\in(0,\infty)$. For any measurable function
$F:\ \rn\times\zz\to\cc$ and
$x\in\rn$, define
\begin{equation*}
\mathscr{A}(F)(x):=
\lf[\sum_{\ell\in\zz}b^{-\ell}\int_{\{y\in\rn:\ (y,\ell)\in\,\Gamma(x)\}}
|F(y,\ell)|^2\,
dy\r]^\frac12,
\end{equation*}
where $\Gamma(x)$ is the same as in Definition \ref{defcone}.
A measurable function $F$ on $\rn\times\zz$
is said to belong to the \emph{anisotropic tent space}
$T_2^{A,p}(\rn\times\zz)$, with $p\in(0,\infty)$,
if $$\|F\|_{T_2^{A,p}(\rn\times\zz)}:=\|\mathscr{A}(F)\|_{L^p(\rn)}<\infty.$$
For any given ball quasi-Banach function space $X$,
the \emph{anisotropic} $X$-\emph{tent space} $T_X^{A}(\rn\times\zz)$
is defined to be the set of all the measurable
functions $F$ on $\rn\times\zz$ such that $\mathscr{A}(F)\in X$
and naturally equipped with the quasi-norm
$\|F\|_{T_X^{A}(\rn\times\zz)}:=\|\mathscr{A}(F)\|_{X}$.

We next give the definition of anisotropic $(T_X, p)$-atoms.

\begin{definition}\label{s5d1}
Let $p\in(1,\infty)$, $A$ be a dilation,
and $X$ a ball quasi-Banach function space.
A measurable function $a:\ \rn\times\zz\to\cc$ is called an
\emph{anisotropic $(T_X,p)$-atom} if there
exists a ball $B\subset\CB$ such that
\begin{enumerate}
\item[{\rm(i)}] $\supp a:=\{(x,k)\in\rn\times\zz:\ a(x,k)\neq0\}\subset
\widehat{B}$, where $\widehat{B}$ is the same as in \eqref{hatB}
with $B^{(j)}$ replaced by $B$.
\item[{\rm(ii)}] $\|a\|_{T_2^{A,p}(\rn\times\zz)}\le|B|^{1/p}/\|\one_B\|_{X}$.
\end{enumerate}
Moreover, if $a$ is an anisotropic $(T_X,p)$-atom for any $p\in(1,\infty)$,
then $a$ is called an \emph{anisotropic $(T_X,\infty)$-atom}.
\end{definition}

We have the following atomic decomposition
on the anisotropic $X$-tent space $T_X^{A}(\rn\times\zz)$.
\begin{lemma}\label{s5l1}
Let $A,\,X$, and $\tz_0$ be the same as in Definition \ref{deffin}
and $F:\ \rn\times\zz\to\cc$ a measurable function.
If $F\in T_X^A(\rn\times\zz)$, then there exists
a sequence $\{\lambda_j\}_{j\in\nn}\subset [0,\infty)$,
a sequence $\{B^{(j)}\}_{j\in\nn}\subset\CB$,
and a sequence $\{A_j\}_{j\in\nn}$ of anisotropic $(T_X,\infty)$-atoms
supported,
respectively, in
$\{\widehat{B^{(j)}}\}_{j\in\nn}$ such that,
for almost every $(x,k)\in\rn\times\zz$,
\begin{equation*}
F(x,k)=\sum_{j\in\nn}\lambda_jA_j(x,k),\,|F(x,k)|
=\sum_{j\in\nn}\lambda_j|A_j(x,k)|
\end{equation*}
pointwisely, and
\begin{align}\label{s5e2}
\lf\|\lf\{\sum_{j\in\nn}\lf(\frac{\lambda_j}
{\|\mathbf1_{B^{(j)}}\|_X}\r)^{\tz_0}\mathbf1_{B^{(j)}}\r\}^{\frac{1}{\tz_0}}\r\|_X
\ls \|F\|_{T_X^A(\rn\times\zz)},
\end{align}
where the implicit positive constant is independent of $F$.
\end{lemma}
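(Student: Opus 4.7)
The plan is to adapt the classical Coifman--Meyer--Stein atomic decomposition for tent spaces to the anisotropic setting and the ball quasi-Banach function space framework. First, I would set up level sets: for each $k\in\zz$, let
\begin{align*}
O_k:=\lf\{x\in\rn:\ \mathscr{A}(F)(x)>2^k\r\},\quad
\widetilde{O_k}:=\lf\{x\in\rn:\ \cm(\mathbf{1}_{O_k})(x)>\tfrac{1}{2}\r\}.
\end{align*}
These satisfy $O_{k+1}\subset O_k\subset\widetilde{O_k}$, and the standard estimate $\mathscr{A}(F\mathbf{1}_{\widehat{\widetilde{O_k}}^{\com}})=0$ on $\widetilde{O_k}^{\com}$ shows that the part of $F$ lying outside every tent over $\widetilde{O_k}$ is negligible. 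Then I would apply an anisotropic Whitney-type covering to each $\widetilde{O_k}$ to obtain dilated balls $\{B_k^j\}_{j\in\nn}\subset\CB$ with controlled overlap whose union covers $\widetilde{O_k}$.

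Second, I would define the atoms and coefficients. For each pair $(k,j)$, set
\begin{align*}
E_{k,j}:=\lf(\widehat{\widetilde{O_k}}\setminus\widehat{\widetilde{O_{k+1}}}\r)\cap\widehat{B_k^j}\setminus\bigcup_{j'<j}\widehat{B_k^{j'}},
\end{align*}
so that the sets $\{E_{k,j}\}_{k,j}$ partition (modulo null sets) the essential support of $F$. Choose $\lambda_{k,j}:=C\,2^k\,\|\mathbf{1}_{B_k^j}\|_X$ for a suitable constant $C$, and put $A_{k,j}:=\lambda_{k,j}^{-1}F\mathbf{1}_{E_{k,j}}$. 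The identities $F=\sum_{k,j}\lambda_{k,j}A_{k,j}$ and $|F|=\sum_{k,j}\lambda_{k,j}|A_{k,j}|$ are then immediate from the disjointness of $\{E_{k,j}\}_{k,j}$. To verify that each $A_{k,j}$ is an anisotropic $(T_X,\infty)$-atom, I would, for any fixed $p\in(1,\infty)$, exploit the duality $(T_2^{A,p})^\ast=T_2^{A,p'}$ together with the inclusion $E_{k,j}\subset\widehat{B_k^j}\setminus\widehat{\widetilde{O_{k+1}}}$: for any nonnegative $g\in T_2^{A,p'}(\rn\times\zz)$ with unit norm, estimate $\int F\mathbf{1}_{E_{k,j}}\,\bar g$ by splitting into the region where $\mathscr{A}(g)\le c\,2^k|B_k^j|^{1/p}/\|\mathbf{1}_{B_k^j}\|_X$ and its complement, and use the size condition of the atom plus a Carleson-type tent argument to obtain $\|A_{k,j}\|_{T_2^{A,p}}\le|B_k^j|^{1/p}/\|\mathbf{1}_{B_k^j}\|_X$.

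Third, I would establish the norm estimate \eqref{s5e2}. By Definition \ref{BQBFS}(ii) and the choice of $\lambda_{k,j}$, together with the finite overlap of $\{B_k^j\}_j$ inside $\widetilde{O_k}$,
\begin{align*}
\lf\|\lf\{\sum_{k,j}\lf(\frac{\lambda_{k,j}}{\|\mathbf{1}_{B_k^j}\|_X}\r)^{\tz_0}\mathbf{1}_{B_k^j}\r\}^{\frac{1}{\tz_0}}\r\|_X
\ls\lf\|\lf(\sum_{k\in\zz}2^{k\tz_0}\mathbf{1}_{\widetilde{O_k}}\r)^{\frac{1}{\tz_0}}\r\|_X.
\end{align*}
Since $\widetilde{O_k}\subset\{x\in\rn:\ [\cm(\mathbf{1}_{O_k})(x)]^{\tz_0}>2^{-\tz_0}\}$ and $\tz_0\in(0,\unp)$, Assumption \ref{Assum-1} (applied in $X^{1/\tz_0}$ to the sequence $\{\mathbf{1}_{O_k}\}_{k\in\zz}$ with exponent $u=2/\tz_0$, say) yields the pointwise-style bound allowing us to replace $\widetilde{O_k}$ by $O_k$. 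Finally the elementary pointwise inequality
\begin{align*}
\sum_{k\in\zz}2^{k\tz_0}\mathbf{1}_{O_k}(x)\ls\lf[\mathscr{A}(F)(x)\r]^{\tz_0}
\end{align*}
and Definition \ref{BQBFS}(ii) give the desired bound by $\|\mathscr{A}(F)\|_X=\|F\|_{T_X^A(\rn\times\zz)}$.

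The main obstacle will be step two, namely verifying the $(T_X,\infty)$-atom condition uniformly for all $p\in(1,\infty)$: the anisotropic Whitney balls and the strip $\widehat{\widetilde{O_k}}\setminus\widehat{\widetilde{O_{k+1}}}$ need to interact cleanly enough with the area function $\mathscr{A}$ to yield the sharp $L^p$ size bound, and this is where one must invoke the Journé-type covering properties of $\widehat{\widetilde{O_k}}$ together with a careful duality identity; the remaining steps are either bookkeeping or direct consequences of Assumption \ref{Assum-1} and the definition of $\mathscr{A}$.
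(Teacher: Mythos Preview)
Your overall framework---level sets $O_k$, expanded sets $\widetilde{O_k}$, Whitney covering, disjoint pieces $E_{k,j}$, and the coefficients $\lambda_{k,j}=C\,2^k\|\mathbf 1_{B_k^j}\|_X$---is exactly the paper's strategy, and your norm estimate in step three is correct and matches the paper's argument line by line.

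The issue is in step two, the verification of the $(T_X,p)$-atom size condition. Your proposed device of ``splitting into the region where $\mathscr A(g)\le c\,2^k|B_k^j|^{1/p}/\|\mathbf 1_{B_k^j}\|_X$ and its complement'' is not the right mechanism and will not give the bound: you have no control on where $\mathscr A(g)$ is large other than $\|\mathscr A(g)\|_{L^{p'}}\le 1$, so a level-set split on the \emph{dual} function does not exploit anything about $F$ or the geometry of $E_{k,j}$. The key point you are missing is that $E_{k,j}\subset \widehat{\widetilde{O_{k+1}}}^{\,\complement}=\bigcup_{x\notin\widetilde{O_{k+1}}}\Gamma(x)$, and the CMS-type density lemma (cf.\ \cite[Lemma~1.3]{fl16}, the anisotropic version of the Coifman--Meyer--Stein lemma) then converts the pairing into
\[
|\langle A_{k,j},g\rangle|\ \lesssim\ \int_{O_{k+1}^{\complement}}\mathscr A(A_{k,j})(x)\,\mathscr A(g)(x)\,dx.
\]
Now two facts finish it cleanly: first, on $O_{k+1}^{\complement}$ one has $\mathscr A(F)\le 2^{k+1}$, so $\mathscr A(A_{k,j})=\lambda_{k,j}^{-1}\mathscr A(F\mathbf 1_{E_{k,j}})\lesssim \|\mathbf 1_{B_k^j}\|_X^{-1}$; second, $\supp A_{k,j}\subset\widehat{B_k^j}$ forces $\supp\mathscr A(A_{k,j})\subset B_k^j$. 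H\"older's inequality then gives $|\langle A_{k,j},g\rangle|\lesssim |B_k^j|^{1/p}/\|\mathbf 1_{B_k^j}\|_X$, uniformly in $p\in(1,\infty)$. No Journ\'e covering or splitting on $\mathscr A(g)$ is needed; the whole weight of the argument is that the $\gamma$-density expansion $\widetilde{O_{k+1}}$ lets you pass from cones over $\widetilde{O_{k+1}}^{\complement}$ to an integral over $O_{k+1}^{\complement}$, where $\mathscr A(F)$ is bounded by $2^{k+1}$.
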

\begin{proof}
For any $j\in\zz$, let
$$O_j:=\lf\{x\in\rn:\ \mathscr{A}(F)(x)>2^j\r\},$$
$F_j:=(O_j)^\complement$, and,
for any given $\gamma\in(0,1)$,
\begin{align*}
(O_j)_\gamma^\ast:=\lf \{x\in\rn: \cm(\one_{O_j})(x)>1-\gamma\r\}.
\end{align*}
Then, by an argument similar to
that used in the proof of \cite[(1.14)]{fl16},
we find that
\begin{align*}
\supp F\subset\left[\bigcup_{j\in\zz}\widehat{(O_j)_\gamma^\ast}\cup E\right],
\end{align*}
where $E\subset\rn\times\zz$ satisfies that
$$\sum_{\ell\in\zz}\int_{\{y\in\rn:\ (y,\ell)\in E\}}dy=0.$$
Moreover, applying \cite[(1.15)]{fl16}, we conclude that, for any $j\in\zz$,
there exists an integer
$N_j\in\nn\cup\{\infty\}$, $\{x_k^{(j)}\}_{k=1}^{N_j}\subset(O_j)_\gamma^\ast$,
and $\{l_k\}_{k=1}^{N_j}\subset\zz$ such that
$\{x_k^{(j)}+B_{l_k}^{(j)}\}_{k=1}^{N_j}$ has the
finite intersection property and
\begin{align}\label{s5l1e2}
\left(O_{j}\right)_{\gamma}^{*} &=\bigcup_{k=1}^{N_j}\left[x_k^{(j)}
+B_{l_k}^{(j)}\right] \\
&=\left[x_{1}^{(j)}+B_{l_{1}}^{(j)}\right] \cup\left\{\left[x_{2}^{(j)}
+B_{l_{2}}^{(j)}\right] \setminus \left[x_{1}^{(j)}+B_{l_{1}}^{(j)}\right]\right\}
\cup \cdots \noz\\
&\quad\cup\left\{\left[x_{N_{j}}^{(j)}+B_{l_{N_{j}}}^{(j)}\right]
\setminus  \bigcup_{i=1}^{N_{j}-1}\left[x_{i}^{(j)}+B_{l_{i}}^{(j)}\right]\right\}
\notag\\
&=: \bigcup_{k=1}^{N_{j}} B_{j, k}\noz.
\end{align}
Notice that, for any $j \in \mathbb{Z},\{B_{j, k}\}_{k=1}^{N_{j}}$
are mutually disjoint.
Thus, $\widehat{(O_{j})_{\gamma}^{*}}=\bigcup_{k=1}^{N_{j}} \widehat{B_{j, k}}$.
For any $j \in \mathbb{Z}$ and $k \in\{1, \ldots, N_{j}\}$, let
\begin{align}\label{s5l1e4}
C_{j, k}:=\widehat{B_{j, k}} \cap\left[\widehat{(O_{j})_{\gamma}^{*}}
\setminus \widehat{(O_{j+1})_{\gamma}^{*}}\right],\
A_{j, k}:=2^{-j}\left\|\one_{x_k^{(j)}+B_{l_k}^{(j)}}\right\|_{X}^{-1}
F \one_{C_{j, k}},
\end{align}
and $\lambda_{j, k}:=2^{j}||\one_{x_k^{(j)}+B_{l_k}^{(j)}}||_{X}$.
Therefore, from \eqref{s5l1e2}, it follows that
$$
F=\sum_{j \in \mathbb{Z}} \sum_{k=1}^{N_{j}} \lambda_{j, k} A_{j, k}
\  \text { and } \ |F|=\sum_{j \in \mathbb{Z}}
\sum_{k=1}^{N_{j}} \lambda_{j, k}\left|A_{j, k}\right|
$$
almost everywhere on $\mathbb{R}^{n} \times \mathbb{Z}$.
We now show that, for any $j \in \mathbb{Z}$
and $k \in\{1, \ldots, N_{j}\}$, $A_{j, k}$
is an anisotropic $(T_X^{A},\infty)$-atom
supported in $\widehat{x_k^{(j)}+B_{l_k}^{(j)}}$
up to a harmless constant multiple. Obviously,
$$
\supp  A_{j, k} \subset C_{j, k} \subset \widehat{B_{j, k}}
\subset \widehat{x_k^{(j)}+B_{l_k}^{(j)}}.
$$
In addition, let $p \in(1, \infty)$ and
$h \in T_{2}^{A, {p^\prime}}\left(\mathbb{R}^{n} \times \mathbb{Z}\right)$
satisfy $\|h\|_{T_{2}^{A, {p^\prime}}\left(\mathbb{R}^{n}
\times \mathbb{Z}\right)}
\leq 1$. Notice that
$$C_{j, k} \subset\widehat{(O_{j+1})_{\gamma}^{*}}^\complement
=\bigcup_{x \in{\left(O_{j+1}\right)_{\gamma}^{*}}^\complement} \Gamma(x).$$
Applying this, \cite[Lemma 1.3]{fl16}, the H\"older inequality,
and \eqref{s5l1e4},
we find that
\begin{align*}
\left|\left\langle A_{j, k}, h\right\rangle\right|
&=\left|\sum_{\ell \in \mathbb{Z}}
\int_{\rn} A_{j, k}(y, \ell) h(y, \ell) \one_{C_{j, k}}(y,\ell) \,dy\right| \\
&\leq \sum_{\ell \in \mathbb{Z}}
\int_{(y,\ell)\in\bigcup_{x \in{(O_{j+1})_{\gamma}^{*}}^
\complement} \Gamma(x)} \left|A_{j, k}(y, \ell) h(y, \ell)\right|
\,dy \,\delta_i(\ell)\\
&\lesssim \int_{{\left(O_{j+1}\right)}^\complement}
\lf[\sum_{\ell \in \mathbb{Z}} \int_{\left\{y \in \mathbb{R}^{n}:\
(y, \ell )
\in \Gamma(x)\right\}} b^{-\ell}
\left|A_{j, k}(y, \ell) h(y, \ell)\right|  \,dy \r] \,dx \\
&\leq \int_{\left(O_{j+1}\right)^{\complement}}
\mathscr{A}\left(A_{j, k}\right)(x) \mathscr{A}(h)(x)\,dx\\
&\leq\left\{\int_{\left(O_{j+1}\right)^{\complement}}
\left[\mathscr{A}\left(A_{j, k}\right)(x)\right]^{p}  \,dx\right\}^{\frac{1}{p}}
\left\{\int_{\left(O_{j+1}\right)^{\complement}}
[\mathscr{A}(h)(x)]^{p^{\prime}}
\,dx\right\}^{\frac{1}{p^{\prime}}} \\
&\leq 2^{-j}\left\|\one_{x_k^{(j)}+B_{l_k}^{(j)}}\right\|_{X}^{-1}
\left\{\int_{(x_k^{(j)}+B_{l_k}^{(j)}) \cap\left(O_{j+1}\right)^{\complement}}
[\mathscr{A}(F)(x)]^{p}  \,dx\right\}^{\frac{1}{p}} \\
&\quad\times\|h\|_{T_{2}^{A, p^{\prime}}
\left(\mathbb{R}^{n} \times \mathbb{Z}\right)} \\
&\lesssim \frac{|x_k^{(j)}+B_{l_k}^{(j)}|^{\frac{1}{p}}}
{\|\one_{x_k^{(j)}+B_{l_k}^{(j)}}
\|_{X}},
\end{align*}
which, combined with $(T_{2}^{A, p}\left(\mathbb{R}^{n}
\times \mathbb{Z}\right))^{*}=T_{2}^{A, p^{\prime}}
\left(\mathbb{R}^{n} \times \mathbb{Z}\right)$ (see \cite[Theorem 2]{CMS85}),
further implies that
\begin{align*}
\left\|A_{j, k}\right\|_{T_{2}^{A, p}\left(\mathbb{R}^{n}
\times \mathbb{Z}\right)} \lesssim \frac{|x_k^{(j)}+B_{l_k}^{(j)}|
^{\frac{1}{p}}}{\| \one_{x_k^{(j)}+B_{l_k}^{(j)}}\|_{X}}.
\end{align*}
Using this, we find that, for any $j \in \mathbb{Z}$
and $k \in$ $\{1, \ldots, N_{j}\},$ $A_{j, k}$
is an anisotropic $(T_X^{A},p)$-atom up to a harmless
constant multiple for any $p \in(1, \infty)$.
Thus, for any $j \in \mathbb{Z}$ and $k \in\{1, \ldots, N_{j}\}$, $A_{j, k}$
is an anisotropic $(T_X^{A},\infty)$-atom up to a harmless constant multiple.

We next prove \eqref{s5e2}. To achieve this, from \eqref{s5l1e2},
the finite intersection property of $\{x_k^{(j)}+B_{l_k}^{(j)}\}_{k=1}^{N_{j}}$,
the estimate that $\one_{(O_{j})_{\gamma}^{*}} \lesssim[\cm(\one_{O_{j}})]
^{\frac{1}{\tz_0}}$,
and Assumption \ref{Assum-1}, we deduce that
\begin{align*}
&\left\|\left\{\sum_{j \in \mathbb{Z}}
\sum_{k=1}^{N_{j}}\left[\frac{\lambda_{j, k}}
{||\one_{x_k^{(j)}+B_{l_k}^{(j)}}||_X}\right]^{\tz_0}\one_{x_k^{(j)}+B_{l_k}
^{(j)}}
\right\}^{\frac{1}{\tz_0}}\right\|_{X}\\
&\quad=\left\|\left\{\sum_{j \in \mathbb{Z}}
\sum_{k=1}^{N_{j}}\left[2^{j}
\one_{x_k^{(j)}+B_{l_k}^{(j)}}\right]^{\tz_0}\right\}^{\frac{1}{\tz_0}}
\right\|_{X}\\
&\quad\lesssim\left\|\left\{\sum_{j \in \mathbb{Z}}
\left[2^{j} \one_{\left(O_{j}\right)_{\gamma}^{*}}\right]^{\tz_0}\right\}
^{\frac{1}{\tz_0}}\right\|_{X}\lesssim\left\|\sum_{j \in \mathbb{Z}}
\left\{2^{j} \left[\cm\left(\one_{O_{j}}\right)\right]^{\frac{1}{\tz_0}}
\right\}
^{\tz_0}\right\|_{X^{\frac{1}{\tz_0}}}^{\frac{1}{\tz_0}}\\
&\quad\lesssim \left\|
\left\{\sum_{j \in \mathbb{Z}}\left(2^{j} \one_{O_{j}}\right)^{\tz_0}\right\}
^{\frac{1}{\tz_0}}\right\|_{X}
\sim\left\|
\left\{\sum_{j \in \mathbb{Z}}\left(2^{j} \one_{O_{j} \setminus  O_{j+1}}\right)
^{\tz_0}\right\}^{\frac{1}{\tz_0}} \right\|_{X}\\
&\quad\leq \left\|\mathscr{A}(F)\left[\sum_{j \in \mathbb{Z}} \one_{O_{j}
\setminus  O_{j+1}}\right]^{\frac{1}{\tz_0}}\right\|_{X}
=\|\mathscr{A}(F)\|_{X} = \|F\|_{T_X^{A}\left(\mathbb{R}^{n}
\times \mathbb{Z}\right)} .
\end{align*}
This further implies that \eqref{s5e2} holds true and hence
finishes the proof of Lemma \ref{s5l1}.
\end{proof}

We now prove Theorem \ref{s5t1}.
\begin{proof}[Proof of Theorem \ref{s5t1}]
We first show (i). To this end, let $h\in\mathcal{L}_{X,1,d,\tz_0}^{A}({\rn})$
and $\{x_j+B_{l_j}\}_{j=1}^m\subset \CB$ with
$m\in\nn$, $\{x_j\}_{j=1}^m\subset\rn$, and $\{l_j\}_{j=1}^m\subset\zz$.
Then we easily find that, for any $j\in\{1,\ldots,m\}$,
\begin{align}\label{s5e3}
h&=P^d_{x_j+B_{l_j}}h+\lf(h-P^d_{x_j+B_{l_j}}h\r)\one_{x_j+B_{l_j+\tau}}+
\lf(h-P^d_{x_j+B_{l_j}}h\r)\one_{(x_j+B_{l_j+\tau})^{\com}}\\
&=:h_{j}^{(1)}+h_{j}^{(2)}+h_{j}^{(3)},\noz
\end{align}
where $\tau$ is the same as in \eqref{tau}.
For $h_{j}^{(1)}$, by the fact that $\int_{\rn}\phi(x)x^{\az}\,dx=0$
for any $\az\in\zz_+^n$ with $|\alpha|\leq d$, we conclude
that, for any $k\in\zz$,
$\phi_k\ast h_{j}^{(1)}\equiv 0$ and hence
\begin{equation}\label{s5e4}
\sum_{k\in\zz}\int_{\{x\in\rn:\,(x,k)\in\widehat{x_j+B_{l_j}}\}}\lf|\phi_k\ast
h_{j}^{(1)}(x)\r|^2dx=0.
\end{equation}
For $h_{j}^{(2)}$, from the Tonelli theorem and the boundedness
on $L^2(\rn)$ of the anisotropic $g$-function
$$g(h_{j}^{(2)}):=\lf[\sum_{k \in \zz}\lf|\phi_k\ast h_{j}^{(2)}\r|
^2\r]^{\frac{1}{2}}$$
(see, for instance, \cite[Theorem 6.3]{hlyy20}),
we infer that
\begin{align}\label{s5e5}
&\sum_{k\in\zz}\int_{\{x\in\rn:\,(x,k)
\in\widehat{x_j+B_{l_j}}\}}\lf|\phi_k\ast h_{j}^{(2)}(x)\r|^2 \,dx\noz\\
&\quad\le \int_{\rn}\sum_{k \in \mathbb{Z}}\lf|\phi_k\ast h_{j}^{(2)}(x)\r|^2
\,dx
\ls \lf\|h_{j}^{(2)}\r\|_{L^2(\rn)}^2\\
&\quad= \int_{x_j+B_{l_j+\tau}}\lf|h(x)-P^d_{x_j+B_{l_j}}h(x)\r|^2\,dx\noz\\
&\quad\leq \int_{x_j+B_{l_j+\tau}}\lf|h(x)-P^d_{x_j+B_{l_j+\tau}}h(x)\r|^2
\,dx\noz\\
&\qquad+\int_{x_j+B_{l_j+\tau}}\lf|P^d_{x_j+B_{l_j+\tau}}h(x)-P^d_{x_j+B_{l_j}}
h(x)\r|
^2\,dx.\noz
\end{align}
In addition, using Lemma \ref{s3l1}, we obtain, for any
$x\in x_j+B_{l_j+\tau}$,
\begin{align*}
&\lf|P^d_{x_j+B_{l_j+\tau}}h(x)-P^d_{x_j+B_{l_j}}h(x)\r|\\
&\quad=\lf|P^d_{x_j+B_{l_j}+\tau}\lf(h-P^d_{x_j+B_{l_j}}h\r)(x)\r|\\
&\quad\ls \frac1{|x_j+B_{l_j}|}
\int_{x_j+B_{l_j+\tau}}\lf|h(y)-P^d_{x_j+B_{l_j+\tau}}h(y)\r|\,dy.
\end{align*}
Thus, combining this with \eqref{s5e5},
Lemma \ref{s3l2}, and Definition \ref{BQBFS}(ii),
we find that, for any $m\in\nn$, $\{x_j+B_{l_j}\}_{j=1}^m\subset \CB$
with both $\{x_j\}_{j=1}^m\subset\rn$ and $\{l_j\}_{j=1}^m\subset\zz$,
and $\{\lz_j\}_{j=1}^m\subset [0,\fz)$ with $\sum_{j=1}^m\lambda_j\neq0$,
\begin{align*}
&\lf\|\lf\{\sum_{i=1}^m
\lf(\frac{{\lambda}_i}{\|{\one}_{x_i+B_{l_i}}\|_X}\r)^{\tz_0}
{\one}_{x_i+B_{l_i}}\r\}^{\frac{1}{\tz_0}}\r\|_{X}^{-1}\sum_{j=1}^m
\frac{{\lambda}_j|x_j+B_{l_j}|^{\frac{1}{2}}}{\|{\one}_{x_j+B_{l_j}}
\|_{X}}\\
&\qquad\times\lf[\sum_{k\in\zz}
\int_{\{x\in\rn:\,(x,k)\in\widehat{x_j+B_{l_j}}\}}\lf|\phi_k\ast
h_{j}^{(2)}(x)\r|
^2\,dx\r]^{\frac{1}{2}}\\
&\quad\ls J_1
\lf\|\lf\{\sum_{i=1}^m
\lf(\frac{{\lambda}_i}{\|{\one}_{x_i+B_{l_i+\tau}}\|_X}\r)^{\tz_0}
{\one}_{x_i+B_{l_i+\tau}}\r\}^{\frac{1}{\tz_0}}\r\|_{X}^{-1}\\
&\qquad\times \sum_{j=1}^m J_2^{(j)}
\left\{
\lf[\int_{x_j+B_{l_j+\tau}}
\lf|h(x)-P^d_{x_j+B_{l_j+\tau}}h(x)\r|^2\,dx\r]^{\frac{1}{2}}\right.\\
&\qquad+\left.
\frac1{|x_j+B_{l_j}|^{\frac{1}{2}}}\int_{x_j+B_{l_j+\tau}}
\lf|h(x)-P^d_{x_j+B_{l_j+\tau}}h(x)\r|\,dx
\right\}\noz\\
&\quad\ls \lf\|\lf\{\sum_{i=1}^m
\lf(\frac{{\lambda}_i}{\|{\one}_{x_i+B_{l_i+\tau}}\|_X}\r)^{\tz_0}
{\one}_{x_i+B_{l_i+\tau}}\r\}^{\frac{1}{\tz_0}}\r\|_{X}^{-1}\\
&\qquad\times\sum_{j=1}^m
\frac{{\lambda}_j|x_j+B_{l_j+\tau}|^{\frac{1}{2}}}
{\|{\one}_{x_j+B_{l_j+\tau}}\|_{X}}
\lf\{\lf[\int_{x_j+B_{l_j+\tau}}
\lf|h(x)-P^d_{x_j+B_{l_j+\tau}}h(x)\r|^2\,dx\r]^{\frac{1}{2}}\right.\\
&\qquad+\left.\frac1{|x_j+B_{l_j}|^{\frac{1}{2}}}
\int_{x_j+B_{l_j+\tau}}
\lf|h(x)-P^d_{x_j+B_{l_j+\tau}}h(x)\r|\,dx\r\}\\
&\quad\leq\|h\|_{\mathcal{L}_{X,2,d,\tz_0}^{A}({\rn})}+\|h\|_
{\mathcal{L}_{X,1,d,\tz_0}^{A}({\rn})},\noz
\end{align*}
where $$ J_1:=\frac{\|\{\sum_{i=1}^m
(\frac{{\lambda}_i}{\|{\one}_{x_i+B_{l_i+\tau}}\|_X})^{\tz_0}
{\one}_{x_i+B_{l_i+\tau}}\}^{\frac{1}{\tz_0}}\|_{X}}
{\|\{\sum_{i=1}^m
(\frac{{\lambda}_i}{\|{\one}_{x_i+B_{l_i}}\|_X})^{\tz_0}
{\one}_{x_i+B_{l_i}}\}^{\frac{1}{\tz_0}}\|_{X}}$$
and, for any $j\in\{1,\ldots,m\}$,
$$J_2^{(j)}:=
\frac{\|{\one}_{x_j+B_{l_j+\tau}}\|_{X}}{\|{\one}_{x_j+B_{l_j}}\|_{X}}
\frac{{\lambda}_j|x_j+B_{l_j+\tau}|^{\frac{1}{2}}}
{\|{\one}_{x_j+B_{l_j+\tau}}\|_{X}}.$$
This, combined with $p_0\in(\tz_0,2)$ and Corollary \ref{s2c1},
further implies that
\begin{align}\label{s5e6}
&\lf\|\lf\{\sum_{i=1}^m
\lf(\frac{{\lambda}_i}{\|{\one}_{x_i+B_{l_i}}\|_X}\r)^{\tz_0}
{\one}_{x_i+B_{l_i}}\r\}^{\frac{1}{\tz_0}}\r\|_{X}^{-1}\sum_{j=1}^m
\frac{{\lambda}_j|x_j+B_{l_j}|^{\frac{1}{2}}}{\|{\one}_{x_j+B_{l_j}}
\|_{X}}\\
&\qquad\times\lf[\sum_{k\in\zz}\int_{\{x\in\rn:\,(x,k)
\in\widehat{x_j+B_{l_j}}\}}\lf|\phi_k\ast h_{j}^{(2)}(x)\r|^2dx\r]^{\frac{1}{2}}\noz\\
&\quad\ls\|h\|_{\mathcal{L}_{X,1,d,\tz_0}^{A}({\rn})}.\noz
\end{align}
Finally, we deal with $h_{j}^{(3)}$. To do this, letting $s\in(0,\tz_0)$
and $\vaz\in(\frac{\ln b}{\ln(\lambda_-)}
[\frac2s+d\frac{\ln(\lambda_+)}{\ln b}],\fz)$, we have,
for any
$j\in\{1,\ldots,m\}$
and $(x,k)\in
\widehat{x_j+B_{l_j}}$,
\begin{align*}
\lf|\phi_k\ast h_{j}^{(3)}(x)\r|
&\ls \int_{(x_j+B_{l_j+\tau})^{\com}}
\frac{b^{\vaz k\frac{\ln \lambda_-}{\ln b}}}{[b^k+\rho(x-y)]
^{1+\vaz\frac{\ln \lambda_-}{\ln b}}}
\lf|h(y)-P^d_{x_j+B_{l_j}}h(y)\r|\,dy\\
&\sim \int_{(x_j+B_{l_j+\tau})^{\com}}
\frac{b^{\vaz k\frac{\ln \lambda_-}{\ln b}}}{[b^k+\rho(x_j-y)]
^{1+\vaz\frac{\ln \lambda_-}{\ln b}}}
\lf|h(y)-P^d_{x_j+B_{l_j}}h(y)\r|\,dy\noz\\
&\leq \frac{{b^{\vaz k \frac{\ln \lambda_-}{\ln b}}}}
{{b^{\vaz l_j \frac{\ln \lambda_-}{\ln b}}}}\int_{(x_j+B_{l_j+\tau})^{\com}}
\frac{b^{\vaz l_j \frac{\ln \lambda_-}{\ln b}}}
{[\rho(x_j-y)]^{1+\vaz\frac{\ln \lambda_-}{\ln b}}}
\lf|h(y)-P^d_{x_j+B_{l_j}}h(y)\r|\,dy\noz\\
&\ls \frac{{b^{\vaz k \frac{\ln \lambda_-}{\ln b}}}}
{{b^{\vaz l_j \frac{\ln \lambda_-}{\ln b}}}} \int_{(x_j+B_{l_j+\tau})^{\com}}
\frac{b^{\vaz l_j \frac{\ln \lambda_-}{\ln b}}}
{b^{l_j(1+\vaz\frac{\ln \lambda_-}{\ln b})}+[\rho(x_j-y)]
^{1+\vaz\frac{\ln \lambda_-}{\ln b}}}\\
&\quad\times\lf|h(y)-P^d_{x_j+B_{l_j}}h(y)\r|\,dy.\noz
\end{align*}
From this and Theorem \ref{s3t1}, it follows that,
for any $m\in\nn$, $\{x_j+B_{l_j}\}_{j=1}^m\subset \CB$
with both $\{x_j\}_{j=1}^m\subset\rn$ and
$\{l_j\}_{j=1}^m\subset\zz$, and
$\{\lz_j\}_{j=1}^m\subset [0,\fz)$ with $\sum_{j=1}^m\lambda_j\neq0$,
\begin{align*}
&\lf\|\lf\{\sum_{i=1}^m
\lf(\frac{{\lambda}_i}{\|{\one}_{x_i+B_{l_i}}\|_X}\r)^{\tz_0}
{\one}_{x_i+B_{l_i}}\r\}^{\frac{1}{\tz_0}}\r\|_{X}^{-1}\sum_{j=1}^m
\frac{{\lambda}_j|x_j+B_{l_j}|^{\frac{1}{2}}}{\|{\one}_{x_j+B_{l_j}}
\|_{X}}\\
&\qquad\times\lf[\sum_{k\in\zz}
\int_{\{x\in\rn: (x,k)\in\widehat{x_j+B_{l_j}}\}}\lf|\phi_k\ast h_{j}^{(3)}(x)\r|^2
\,dx\r]^{\frac{1}{2}}\\
&\quad\ls \lf\|\lf\{\sum_{i=1}^m
\lf(\frac{{\lambda}_i}{\|{\one}_{x_i+B_{l_i}}\|_X}\r)^{\tz_0}
{\one}_{x_i+B_{l_i}}\r\}^{\frac{1}{\tz_0}}\r\|_{X}^{-1}\sum_{j=1}^m
\frac{{\lambda}_j|x_j+B_{l_j}|}{\|{\one}_{x_j+B_{l_j}}
\|_{X}}\\
&\qquad\times\sum_{k=-\infty}^{l_j}b^{-(l_j-k)\vaz\frac{\ln\lambda_{-}}{\ln b}}
\int_{(x_j+B_{l_j+\tau})^{\com}}
\frac{b^{\vaz l_j \frac{\ln \lambda_-}{\ln b}}|h(x)-P^d_{x_j+B_{l_j}}h(x)|}
{b^{l_j(1+\vaz\frac{\ln \lambda_-}{\ln b}})
+[\rho(x_j-x)]^{1+\vaz\frac{\ln \lambda_-}{\ln b}}}\,dx\noz\\
&\quad\ls\|h\|_{\mathcal{L}_{X,1,d,\tz_0}^{A,\vaz}(\rn)}
\sim\|h\|_{\mathcal{L}_{X,1,d,\tz_0}^{A}({\rn})}.\noz
\end{align*}
Combining this, \eqref{s5e3}, \eqref{s5e4}, and \eqref{s5e6}, we
conclude that
\begin{align*}
&\lf\|\lf\{\sum_{i=1}^m
\lf(\frac{{\lambda}_i}{\|{\one}_{x_i+B_{l_i}}\|_X}\r)^{\tz_0}
{\one}_{x_i+B_{l_i}}\r\}^{\frac{1}{\tz_0}}\r\|_{X}^{-1}\sum_{j=1}^m
\frac{{\lambda}_j|x_j+B_{l_j}|^{\frac{1}{2}}}{\|{\one}_{x_j+B_{l_j}}
\|_{X}}\\
&\qquad\times\lf[\sum_{k\in\zz}
\int_{\{x\in\rn:\,(x,k)\in\widehat{x_j+B_{l_j}}\}}\lf|\phi_k\ast h(x)\r|^2\,dx\r]
^{\frac{1}{2}}\\
&\quad\ls\|h\|_{\mathcal{L}_{X,1,d,\tz_0}^{A}({\rn})},
\end{align*}
which, together with the arbitrariness of $m\in\nn$, $\{x_j+B_{l_j}\}_{j=1}
^m\subset \CB$ with both $\{x_j\}_{j=1}^m\subset\rn$
and $\{l_j\}_{j=1}^m\subset\zz$,
and $\{\lz_j\}_{j=1}^m\subset [0,\fz)$ with $\sum_{j=1}^m\lambda_j\neq0$,
further implies that, for any
$(x,k)\in\rn\times\zz$,
$$d\mu(x,k):=|\phi_k\ast h(x)|^2\,dx$$
is an
$X$--Carleson measure on $\rn\times\zz$.
Moreover, there exists a positive constant $C$, independent of $b$, such that
$\|d\mu\|_X^{A}\ls\|h\|_{\mathcal{L}_{X,1,d,\tz_0}^{A}({\rn})}$.
This finishes the proof
of (i).

We now prove (ii). To this end, let
$f\in H_{X, \rm fin}^{A,\fz,d,\tz_0}({{\rr}^n})$
with the quasi-norm greater than zero.
Then $f\in L^{\fz}(\rn)$ with compact support. From this, the assumption
that
$h\in L^2_{\rm loc}(\rn)$, and \cite[(2.10)]{fl16}, it follows that
\begin{equation}\label{s5e7}
\lf|\int_{\rn}f(x)\overline{h(x)}\,dx\r|
\sim \lf|\sum_{k \in \zz}\int_{\rn}\phi_k\ast f(x)\overline{\phi_k\ast h(x)}
\,dx\r|.
\end{equation}
In addition, by the assumption
that $f\in H_X^{A}(\rn)$ and Theorem \ref{s4t1},
we find that
\begin{align*}
\lf\|\phi_k\ast f\r\|_{T_X^A(\rn\times\zz)}\sim\|f\|_{H_X^{A}(\rn)}<\fz,
\end{align*}
which, combined with Lemma \ref{s5l1},
further implies that there exists a sequence
$\{\lz_j\}_{j\in\nn}\subset [0,\fz)$
and a sequence $\{A_j\}_{j\in\nn}$ of anisotropic $(T_X^{A},\fz)$-atoms supported,
respectively, in
$\{\widehat{x_j+B_{l_j}}\}_{j\in\nn}$ with $\{x_j+B_{l_j}\}_{j\in\nn}\subset \CB$
such that, for almost every $(x,k)\in\rn\times\zz$,
$$\phi_k\ast f(x)=\sum_{j\in\nn}\lz_jA_j(x,k)$$
and
\begin{align*}
0<\lf\|\lf\{\sum_{j\in\nn}\lf(\frac{\lambda_j}
{\|\one_{x_j+B_{l_j}}\|_X}\r)^{\tz_0}
\one_{x_j+B_{l_j}}\r\}^{\frac{1}{\tz_0}}\r\|_X
\ls\|f\|_{H_X^{A}(\rn)}.
\end{align*}
From this, \eqref{s5e7}, the H\"older inequality, the size condition
of $A_j$, and the Tonelli theorem, we infer that, for any
$f\in H_{X,\fin}^{A,\infty,d}({{\rr}^n})$,
\begin{align*}
&\lf|\int_{\rn}f(x)\overline{h(x)}\,dx\r|\\
&\quad\ls \sum_{k\in\zz}\sum_{j\in\nn}\lz_j\int_{\rn}\lf|A_j(x,k)\r|
\lf|\phi_k\ast h(x)\r|\,dx\\
&\quad\leq \sum_{j\in\nn}\lz_j\lf[\sum_{k\in\zz}\int_{\{x\in\rn:\,(x,k)
\in\widehat{x_j+B_{l_j}}\}}\lf|A_j(x,k)\r|^2
\,dx\r]^{\frac{1}{2}}\\
&\qquad\times\lf[\sum_{k\in\zz}\int_{\{x\in\rn:\,(x,k)
\in\widehat{x_j+B_{l_j}}\}}\lf|\phi_k\ast h(x)\r|^2
\,dx\r]^{\frac{1}{2}}\\
&\quad=\sum_{j\in\nn}\lz_j\lf[\sum_{k \in \mathbb{Z}}b^{-k}
\int_{\{x\in\rn:\,(x,k)
\in\Gamma(y)\}}\lf|A_j(x,k)\r|^2
\,dx\int_{\{y\in\rn:\,y\in x+B_k\}}dy\r]^{\frac{1}{2}}\\
&\qquad\times\lf[\sum_{k\in\zz}\int_{\{x\in\rn:\,(x,k)\in\widehat{x_j+B_{l_j}}\}}
\lf|\phi_k\ast h(x)\r|^2 \,dx\r]^{\frac{1}{2}}\\
&\quad= \sum_{j\in\nn}\lz_j\lf\|A_j\r\|_{T^{A,2}_2(\rn\times\zz)}
\lf[\sum_{k\in\zz}\int_{\{x\in\rn:\,(x,k)\in\widehat{x_j+B_{l_j}}\}}\lf|\phi_k
\ast h(x)\r|^2\,dx\r]^{\frac{1}{2}}\\
&\quad\leq \sum_{j\in\nn}\frac{\lz_j|x_j+B_{l_j}|^{\frac{1}{2}}}
{\|\one_{x_j+B_{l_j}}\|_{X}}
\lf[\sum_{k\in\zz}\int_{\{x\in\rn:\,(x,k)\in\widehat{x_j+B_{l_j}}\}}\lf|\phi_k
\ast h(x)\r|^2
\,dx\r]^{\frac{1}{2}}\\
&\quad\ls \|f\|_{H_X^{A}(\rn)}\widetilde{\|d\mu\|}_X^{A},
\end{align*}
which, together with Theorem \ref{s2t1}, Proposition \ref{s5p1},
and Corollary \ref{s2c1}, further implies that
$$\|h\|_{\mathcal{L}_{X,1,d,\tz_0}^{A}({\rn})}\ls\|d\mu\|_X^{A}.$$
This finishes the proof of (ii) and hence Theorem \ref{s5t1}.
\end{proof}
\section{Several Applications \label{s6}}
In this section, we apply Theorems \ref{s2t1}, \ref{s3t1},
\ref{s3t2}, \ref{s4t1}, \ref{s4t1'}, \ref{s4t1''}, and \ref{s5t1}
as well as Corollary \ref{s2c1} to
seven concrete examples of ball quasi-Banach function spaces,
namely
Morrey spaces (see Subsection \ref{s6-appl1} below),
Orlicz-slice spaces (see Subsection \ref{s6-appl2} below),
Lorentz spaces (see Subsection \ref{s6-appl3} below),
variable Lebesgue spaces (see Subsection \ref{s6-appl4} below),
mixed-norm Lebesgue spaces (see Subsection \ref{s6-appl5} below),
weighted Lebesgue spaces (see Subsection \ref{s6-appl6} below), and
Orlicz spaces (see Subsection \ref{s6-appl7} below).

\subsection{Morrey Spaces\label{s6-appl1}}

Recall that the classical Morrey space
$M_{q}^{p}(\mathbb{R}^{n})$ with $0<q\leq p<\infty$,
originally introduced by Morrey \cite{morrey38} in 1938,
plays a fundamental role in harmonic analysis
and partial differential equations. From then on,
various variants of Morrey spaces over different
underlying spaces have been investigated and developed
(see, for instance, \cite{cltl20,st09}).

\begin{definition}
Let $A$ be a dilation and $ 0<q\leq p<\infty $.
The {\it anisotropic Morrey space} $M_{q,A}^p(\rn)$ is defined
to be the set of all the measurable functions $f$ on $\rn$ such that
\begin{equation*}
\|f\|_{M_{q,A}^p(\rn)}:=\sup_{B\in\CB}
\lf[|B|^{\frac{1}{p}-\frac{1}{q}}\|f\|_{L^q(B)}\r]<\infty,
\end{equation*}
where $\CB$ is the same as in \eqref{ball-B}.
\end{definition}

It is easy to show that,
$M_{q,A}^p(\rn)$ is a ball quasi-Banach function space.
From this and \cite[Remark 8.4]{wyy22},
we deduce that $M_{q,A}^{p}(\rn)$ satisfies both
Assumptions \ref{Assum-1} and \ref{Assum-2} with $X:=M_{q,A}^{p}(\rn)$,
$p_{-}\in(0,q]$, $\tz_0\in(0,\underline{p})$, and $p_0\in(p,\infty)$,
where $\underline{p}:=\min\{p_{-},1\}$. In what follows,
we always let $HM_{q,A}^p(\rn)$ be the
\emph{anisotropic Hardy--Morrey space}
which is defined to be the same
as in Definition \ref{HXA} with $X:=M_{q,A}^p(\rn)$.
Then, applying Theorems \ref{s4t1}, \ref{s4t1'}, and \ref{s4t1''},
we obtain the following characterizations of $HM_{q,A}^p(\rn)$,
respectively, in terms of the anisotropic
Lusin area function, the anisotropic Littlewood--Paley $g$-function,
and the anisotropic Littlewood--Paley $g_\lambda^*$-function.

\begin{theorem}\label{Thsm}
Let $A$ be a dilation and $0<q\leq p<\infty$.
Then Theorems \ref{s4t1}, \ref{s4t1'},
and \ref{s4t1''} with $X:=M_{q,A}^p(\rn)$
and $\lambda\in(2/\min\{1,q\},\infty)$ hold true.
\end{theorem}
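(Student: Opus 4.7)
The plan is to deduce Theorem \ref{Thsm} as a direct corollary of the general Littlewood--Paley function characterizations established, respectively, in Theorems \ref{s4t1}, \ref{s4t1'}, and \ref{s4t1''}. Since those three theorems are formulated at the level of an arbitrary ball quasi-Banach function space $X$ satisfying Assumptions \ref{Assum-1} and \ref{Assum-2}, the entire task is reduced to verifying these two structural hypotheses for the specific choice $X:=M_{q,A}^p(\rn)$ and then matching the index range of the parameter $\lambda$ appearing in Theorem \ref{s4t1''}.

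First I would record that $M_{q,A}^p(\rn)$ is a ball quasi-Banach function space, which is straightforward from the definition of $\|\cdot\|_{M_{q,A}^p(\rn)}$: properties (i)--(iv) of Definition \ref{BQBFS} follow directly from the corresponding properties of $\|\cdot\|_{L^q(B)}$ together with the monotone convergence theorem. Next, I would invoke the discussion immediately preceding the statement of Theorem \ref{Thsm}, which, together with \cite[Remark 8.4]{wyy22}, guarantees that $M_{q,A}^p(\rn)$ satisfies Assumption \ref{Assum-1} for any $p_-\in(0,q]$ and Assumption \ref{Assum-2} for some $\tz_0\in(0,\underline{p})$ with $\underline{p}:=\min\{p_-,1\}$ and some $p_0\in(p,\infty)$. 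With these two verifications in hand, Theorems \ref{s4t1} and \ref{s4t1'} immediately yield the anisotropic Lusin area function and the anisotropic Littlewood--Paley $g$-function characterizations of $HM_{q,A}^p(\rn)$.

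For the $g_\lambda^*$-function characterization, I would appeal to Theorem \ref{s4t1''}, whose range of admissible apertures is $\lambda\in(\max\{1,2/r_+\},\infty)$, where $r_+$ is defined in \eqref{3.25.x1}. The key observation is that, since $M_{q,A}^p(\rn)$ satisfies Assumption \ref{Assum-2} for every $\tz_0\in(0,\underline{p})$ with $\underline{p}=\min\{p_-,1\}$ and every admissible $p_-\in(0,q]$, one can let $p_-\uparrow q$ and $\tz_0\uparrow\min\{q,1\}$, so that $r_+\ge\min\{q,1\}$. Consequently $2/r_+\le 2/\min\{1,q\}$ and, because $2/\min\{1,q\}\ge 2>1$, the hypothesis $\lambda\in(2/\min\{1,q\},\infty)$ imposed in Theorem \ref{Thsm} implies the hypothesis $\lambda\in(\max\{1,2/r_+\},\infty)$ required by Theorem \ref{s4t1''}, allowing us to conclude.

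The only mildly delicate point is the computation of (a lower bound for) $r_+$ in terms of $q$; the rest is a mechanical appeal to previously established general theorems. Since the proof is essentially bookkeeping and contains no genuine new analytic content beyond the already cited verification of Assumptions \ref{Assum-1} and \ref{Assum-2}, the details of this deduction can safely be omitted.
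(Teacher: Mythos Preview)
Your proposal is correct and follows the same route the paper takes: the paper states Theorem \ref{Thsm} as an immediate corollary of Theorems \ref{s4t1}, \ref{s4t1'}, and \ref{s4t1''} once the preceding discussion (via \cite[Remark 8.4]{wyy22}) has verified Assumptions \ref{Assum-1} and \ref{Assum-2} for $M_{q,A}^p(\rn)$, and gives no further proof. Your additional computation showing $r_+\ge\min\{1,q\}$ and hence $(2/\min\{1,q\},\infty)\subset(\max\{1,2/r_+\},\infty)$ is exactly the bookkeeping the paper leaves implicit.
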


\begin{remark}\label{3.23.x1}
\begin{enumerate}
\item[{\rm(i)}] We point out that Theorem \ref{Thsm} is completely new.
\item[{\rm(ii)}] However, Theorems \ref{s2t1},
\ref{s3t1}, \ref{s3t2}, and \ref{s5t1} as well as
Corollary \ref{s2c1} can not be applied to the anisotropic
Morrey space $M_{q,A}^p(\rn)$ because $M_{q,A}^p(\rn)$ does not
have an absolutely continuous quasi-norm.
\end{enumerate}
\end{remark}

\subsection{Orlicz-Slice Spaces\label{s6-appl2}}

Recently, Zhang et al. \cite{zyyw} originally introduced the
Orlicz-slice space on $\rn$, which generalizes both the slice
space in \cite{am19} and the Wiener-amalgam space in \cite{dj17}.
They also introduced the Orlicz-slice (local) Hardy spaces
and developed a complete real-variable theory of these spaces
in \cite{zyy21,zyyw}. For more studies about Orlicz-slice spaces,
we refer the reader to \cite{ho21,ho22}.

Recall that a function $\Phi:[0,\infty)\to[0,\infty)$
is called an {\it Orlicz function} if it is non-decreasing,
$\Phi(0)=0$, $\Phi(t)>0$ for any $t\in(0,\infty)$,
and $\lim_{t\to\infty}\Phi(t)=\infty$.
The function $ \Phi $ is said to be of {\it upper}
(resp. {\it lower}) {\it type} $p$ for some $p\in[0,\infty)$
if there exists a positive constant $C$ such that,
for any $s\in[1,\infty)$ (resp. $s\in[0,1]$)
and $t\in[0,\infty)$, $\Phi(st)\leq Cs^p\Phi(t)$.
The \emph{Orlicz space} $L^{\Phi}(\rn)$ is defined to be
the set of all the measurable functions $f$ on $\rn$ such that
\begin{align*}
&\|f\|_{L^{\Phi}(\rn)}:=\inf\lf\{\lz\in(0,\fz)
:\ \int_{\rn}\Phi\lf(\frac{|f(x)|}{\lz}\r)\,dx\le1\r\}<\infty.
\end{align*}

\begin{definition}
Let $A$ be a dilation, $\ell\in\zz$, $q\in(0,\infty)$,
and $\Phi$ be an Orlicz function.
The {\it anisotropic Orlicz--slice space}
$(E_\Phi^q)_{\ell,A}\lf(\rn\r)$ is defined to be the
set of all the measurable functions $f$ on $\rn$ such that
\begin{align*}
\|f\|_{(E_\Phi^q)_{\ell,A}\lf(\rn\r)}
:=\lf\{\int_{\rn}\lf[\frac{\|f\one_{x+B_{\ell}}\|_{L^\Phi(\rn)}}
{\|\one_{x+B_{\ell}}\|_{L^\Phi(\rn)}}\r]^q\,dx\r\}^{\frac{1}{q}}
<\infty,
\end{align*}
where $B_{\ell}$ is the same as in \eqref{B_k}.
\end{definition}
Let $A$ be a dilation, $\ell\in\zz$, $q\in(0,\infty)$,
and $\Phi$ be an Orlicz function with positive lower type
$p_\Phi^-$ and positive upper type $p_\Phi^+$.
Then, by the arguments similar to those used in
the proofs of \cite[Lemmas 2.28 and 4.5]{zyyw},
we find that $(E_\Phi^q)_{\ell,A}\lf(\rn\r)$ is
a ball quasi-Banach function space and
has an absolutely continuous quasi-norm.
From these and \cite[Remark 8.14]{wyy22},
we deduce that $(E_\Phi^q)_{\ell,A}(\rn)$ satisfies both
Assumptions \ref{Assum-1} and \ref{Assum-2} with $X:=(E_\Phi^q)_{\ell,A}(\rn)$,
$p_{-}\in (0,\min\{p_\Phi^-,q\}]$, $\tz_0\in(0,\underline{p})$,
and $p_0\in(\max\{p_\Phi^+,q\},\infty)$, where $\underline{p}:=\min\{p_{-},1\}$.
In what follows, we always let $(HE_\Phi^q)_{\ell,A}(\rn)$
denote the \emph{anisotropic Orlicz-slice Hardy space}
which is defined to be the same as in Definition \ref{HXA} with
$X:=(E_\Phi^q)_{\ell,A}(\rn)$.
Moreover, by Theorems \ref{s2t1}, \ref{s3t1},
\ref{s3t2}, \ref{s4t1}, \ref{s4t1'}, \ref{s4t1''},
and \ref{s5t1} as well as Corollary \ref{s2c1}
with $X$ replaced by $(E_\Phi^q)_{\ell,A}(\rn)$,
we obtain the following conclusion.

\begin{theorem}\label{thorliczslice}
Let $A$ be a dilation, $\ell\in\zz$, $q\in(0,\infty)$,
and $\Phi$ be an Orlicz function with positive lower
type $p_\Phi^-$.
Then
\begin{enumerate}
\item[{\rm(i)}] Theorems \ref{s2t1}, \ref{s3t1},
\ref{s3t2}, and \ref{s5t1} as well as Corollary \ref{s2c1}
with $X:=(E_\Phi^q)_{\ell,A}(\rn)$ hold true;

\item[{\rm(ii)}] Theorems \ref{s4t1}, \ref{s4t1'},
and \ref{s4t1''} with $X:=(E_\Phi^q)_{\ell,A}(\rn)$
and $\lambda\in(\frac{2}{\min\{1,p_\Phi^-,q\}},\infty)$
also hold true.
\end{enumerate}
\end{theorem}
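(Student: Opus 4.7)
The plan is to verify that the anisotropic Orlicz-slice space $(E_\Phi^q)_{\ell,A}(\rn)$ falls within the scope of the general framework developed in Sections \ref{s2}--\ref{s5}, and then to invoke the master theorems directly. All the heavy lifting has already been done abstractly: once the four structural prerequisites are checked, Theorems \ref{s2t1}, \ref{s3t1}, \ref{s3t2}, \ref{s5t1}, and Corollary \ref{s2c1} (for part (i)) and Theorems \ref{s4t1}, \ref{s4t1'}, \ref{s4t1''} (for part (ii)) apply verbatim with $X:=(E_\Phi^q)_{\ell,A}(\rn)$.

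First, I would check that $(E_\Phi^q)_{\ell,A}(\rn)$ is a ball quasi-Banach function space in the sense of Definition \ref{BQBFS}. The argument is essentially a routine adaptation to the anisotropic setting of \cite[Lemma 2.28]{zyyw}: properties (i)--(iv) of Definition \ref{BQBFS} follow because $L^\Phi(\rn)$ itself satisfies them, and one simply transfers them through the outer $L^q$-norm weighted by $\|\one_{x+B_\ell}\|_{L^\Phi(\rn)}$. The only place where anisotropy enters is that the averaging is over dilated balls $x+B_\ell$ rather than Euclidean balls, but this makes no structural difference. Second, the absolute continuity of the quasi-norm follows by adapting \cite[Lemma 4.5]{zyyw}: for a nested family $E_j\supset E_{j+1}$ with $\bigcap_j E_j=\emptyset$, the pointwise-almost-everywhere convergence $\|f\one_{E_j}\one_{x+B_\ell}\|_{L^\Phi(\rn)}\to 0$ at every $x$ combined with a dominated convergence argument in the outer $L^q$ integral yields $\|f\one_{E_j}\|_{(E_\Phi^q)_{\ell,A}(\rn)}\to 0$.

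Third, and this will be the main obstacle, I have to verify Assumptions \ref{Assum-1} and \ref{Assum-2} for the indicated parameters. For Assumption \ref{Assum-1} (the Fefferman--Stein vector-valued inequality on $(E_\Phi^q)_{\ell,A}(\rn)^{1/p}$), the strategy is to adapt the proof of the analogous inequality on the isotropic Orlicz-slice space given in \cite{zyyw}. The upper type $p_\Phi^+$ of $\Phi$ is used to reduce the inequality on $L^\Phi(\rn)$ to an estimate on ordinary Lebesgue spaces via an extrapolation-type argument, while the lower type $p_\Phi^-$ ensures that one can choose $p\in(0,p_-)$ with the required exponent range. For Assumption \ref{Assum-2}, one identifies an appropriate $\theta_0\in(0,\min\{1,p_\Phi^-,q\})$ and $p_0\in(\max\{p_\Phi^+,q\},\infty)$ so that $(E_\Phi^q)_{\ell,A}(\rn)^{1/\theta_0}$ is a ball Banach function space and the powered maximal operator is bounded on its associate space; this follows by combining the duality description of the associate space (which, in the isotropic case, was worked out in \cite{zyyw} and transfers to the anisotropic setting line by line) with the boundedness of $\cm$ on appropriate Muckenhoupt-weighted anisotropic $L^p$ spaces via \cite[Remark 8.14]{wyy22}.

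Once these verifications are in place, part (i) of the theorem follows directly: Theorems \ref{s2t1}, \ref{s3t1}, \ref{s3t2}, \ref{s5t1} and Corollary \ref{s2c1} all require precisely the hypotheses just checked (ball quasi-Banach function space, absolute continuity of the quasi-norm, Assumptions \ref{Assum-1} and \ref{Assum-2}). For part (ii), Theorems \ref{s4t1} and \ref{s4t1'} require only Assumptions \ref{Assum-1} and \ref{Assum-2}, so they apply immediately. For Theorem \ref{s4t1''}, the range $\lambda\in(\max\{1,2/r_+\},\infty)$ with $r_+$ as in \eqref{3.25.x1} reduces, for the Orlicz-slice space, to $\lambda\in(2/\min\{1,p_\Phi^-,q\},\infty)$, because the set of admissible $\theta_0$ for Assumption \ref{Assum-2} fills $(0,\min\{1,p_\Phi^-,q\})$, so $r_+=\min\{1,p_\Phi^-,q\}$ and the hypothesis $\lambda>2/r_+$ in Theorem \ref{s4t1''} coincides with the stated one.
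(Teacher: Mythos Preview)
Your proposal is correct and follows essentially the same approach as the paper: the paper's argument (given in the paragraph preceding the theorem statement) likewise reduces to verifying that $(E_\Phi^q)_{\ell,A}(\rn)$ is a ball quasi-Banach function space with an absolutely continuous quasi-norm via \cite[Lemmas 2.28 and 4.5]{zyyw}, then checks Assumptions \ref{Assum-1} and \ref{Assum-2} with $p_-\in(0,\min\{p_\Phi^-,q\}]$, $\theta_0\in(0,\underline{p})$, and $p_0\in(\max\{p_\Phi^+,q\},\infty)$ by invoking \cite[Remark 8.14]{wyy22}, and finally applies the abstract theorems. Your identification of $r_+=\min\{1,p_\Phi^-,q\}$ for the range of $\lambda$ in Theorem \ref{s4t1''} is also exactly what the paper uses.
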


\begin{remark}\label{3.23.x2}
We point out that Theorem \ref{thorliczslice} is completely new.
\end{remark}

\subsection{Lorentz Spaces\label{s6-appl3}}
Let $p\in(0,\fz]$ and $q\in(0,\fz]$.
Recall that the {\it Lorentz space} $L^{p,q}(\rn)$
is defined to be the set of all the measurable functions
$f$ on $\rn$ with the following finite quasi-norm
\begin{align*}
\|f\|_{L^{p,q}(\rn)}:=
\begin{cases}
\lf[\displaystyle{\frac{q}{p}\int_0^{\infty}}
\lf\{t^{\frac{1}{p}}f^*(t)\r\}^q\,\frac{dt}{t}\r]^{\frac{1}{q}}
\ &{\rm if}\ q\in(0,\infty),\\
\displaystyle{\sup_{t\in(0,\infty)}}
\lf[t^{\frac{1}{p}}f^*(t)\r]&{\rm if}\ q=\infty
\end{cases}
\end{align*}
with the usual modification made when $p=\fz$,
where $f^*$ denotes the \emph{non-increasing rearrangement} of $f$,
that is, for any $t\in(0,\infty)$,
\begin{equation*}
f^*(t):=\lf\{\alpha\in(0,\infty):\ d_f(\alpha)\leq t\r\}
\end{equation*}
with $d_f(\alpha):=|\{x\in\rn:\ |f(x)|>\alpha\}|$
for any $\az\in(0,\fz)$.

Then, by \cite[Remarks 2.7(ii), 4.21(ii), and 6.8(iv)]{yhyy},
we conclude that $L^{p,q}(\rn)$ satisfies all the assumptions
of Definition \ref{HXA} with $X:=L^{p,q}(\rn)$,
$p_{-}\in (0,p]$, $\tz_0\in(0,\underline{p})$,
and $p_0\in(p,\infty)$, where $\underline{p}:=\min\{p_{-},1\}$,
and has an absolutely continuous quasi-norm.
In what follows, we always let $H_{A}^{p,q}(\rn)$
be the \emph{anisotropic Hardy--Lorentz space}
which is defined to be the same
as in Definition \ref{HXA} with $X:=L^{p,q}(\rn)$.
By Theorems \ref{s2t1}, \ref{s3t1},
\ref{s3t2}, \ref{s4t1}, \ref{s4t1'}, \ref{s4t1''},
and \ref{s5t1} as well as Corollary \ref{s2c1}
with $X$ replaced by $L^{p,q}(\rn)$,
we obtain the following conclusion.

\begin{theorem}\label{thlorentz}
Let $A$ be a dilation, $p\in(0,\fz)$, and $q\in(0,\fz]$. Then
\begin{enumerate}
\item[{\rm(i)}] Theorems \ref{s2t1}, \ref{s3t1},
\ref{s3t2}, and \ref{s5t1} as well as Corollary \ref{s2c1}
with $X:=L^{p,q}(\rn)$ hold true;

\item[{\rm(ii)}] Theorems \ref{s4t1}, \ref{s4t1'},
and \ref{s4t1''} with $X:=L^{p,q}(\rn)$ and
$\lambda\in(2/\min\{1,p\},\infty)$ also hold true.
\end{enumerate}
\end{theorem}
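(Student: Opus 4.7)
The strategy is completely structural: the two numbered conclusions are instances of the abstract Theorems \ref{s2t1}, \ref{s3t1}, \ref{s3t2}, \ref{s4t1}, \ref{s4t1'}, \ref{s4t1''}, and \ref{s5t1} together with Corollary \ref{s2c1}, specialized to $X:=L^{p,q}(\rn)$. Thus the entire proof reduces to checking that $L^{p,q}(\rn)$ fits the abstract framework, namely that it is a ball quasi-Banach function space (Definition \ref{BQBFS}), that it satisfies Assumption \ref{Assum-1} with any $p_-\in(0,p]$, that it satisfies Assumption \ref{Assum-2} for any $\theta_0\in(0,\underline{p})$ and some $p_0\in(p,\infty)$, and that its quasi-norm is absolutely continuous. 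The ball quasi-Banach structure and absolute continuity are recorded in \cite[Remarks 2.7(ii), 4.21(ii), and 6.8(iv)]{yhyy}; the remaining work is the maximal-operator estimates.

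For Assumption \ref{Assum-1}, I would combine the classical Fefferman--Stein vector-valued maximal inequality on Lebesgue spaces with the Marcinkiewicz interpolation theorem to obtain the corresponding vector-valued inequality on Lorentz spaces: for any $u\in(1,\infty)$ and any $s\in(1,\infty)$, $v\in(0,\infty]$, the operator $\{f_k\}\mapsto\{\mathcal{M}(f_k)\}$ is bounded from $L^{s,v}(\ell^u)$ to itself. After passing to the $(1/p)$-convexification, one has $(L^{p,q})^{1/p}=L^{p/p,\,q/p}$, so choosing any $p\in(0,p_-)$ and $p_-\le p$ pushes the first index strictly above $1$, and the inequality applies. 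For Assumption \ref{Assum-2}, pick $\theta_0\in(0,\underline{p})$ and $p_0\in(p,\infty)$ sufficiently close to $p$; then $(L^{p,q})^{1/\theta_0}=L^{p/\theta_0,\,q/\theta_0}$ has first index $p/\theta_0>1$ and hence is a ball Banach function space whose associate space is (equivalent to) the Lorentz space $L^{(p/\theta_0)',\,(q/\theta_0)'}$. The exponent $(p_0/\theta_0)'$ can be made strictly less than $(p/\theta_0)'$, which is precisely the threshold below which the powered maximal operator $\mathcal{M}^{((p_0/\theta_0)')}$ is bounded on this associate space by the standard Muckenhoupt-type theory for Lorentz spaces.

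With these hypotheses verified, part (i) is a direct specialization of the quoted abstract theorems and corollary to $X:=L^{p,q}(\rn)$. For part (ii) the decisive computation is that of the constant $r_+$ from \eqref{3.25.x1}: since Assumption \ref{Assum-2} can be verified for every $\theta_0\in(0,p)$ (with $p_0$ chosen appropriately close to $p$), one obtains $r_+=p$, so Theorem \ref{s4t1''} yields the anisotropic Littlewood--Paley $g_\lambda^*$-function characterization exactly for $\lambda\in(\max\{1,2/p\},\infty)=(2/\min\{1,p\},\infty)$. The main obstacle, to the extent there is one, is the uniformity in Assumption \ref{Assum-2} as $\theta_0$ approaches $p$, because the duality theory for $L^{p,q}$ (especially for $q<1$) is delicate near the endpoints; however the needed estimates are already established in the Lorentz setting and only have to be transferred verbatim to the anisotropic $(\rn,\rho,dx)$ framework, a transfer for which the tools developed in Section \ref{s1} suffice.
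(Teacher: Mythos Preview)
Your approach is essentially that of the paper: one cites \cite[Remarks 2.7(ii), 4.21(ii), and 6.8(iv)]{yhyy} to record that $L^{p,q}(\rn)$ is a ball quasi-Banach function space with an absolutely continuous quasi-norm satisfying Assumptions \ref{Assum-1} and \ref{Assum-2} with $p_-\in(0,p]$, $\theta_0\in(0,\underline{p})$, and $p_0\in(p,\infty)$, and then invokes the abstract theorems directly. Your added sketch of the Fefferman--Stein/Marcinkiewicz and Lorentz-duality verifications is consistent with (and more explicit than) that citation.

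There is one arithmetic slip in your treatment of part (ii): the claimed identity $\max\{1,2/p\}=2/\min\{1,p\}$ is false for $p>1$ (take $p=2$: the left side is $1$, the right side is $2$). What is true is the inequality $\max\{1,2/p\}\le 2/\min\{1,p\}$, with equality precisely when $p\le 1$. The paper works throughout with $\theta_0\in(0,\underline{p})\subset(0,\min\{1,p\})$, which yields $r_+=\min\{1,p\}$ and hence $\max\{1,2/r_+\}=2/\min\{1,p\}$ on the nose. If instead your claim $r_+=p$ is correct, you would have established Theorem \ref{s4t1''} on the strictly larger range $(\max\{1,2/p\},\infty)$, and the stated range $(2/\min\{1,p\},\infty)$ would follow a fortiori. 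Either way the theorem is proved; just replace the erroneous equality by the appropriate inequality (or adopt the paper's choice $r_+=\min\{1,p\}$) to make the last line honest.
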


\begin{remark}\label{3.23.x3}
Let $p(\cdot)\in C^{\log}(\rn)$ and $q\in(0,\fz)$,
where $C^{\log}(\rn)$ is the same as in Subsection \ref{s6-appl4} below.
We point out that Theorem \ref{thlorentz}(i) is a special
case of \cite[Theorems 1 and 2]{llh2023} with $p(\cdot)\equiv p\in(0,\infty)$
therein and that Theorem \ref{thlorentz}(ii) improves the corresponding
results in \cite[Theorems 2.7, 2.8, and 2.9]{lyy2018}
by widening the range of $p\in(0,1]$ into $p\in(0,\fz)$.
Although the variable Hardy--Lorentz space $L^{p(\cdot),q}(\rn)$
is also a ball quasi-Banach function space,
\cite[Theorems 1 and 2]{llh2023}
can not be deduced from Theorems \ref{s2t1} and \ref{s5t1}.
This is because the boundedness of the powered Hardy--Littlewood
maximal operator on the associate space of
$L^{p(\cdot),q}(\rn)$ is still unknown,
which makes to verify Assumption \ref{Assum-2}
with $X:=L^{p(\cdot),q}(\rn)$ become impossible presently.
\end{remark}

\subsection{Variable Lebesgue Spaces\label{s6-appl4}}
Denote by $\cp(\rn)$ the {\it set of all the measurable functions}
$p(\cdot)$ on $\rn$ satisfying
\begin{align}\label{s6e1}
0<\widetilde{p_-}:=\mathop\mathrm{ess\,inf}_{x\in\rn}p(x)\le
\mathop\mathrm{ess\,sup}_{x\in\rn}p(x)=:\widetilde{p_+}<\fz.
\end{align}
For any $p(\cdot)\in \cp(\rn)$, the \emph{variable Lebesgue space}
$L^{p(\cdot)}(\rn)$ is defined to be the set of all the measurable
functions $f$ on $\rn$ such that
\begin{equation*}
\int_{\rn}|f(x)|^{p(x)}\,dx<\infty,
\end{equation*}
equipped with the \emph{quasi-norm}
$\|f\|_{L^{p(\cdot)}(\rn)}$ defined by setting
\begin{equation*}
\|f\|_{L^{p(\cdot)}(\rn)}:=\inf\lf\{\lambda\in(0,\infty):
\  \int_{\rn}\lf[\frac{|f(x)|}
{\lambda}\r]^{p(x)}\,dx\leq 1\r\}.
\end{equation*}

Denote by $C^{\log}(\rn)$ the set of all the functions $p(\cdot)\in\cp(\rn)$
satisfying the {\it globally log-H\"older continuous condition}, that is,
there exist $C_{\log}(p),C_\infty\in(0,\infty)$ and $p_\infty\in\rr$
such that, for any $x,y\in\rn$,
\begin{equation*}
|p(x)-p(y)|\leq\frac{C_{\log}(p)}{\log {(e+1/|x-y|)}}
\end{equation*}
and
\begin{equation*}
|p(x)-p_\infty|\leq\frac{C_\infty}{\log(e+\rho(x))}.
\end{equation*}

Let $p(\cdot)\in C^{\log}(\rn)$.
Then, by \cite[Remarks 2.7(iv), 4.21(v), and 6.8(vii)]{yhyy},
we conclude that $L^{p(\cdot)}(\rn)$ satisfies all the
assumptions of Definition \ref{HXA} with $X:=L^{p(\cdot)}(\rn)$,
$p_{-}:=\widetilde{p_-}$, $\tz_0\in(0,\widetilde{\unp})$, and
$p_0\in(\widetilde{p_+},\infty]$,
where $\widetilde{p_-}$ and $\widetilde{p_+}$ are the same as in \eqref{s6e1}
and $\widetilde{\unp}:=\min\{1,\widetilde{p_-}\}$,
and has an absolutely continuous quasi-norm.
In what follows,
we always let $H_{A}^{p(\cdot)}(\rn)$ be the
\emph{anisotropic variable Hardy space}
which is defined to be the same
as in Definition \ref{HXA} with $X:=L^{p(\cdot)}(\rn)$.
Moreover, by Theorems \ref{s2t1}, \ref{s3t1},
\ref{s3t2}, \ref{s4t1}, \ref{s4t1'}, \ref{s4t1''},
and \ref{s5t1} as well as Corollary \ref{s2c1}
with $X$ replaced by $L^{p(\cdot)}(\rn)$, we obtain the following conclusion.

\begin{theorem}\label{thvariable}
Let $A$ be a dilation and $p(\cdot)\in C^{\log}(\rn)$. Then
\begin{enumerate}
\item[{\rm(i)}] Theorems \ref{s2t1}, \ref{s3t1},
\ref{s3t2}, and \ref{s5t1} as well as Corollary \ref{s2c1}
with $X:=L^{p(\cdot)}(\rn)$ hold true;

\item[{\rm(ii)}] Theorems \ref{s4t1}, \ref{s4t1'},
and \ref{s4t1''} with $X:=L^{p(\cdot)}(\rn)$ and
$\lambda\in(2/\min\{1,\widetilde p_-\},\infty)$ also hold true,
where $\widetilde{p_-}$ is the same as in \eqref{s6e1}.
\end{enumerate}
\end{theorem}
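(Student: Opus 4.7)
The proof will proceed by verifying the four structural hypotheses underlying the master theorems of Sections \ref{s2}--\ref{s5} for the specific choice $X := L^{p(\cdot)}(\rn)$ and then invoking those theorems directly. First, I will recall that $L^{p(\cdot)}(\rn)$ is a ball quasi-Banach function space: properties (i)--(iv) of Definition \ref{BQBFS} follow from the definition of the Luxemburg quasi-norm together with $\widetilde{p_-}>0$ and $\widetilde{p_+}<\fz$, where $\widetilde{p_-},\widetilde{p_+}$ are as in \eqref{s6e1}. Second, I will check Assumption \ref{Assum-1} with $p_-:=\widetilde{p_-}$, so that $\underline p = \min\{1,\widetilde{p_-}\}$; the anisotropic Fefferman--Stein vector-valued maximal inequality on the $p$-convexification $L^{p(\cdot)/p}(\rn)$ for any $p\in(0,\widetilde{p_-})$ and $u\in(1,\fz)$ is the content of the cited \cite[Remark 2.7(iv)]{yhyy} (and ultimately rests on the globally $\log$-H\"older continuity of $p(\cdot)$).

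Third, I will verify Assumption \ref{Assum-2}. For any $\theta_0\in(0,\widetilde{\underline p})$ and any $p_0\in(\widetilde{p_+},\fz)$, one has $p_0/\theta_0\in(1,\fz)$ and $(L^{p(\cdot)/\theta_0}(\rn))'$ may be identified with $L^{(p(\cdot)/\theta_0)'}(\rn)$; the anisotropic boundedness of $\cm^{((p_0/\theta_0)')}$ on this associate space then follows from \cite[Remark 4.21(v)]{yhyy}. Fourth, the absolute continuity of the quasi-norm of $L^{p(\cdot)}(\rn)$ is \cite[Remark 6.8(vii)]{yhyy}. All hypotheses of Definition \ref{HXA}, Definition \ref{deffin}, and Corollary \ref{s2c1} are thus in force, so Theorems \ref{s2t1}, \ref{s3t1}, \ref{s3t2}, and \ref{s5t1} together with Corollary \ref{s2c1} apply verbatim, yielding (i).

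For (ii), the Lusin area function and $g$-function characterizations (Theorems \ref{s4t1} and \ref{s4t1'}) require only Assumptions \ref{Assum-1} and \ref{Assum-2}, so they transfer immediately. For the $g_\lambda^\ast$-function characterization (Theorem \ref{s4t1''}), the admissible range is $\lambda\in(\max\{1,2/r_+\},\fz)$, where $r_+$ is defined by \eqref{3.25.x1} as the supremum of those $\theta_0$ for which $X$ satisfies Assumption \ref{Assum-2}. Since the verification of Assumption \ref{Assum-2} above works for every $\theta_0\in(0,\widetilde{p_-})$ (by pairing it with a sufficiently large $p_0\in(\widetilde{p_+},\fz)$), we obtain $r_+\geq \widetilde{p_-}$, and hence $\max\{1,2/r_+\}\leq 2/\min\{1,\widetilde{p_-}\}$, giving the stated range.

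The only delicate point in the plan is the endpoint behaviour for Assumption \ref{Assum-2}: one must confirm that the exponent $(p_0/\theta_0)'$ arising from the powered maximal operator is strictly greater than $1$ and that the log-H\"older hypothesis propagates to the associate exponent $(p(\cdot)/\theta_0)'$, but this is a routine consequence of $\theta_0<\widetilde{p_-}\leq \widetilde{p_+}<p_0$ and the invariance of $C^{\log}(\rn)$ under taking conjugate indices. No further obstacle remains, and this proof is almost mechanical once the four structural hypotheses are pinned down.
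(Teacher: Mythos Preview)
Your proposal is correct and follows essentially the same approach as the paper: the paper's argument (contained in the paragraph preceding the theorem) simply cites \cite[Remarks 2.7(iv), 4.21(v), and 6.8(vii)]{yhyy} to verify that $L^{p(\cdot)}(\rn)$ is a ball quasi-Banach function space satisfying Assumptions \ref{Assum-1} and \ref{Assum-2} with $p_-=\widetilde{p_-}$, $\theta_0\in(0,\widetilde{\underline p})$, $p_0\in(\widetilde{p_+},\infty]$, and having an absolutely continuous quasi-norm, after which the master theorems apply directly. Your added computation that the conditions of Assumption \ref{Assum-2} persist for every $\theta_0\in(0,\widetilde{p_-})$, yielding $r_+\ge\widetilde{p_-}$ and hence $\max\{1,2/r_+\}\le 2/\min\{1,\widetilde{p_-}\}$, is precisely what justifies the stated range for $\lambda$ in part (ii) and is left implicit in the paper.
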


\begin{remark}\label{3.23.x4}
We point out that Theorem \ref{thvariable}(i)
was also obtained in \cite[Theorems 1, 2, and 3, and Corollary 1]{hw22} and
Theorem \ref{thvariable}(ii) improves the
corresponding results in \cite[Theorems 6.1, 6.2, and 6.3]{lwyy2018}
by widening the range
of $\lambda\in(1+2/\min\{2,\widetilde p_-\},\infty)$
into $\lambda\in(2/\min\{1,\widetilde p_-\},\infty)$.
\end{remark}

\subsection{Mixed-Norm Lebesgue Spaces\label{s6-appl5}}

Let $\vec{p}:=(p_1,\ldots,p_n)\in(0,\fz]^n$.
Recall that the \emph{mixed-norm Lebesgue space $L^{\vec{p}}(\rn)$}
is defined to be the set of all the measurable
functions $f$ on $\rn$ such that
\begin{align*}
&\|f\|_{L^{\vec{p}}(\rn)}\\
&\quad:=\lf\{\int_{\mathbb{R}}\cdots
\lf[\int_{\mathbb{R}}
\lf\{\int_{\mathbb{R}}|f(x_1,\ldots,x_n)|^{p_1}\,dx_1\r\}
^{\frac{p_2}{p_1}}\,dx_2\r]^{\frac{p_3}{p_2}}\cdots\,dx_n\r\}
^{\frac{1}{p_n}}
\end{align*}
is finite with the usual modifications made
when $p_i=\fz$ for some $i\in\{1,\ldots,n\}$.

Let $\vec{p}\in(0,\fz]^n$.
Then, by both \cite[p.\,2047]{zwyy}
and \cite[Lemmas 7.22 and 7.26]{zwyy},
we conclude that $L^{\vec{p}}(\rn)$ satisfies
all the assumptions of Definition \ref{HXA}
with $X:=L^{\vec{p}}(\rn)$, $p_{-}:=\widehat{p_-}$,
$\tz_0\in(0,\widehat{\unp})$, and $p_0\in(\tz_0,\infty)$,
where $\widehat{p_-}:=\min\{p_1,...,p_n\}$
and $\widehat{\unp}:=\min\{1,\widehat{p_-}\}$, and has
an absolutely continuous quasi-norm.
In what follows, we always let $H^{\vec{p}}_{A}(\rn)$
be the
\emph{anisotropic mixed-norm Hardy space}
which is defined to be the same
as in Definition \ref{HXA} with $X:=L^{\vec{p}}(\rn)$.
Moreover, by Theorems \ref{s2t1}, \ref{s3t1},
\ref{s3t2}, \ref{s4t1}, \ref{s4t1'}, \ref{s4t1''},
and \ref{s5t1} as well as Corollary \ref{s2c1}
with $X$ replaced by $L^{\vec{p}}(\rn)$,
we obtain the following conclusion.

\begin{theorem}\label{thmix}
Let $A$ be a dilation and $\vec{p}\in(0,\fz)^n$. Then
\begin{enumerate}
\item[{\rm(i)}] Theorems \ref{s2t1}, \ref{s3t1},
\ref{s3t2}, and \ref{s5t1} as well as Corollary \ref{s2c1}
with $X:=L^{\vec{p}}(\rn)$ hold true;

\item[{\rm(ii)}] Theorems \ref{s4t1}, \ref{s4t1'},
and \ref{s4t1''} with
$X:=L^{\vec{p}}(\rn)$ and
$\lambda\in(2/\min\{1,\widehat{p_-}\},\infty)$ also hold true,
where $\widehat{p_-}:=\min\{p_1,...,p_n\}$.
\end{enumerate}
\end{theorem}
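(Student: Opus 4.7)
The plan is simply to verify that $L^{\vec{p}}(\rn)$ meets every hypothesis imposed by the master theorems of Sections \ref{s2}--\ref{s5}, and then to quote them directly. First, I would recall from \cite[p.\,2047]{zwyy} that $L^{\vec{p}}(\rn)$ is a ball quasi-Banach function space. Next, by \cite[Lemma 7.22]{zwyy}, for any given $\vec{p}\in(0,\fz)^n$ the Fefferman--Stein vector-valued inequality of Assumption \ref{Assum-1} holds with $p_-:=\widehat{p_-}:=\min\{p_1,\dots,p_n\}$; and by \cite[Lemma 7.26]{zwyy}, for any $\theta_0\in(0,\widehat{\unp})$ with $\widehat{\unp}:=\min\{1,\widehat{p_-}\}$ there exists some $p_0\in(\theta_0,\infty)$ such that $(L^{\vec{p}}(\rn))^{1/\theta_0}$ is a ball Banach function space whose associate space admits the boundedness of the powered Hardy--Littlewood maximal operator, thus verifying Assumption \ref{Assum-2}. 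Finally, the absolute continuity of $\|\cdot\|_{L^{\vec{p}}(\rn)}$ is also established in \cite{zwyy} and follows from the standard approximation by simple functions in each coordinate direction.

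Having recorded these four facts, part (i) is immediate: Theorem \ref{s2t1}, Corollary \ref{s2c1}, Theorems \ref{s3t1} and \ref{s3t2}, and Theorem \ref{s5t1} all require precisely the hypotheses just verified, so specializing $X:=L^{\vec{p}}(\rn)$ yields their conclusions. For part (ii), Theorems \ref{s4t1} and \ref{s4t1'} only require Assumptions \ref{Assum-1} and \ref{Assum-2} (without the absolute-continuity hypothesis), so their conclusions follow in the same way. For Theorem \ref{s4t1''}, the admissible range of the aperture parameter is $\lambda\in(\max\{1,2/r_+\},\infty)$, where $r_+$ is as in \eqref{3.25.x1}. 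Since $L^{\vec{p}}(\rn)$ satisfies Assumption \ref{Assum-2} for every $\theta_0\in(0,\widehat{\unp})$, we obtain $r_+\ge\widehat{\unp}=\min\{1,\widehat{p_-}\}$, which gives the stated range $\lambda\in(2/\min\{1,\widehat{p_-}\},\infty)$.

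There is no real obstacle here since the bulk of the work was done abstractly in Sections \ref{s2}--\ref{s5}; the only delicate point is keeping track of which theorems need the absolutely continuous quasi-norm (the duality, equivalent-norm, and Carleson-measure statements do) and which do not (the Littlewood--Paley characterizations), but since $L^{\vec{p}}(\rn)$ with $\vec{p}\in(0,\infty)^n$ enjoys absolute continuity, this distinction does not restrict the conclusions. Thus the theorem follows.
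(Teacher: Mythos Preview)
Your proof is correct and follows essentially the same route as the paper: the paper does not give a separate proof of Theorem \ref{thmix} but instead, in the paragraph preceding it, records exactly the same verifications you list (ball quasi-Banach structure from \cite[p.\,2047]{zwyy}, Assumptions \ref{Assum-1} and \ref{Assum-2} from \cite[Lemmas 7.22 and 7.26]{zwyy}, and absolute continuity) and then declares the theorem a direct consequence. Your additional paragraph justifying the range of $\lambda$ via $r_+\ge\widehat{\unp}$ makes explicit a computation the paper leaves to the reader.
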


\begin{remark}\label{3.23.x5}
\begin{enumerate}
\item[{\rm(i)}] We point out that Theorem \ref{thmix}(i) was also obtained in
\cite[Theorems 3.4, 4.1, and 5.3, and Corollary 3.9]{hyy21}
and Theorem \ref{thmix}(ii) improves the corresponding results
in \cite[Theorems 6.2, 6.3, and 6.4]{hlyy20} by widening the range of
$\lambda\in(1+2/\min\{2,\widehat{p_-}\},\infty)$
into $\lambda\in(2/\min\{1,\widehat{p_-}\},\infty)$.
\item[{\rm(ii)}] Let $\vec{a}:=(a_1,\ldots,a_n)\in[1,\infty]^n$.
Then Theorem \ref{thmix}(i) with
$$A:=
\begin{pmatrix}
 2^{a_1}&0&\cdots&0\\
 0&2^{a_2}&\cdots&0\\
 \vdots&\vdots& &\vdots\\
 0&0&\cdots&2^{a_n}
\end{pmatrix}$$
gives the dual space of the anisotropic mixed-norm Hardy space
$H_{\vec{a}}^{\vec{p}}(\rn)$ which was introduced in
\cite[Definition 3.3]{cgn17} and completely answers
the open problem on the dual space of
$H_{\vec{a}}^{\vec{p}}(\rn)$ proposed in \cite{cgn17}.
\end{enumerate}
\end{remark}

\subsection{Weighted Lebesgue Spaces\label{s6-appl6}}

Let $p\in(0,\fz]$ and $ w\in\ca_\infty(A)$.
From \cite[Remarks 2.7(iii), 4.21(iii), and 6.8(v)]{yhyy},
we deduce that $L_w^p(\rn)$ satisfies all the
assumptions of Definition \ref{HXA} with $X:=L_w^p(\rn)$,
$p_{-}\in(0,p/q_w]$, $\tz_0\in(0,\min\{1,p_{-}\})$, and $p\in(\tz_0,\infty)$,
where $q_w$ is the same as in \eqref{qw},
and has an absolutely continuous quasi-norm.
In what follows, we always let $H_w^p(\rn)$
be the
\emph{anisotropic weighted Hardy space} which
is defined to be the same as in Definition \ref{HXA} with $X:=L_w^p(\rn)$.
By Theorems \ref{s2t1}, \ref{s3t1},
\ref{s3t2}, \ref{s4t1}, \ref{s4t1'}, \ref{s4t1''},
and \ref{s5t1} as well as Corollary \ref{s2c1}
with $X$ replaced by $L_w^p(\rn)$,
we obtain the following conclusion.

\begin{theorem}\label{thwei}
Let $A$ be a dilation, $p\in(0,\fz)$, and
$w\in\ca_\infty(A)$.
Then
\begin{enumerate}
\item[{\rm(i)}] Theorems \ref{s2t1}, \ref{s3t1},
\ref{s3t2}, and \ref{s5t1} as well as Corollary \ref{s2c1}
with $X:=L_w^p(\rn)$ hold true;

\item[{\rm(ii)}] Theorems \ref{s4t1}, \ref{s4t1'},
and \ref{s4t1''} with $X:=L_w^p(\rn)$ and
$\lambda\in(2/\min\{1,{q_w}/{p}\},\infty)$ also hold true,
where $q_w$ is the same as in \eqref{qw}.
\end{enumerate}
\end{theorem}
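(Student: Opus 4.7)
The plan is to show that $L_w^p(\rn)$ fits into the framework of Sections \ref{s2}--\ref{s5} and then simply invoke the corresponding general theorems. First I would record that $L_w^p(\rn)$ is a ball quasi-Banach function space (this is standard and was recalled just before Lemma \ref{embed} above) and that, since $w\in\ca_\infty(A)$ gives $dw$ absolutely continuous with respect to $dx$ on any dilated ball together with the Lebesgue dominated convergence theorem in $L^p_w(\rn)$, the space $L_w^p(\rn)$ has an absolutely continuous quasi-norm. These are precisely the ingredients gathered from \cite[Remarks 2.7(iii), 4.21(iii), and 6.8(v)]{yhyy}, already cited in the paragraph preceding the theorem.

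The key step is to verify the two fundamental boundedness assumptions. For Assumption \ref{Assum-1}, since $w\in\ca_\infty(A)=\bigcup_{q\ge1}\ca_q(A)$, one has $w\in\ca_{p/p_-}(A)$ for some $p_-\in(0,p/q_w]$ with $q_w$ as in \eqref{qw}; the anisotropic Fefferman--Stein vector-valued maximal inequality on $L_w^{p/p_-}(\rn)$ (which can be obtained by an extrapolation argument from the scalar case in \cite{bh06}, combined with the identification $(L_w^p(\rn))^{1/p_-}=L_w^{p/p_-}(\rn)$) then gives the desired inequality for any $p\in(0,p_-)$ and $u\in(1,\infty)$. For Assumption \ref{Assum-2}, pick $\tz_0\in(0,\min\{1,p_-\})$ and $p_0\in(\tz_0,p)$ with $p/\tz_0>p_0/\tz_0>q_w$; then the associate space of $(L_w^p(\rn))^{1/\tz_0}=L_w^{p/\tz_0}(\rn)$ is, up to a weight change, a weighted Lebesgue space on which the powered Hardy--Littlewood maximal operator $\cm^{((p_0/\tz_0)')}$ is bounded by the classical anisotropic Muckenhoupt theory of \cite{bh06}. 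Thus both assumptions hold with the parameters listed in the theorem statement.

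Once these verifications are in place, part (i) follows immediately by applying Theorem \ref{s2t1} (which also needs the absolute continuity of the quasi-norm, just checked), Theorems \ref{s3t1} and \ref{s3t2}, Corollary \ref{s2c1}, and Theorem \ref{s5t1} with $X:=L_w^p(\rn)$. For part (ii), I would compute the index $r_+$ in \eqref{3.25.x1}: since Assumption \ref{Assum-2} holds for every $\tz_0\in(0,p/q_w)$, we obtain $r_+=p/q_w$, so that Theorem \ref{s4t1''} gives the $g_\lambda^\ast$-function characterization exactly for $\lambda\in(2/\min\{1,p/q_w\},\infty)$; Theorems \ref{s4t1} and \ref{s4t1'} apply without further constraint. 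The only mildly delicate point I anticipate is bookkeeping of the ranges of $p_-$, $\tz_0$, and $p_0$ so that the hypothesis $\tz_0\in(0,\underline{p})$ in Assumption \ref{Assum-2} is compatible with $p_0\in(\tz_0,\infty)$ and with the weight condition $p_0/\tz_0>q_w$; this is purely an arithmetic verification using $w\in\ca_{p/p_-}(A)$.
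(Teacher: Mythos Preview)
Your approach is exactly the paper's: the paragraph preceding Theorem \ref{thwei} already records, via \cite[Remarks 2.7(iii), 4.21(iii), and 6.8(v)]{yhyy}, that $L_w^p(\rn)$ is a ball quasi-Banach function space with absolutely continuous quasi-norm satisfying Assumptions \ref{Assum-1} and \ref{Assum-2} with $p_-\in(0,p/q_w]$, $\tz_0\in(0,\min\{1,p_-\})$, and suitable $p_0$; the theorem then follows by direct substitution into the general results. Your proposal simply unpacks the verification of the two Assumptions in more detail, which is fine.

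Two small remarks. First, since Assumption \ref{Assum-2} requires $\tz_0\in(0,\underline{p})$ with $\underline{p}=\min\{p_-,1\}$, you actually get $r_+=\min\{1,p/q_w\}$ rather than $r_+=p/q_w$; this does not affect your final range for $\lambda$ because $2/r_+\ge2>1$ forces $\max\{1,2/r_+\}=2/\min\{1,p/q_w\}$ anyway. Second, your computed range $\lambda\in(2/\min\{1,p/q_w\},\infty)$ differs from the stated $\lambda\in(2/\min\{1,q_w/p\},\infty)$; comparing with the parallel formulas in Theorems \ref{Thsm}, \ref{thorliczslice}, \ref{thlorentz}, \ref{thvariable}, \ref{thmix}, and \ref{thorlicz} (all of the form $2/\min\{1,p_-\}$ with $p_-$ the natural supremum), your version is the one consistent with the framework, and the printed $q_w/p$ appears to be a typographical inversion.
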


\begin{remark}\label{3.27.x1}
We point out that Theorem \ref{thwei}(i) is completely new and
Theorem \ref{thwei}(ii) improves the corresponding results in
\cite[Theorems 2.14, 3.1, and 3.9]{lfy15} by
widening the range of $p\in(0,1]$ into $p\in(0,\fz)$.
\end{remark}

\subsection{Orlicz Spaces\label{s6-appl7}}	
Let $\Phi$ be an Orlicz function with positive lower type
$p_{\Phi}^-$ and positive upper type $p_{\Phi}^+$.
From \cite[Remarks 2.7(iii), 4.21(iv), and 6.8(vi)]{yhyy},
we deduce that $L^{\Phi}(\rn)$ satisfies all the assumptions
of Definition \ref{HXA} with $X:=L^{\Phi}(\rn)$,
$p_{-}\in(0,p_\Phi^-]$, $\tz_0\in(0,\min\{p_\Phi^-,1\})$,
and $p_0\in(\max\{p_\Phi^+,1\},\infty)$,
and has an absolutely continuous quasi-norm.
In what follows, we always let $H^{\Phi}_{A}(\rn)$
be the
\emph{anisotropic Orlicz--Hardy space} which is
defined to be the same as in Definition \ref{HXA} with $X:=L^{\Phi}(\rn)$.
Moreover, by Theorems \ref{s2t1}, \ref{s3t1},
\ref{s3t2}, \ref{s4t1}, \ref{s4t1'}, \ref{s4t1''},
and \ref{s5t1} as well as Corollary \ref{s2c1}
with $X$ replaced by $L^{\Phi}(\rn)$,
we obtain the following conclusion.

\begin{theorem}\label{thorlicz}
Let $A$ be a dilation and
$\Phi$ an Orlicz function with lower type $p_{\Phi}^-\in(0,\fz)$. Then
\begin{enumerate}
\item[{\rm(i)}] Theorems \ref{s2t1}, \ref{s3t1},
\ref{s3t2}, and \ref{s5t1} as well as Corollary \ref{s2c1} with
$X:=L^{\Phi}(\rn)$ hold true;

\item[{\rm(ii)}] Theorems \ref{s4t1}, \ref{s4t1'},
and \ref{s4t1''}
with $X:=L^{\Phi}(\rn)$ and
$\lambda\in(2/\min\{1,p_{\Phi}^-\},\infty)$ also hold true.
\end{enumerate}
\end{theorem}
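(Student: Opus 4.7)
The plan is to deduce Theorem \ref{thorlicz} as a direct application of the general results (Theorems \ref{s2t1}, \ref{s3t1}, \ref{s3t2}, \ref{s4t1}, \ref{s4t1'}, \ref{s4t1''}, and \ref{s5t1}, together with Corollary \ref{s2c1}) with $X:=L^\Phi(\rn)$. To this end, the key observation is that, once one knows $L^\Phi(\rn)$ is a ball quasi-Banach function space with an absolutely continuous quasi-norm that satisfies Assumptions \ref{Assum-1} and \ref{Assum-2} with the right range of parameters, the conclusions of (i) follow immediately, while (ii) follows as soon as the admissible range of $\theta_0$ is identified.

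First I would verify that $L^\Phi(\rn)$ satisfies all the structural hypotheses required by the abstract theory. By the definition of the Orlicz function with positive lower type $p_\Phi^-$ and positive upper type $p_\Phi^+$, $L^\Phi(\rn)$ is a ball quasi-Banach function space; this, together with the absolute continuity of its quasi-norm and the requisite boundedness of the Hardy--Littlewood maximal operator on relevant convexifications, is recorded in \cite[Remarks 2.7(iii), 4.21(iv), and 6.8(vi)]{yhyy}. From these remarks, Assumption \ref{Assum-1} holds for $X:=L^\Phi(\rn)$ with any $p_-\in(0,p_\Phi^-]$, and Assumption \ref{Assum-2} holds with the same $p_-$, $\theta_0\in(0,\min\{p_\Phi^-,1\})$, and $p_0\in(\max\{p_\Phi^+,1\},\infty)$. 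In particular, $r_+$ as defined in \eqref{3.25.x1} satisfies $r_+\ge \min\{p_\Phi^-,1\}$, so the condition $\lambda\in(\max\{1,2/r_+\},\infty)$ of Theorem \ref{s4t1''} reduces precisely to $\lambda\in(2/\min\{1,p_\Phi^-\},\infty)$.

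Next, to obtain (i), I would invoke each of Theorems \ref{s2t1}, \ref{s3t1}, \ref{s3t2}, \ref{s5t1}, and Corollary \ref{s2c1} directly with $X:=L^\Phi(\rn)$; every hypothesis (the two Assumptions, the absolute continuity of the quasi-norm, and the requirement $p_0\in(\theta_0,2)$ needed in Theorem \ref{s5t1}) is available from the verification above by choosing $\theta_0$ small enough and, when needed, shrinking $p_0$ so that $p_0<2$, which is possible because the hypotheses permit any $p_0$ strictly greater than $\theta_0$ (and hence, upon refining the choice of $p_0$, any value in $(\theta_0,\infty)$, including values in $(\theta_0,2)$, provided $\theta_0<2$). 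For (ii), I would apply Theorems \ref{s4t1}, \ref{s4t1'}, and \ref{s4t1''} with $X:=L^\Phi(\rn)$; the first two require only the Assumptions already verified, and for the $g_\lambda^\ast$-function characterization, the computation of $r_+$ above yields precisely the announced range $\lambda\in(2/\min\{1,p_\Phi^-\},\infty)$.

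No real obstacle is expected at this stage: the entire content has been absorbed into the general framework of the preceding sections. The only mildly technical point is to keep track of the parameter $\theta_0$ and ensure that the auxiliary inequality $p_0<2$ demanded in Theorem \ref{s5t1} can indeed be arranged for $L^\Phi(\rn)$; since Assumption \ref{Assum-2} is available for every $p_0\in(\theta_0,\infty)$ large enough, and its validity is preserved upon shrinking $p_0$ (one uses the associate-space boundedness of the powered maximal operator, whose hypothesis becomes stronger but remains true for the Orlicz associate space), the required $p_0$ can always be chosen in $(\theta_0,2)$. This completes the verification that all cited theorems apply, and thereby proves Theorem \ref{thorlicz}.
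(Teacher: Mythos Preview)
Your overall approach matches the paper's exactly: cite \cite[Remarks 2.7(iii), 4.21(iv), and 6.8(vi)]{yhyy} to verify that $L^\Phi(\rn)$ is a ball quasi-Banach function space with absolutely continuous quasi-norm satisfying Assumptions \ref{Assum-1} and \ref{Assum-2} with $p_-\in(0,p_\Phi^-]$, $\theta_0\in(0,\min\{p_\Phi^-,1\})$, $p_0\in(\max\{p_\Phi^+,1\},\infty)$, and then invoke the general theorems. The paper provides no further argument. Your computation of the range of $\lambda$ in (ii) via $r_+\ge\min\{p_\Phi^-,1\}$ is also correct.

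There is, however, a genuine error in your treatment of the constraint $p_0\in(\theta_0,2)$ needed for Theorem \ref{s5t1}. You assert that Assumption \ref{Assum-2} ``is preserved upon shrinking $p_0$,'' but the monotonicity goes the wrong way: as $p_0$ decreases, the exponent $(p_0/\theta_0)'$ \emph{increases}, so $\cm^{((p_0/\theta_0)')}$ becomes pointwise larger, and its boundedness on $(X^{1/\theta_0})'$ is a strictly stronger condition, not a consequence of boundedness for a larger $p_0$. Your parenthetical remark that this stronger hypothesis ``remains true for the Orlicz associate space'' is precisely the statement that would need proof, and you give none. The paper itself is silent on this point --- it simply records the range $p_0\in(\max\{p_\Phi^+,1\},\infty)$ and asserts that Theorem \ref{s5t1} applies --- so when $p_\Phi^+\ge 2$ the applicability of Theorem \ref{s5t1} is not justified by the argument as written, either yours or the paper's; a direct verification of Assumption \ref{Assum-2} with some $p_0\in(\theta_0,2)$ (or an additional hypothesis on $p_\Phi^+$) would be required.
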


\begin{remark}\label{3.27.x2}
We point out that Theorem \ref{thorlicz}(i) is completely new and
Theorem \ref{thorlicz}(ii) improves the corresponding results in
\cite[Theorems 2.14, 3.1, and 3.9]{lfy15}
by widening the range of $p_{\Phi}^-\in(0,1]$ into $p_{\Phi}^-\in(0,\fz)$.
\end{remark}

\medskip

\noindent\textbf{Author Contributions}\quad All authors developed and discussed the results and contributed to the final manuscript.

\medskip

\noindent\textbf{Data Availibility Statement}\quad Data sharing 
is not applicable to this article as no data sets were generated or
analysed.

\medskip

\noindent\textbf{Declarations}

\medskip

\noindent\textbf{Conflict of interest}\quad All authors state no conflict of interest.

\medskip

\noindent\textbf{Informed consent}\quad Informed consent has been obtained from all individuals included in this research work.


\bigskip

\noindent Chaoan Li and Dachun Yang (Corresponding author)

\medskip

\noindent Laboratory of Mathematics and Complex Systems
(Ministry of Education of China),
School of Mathematical Sciences, Beijing Normal University,
Beijing 100875, The People's Republic of China

\smallskip

\noindent {\it E-mails}:
\texttt{cali@mail.bnu.edu.cn} (C. Li)

\noindent\phantom{{\it E-mails:}}
\texttt{dcyang@bnu.edu.cn} (D. Yang)

\bigskip

\noindent Xianjie Yan

\medskip

\noindent Institute of Contemporary Mathematics,
School of Mathematics and Statistics, Henan University,
Kaifeng 475004, The People's Republic of China

\smallskip

\noindent{{\it E-mail:}}
\texttt{xianjieyan@henu.edu.cn}


\end{document}